\newcommand{\C}{{\mathbb C}}       
\newcommand{\R}{{\mathbb R}}       
\newcommand{\N}{{\mathbb N}}       
\newcommand{\Z}{{\mathbb Z}}       
\newcommand{\D}{{\mathbb D}}
\newcommand{\dist}{{\rm dist}}
\newcommand{\real}{{\rm Re \,}}
\newcommand{\imag}{{\rm Im}}
\newcommand{\rf}[1]{{(\ref{#1})}}
\newcommand{\supp}{{\rm supp}}
\newcommand{\Beurling}{{\mathbf B}}
\newcommand{\Cauchy}{{\mathbf C}}
\newcommand{\norm}[1]{{\left\|{#1}\right\|}}
\newcommand{\modulus}[2]{\omega_{#1}#2}
\definecolor{ffffff}{rgb}{1.0,1.0,1.0}
\definecolor{qqqqff}{rgb}{0.0,0.0,1.0}
\definecolor{ffqqqq}{rgb}{1.0,0.0,0.0}
\definecolor{zzzzqq}{rgb}{0.6,0.6,0.0}
\definecolor{marronet}{rgb}{0.6,0.2,0}
\definecolor{negre}{rgb}{0,0,0}
\definecolor{vermell}{rgb}{0.8,0.05,0.05}
\definecolor{blau}{rgb}{0.2,0.1,1}
\definecolor{blauclar}{rgb}{0.,0.,1.}
\definecolor{grisfosc}{rgb}{0.25098039215686274,0.25098039215686274,0.25098039215686274}
\definecolor{verd}{rgb}{0.1,0.6,0.1}
\definecolor{taronja}{rgb}{0.9,0.6,0.05}
\definecolor{vermellclar}{rgb}{1.,0.,0.}
\definecolor{verdet}{rgb}{0,0.8,0.1}
\definecolor{blauverd}{rgb}{0,0.4,0.2}
\definecolor{grisclar}{rgb}{0.6274509803921569,0.6274509803921569,0.6274509803921569}
\newtheorem{theorem}{Theorem}
\newtheorem*{theorem*}{Theorem}
\newtheorem{lemma}[theorem]{Lemma}
\newtheorem{claim}[theorem]{Claim}
\newtheorem{corollary}[theorem]{Corollary}
\newtheorem*{corollary*}{Corollary}
\newtheorem{proposition}[theorem]{Proposition}
\newtheorem{definition}[theorem]{Definition}
\newtheorem{example}[theorem]{Example}
\newtheorem{remark}[theorem]{Remark}
\numberwithin{subsection}{section}
\numberwithin{theorem}{section}
\numberwithin{equation}{section}
\numberwithin{figure}{section}
\title{Characterization for stability in planar conductivities}
\author{Daniel Faraco, Mart\'i Prats
\thanks{U\-ni\-ver\-si\-dad Au\-t\'o\-no\-ma de Ma\-drid - ICMAT, Spain: \texttt{daniel.faraco@uam.es}, \texttt{marti.prats@uam.es}. The authors were funded by the European Research
Council under the grant agreement 307179-GFTIPFD and MTM2011-28198 and they acknowledge financial support from the Spanish Ministry of Economy and Competitiveness, through the ``Severo Ochoa'' Programme for Centres of Excellence in R\&D (SEV-2015-0554). The second author was partially funded by AGAUR - Generalitat de Catalunya (2014 SGR 75) as well.}}
\begin{document}
\maketitle
\bibliographystyle{alpha}

\begin{abstract} 
We find a complete characterization for sets of uniformly strongly elliptic and isotropic conductivities with stable recovery in the $L^2$ norm when the data of the Calder\'on Inverse Conductivity Problem is obtained in the boundary of a disk and the conductivities are constant in a neighborhood of its boundary.
To obtain this result, we present  minimal a priori assumptions which turn out to be sufficient for  sets of conductivities to have stable recovery in a bounded and rough domain.  
The condition is presented in terms of the integral moduli of continuity of the coefficients involved and their ellipticity bound as conjectured by Alessandrini in his 2007 paper, giving explicit quantitative control for every pair of conductivities.
\end{abstract}

\renewcommand{\abstractname}{}
\begin{abstract}
{\bf Keywords}: Calder\'on Inverse Problem, Complex Geometric Optics Solutions, Stability, quasiconformal mappings, integral modulus of continuity.

{\bf MSC 2010}: 35R30, 35J15, 30C62.
\end{abstract}

\section{Introduction}
Let $\gamma$ be a strongly elliptic, isotropic conductivity coefficient in a bounded domain $\Omega\subset \C$, that is $\gamma:\C \to \R_+$ with $\supp(\gamma-1 )\subset \overline\Omega$ and both $\gamma$ and its multiplicative inverse $\gamma^{-1}$  bounded above by $K<\infty$ modulo null sets, which we summarize as
$$\gamma \in \mathcal{G}(K,\Omega).$$
 For $1\leq p \leq \infty$, the set $\mathcal{G}(K,\Omega)$ is a metric space when endowed with the $L^p$-distance 
\begin{equation*}
\dist_{p}^\Omega(\gamma_1,\gamma_2)= \norm{\gamma_1-\gamma_2}_{L^p(\Omega)}.
\end{equation*}
The conductivity inverse problem, proposed in 1980 by Alberto Calder\'on (see \cite{CalderonInverse}), consists in determining $\gamma$ from boundary measurements. This measurements are samples of the Dirichlet-to-Neumann (DtN) map $\Lambda_\gamma: H^{1/2}(\partial\Omega)\to H^{-1/2}(\partial\Omega)$ which sends a function $f$ to $\gamma\frac{\partial u}{\partial \nu}$, being $\nu$ the outward unit  normal  vector of $\partial\Omega$ and $u$ the solution of the Dirichlet boundary value problem
\begin{equation}\label{eqConductivity}
\begin{cases}
	\nabla \cdot (\gamma \nabla u)=0, \\
	u_{|\partial\Omega}=f.
\end{cases}
\end{equation}
See Section \ref{secDirect} for the precise weak formulation of this equation and the DtN map.


In dimension 2, after the milestones \cite{Nachman} and \cite{BrownUhlmann}, Astala and P\"aiv\"arinta showed  in \cite{AstalaPaivarinta} that every pair of conductivity coefficients $\gamma_1,\gamma_2 \in \mathcal{G}(K,\Omega)$ satisfies that $\Lambda_{\gamma_1}=\Lambda_{\gamma_2}$ if and only if $\gamma_1=\gamma_2$.  In higher dimensions, there are uniqueness results which require some a priori regularity of $\gamma$ (see \cite{CaroRogers, Haberman, HabermanTataru, BrownTorres, SylvesterUhlmann}). 

Nevertheless, for the problem to be well-posed  the inverse map $\Lambda_\gamma \mapsto \gamma$ should be continuous in some sense, to have a chance that in real life situations if our sample differs slightly from the  DtN map of a certain body, then the solution we get is a good approximation of the conductivity of that body. Alessandrini showed in \cite{AlessandriniStable} that this is not possible in the $L^{\infty}$-distance unless one imposes a priori conditions. 
In fact, $G$ convergence gives explicit examples of highly oscillating sequences such that convergence of the DtN map
does not imply the convergence of the conductivities in any $\dist_{p}^\Omega$ distance (see \cite{AlessandriniCabib,FaracoKurylevRuiz}).

In \cite{BarceloFaracoRuiz}  this problem was solved assuming that the space of conductivities is $L^{\infty} \cap C^\alpha(\Omega)$ by a cunning adaptation of Astala and P\"aiv\"arinta arguments, to obtain stability in the $L^\infty$-distance for Lipschitz domains.  Later on, in \cite{ClopFaracoRuiz}   that result was extended to non-smooth conductivities, as long as they belong to a fractional Sobolev space $H^\alpha(\Omega)$ and showing stability with respect to the $L^p$-distance with $p<\infty$.  Estimates for rough domains were obtained in \cite{FaracoRogers}.  
Previous remarkable steps can be found in \cite{AlessandriniSingular, Liu}.

The purpose of the present paper is to characterize the subsets  $\mathcal{F}\subset \mathcal{G}(K,\Omega)$ such that the inverse map is $L^2$-stable for these conductivities. The condition studied is to have a uniform integral modulus of continuity of exponent $p$  for $1\leq p<\infty$, under which $L^2$-stability is shown, the condition being necessary when $\Omega=\D$ and the conductivities are constant in a neighborhood of the boundary. Incidentally, every uniformly elliptic conductivity has a bounded integral modulus of continuity for every finite exponent.

\begin{definition}\label{defModulus}
An increasing function $\omega:\R_+\to \R_+$ with $\lim_{t\to0} \omega(t)= 0$ is called {\em modulus of continuity}.

Let $f : \R^d \to \C$ be a measurable function and let $0< p \leq \infty$. We define its integral modulus of continuity of exponent $p$, or $p$-modulus for short, as
\begin{equation*}
\modulus{p}{f}(t) := \sup_{|y|\leq t} \norm{ f - \tau_y f}_{L^p} \mbox{\quad\quad for $0\leq t \leq \infty$},
\end{equation*}
where we wrote $\tau_y f(x) =  f(x-y)$.
\end{definition}
Note that $\modulus{p}{f}$ is increasing by definition. If $f\in L^p$ with $1\leq p<\infty$, then  $\modulus{p}{f}$  is a modulus of continuity by the Kolmogorov-Riesz Theorem (see \cite[Theorem 5]{HancheOlsenHolden}, for instance). However this is not true for $L^\infty$, since  $\lim_{t\to0} \modulus{\infty}{f}(t)= 0$ is equivalent to uniform continuity of $f$. 

Next we define the families of conductivities under study:
\begin{definition}\label{defFamiliesOfModuli}
Let $\gamma \in \mathcal{G}(K,\Omega)$ for $1\leq K<\infty$. Let $0< p \leq\infty$ and assume  that for a certain modulus of continuity  $\omega$ we have the pointwise bound $\modulus{p}{\gamma}\leq \omega$. Then we say that $\gamma \in\mathcal{G}(K,\Omega, p,\omega)$.
\end{definition}

\begin{definition}
Let $0<p\leq\infty$. We say that a  family of conductivities $\mathcal{F}\subset \mathcal{G}(K, \Omega)$ is $L^p$-stable for (recovery in) $\Omega$ if the map $\Lambda_\gamma \mapsto \gamma$ is uniformly continuous in $\Lambda(\mathcal{F})$ with respect to the (semi)distance $\dist_p^\Omega$ in $\mathcal{F}$. 
\end{definition}

The main result of the paper is summarized in the following theorem.
\begin{theorem}\label{theoBigUp}
Let $K\geq 1$, let $r_0<1$ and let $\mathcal{F}\subset \mathcal{G}(K, r_0\D)$. The family $\mathcal{F}$ is $L^2$-stable for  $\D$ if and only if there exists a modulus of continuity $\omega$ such that $\mathcal{F}\subset \mathcal{G}(K,r_0\D, 2, \omega)$.
\end{theorem}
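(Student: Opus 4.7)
The plan is to establish a quantitative stability estimate of the form
\begin{equation*}
\|\gamma_1-\gamma_2\|_{L^2(\D)} \leq \Phi\bigl(\|\Lambda_{\gamma_1}-\Lambda_{\gamma_2}\|,\,\omega,\,K,\,r_0\bigr),
\end{equation*}
valid whenever $\gamma_1,\gamma_2\in \mathcal{G}(K,r_0\D,2,\omega)$, with $\Phi(s,\omega,K,r_0)\to 0$ as $s\to 0$. Such a bound immediately yields the required uniform continuity on $\mathcal{F}$. I would build on the Astala--P\"aiv\"arinta reconstruction scheme via complex geometric optics (CGO) solutions for the associated Beltrami equation, in the spirit of Barcel\'o--Faraco--Ruiz, Clop--Faraco--Ruiz and Faraco--Rogers. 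Given a complex spectral parameter $k$, one truncates at a radius $|k|\leq R$: the low-frequency part of the reconstruction is controlled by the DtN difference through boundary identities and subexponential bounds on the CGO solutions, while the high-frequency tail is estimated directly by the $L^2$-modulus $\omega$ evaluated at $1/R$ (via Plancherel and the characterisation of $L^2$-decay of the Fourier transform by $\modulus{2}{\gamma}$). Optimising $R$ in terms of $\|\Lambda_{\gamma_1}-\Lambda_{\gamma_2}\|$ produces the modulus $\Phi$.

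\textbf{Necessity.} Suppose $\mathcal{F}$ is $L^2$-stable but not contained in $\mathcal{G}(K,r_0\D,2,\omega)$ for any modulus $\omega$. The Kolmogorov--Riesz theorem, combined with the uniform $L^\infty$ bound and the common support in $r_0\D$, then implies that $\mathcal{F}$ fails to be totally bounded in $L^2(\D)$, so we may pick $\delta>0$ and a $\delta$-separated sequence $(\gamma_n)\subset\mathcal{F}$. I will contradict stability by exhibiting a subsequence along which the DtN maps are Cauchy in operator norm. Two compactness ingredients do this. First, $G$-compactness of $\mathcal{G}(K,r_0\D)$ yields a subsequence $\gamma_{n_k}$ that $G$-converges to some (possibly anisotropic) coefficient $A^*$, and the weak formulation of the direct problem then gives pointwise convergence $\Lambda_{\gamma_{n_k}}f\to\Lambda_{A^*}f$ in $H^{-1/2}(\partial\D)$ for each fixed $f\in H^{1/2}(\partial\D)$. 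Second, because every $\gamma\in\mathcal{G}(K,r_0\D)$ equals $1$ on the annulus $r_0<|z|<1$, the operator $\Lambda_\gamma-\Lambda_1$ factors through harmonic extension across that annulus and maps the unit ball of $H^{1/2}(\partial\D)$ into a fixed bounded set of $C^\infty(\partial\D)$, with bound depending only on $K$ and $r_0$. Consequently $\{\Lambda_\gamma-\Lambda_1:\gamma\in\mathcal{G}(K,r_0\D)\}$ is collectively compact in $\mathcal{L}(H^{1/2},H^{-1/2})$, and pointwise convergence together with collective compactness upgrades to operator-norm convergence. Uniform continuity of $\Lambda^{-1}$ then forces $(\gamma_{n_k})$ to be $L^2$-Cauchy, contradicting the $\delta$-separation.

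\textbf{Main obstacle.} The technical core lies in the sufficiency direction. Earlier stability results handle H\"older or fractional Sobolev classes, whose modulus $\omega(t)=Ct^\alpha$ is power-like, and the optimisation of the cut-off $R$ exploits that explicit algebraic structure. Replacing this by an arbitrary modulus $\omega$ requires dissecting the CGO-based estimates into a modulus-free quantitative core that depends only on $K$, $r_0$ and the DtN difference, plus a single final step where $\omega$ enters through the tail control; the choice $R=R(\|\Lambda_{\gamma_1}-\Lambda_{\gamma_2}\|)$ must then be made so that $\Phi$ is a genuine continuity modulus. Designing that decoupling and tracking the constants uniformly across $\mathcal{F}$ is where the bulk of the work is expected to lie.
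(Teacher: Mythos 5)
Your necessity argument takes a genuinely different route from the paper's, but as written it hinges on a false principle. The step ``pointwise convergence together with collective compactness upgrades to operator-norm convergence'' fails in general: take $T_nf=\langle f,e_n\rangle g$ with $(e_n)$ orthonormal and $g$ a fixed smooth function; all ranges lie in one fixed compact segment, $T_nf\to 0$ for every $f$, yet $\norm{T_n}$ is constant. What rescues your scheme is a property you never invoke: the DtN maps are symmetric, so the smoothing you get on the output side (because $\gamma\equiv 1$ on the annulus $r_0<|z|<1$) transfers by duality to the input side; the family $\Lambda_\gamma-\Lambda_1$ is then uniformly bounded from $H^{-N}(\partial\D)$ to $H^{N}(\partial\D)$ for arbitrarily large $N$, and equicontinuity on the (precompact) image of the unit ball under the compact embedding $H^{1/2}\hookrightarrow H^{-N}$, together with the pointwise convergence furnished by $G$-compactness, does yield norm convergence. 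This two-sided decay is exactly what the paper establishes directly: Lemma \ref{lemHarmonicsUnderControl} bounds the matrix entries of $\Lambda_\gamma-\Lambda_0$ against spherical harmonics by $C_K\sqrt{jk}\,r_0^{\max\{j,k\}}$, whence total boundedness of the image in $\mathcal{L}_{1/2,-1/2}(\D)$ (Lemma \ref{lemGCompactInL}, following Mandache); the paper then argues positively --- a uniformly continuous inverse carries the totally bounded image to a totally bounded $\mathcal{F}$, and Kolmogorov--Riesz produces the uniform modulus --- rather than by contradiction through $G$-convergence. Your contrapositive use of Kolmogorov--Riesz and the final contradiction with the $\delta$-separated sequence are fine; the gap is the unjustified norm-convergence upgrade, which needs the symmetry (or a direct two-index decay estimate) to close.

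For sufficiency you have essentially restated the problem rather than solved it: the truncation-in-$k$ scheme you outline is the paper's own (Theorem \ref{theoMainTheorem}, carried out in Sections \ref{secDecay}--\ref{secFinal}), and the part you defer as ``the main obstacle'' --- making the CGO decay, the scattering-to-CGOS stability and the final frequency splitting depend on an arbitrary modulus $\omega$ instead of a power --- is precisely the content of those sections (the Neumann-series and Fourier analysis of Section \ref{secLinear} and its transfer through quasiconformal composition in Section \ref{secNonlinear}, the topological degree argument of Section \ref{secScatteringToUInfty}, and the Caccioppoli-type modulus estimates of Section \ref{secCaccioppoli}); so no proof of this direction is actually given. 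Two further remarks. First, the quantitative theorem requires $p>2K$, so to serve Theorem \ref{theoBigUp} with $p=2$ one must first upgrade the $2$-modulus to a $p$-modulus by interpolating with the uniform $L^\infty$ bound, as the paper notes; your sketch uses the $L^2$-modulus directly and silently skips this. Second, for the purely qualitative statement of Theorem \ref{theoBigUp} the paper points out a much shorter route: since the conductivities are supported in $r_0\D$, the class $\mathcal{G}(K,r_0\D,2,\omega)$ is compact in $L^2$ by Kolmogorov--Riesz, the forward map is continuous (Lemma \ref{lemCompareLambdas} plus higher integrability of gradients) and injective by Astala--P\"aiv\"arinta, so the inverse is automatically uniformly continuous on the image; the heavy CGO machinery is only needed if one insists, as the paper does in Theorem \ref{theoMainTheorem}, on an explicit modulus $\eta$ and on general rough domains.
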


 Theorem \ref{theoBigUp} is divided in two parts. The necessity of the condition is based on a compactness argument and it requires that the conductivities involved coincide near the boundary:
\begin{theorem}\label{theoSharp}
Let $K\geq 1$, $0 < s<\infty$ and $r_0<1$. Let $\mathcal{F}\subset \mathcal{G}(K, r_0\D)$ be an $L^s$-stable family of conductivities for recovery in $\D$. 

Then for every $0< p<\infty$ there exists a modulus of continuity $\omega$ such that $\mathcal{F}\subset \mathcal{G}(K,\Omega, p, \omega)$. 
\end{theorem}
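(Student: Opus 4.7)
The plan is to prove the stronger statement that $\mathcal F$ is relatively compact in $L^p(\D)$ for every $0<p<\infty$, and then conclude by the Fr\'echet--Kolmogorov--Riesz theorem, which produces a common modulus $\omega$ with $\modulus{p}{\gamma}(t)\le\omega(t)\to 0$ for every $\gamma\in\mathcal F$. Since $\gamma-1$ is supported in $\overline{r_0\D}$ and $1/K\le\gamma\le K$, the family $\{\gamma-1:\gamma\in\mathcal F\}$ is automatically bounded in $L^p(\C)$ and uniformly tight, so precompactness reduces exactly to the uniform $L^p$-equicontinuity we want.

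The heart of the argument is to show that $\{\Lambda_\gamma:\gamma\in\mathcal F\}$ is relatively compact in the operator norm of $\mathcal{L}(H^{1/2}(\partial\D),H^{-1/2}(\partial\D))$. This exploits that every $\gamma$ equals $1$ identically on the annulus $A=\D\setminus\overline{r_0\D}$. Given $f\in H^{1/2}(\partial\D)$, let $u$ solve \eqref{eqConductivity} and let $v$ be the harmonic extension of $f$; then $w=u-v$ is harmonic in $A$, vanishes on $\partial\D$, and $(\Lambda_\gamma-\Lambda_1)f=\partial_\nu w|_{\partial\D}$. Expanding $g:=w|_{\partial(r_0\D)}=\sum_k g_k e^{ik\theta}$ and solving the harmonic Dirichlet problem on $A$ by separation of variables, the $k$-th Fourier coefficient of $\partial_\nu w|_{\partial\D}$ equals $2|k|\,g_k/(r_0^{|k|}-r_0^{-|k|})$, which carries a decay factor $r_0^{|k|}$ for large $|k|$. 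Combined with the energy estimate $\|u\|_{H^1(\D)}\le C(K)\|f\|_{H^{1/2}(\partial\D)}$ and the interior trace on $\partial(r_0\D)$, this yields, with $P_N$ the Fourier projector onto modes $|k|\le N$,
\[
\bigl\|(I-P_N)(\Lambda_\gamma-\Lambda_1)\bigr\|_{\mathcal{L}(H^{1/2},H^{-1/2})}\le C(K,r_0)\,r_0^{N},
\]
uniformly in $\gamma\in\mathcal F$. Hence $\{\Lambda_\gamma-\Lambda_1\}$ is uniformly approximable in operator norm by a bounded family of finite-rank operators valued in the fixed finite-dimensional span of $\{e^{ik\theta}:|k|\le N\}$, and is therefore precompact.

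Given any sequence $\{\gamma_n\}\subset\mathcal F$, extract a subsequence along which $\Lambda_{\gamma_n}$ is Cauchy in operator norm. The $L^s$-stability hypothesis states exactly that $\Lambda\mapsto\gamma$ is uniformly continuous on $\Lambda(\mathcal F)$ with target $(\mathcal F,\dist_s^\D)$, so $\{\gamma_n\}$ is Cauchy and hence converges in $L^s(\D)$; the uniform bound $1/K\le\gamma_n\le K$ then upgrades this to convergence in $L^p(\D)$ for every $0<p<\infty$, via $|\gamma_n-\gamma_m|^p\le(2K)^{p-s}|\gamma_n-\gamma_m|^s$ when $p\ge s$ and H\"older on the bounded domain $\D$ when $p<s$. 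Thus $\mathcal F$ is precompact in $L^p(\D)$ for every $0<p<\infty$, and the proof is concluded. The main technical step is the exponential smoothing estimate above: it relies crucially on $\supp(\gamma-1)\subset\overline{r_0\D}$ for a fixed $r_0<1$, since it is precisely the gap between $\partial(r_0\D)$ and $\partial\D$ that produces the factor $r_0^{N}$. Without this a priori localization the DtN family need not be equi-smoothing and the compactness argument breaks down, consistent with $r_0<1$ appearing as an essential hypothesis.
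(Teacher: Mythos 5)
Your overall strategy---prove that the DtN image of $\mathcal F$ is totally bounded in operator norm, transfer this to $\mathcal F$ through the assumed uniform continuity of the recovery map, upgrade $L^s$-compactness to $L^p$ using $1/K\le\gamma\le K$, and invoke Kolmogorov--Riesz to extract the common modulus---is the same as the paper's (which bounds the matrix elements $\langle(\Lambda_\gamma-\Lambda_0)f_{jp},f_{kq}\rangle$ by $C_K\sqrt{jk}\,r_0^{\max\{j,k\}}$ via Lemma \ref{lemCompareLambdas} and then runs Mandache's quantization argument in Lemmas \ref{lemHarmonicsUnderControl} and \ref{lemGCompactInL}); your explicit separation of variables on the annulus is a legitimate variant of that mechanism. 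However, there is a genuine gap at the step where you conclude precompactness: a bounded family of finite-rank operators whose ranges lie in a fixed finite-dimensional subspace $V$ is \emph{not} in general precompact in operator norm, because $\mathcal L(H^{1/2}(\partial\D),V)\cong V\otimes H^{-1/2}(\partial\D)$ is infinite dimensional. For instance, the rank-one operators $T_nf=\langle f,e_n\rangle v$, with $(e_n)$ an orthonormal system and $v\in V$ fixed, are uniformly bounded and all valued in ${\rm span}\{v\}$, yet $\norm{T_n-T_m}=\sqrt2\,\norm{v}$ for $n\neq m$. Your estimate $\norm{(I-P_N)(\Lambda_\gamma-\Lambda_1)}\le C(K,r_0)r_0^N$ controls only the output frequencies; nothing in the argument as written prevents the truncated family $P_N(\Lambda_\gamma-\Lambda_1)$ from oscillating in the input variable, so ``therefore precompact'' does not follow.

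The gap is fixable inside your scheme. Since each DtN map is symmetric, $\langle(I-P_N)(\Lambda_\gamma-\Lambda_1)f,g\rangle=\langle(\Lambda_\gamma-\Lambda_1)(I-P_N)g,f\rangle$, so the same smoothing estimate holds with the truncation on the input side, and hence
\begin{equation*}
\norm{\Lambda_\gamma-\Lambda_1-P_N(\Lambda_\gamma-\Lambda_1)P_N}_{H^{1/2}\to H^{-1/2}}\le 2C(K,r_0)\,r_0^N
\end{equation*}
uniformly in $\gamma\in\mathcal F$. The two-sided truncations form a bounded subset of the genuinely finite-dimensional space of operators from $P_NH^{1/2}$ to $P_NH^{-1/2}$, hence a totally bounded set, and letting $N\to\infty$ yields total boundedness of $\Lambda(\mathcal F)$; the remainder of your argument (stability transfers total boundedness to $(\mathcal F,\dist_s^\D)$, boundedness upgrades $L^s$ to $L^p$, Kolmogorov--Riesz produces $\omega$) is correct and matches the paper. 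Note that this two-sided decay, i.e.\ decay in $\max\{j,k\}$ of the matrix coefficients, is exactly what the paper's Lemma \ref{lemHarmonicsUnderControl} records and what its Lemma \ref{lemGCompactInL} exploits.
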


As pointed out by Alessandrini in \cite{AlessandriniOpen}, whenever the forward map is continuous in a compact subset of $\mathcal{G}(K,\Omega)$ with the $L^p$-distance, the inverse map is continuous by trivial arguments (and it is well defined by \cite{AstalaPaivarinta}). Given any set $D$ compactly supported in a domain $\Omega$, one can check that the forward mapping is continuous in every $L^p$ by using the higher integrability of the gradient and Lemma \ref{lemCompareLambdas} below. The family $\mathcal{G}(K,D,p,\omega)$ is compact in the metric space $\mathcal{G}(K,\Omega)$ endowed with the $L^p$ distance by the Kolmogorov-Riesz criterion and, thus, it is always $L^p$ stable for $\Omega$, finishing the proof of the sufficiency, but this reasoning does not provide us with information on the modulus of continuity of the inverse mapping. 

In Theorem \ref{theoMainTheorem} below we give a quantitative version of this result and we do it without any assumption on the continuity of the forward mapping, solving all the questions posed in \cite{AlessandriniOpen} on the minimal a priori assumptions for stability in dimension 2. The argument is quite deeper, and it works in a rather general setting, with no conditions on the boundary of the domain (improving \cite{FaracoRogers}):
\begin{theorem}\label{theoMainTheorem}
Let $K\geq 1$, let $2K< p <\infty$, let $\Omega$ be a bounded domain and let $\omega$ be a modulus of continuity. Then the family  $\mathcal{G}(K,\Omega, p, \omega)$ is  $L^2$-stable for $\Omega$. That is, there exists a modulus of continuity $\eta$  depending only on $K$, $\Omega$, $p$ and $\omega$ so that  every pair of coefficients $\gamma_1, \gamma_2\in {\mathcal{G}}$ satisfies that
$$ \dist_2^\Omega (\gamma_1,\gamma_2) \leq \eta \left( \norm{ \Lambda_{\gamma_1} - \Lambda_{\gamma_2} }_{H^{1/2}(\partial\Omega)\to H^{-1/2}(\partial\Omega)} \right). $$

Moreover, if $\omega$ is upper semi-continuous, there exist constants $C_{\kappa,p}$, $C_K$, $b_{\kappa,p}$ and $\alpha_K$ such that 
\begin{equation*}
\eta(\rho)
	\lesssim_{\kappa,p}  \omega\left(\frac{C_K}{|\log(\rho)|^\frac1K}\right)^{b_{\kappa,p}} +  \frac{1}{|\log(\rho)|^{\alpha_K}} + \omega\left(  C_{\kappa,p} \,  \omega\left(\frac{C_K}{|\log(\rho)|^\frac1K}\right)^{b_{\kappa,p}} +  \frac{C_K}{|\log(\rho)|^{\alpha_K}}\right).
\end{equation*}
\end{theorem}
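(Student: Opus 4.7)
The plan is an Alessandrini--Vessella-style interpolation combining a quantitative Complex Geometric Optics (CGO) stability bound for smoothened conductivities with a controlled mollification error governed by $\omega$. First I would pass to the Beltrami picture via $\mu_j:=(1-\gamma_j)/(1+\gamma_j)$, so that $\norm{\mu_j}_{L^\infty}\leq\kappa=(K-1)/(K+1)<1$, extend trivially so that $\supp(\gamma_j-1)$ lies in a fixed disk, and invoke the Astala--P\"aiv\"arinta CGO solutions $f_{\mu_j}(z,k)=e^{ikz}(1+O(1/z))$ at infinity. A standard Alessandrini identity, in which $f_{\mu_1}$ and $f_{\mu_2}$ are used as mutual test functions, translates $\norm{\Lambda_{\gamma_1}-\Lambda_{\gamma_2}}$ into quantitative control of the scattering transforms $\tau_{\mu_j}(k)$ on any fixed disk of frequencies, at the price of an exponential factor $e^{CR}$ in the frequency radius $R$. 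Since $\Omega$ has no boundary regularity, this pairing must be justified through a Tartar-type weak formulation of $\Lambda_\gamma$, which is also the framework in which the companion Lemma \ref{lemCompareLambdas} has to be read.

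Next, to bring the hypothesis on $\omega$ into play, I would mollify: $\gamma_j^\epsilon:=\phi_\epsilon*\gamma_j$ satisfies $\norm{\gamma_j-\gamma_j^\epsilon}_{L^p}\lesssim\omega(\epsilon)$ and, by the identity $\nabla\phi_\epsilon*\gamma_j(x)=\int\nabla\phi_\epsilon(y)(\gamma_j(x-y)-\gamma_j(x))\,dy$, also $\norm{\nabla\gamma_j^\epsilon}_{L^p}\lesssim\omega(\epsilon)/\epsilon$. Lemma \ref{lemCompareLambdas} upgrades the first bound to $\norm{\Lambda_{\gamma_j^\epsilon}-\Lambda_{\gamma_j}}\lesssim\omega(\epsilon)^{a_{K,p}}$, so I may work with coefficients whose gradient lies in $L^p$ at the cost of an $L^2$-error $\omega(\epsilon)$ and a DtN-error $\omega(\epsilon)^{a_{K,p}}$. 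The hypothesis $p>2K$ enters at this level: it is the threshold above which the inversion of $\mathrm{Id}-\mu\mathcal{B}$ on $L^p$, combined with Astala's area distortion, yields quantitative $W^{1,p}$ bounds for $f_{\mu_j^\epsilon}-e^{ik\,\cdot}$ with controlled dependence on $\norm{\nabla\mu_j^\epsilon}_{L^p}$.

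The analytic core is a frequency splitting of the scattering transform. Differentiating the Beltrami equation in $k$ produces a $\bar\partial_k$-equation for $\tau_{\mu_j^\epsilon}(k)$ invertible by a Cauchy transform in frequency; combined with the boundary pairing this yields a low-frequency bound $\norm{\tau_{\mu_1^\epsilon}-\tau_{\mu_2^\epsilon}}_{L^2(R\D)}\lesssim e^{CR}\norm{\Lambda_{\gamma_1^\epsilon}-\Lambda_{\gamma_2^\epsilon}}^{c_K}$, while the $W^{1,p}$ regularity of $\mu_j^\epsilon$ yields a high-frequency decay $\norm{\tau_{\mu_j^\epsilon}}_{L^2(\{|k|>R\})}\lesssim R^{-\alpha_K}\Phi_p(\norm{\nabla\gamma_j^\epsilon}_{L^p})$. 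Balancing $R\sim|\log\rho|^{1/K}$ with $\rho=\norm{\Lambda_{\gamma_1}-\Lambda_{\gamma_2}}$, and then choosing $\epsilon$ so that $\omega(\epsilon)$ matches the residual low-frequency contribution, produces the announced rate
\begin{equation*}
\eta(\rho)\lesssim\omega\!\left(\frac{C_K}{|\log\rho|^{1/K}}\right)^{b_{\kappa,p}}+\frac{1}{|\log\rho|^{\alpha_K}}+\omega\!\left(\cdots\right).
\end{equation*}
The principal obstacle I foresee is carrying this entire CGO-plus-$\bar\partial_k$ scheme on a bounded domain \emph{without} any boundary regularity: the pairing between $\Lambda_\gamma$ and CGO traces, the mollification-to-DtN estimate behind Lemma \ref{lemCompareLambdas}, and every quantitative $L^p$-to-$L^2$ conversion have to remain uniformly controlled in this rough setting. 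This is the genuine advance over \cite{ClopFaracoRuiz} and \cite{FaracoRogers}.
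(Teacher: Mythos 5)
Your outline replaces the paper's core mechanism with a step that is asserted rather than available. The heart of the proof is passing from closeness of the scattering transforms to closeness of the conductivities, and your sentence ``differentiating the Beltrami equation in $k$ produces a $\bar\partial_k$-equation for $\tau$ invertible by a Cauchy transform in frequency'' does not do this: the relevant equation is $\partial_{\bar k}u_\gamma=-i\tau_\mu\overline{u_\gamma}$, which is conjugate-linear in $u_\gamma$, and it can only be exploited quantitatively through uniform sub-exponential asymptotics of $u_\gamma(z,k)=e^{ik\varphi(z,k)}$ in $k$. In the paper those asymptotics are \emph{derived from the integral modulus of continuity itself} (Neumann-series splitting, the Fourier--modulus Lemma \ref{lemFourier}, and the quasiconformal composition Lemma \ref{lemCompositionModuli}, which is where $p>2K$ and the exponent $1/K$ in $\eta$ actually originate), and even then the transfer requires the degree-theoretic zero-counting argument of Proposition \ref{propoGDoesNotVanishFarAway}; nothing in your scheme supplies a substitute. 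Your high-frequency input, a power decay of $\tau_{\mu^\epsilon}$ from a $W^{1,p}$ bound, is likewise unproven in the form you need, and your choice $R\sim|\log\rho|^{1/K}$ is reverse-engineered from the answer rather than produced by any estimate in your argument. Finally, you never address how to pass from scattering/CGO information to the $L^2$ distance of the conductivities; in the paper this is a separate nontrivial step (the Caccioppoli-type modulus estimates of Section \ref{secCaccioppoli}, the lower bound on $|\partial f_2|$, and the frequency splitting of Section \ref{secInterpolation}), which is responsible for the third term $\omega(\cdots)$ in $\eta$.

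The mollification detour also cannot yield the stated modulus as sketched. Quantitative stability for the smoothed coefficients $\gamma_j^\epsilon$ would have to be cited or reproved with explicit dependence on $\norm{\nabla\gamma_j^\epsilon}\sim\omega(\epsilon)/\epsilon$; in the available smooth-coefficient arguments (and in any CGO-based one) that dependence enters through factors of type $e^{C(1+|k|)\norm{\cdot}}$, so after balancing in $\epsilon$ one does not recover $\omega\bigl(C_K|\log\rho|^{-1/K}\bigr)^{b_{\kappa,p}}+|\log\rho|^{-\alpha_K}$ but a much worse (typically iterated-logarithmic) rate --- this is exactly why the paper works directly with $\omega_p\gamma\le\omega$ instead of mollifying. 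Two smaller points: the rough-domain issue you list as the ``principal obstacle'' is dispatched in the paper by the reduction to the unit disk (Section \ref{secReduction}, via \cite[Theorem 3.6]{ClopFaracoRuiz}), so it is not where the difficulty lies; and your use of Lemma \ref{lemCompareLambdas} to get $\norm{\Lambda_{\gamma_j^\epsilon}-\Lambda_{\gamma_j}}\lesssim\omega(\epsilon)^{a}$ needs the difference compactly supported in the domain plus interior Meyers higher integrability, which again only becomes clean after that reduction.
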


Note that if $\omega$ is not upper semi-continuous, then $\omega^*(t):=\inf_{s>t}\omega(s)$ is upper semi-continuous and $B^\omega_p\subset B^{\omega^*}_p$.
 Theorem \ref{theoMainTheorem} can be stated replacing $2K< p <\infty$  by $0< p <\infty$ via a quick interpolation argument with $L^\infty$. However, we will use along the proof the integral modulus of continuity of exponent $p$ satifying the first restriction, see Section \ref{secDecay} for the details.
A similar replacement can be done for the estimates on the distance, using interpolation and H\"older inequalities, to obtain $L^s$-stability for $0<s<\infty$, leading to 
$$ \dist_s^\Omega (\gamma_1,\gamma_2) \lesssim_{s,|\Omega|} \eta \left( \norm{ \Lambda_{\gamma_1} - \Lambda_{\gamma_2} }_{H^{1/2}(\partial\Omega)\to H^{-1/2}(\partial\Omega)} \right)^\frac2s. $$
 Similar reasonings can be used to rewrite Theorem \ref{theoBigUp} in these terms.

To show Theorem \ref{theoMainTheorem}, we follow the scheme of \cite{BarceloFaracoRuiz} and \cite{ClopFaracoRuiz}: first we define Complex Geometric Optics Solutions (CGOS) for a certain Beltrami equation as in \cite{AstalaPaivarinta}, and we show that the decay rate is indeed a function of the modulus of continuity. Then we derive $L^\infty$-stability of the CGOS from the scattering transform and finally we use an interpolation argument to deduce stability for the conductivities. 

The decay argument follows the line of \cite{ClopFaracoRuiz}, but we need to study the interaction of quasiconformal mappings, the Beurling transform and the modulus of continuity

 Then we use the approach of Nachman and study the equation in the $k$ variable.  As in \cite{BarceloFaracoRuiz} we are able to use topological arguments to obtain estimates for the 
solutions $u(z,k)$ using their asymptotics in both variables. The proof here follows closely 
\cite{BarceloFaracoRuiz} but we present here shorter and clearer arguments. In particular we are able to keep track of the evolution of the various moduli of continuity.

 Finally, the interpolation argument presented at the end of the paper is inspired by the one in \cite{ClopFaracoRuiz}, but we cannot use Sobolev interpolation. Instead, we need to solve some intermediate steps which are of interest by themselves and which may shed some light on the original argument.

At this point, we want to draw the attention of the reader to the question of the regularity of quasiconformal mappings in relation with their Beltrami coefficient. In \cite{ClopFaracoMateuOrobitgZhong}, \cite{ClopFaracoRuiz}, \cite{CruzMateuOrobitg}, \cite{BaisonClopOrobitg} and \cite{PratsBeltrami} the authors establish Sobolev regularity of the principal solution of the $\R$-linear Beltrami equation in terms of the regularity of the  pair of Beltrami coefficients. Here we show that the $p$-modulus of the derivative of a quasiregular solution is locally controlled by the $p$-modulus of its Beltrami coefficient and the natural H\"older regularity of the mapping by means of Caccioppoli inequalities: 

\begin{theorem}\label{theoCaccioppoliModulusModified}
Let $\mu,\nu \in L^\infty$ be compactly supported with $\norm{|\mu|+|\nu|}_{L^\infty} \leq \kappa< 1$. Let $f$ be a quasiregular solution to 
$$	\bar\partial f = \mu \,  \partial f + \nu \,  \overline{\partial f }. $$
Let $1<p<p_\kappa$ satisfy that $\kappa \norm{\Beurling}_{L^p\to L^p}<1$, let $r\in [p, p_\kappa)$ and let $q$ be defined by $\frac1p=\frac1q+\frac1r$. Then, for every real-valued Lipschitz function $\varphi$ compactly supported in $\D$, we have that
\begin{equation*}
\modulus{p}{(\varphi \bar\partial f)}(t)
	 \leq  C_{\kappa,r,p} \norm{f \nabla \varphi}_{L^r}\left(  \modulus{q}{\mu}(t)+  \modulus{q}{\nu}(t)\right)  +C_{\kappa,p,\varphi} \, \norm{f}_{W^{1+p} ((t+1)\D)}  |t|^{1-\frac2p}.
\end{equation*}
\end{theorem}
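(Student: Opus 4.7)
\medskip
\noindent\textbf{Proof plan.} The plan is to localise the problem by setting $u:=\varphi f$ and to study the finite difference $U(z):=(\tau_y u)(z)-u(z)$ as a solution of an inhomogeneous Beltrami equation. Using $\bar\partial f=\mu\,\partial f+\nu\,\overline{\partial f}$ together with the product rule, the compactly supported function $u$ satisfies
\begin{equation*}
\bar\partial u = \mu\,\partial u+\nu\,\overline{\partial u}+S,\qquad S:=(\bar\partial\varphi)f-\mu(\partial\varphi)f-\nu\,\overline{(\partial\varphi)f},
\end{equation*}
with $S$ supported in $\supp\nabla\varphi$. Subtracting the corresponding equation for $\tau_y u$ (whose coefficients are the translates $\tau_y\mu,\tau_y\nu$) yields
\begin{equation*}
\bar\partial U = \mu\,\partial U+\nu\,\overline{\partial U}+\widetilde S,\quad \widetilde S:=(\tau_y\mu-\mu)\partial(\tau_y u)+(\tau_y\nu-\nu)\overline{\partial(\tau_y u)}+(\tau_y S-S).
\end{equation*}
Since $U$ is compactly supported one has $\partial U=\Beurling\,\bar\partial U$, and the hypothesis $\kappa\,\norm{\Beurling}_{L^p\to L^p}<1$ makes the $\R$-linear operator $g\mapsto g-\mu\Beurling g-\nu\overline{\Beurling g}$ invertible on $L^p$ with norm controlled by $\kappa$ and $p$, yielding $\norm{\bar\partial U}_{L^p}\lesssim_{\kappa,p}\norm{\widetilde S}_{L^p}$.

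The two ``coefficient-difference'' terms in $\widetilde S$ are then treated with H\"older at the split $\tfrac1p=\tfrac1q+\tfrac1r$ and the translation invariance of Lebesgue norms:
\begin{equation*}
\norm{(\tau_y\mu-\mu)\partial(\tau_y u)}_{L^p}\leq \modulus{q}{\mu}(t)\,\norm{\partial u}_{L^r},
\end{equation*}
and symmetrically for $\nu$. Applying the $L^r$-version of the same Caccioppoli inversion directly to $u=\varphi f$ bounds $\norm{\partial u}_{L^r}\lesssim_{\kappa,r}\norm{f\nabla\varphi}_{L^r}$ — legitimate because $r<p_\kappa$ — and produces the first summand of the claimed inequality with a constant depending only on $\kappa,r,p$. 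To recover the statement's left-hand side I use $\bar\partial u=\varphi\bar\partial f+f\bar\partial\varphi$ to rewrite $\bar\partial U=(\tau_y(\varphi\bar\partial f)-\varphi\bar\partial f)+(\tau_y(f\bar\partial\varphi)-f\bar\partial\varphi)$, so that the remaining pieces $\norm{\tau_y S-S}_{L^p}$ (from $\widetilde S$) and $\norm{\tau_y(f\bar\partial\varphi)-f\bar\partial\varphi}_{L^p}$ (the commutator with the cutoff) feed the $\varphi$-dependent summand. Both are supported in $\supp\nabla\varphi+t\D\subset(t+1)\D$ and involve only translates of $f$, the Lipschitz $\varphi$ and the bounded coefficients $\mu,\nu$; the factor $|t|^{1-\frac2p}$ is extracted from $f$ alone via the 2-D Sobolev embedding $W^{1,p}\hookrightarrow C^{0,1-\frac2p}$ (valid since $p>2K\geq 2$), which gives $\norm{\tau_y f-f}_{L^\infty((t+1)\D)}\lesssim |t|^{1-\frac2p}\norm{f}_{W^{1,p}((t+1)\D)}$, and multiplying by the bounded $\varphi$-factors compactly supported in $(t+1)\D$ yields the second summand. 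Taking the supremum over $|y|\leq t$ finishes the argument.

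The main obstacle I anticipate is precisely this last step: because $\varphi$ is merely Lipschitz, $\modulus{p}{\nabla\varphi}(t)$ need not tend to $0$, and one cannot transfer the finite differences onto $\nabla\varphi$. The smallness must instead be extracted from $f$ through Sobolev embedding, which forces a careful splitting of each product $\tau_y(ab)-ab$ and the use of the enlarged support $(t+1)\D$ to dominate all translated factors uniformly for $|y|\leq t$. A secondary bookkeeping point is keeping the first constant independent of $\varphi$, which is why the $L^r$-Caccioppoli inversion in the coefficient-difference step must be applied directly to $u=\varphi f$ rather than after thickening the cutoff.
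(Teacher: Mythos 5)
Your first half is sound and is essentially the paper's own argument in a different packaging: localizing with $u=\varphi f$, inverting $g\mapsto g-\mu\Beurling g-\nu\overline{\Beurling g}$ on $L^p$ (the paper achieves the same by absorbing $\kappa\norm{\Beurling}_{p,p}\,\modulus{p}{(\bar\partial(\varphi f))}(t)$), splitting with H\"older at $\frac1p=\frac1q+\frac1r$, and using the Caccioppoli estimate \rf{eqCaccioppoli} to bound $\norm{\partial u}_{L^r}\lesssim_{\kappa,r}\norm{f\nabla\varphi}_{L^r}$; this correctly yields the first summand with a constant independent of $\varphi$.

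The gap is in the last step. Because you translate the localized function $u=\varphi f$, your error terms $\tau_y S-S$ and $\tau_y(f\bar\partial\varphi)-f\bar\partial\varphi$ are finite differences of products having $\nabla\varphi$ as a factor, and any splitting of $\tau_y(ab)-ab$ leaves a piece where the difference falls on $\nabla\varphi$, such as $f\,(\tau_y\bar\partial\varphi-\bar\partial\varphi)$ or $\mu f\,(\tau_y\partial\varphi-\partial\varphi)$. No Sobolev embedding applied to $f$ helps there: that piece is governed by $\modulus{p}{(\nabla\varphi)}(t)$, which for a merely Lipschitz $\varphi$ does not decay like $|t|^{1-\frac2p}$. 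Concretely, take $\mu=\nu=0$, $f\equiv1$ and $\varphi$ a tent-type Lipschitz function supported in $\D$ whose gradient jumps across a segment: then $S=\bar\partial\varphi$ and $\norm{\tau_y S-S}_{L^p}\approx|y|^{\frac1p}$, which is not $O\bigl(|y|^{1-\frac2p}\bigr)$ once $p>3$ (admissible for small $\kappa$), so the bound you assert for these terms is false even though the theorem's conclusion is trivially true in this example; moreover $\modulus{p}{(\nabla\varphi)}$ can in fact decay arbitrarily slowly. The mechanism that makes the statement work -- and what the paper's ``arguing analogously'' after Theorem \ref{theoCaccioppoliModulus} and the computation \rf{eqControlOffdiagonalCaccioppoli} relies on -- is that the $|t|^{1-\frac2p}$ must come from differences landing on $f$ (via $W^{1,p}\hookrightarrow C^{1-\frac2p}$), never on $\nabla\varphi$. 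So keep the cutoff outside the translation: write $\tau_y(\varphi\bar\partial f)-\varphi\bar\partial f=\varphi\,\bar\partial(\tau_y f-f)+(\tau_y\varphi-\varphi)\,\tau_y\bar\partial f$, run your Beurling/absorption scheme on $v=\tau_y f-f$ with the fixed cutoff (and with the translated cutoff $\tau_{-y}\varphi$ in the Caccioppoli step, using $\norm{(\tau_y f)\nabla\varphi}_{L^r}\le\norm{f\nabla\varphi}_{L^r}+\norm{(\tau_y f-f)\nabla\varphi}_{L^r}$). Then the only cutoff terms produced are $(\tau_y f-f)\nabla\varphi$ and $(\tau_y\varphi-\varphi)$ times derivatives of $f$ on $(t+1)\D$, bounded respectively by $C_\varphi\norm{f}_{W^{1,p}((t+1)\D)}|t|^{1-\frac2p}$ and $C_\varphi|t|\,\norm{f}_{W^{1,p}((t+1)\D)}$, which is exactly the second summand.
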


Still regarding the regularity of quasiconformal mappings we establish another result of independent interest which deals with the $p$-modulus of a function when precomposed with a quasiconformal mapping $\phi$. In Lemma \ref{lemCompositionLebesgue} it is shown that for convenient indices $p$ and $q$ the $q$-modulus of $f\circ \phi$ can be controlled in terms of $\modulus{p}{f}$. The result obtained is clearly non-sharp, as the results in Sobolev spaces obtained in \cite{HenclKoskelaComposition} and \cite{OlivaPrats} show. Optimal estimates would be highly appreciated.

There is another open question which is of interest.  It may happen that convenient modifications of the arguments presented here lead to an expression such as 
\begin{equation}\label{eqConjecture}
 \dist_s^\Omega (\gamma_1,\gamma_2) \lesssim \norm{\gamma_1-\gamma_2}_{B^\omega_p} \,\eta \left(\frac{\norm{ \Lambda_{\gamma_1} - \Lambda_{\gamma_2} }_{H^{1/2}(\partial\Omega)\to H^{-1/2}(\partial\Omega)} }{\norm{\gamma_1-\gamma_2}_{B^\omega_p}}\right).
 \end{equation}
In \cite{AlessandriniVessella} the authors present a technique which allows to deduce Lipschitz stability whenever we restrict ourselves to a finitely generated family of conductivities, as long as  \rf{eqConjecture} holds. Some steps can be done to modify Sections \ref{secLinear} and \ref{secNonlinear} accordingly but, as it happened already in the quest for uniqueness, obstacles appear when trying to derive decay in the conductivity CGOS in terms of $\norm{\gamma_1-\gamma_2}_{B^\omega_p}$. If these obstacles could be overcome, the authors are convinced that an expression in the spirit of \rf{eqConjecture} would be obtained  and the techniques in \cite{AlessandriniVessella} could be applied in this context.

Let us give a final thought to end this introduction. Given a body with conductivity coefficient $\gamma$, if we measure $\widetilde{\Lambda}$ in an experimental setting and it does not coincide with $\Lambda_{\gamma}$ for any $\gamma$, or if it does coincide with $\Lambda_\gamma$ but the corresponding $\gamma$ does not have the regularity which we assumed a priori, then we have to deal  with the additional problem of projecting our sample to the space of admissible DtN maps, say $\widetilde{\Lambda}\mapsto \Lambda_{\widetilde{\gamma}}$. In other words, it is convenient to find a regularization strategy for the Dirichlet-to-Neumann map. There are several approaches to this problem, each of them valid with some extra a priori assumptions. We refer the reader to \cite{KnudsenLassasMuellerSiltanen,AstalaMuellerPaivarintaSiltanen} and references therein for that particular question. It remains open to find a regularization strategy that fits with the present paper approach.

The paper is organized as follows: after the preliminaries below and a word on the direct problem in Section \ref{secDirect},  Theorem \ref{theoSharp} is shown in Section \ref{secSharp}. 
In Section \ref{secBackground} the strategies of some previous works to deal with uniqueness and stability of Calder\'on's inverse problem are recalled. In particular, Section \ref{secQuasiregular} contains some general results on quasiconformal and quasiregular mappings to be used along the present paper, Section \ref{secInverse} introduces the big picture of the inverse problem, Section \ref{secReduction} details the reduction to the unit disk and Section \ref{secCGOS} is devoted to present the CGOS and some other tools introduced by previous authors. 

Following the approach in \cite{AstalaPaivarinta}, in Section \ref{secDecay} it is seen how the modulus of continuity determines the decay of CGOS. The interplay of the moduli with certain operators such as the Beurling and the Fourier transforms is introduced in Section \ref{secInteractionCGOS}. Then, some decay properties of the solutions to a linear equation  is shown  in Section \ref{secLinear}, to reduce the decay of the CGOS for the Beltrami equation to the previous case in Section \ref{secNonlinear}. To end this part, in Section \ref{secDecayConductivity} it is  checked that this information can be brought to the conductivity equation. 

In Section \ref{secScatteringToUInfty} the stability from the scattering transform to the CGOS is studied, presenting first the main argument and a delicate topological argument in Section \ref{secGauge} and then completing the details of the proof in Section \ref{secDetails}. Finally, Section \ref{secFinal} is devoted to show the Theorem \ref{theoMainTheorem}, via some Caccioppoli inequalities presented in Section \ref{secCaccioppoli} and a final interpolation argument in Section \ref{secInterpolation}.

\subsection{Notation}
Given a distribution $f\in D'(\C)$ we will denote its gradient $\nabla f =(\partial_x f, \partial_y f)$, that is, a pair of distributions given by the usual partial distributional derivatives of $f$. Whenever the derivatives coincide with $L^1_{loc}$ functions, we will use the expression $|\nabla f|$ to denote the function $|\partial_x f| + |\partial_y f|$. We will adopt the Wirtinger notation for derivatives as well, $\partial f:= \frac12(\partial_x f - i \partial_y f)$ and $\bar \partial f:= \frac12(\partial_x f + i \partial_y f)$. Integration with respect to the Lebesgue measure will be indicated by $dm$, whereas we reserve the notation $dz$ (or $d\xi$, $d\zeta$ and so on) for the line integration with form $dx + i dy$, where $z=x+iy$. 

Let $\Omega\subset \C$ be a domain. For $0<s<1$, we say that a function $u\in \dot C^s(\Omega)$ if the seminorm $\norm{u}_{\dot C^s(\Omega)}:=\sup_{x,y \in \Omega} \frac{|u(x)-u(y)|}{|x-y|^s}<\infty$. We say that $u\in C^s(\Omega)$ if $\norm{u}_{C^s(\Omega)}:=\norm{u}_{L^\infty(\Omega)}+\norm{u}_{\dot C^s(\Omega)}<\infty$.

We say that a distribution $u \in \dot W^{1,p}(\Omega)$  if its weak derivatives in $\Omega$ are $L^1_{loc}(\Omega)$ functions and the Sobolev homogeneous seminorm
$\norm{u}_{\dot W^{1,p}(\Omega)} : = \norm{\nabla u}_{L^p(\Omega)}<\infty$.
We say that $u\in L^1_{loc}(\Omega)$ is in the (non-homogeneous) Sobolev space $W^{1,p}(\Omega)$ if $\norm{u}_{W^{1,p}(\Omega)} : = \norm{u}_{L^p(\Omega)}+\norm{u}_{\dot W^{1,p}(\Omega)}<\infty$. We denote $H^1(\Omega):=W^{1,2}(\Omega)$. We say that $u\in H^1_0(\Omega)$ if, in addition, there exists a collection of smooth functions compactly supported on $\Omega$, say $\{u_j\}_{j=0}^\infty\subset C^\infty_c(\Omega)$, such that  $\lim_{j\to\infty}\norm{u-u_j}_{H^1(\Omega)}=0$. 

As usual, we define $H^{1/2}(\partial\Omega):=H^1(\Omega)/H^1_0(\Omega)$, that is, the quotient space (see \cite[Theorem 3.13]{Schechter}, for instance), inducing a trace operator ${\rm tr}_{\partial\Omega}: H^1(\Omega) \to H^{1/2}(\partial\Omega)$ which is the class (coset) function, and we call   its dual $H^{-1/2}(\partial\Omega)$ (see \cite[Theorem 2.10]{Schechter}). When the domain is regular enough, this trace spaces have an equivalent formulation in terms of Bessel potentials (see \cite{TriebelTheory}, for instance).

Given a modulus of continuity $\omega$, we define $B^\omega_p$ as the collection of $L^p$ functions $u$ which have the seminorm
$$\norm{u}_{\dot B^\omega_p} := \sup_{t>0} \frac{\omega_p u(t)}{\omega(t)}<\infty.$$
The collection $B^\omega_p$ is a Banach space when endowed with the norm $\norm{u}_{ B^\omega_p}:=\norm{u}_{L^p} + \norm{u}_{\dot B^\omega_p}$.  We will use as well the classical homogeneous Besov seminorms 
$$\norm{u}_{\dot B^s_{p,q}} := \left(\int_0^1 \left(\frac{\omega_p u(t)}{t^{s}}\right)^q \frac{dm(t)}{t}\right)^\frac1q<\infty,$$
and their non-homogeneous counterpart. For more information, we refer the reader to \cite[Section 1.11.9]{TriebelTheoryIII}. Note that if $\omega(t)=t^s$ with $0<s<1$ and $p<\infty$, then $B^\omega_p$ coincides with the Besov space $B^s_{p,\infty}$, and $B^\omega_\infty=C^s$. 

\renewcommand{\abstractname}{Acknowledgements}
\begin{abstract}
The authors would like to thank Alberto Ruiz for several discussions on the matter and Giovanni Alessandrini for his enlighting feedback after the first draft of this paper was released.
\end{abstract}

\section{Compactness of the collection of DtN maps}
\subsection{The direct problem}\label{secDirect}

Consider $K<\infty$, and $\gamma \in \mathcal{G}(K,\Omega)$. Given $w\in H^1(\Omega)$ and its trace class $f={\rm tr}_{\partial\Omega} w \in H^{1/2}(\partial\Omega)$, we say that $u_\gamma \in w + H^1_0 (\Omega)$ is a weak solution to \rf{eqConductivity} if 
\begin{equation}\label{eqWeakDirichlet0}
\int_\Omega \gamma \nabla u_\gamma \cdot \nabla v =0  \mbox{\quad\quad for every }v\in H^1_0(\Omega).
\end{equation}
Lax-Milgram Theorem (see \cite[Theorem 6.2.1]{Evans}, for instance) precisely grants the existence and uniqueness of such a solution and we have the energy estimate 
\begin{equation*}
\norm{u_\gamma }_{H^1(\Omega)}\leq  C_K \norm{f}_{H^{1/2}(\partial\Omega)}.
\end{equation*}

The image of $f$ by the DtN map is defined as the trace of $\gamma \nabla u_\gamma$ in the direction normal to the boundary. In the weak context, this means that $\Lambda_\gamma f$ is in the dual of $H^{1/2}(\partial\Omega)$, that is,  $H^{-1/2}(\partial\Omega)$ in the following sense:
 \begin{definition}\label{defDtN}
 Let $f,g\in H^{1/2}(\partial\Omega)$ and assume that $g$ has representative $G\in H^1(\Omega)$. Then, the DtN map of $f$ is defined by acting on $g$ as
\begin{equation*}
\langle \Lambda_\gamma f, g\rangle : = \int_\Omega \gamma \nabla u_\gamma \cdot \nabla G \, dm ,
\end{equation*}
where $u_\gamma$ is the solution to \rf{eqWeakDirichlet0}.
\end{definition}
Note that \rf{eqWeakDirichlet0} implies that this definition does not depend on the particular choice of $G$. With a convenient pick, it follows that
\begin{equation*}
\norm{\Lambda_\gamma}_{H^{1/2}(\partial\Omega)\to H^{-1/2}(\partial\Omega)} \leq  C_{K,|\Omega|}.
\end{equation*}
%
%
By standard techniques, we have the following well-known theorem.
\begin{theorem}
The map $\gamma \mapsto \Lambda_\gamma$ is bounded and continuous from  $\mathcal{G}(K,\Omega)$
 to
 $$\mathcal{L}_{1/2,-1/2}(\Omega):=\left\{\Lambda : H^{1/2}(\partial\Omega) \to H^{-1/2}(\partial\Omega)\right\},$$
with  $\dist_\infty^\Omega$ in $\mathcal{G}(K,\Omega)$ and the topology induced by the usual norm in $\mathcal{L}_{1/2,-1/2}(\Omega)$.
\end{theorem}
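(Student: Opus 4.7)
The plan is to prove the stronger claim that $\gamma\mapsto\Lambda_\gamma$ is actually Lipschitz from $(\mathcal{G}(K,\Omega),\dist_\infty^\Omega)$ into $\mathcal{L}_{1/2,-1/2}(\Omega)$, from which both boundedness and continuity follow. The two ingredients are the ellipticity of $\gamma$ (which gives control of $u_\gamma$ in terms of its boundary data) and the bilinear identity induced by Definition \ref{defDtN}.

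For boundedness: given $f,g\in H^{1/2}(\partial\Omega)$, fix any extension $G\in H^1(\Omega)$ of $g$ realizing the quotient norm, so $\norm{G}_{H^1(\Omega)}\leq C\norm{g}_{H^{1/2}(\partial\Omega)}$. By Definition \ref{defDtN} and ellipticity,
\begin{equation*}
|\langle \Lambda_\gamma f,g\rangle|
 \leq K\,\norm{\nabla u_\gamma}_{L^2(\Omega)}\norm{\nabla G}_{L^2(\Omega)}
 \leq K\,C_K\norm{f}_{H^{1/2}(\partial\Omega)}\norm{G}_{H^1(\Omega)},
\end{equation*}
where I have used the energy estimate. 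Taking the supremum over $g$ with unit norm yields the required bound, already noted in the text after Definition \ref{defDtN}.

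For Lipschitz continuity, the main observation is the following standard comparison. Pick $\gamma_1,\gamma_2\in \mathcal{G}(K,\Omega)$ and a common boundary datum $f\in H^{1/2}(\partial\Omega)$; write $u_j := u_{\gamma_j}$. Since $u_1-u_2\in H^1_0(\Omega)$ and both $u_j$ solve the respective weak formulation \rf{eqWeakDirichlet0}, subtracting yields
\begin{equation*}
\int_\Omega \gamma_1 \nabla(u_1-u_2)\cdot\nabla v\,dm
= -\int_\Omega (\gamma_1-\gamma_2)\nabla u_2\cdot \nabla v\,dm
\qquad\text{for every } v\in H^1_0(\Omega).
\end{equation*}
Testing against $v=u_1-u_2$ and using ellipticity gives
\begin{equation*}
\tfrac{1}{K}\norm{\nabla(u_1-u_2)}_{L^2(\Omega)}^2
\leq \dist_\infty^\Omega(\gamma_1,\gamma_2)\,\norm{\nabla u_2}_{L^2(\Omega)}\,\norm{\nabla(u_1-u_2)}_{L^2(\Omega)},
\end{equation*}
hence $\norm{\nabla(u_1-u_2)}_{L^2(\Omega)}\lesssim_K \dist_\infty^\Omega(\gamma_1,\gamma_2)\,\norm{f}_{H^{1/2}(\partial\Omega)}$.

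Now, for $g\in H^{1/2}(\partial\Omega)$ with extension $G$ as above,
\begin{equation*}
\langle (\Lambda_{\gamma_1}-\Lambda_{\gamma_2})f,g\rangle
= \int_\Omega \gamma_1\nabla(u_1-u_2)\cdot\nabla G\,dm + \int_\Omega (\gamma_1-\gamma_2)\nabla u_2\cdot\nabla G\,dm,
\end{equation*}
and both terms are bounded by $C_K\,\dist_\infty^\Omega(\gamma_1,\gamma_2)\norm{f}_{H^{1/2}(\partial\Omega)}\norm{g}_{H^{1/2}(\partial\Omega)}$ by the preceding estimate together with ellipticity. Taking the supremum in $f$ and $g$ gives the desired Lipschitz bound
\begin{equation*}
\norm{\Lambda_{\gamma_1}-\Lambda_{\gamma_2}}_{H^{1/2}(\partial\Omega)\to H^{-1/2}(\partial\Omega)}
\leq C_K\,\dist_\infty^\Omega(\gamma_1,\gamma_2).
\end{equation*}
There is no real obstacle: everything rests on Lax--Milgram and on the bilinear definition of the DtN map. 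The only point to be careful with is that $u_1-u_2$ is a legitimate test function because it lies in $H^1_0(\Omega)$ (both $u_j$ have the same trace $f$); this is what allows us to avoid any regularity assumption on $\partial\Omega$.
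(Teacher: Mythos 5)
Your proof is correct, and it is exactly the ``standard techniques'' argument the paper invokes for this theorem without writing it out: boundedness from the energy estimate plus Definition \ref{defDtN}, and Lipschitz continuity from the bilinear comparison identity together with testing the difference of the weak formulations against $u_1-u_2\in H^1_0(\Omega)$. In fact your computations coincide with those in the paper's Lemma \ref{lemCompareLambdas}, where the same identity and energy comparison are carried out with the difference $\gamma_1-\gamma_2$ localized to a subset $U$; you simply keep the factor $\dist_\infty^\Omega(\gamma_1,\gamma_2)$ instead of absorbing it into $C_K$, which yields the stated (indeed Lipschitz) continuity.
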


\subsection{Proof of Theorem \ref{theoSharp}}\label{secSharp}

First we compare the DtN map of two conductivities which coincide near the boundary. To simplify notation, given $\gamma_1,\gamma_2 \in \mathcal{G}(K,\Omega)$ we write $\Lambda_j$ for $\Lambda_{\gamma_j}$.
\begin{lemma}\label{lemCompareLambdas}
Let $\Omega$ be a bounded domain with an open subset $U\subset\Omega$, let $f_1,f_2\in H^\frac12(\partial\Omega)$ and $\gamma_1,\gamma_2 \in \mathcal{G}(K,\Omega)$ whose difference is supported in $U$. If $u_j$ satisfies \rf{eqConductivity} with conductivity $\gamma_j$ and boundary condition $f_1$ for $j\in \{1,2\}$, then for every $F\in H^1(\Omega)$ with trace $f_2$ we have that
$$|\langle (\Lambda_1-\Lambda_2)f_1,f_2\rangle|\leq C_K \norm{\nabla u_2}_{L^2(U)} \norm{\nabla F}_{L^2(\Omega)}.$$
\end{lemma}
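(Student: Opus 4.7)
The plan is to apply an Alessandrini-type identity in the form that features $\nabla u_2$ (rather than $\nabla u_1$), and then to absorb the arbitrary representative $F$ through a Dirichlet energy minimization argument. To this end, I would introduce the auxiliary function $v_1 \in H^1(\Omega)$ defined as the unique weak solution of the $\gamma_1$-conductivity equation with trace $f_2$. Inserting $v_1$ as a representative of $f_2$ in Definition \ref{defDtN} gives
\[
\langle \Lambda_1 f_1, f_2\rangle = \int_\Omega \gamma_1\,\nabla u_1 \cdot \nabla v_1\,dm, \qquad \langle \Lambda_2 f_1, f_2\rangle = \int_\Omega \gamma_2\,\nabla u_2 \cdot \nabla v_1\,dm.
\]
Since $u_1-u_2 \in H^1_0(\Omega)$ and $v_1$ satisfies the weak $\gamma_1$-equation, testing that equation against $u_1-u_2$ yields $\int_\Omega \gamma_1\,\nabla v_1 \cdot \nabla u_1\,dm = \int_\Omega \gamma_1\,\nabla v_1 \cdot \nabla u_2\,dm$. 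Substituting and subtracting delivers the identity
\[
\langle (\Lambda_1-\Lambda_2)f_1, f_2\rangle = \int_\Omega (\gamma_1-\gamma_2)\,\nabla u_2 \cdot \nabla v_1\,dm.
\]

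The hypothesis that $\gamma_1-\gamma_2$ is supported in $U$ restricts the integration domain to $U$, and the Cauchy-Schwarz inequality together with the pointwise bound $|\gamma_1-\gamma_2|\leq 2K$ produces
\[
|\langle (\Lambda_1-\Lambda_2)f_1, f_2\rangle| \leq 2K\,\norm{\nabla u_2}_{L^2(U)}\,\norm{\nabla v_1}_{L^2(\Omega)}.
\]
To finish, I would replace $\norm{\nabla v_1}_{L^2(\Omega)}$ by a constant multiple of $\norm{\nabla F}_{L^2(\Omega)}$: the solution $v_1$ minimizes the Dirichlet energy $\int_\Omega \gamma_1 |\nabla\cdot|^2$ among $H^1$ functions with trace $f_2$, so the competitor $F$ satisfies $\int_\Omega \gamma_1 |\nabla v_1|^2 \leq \int_\Omega \gamma_1 |\nabla F|^2$, and the uniform ellipticity $K^{-1}\leq \gamma_1 \leq K$ converts this into $\norm{\nabla v_1}_{L^2(\Omega)} \leq K\,\norm{\nabla F}_{L^2(\Omega)}$.

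The only point that requires care is the choice of auxiliary solution: one must take $v_1$, the $\gamma_1$-solution with boundary data $f_2$, instead of the more symmetric $\gamma_2$-solution, so that after reorganization the factor appearing together with $\gamma_1-\gamma_2$ is $\nabla u_2$ rather than $\nabla u_1$. This is what permits the localization $\norm{\nabla u_2}_{L^2(U)}$ to appear in the final bound while leaving $F$ entirely free, and it yields the constant $C_K = 2K^2$.
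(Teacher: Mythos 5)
Your argument is correct. The identity $\langle(\Lambda_1-\Lambda_2)f_1,f_2\rangle=\int_\Omega(\gamma_1-\gamma_2)\nabla u_2\cdot\nabla v_1\,dm$ follows exactly as you say, because Definition \ref{defDtN} is representative-independent and $u_1-u_2\in H^1_0(\Omega)$ is an admissible test function for the weak $\gamma_1$-equation satisfied by $v_1$; the support hypothesis then localizes the integral to $U$, and the Dirichlet-principle step $\norm{\nabla v_1}_{L^2(\Omega)}\leq K\norm{\nabla F}_{L^2(\Omega)}$ is the standard orthogonality computation (expand $\int\gamma_1|\nabla F|^2$ around $v_1$ and use $F-v_1\in H^1_0(\Omega)$), yielding $C_K=2K^2$. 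The route is, however, genuinely different from the paper's. The paper works with the arbitrary representative $F$ from the start, splits $\gamma_1\nabla u_1-\gamma_2\nabla u_2=\gamma_1\nabla(u_1-u_2)+(\gamma_1-\gamma_2)\nabla u_2$, and then controls the error term by proving the intermediate energy estimate $\norm{\nabla(u_1-u_2)}_{L^2(\Omega)}\leq C_K\norm{\nabla u_2}_{L^2(U)}$, obtained by testing the difference of the two weak formulations against $u_1-u_2$ and using the support condition. Your version buys a cleaner algebraic core: an exact Alessandrini-type identity with no remainder, with the localization to $U$ appearing automatically, at the cost of introducing the auxiliary solution $v_1$ and invoking energy minimization to reinstate the arbitrary $F$. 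The paper's version avoids any auxiliary boundary value problem and, as a by-product, records the quantitative bound on $\nabla(u_1-u_2)$, which is of independent interest (it is the mechanism by which closeness of the conductivities on $U$ controls closeness of the solutions); the constants produced by the two arguments are of the same order in $K$. Both proofs are complete and either would serve the role the lemma plays in Section \ref{secSharp}.
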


\begin{proof}
Let $F\in H^1(\Omega)$ be a representative of $f_2$. Note that 
\begin{align*}
\langle (\Lambda_1-\Lambda_2)f_1,f_2\rangle
	& =  \int_\Omega (\gamma_1 \nabla u_1-\gamma_2\nabla u_2) \cdot \nabla F \, dm\\
	& =  \int_\Omega \gamma_1 \nabla \left(u_1- u_2\right) \cdot \nabla F \, dm+\int_\Omega (\gamma_1-\gamma_2) \nabla u_2 \cdot \nabla F \, dm.
\end{align*}
Taking absolute values,
\begin{align*}
\left|\langle (\Lambda_1-\Lambda_2)f_1,f_2\rangle\right|
	& \leq  K \norm{\nabla \left(u_1- u_2\right)}_{L^2(\Omega)} \norm{ \nabla F }_{L^2(\Omega)}+ 2K \norm{\nabla u_2}_{L^2(U)} \norm{\nabla F}_{L^2(U)}.
\end{align*}


Note that $u_1$ and $u_2$ have trace $f_1$ by definition, so $u_1- u_2\in W^{1,2}_0(\Omega)$.  Therefore, using \rf{eqWeakDirichlet0} for $u_1$ and $u_2$ we obtain
\begin{align*}
\norm{\nabla \left(u_1- u_2\right)}_{L^2(\Omega)}^2
	&  = \int_\Omega \nabla \left(u_1- u_2\right)\cdot \nabla \left(u_1- u_2\right)\, dm 
		\leq K \int_\Omega \gamma_1 \nabla \left(u_1- u_2\right)\cdot \nabla \left(u_1- u_2\right)\, dm \\
	& =  K  \left| \int_{\Omega} (\gamma_1-\gamma_2)\nabla  u_2 \cdot \nabla \left(u_1- u_2\right)  \, dm\right|\\
	& \leq  K\norm{\gamma_1-\gamma_2}_{L^\infty(U)} \norm{\nabla u_2}_{L^2(U)}\norm{\nabla \left(u_1- u_2\right)}_{L^2(U)} .
\end{align*}
Thus, dividing by $\norm{\nabla \left(u_1- u_2\right)}_{L^2(\Omega)} $ we obtain
\begin{align*}
\norm{\nabla \left(u_1- u_2\right)}_{L^2(\Omega)}
	& \leq  C_K \norm{\nabla u_2}_{L^2(U)} .
\end{align*}

Summing up,
\begin{align*}
\left|\langle (\Lambda_1-\Lambda_2)f_1,f_2\rangle\right|
	& \leq  C_K  \norm{\nabla u_2}_{L^2(U)} \norm{ \nabla F }_{L^2(\Omega)}+ C_K \norm{\nabla u_2}_{L^2(U)} \norm{\nabla F}_{L^2(U)},
\end{align*}
and the lemma follows.
\end{proof}

Next we consider the $L^2(\frac{d\theta}{2\pi})$-orthonormal  family of spherical harmonics in $\partial\D$ given by
$$ f_{jp}(e^{i\theta}) =\begin{cases} 
	c \cos(j\theta) & \mbox{if } 0< j<\infty \mbox{ and }p = 1, \\ 
	c \sin(j\theta) & \mbox{if } 0< j<\infty \mbox{ and }p = -1, \\ 
	1 & \mbox{if } j=0 \mbox{ and } p=0, 
	\end{cases}$$ 
 with $c=\frac\pi2$. We consider their harmonic extensions
to $\C$ expressed in polar coordinates as $P_{jp}(re^{i\theta}) = r^j f_{jp}(e^{i\theta})$. Note that $P_{j,1}(z)=c \, \real (z^{j})$ and $P_{j,-1}(z)=c\, \imag (z ^{j})$.

By a change of variables, we have that 
\begin{equation*}
\norm{P_{jp}}_{L^2(r_0\D)}=\left( \int_0^{r_0} r^{2j} \int_{\partial\D} |f_{jp}(e^{i\theta})|^2\, d\theta \, r dr\right)^\frac12 = \frac{1}{\sqrt{2j+2}} r_0^{j+1} \norm{f_{jp}}_{L^2(\partial\D)} = \frac{1}{c \sqrt{2j+2}} r_0^{j+1},
\end{equation*}
and for $j>0$, since $\partial (P_{j,1} + i P_{j,-1})= cjz^{j-1}$ and $ \partial (P_{j,1} - i P_{j,-1})= 0$, we have that
\begin{equation}\label{eqNablaHarmonic}
\norm{\partial P_{j,p}}_{L^2(r_0\D)} \approx j \left(\norm{ P_{j-1,1}}_{L^2(r_0\D)}+\norm{ P_{j-1,-1}}_{L^2(r_0\D)}\right)  \approx j^\frac12 r_0^{j} .
\end{equation}
One can argue analogously for $\bar\partial$. 

\begin{lemma}\label{lemHarmonicsUnderControl}
Let $0<r_0<1$ and let $\gamma \in \mathcal{G}(K,r_0\D)$. Denote $\Gamma(\gamma)=\Lambda_\gamma-\Lambda_0$, where $\Lambda_0$ denotes the DtN map corresponding to the conductivity $\gamma_0\equiv 1$ in $\partial \D$. Then
$$|\langle \Gamma(\gamma) f_{jp}, f_{kq}\rangle| \leq C_K \sqrt{jk} \, r_0^{\max\{j,k\}}  \leq C_{K,r_0}.$$
\end{lemma}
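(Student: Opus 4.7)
The plan is to apply Lemma \ref{lemCompareLambdas} with $\gamma_1=\gamma$ and $\gamma_2\equiv 1$, so $\gamma_1-\gamma_2$ is supported in $\overline{r_0\D}$. Take boundary data $f_1=f_{jp}$ and $f_2=f_{kq}$. Because $\gamma_2\equiv 1$ globally, the conductivity equation for $u_2$ reduces to $\Delta u_2=0$ with boundary data $f_{jp}$, so its unique solution is the harmonic extension $u_2=P_{jp}$. As a representative $F\in H^1(\D)$ of $f_{kq}$ I take the harmonic extension $F=P_{kq}$. After enlarging $U$ slightly to an open neighborhood of $\overline{r_0\D}$ (then passing $r'\searrow r_0$), Lemma \ref{lemCompareLambdas} gives
\begin{equation*}
|\langle \Gamma(\gamma) f_{jp}, f_{kq}\rangle| \leq C_K\,\norm{\nabla P_{jp}}_{L^2(r_0\D)}\, \norm{\nabla P_{kq}}_{L^2(\D)}.
\end{equation*}
By \rf{eqNablaHarmonic} the first factor is $\lesssim j^{1/2} r_0^{j}$ and, applied with $r_0=1$, the second is $\lesssim k^{1/2}$, yielding the one-sided bound $|\langle \Gamma(\gamma) f_{jp}, f_{kq}\rangle| \leq C_K \sqrt{jk}\, r_0^{j}$.

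To upgrade $r_0^{j}$ to $r_0^{\max(j,k)}$ I would use the symmetry of the DtN maps: from Definition \ref{defDtN} the bilinear form $(f_1,f_2)\mapsto \langle \Lambda_\gamma f_1, f_2\rangle$ is symmetric (either of $u_\gamma$ or the representative of $f_2$ can serve on either factor, combined with the weak formulation \rf{eqWeakDirichlet0}), and hence so is $\Gamma(\gamma)$. Therefore
\begin{equation*}
\langle \Gamma(\gamma) f_{jp}, f_{kq}\rangle = \langle \Gamma(\gamma) f_{kq}, f_{jp}\rangle,
\end{equation*}
and repeating the previous argument with the roles of the indices swapped gives the companion bound $C_K\sqrt{jk}\, r_0^{k}$. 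Taking the minimum of both bounds produces the advertised $C_K\sqrt{jk}\, r_0^{\max(j,k)}$.

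The second inequality is a trivial extremum computation: for $r_0<1$ the map $m\mapsto m \, r_0^{m}$ is bounded on $\N$ by some $C_{r_0}$, so $\sqrt{jk}\,r_0^{\max(j,k)}\leq \max(j,k)\,r_0^{\max(j,k)}\leq C_{r_0}$, absorbing the constant into $C_{K,r_0}$. The corner cases $j=0$ or $k=0$ give $\langle \Gamma(\gamma)f_{jp},f_{kq}\rangle=0$ (harmonic extension of a constant has vanishing gradient), matching the right-hand side, so they need no separate treatment.

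The only real bookkeeping point, and thus the ``main'' (modest) obstacle, is that Lemma \ref{lemCompareLambdas} is stated with the support of $\gamma_1-\gamma_2$ in an open set $U$, whereas here the support lies in the closed disk $\overline{r_0\D}$. I would handle this either by applying the lemma on $U=r'\D$ for each $r'\in(r_0,1)$ and letting $r'\searrow r_0$ (the norm $\norm{\nabla P_{jp}}_{L^2(r'\D)}$ is continuous in $r'$), or by noting that the support can be replaced by any open neighborhood without changing the estimate \rf{eqNablaHarmonic} in the limit. Everything else is an explicit polar-coordinate computation already recorded before the lemma.
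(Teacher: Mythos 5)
Your proof is correct and follows essentially the same route as the paper: apply Lemma \ref{lemCompareLambdas} with $\gamma_2\equiv 1$, take the harmonic extensions $P_{jp}$, $P_{kq}$ as solution and representative, invoke \rf{eqNablaHarmonic}, and use the symmetry of the DtN bilinear form to pass from $r_0^{j}$ to $r_0^{\max\{j,k\}}$ (the paper phrases this as assuming $j\geq k$ at the outset). Your extra remarks on the open-versus-closed support and the $j=0$ or $k=0$ cases are harmless refinements the paper leaves implicit.
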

\begin{proof}
By the symmetry of the DtN maps, we can assume that $j\geq k$.
Using Lemma \ref{lemCompareLambdas} and estimate \rf{eqNablaHarmonic} we get that
$$|\langle (\Lambda_\gamma-\Lambda_0)f_{jp},f_{kq} \rangle|\leq C_K \norm{\nabla P_{jp}}_{L^2(r_0\D)} \norm{\nabla P_{kq}}_{L^2(\D)}\leq C_K \sqrt{jk} \, r_0^{j}.$$
\end{proof}


Next we show that even though $\mathcal{G}(K,r_0\D)$ is not compact, its image $\Gamma(\mathcal{G}(K,r_0\D))$ is a compact set in $\mathcal{L}_{s,-s}(\D)$, following  the  approach given at \cite[Lemma 3]{MandacheInstability}.
\begin{lemma}\label{lemGCompactInL}
Let $r_0<1$ and $s\in \R$. Then $\Gamma(\mathcal{G}(K,r_0\D))$ is a totally bounded subset of $ \mathcal{L}_{s,-s}(\D)$.
\end{lemma}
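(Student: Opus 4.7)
The plan is to exploit the sharp off-diagonal decay furnished by Lemma \ref{lemHarmonicsUnderControl} to approximate each $\Gamma(\gamma)$ uniformly by finite-rank operators, in the spirit of \cite{MandacheInstability}. Fix $s \in \R$. Recall that $\{(1+j)^{-s} f_{jp}/\norm{f_{jp}}_{L^2(\partial\D)}\}_{j,p}$ forms (up to norm equivalence) an orthonormal basis of $H^s(\partial\D)$, and that $H^{-s}(\partial\D)$ is its dual. Consequently, the operator norm of any $T\in \mathcal{L}_{s,-s}(\D)$ is dominated by the weighted Hilbert--Schmidt quantity
$$
S_s(T)^2 := \sum_{j,k,p,q} \frac{|\langle T f_{jp}, f_{kq}\rangle|^2}{(1+j)^{2s}(1+k)^{2s}\,\norm{f_{jp}}_{L^2(\partial\D)}^2 \norm{f_{kq}}_{L^2(\partial\D)}^2}.
$$

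Plugging in $T = \Gamma(\gamma)$ for $\gamma \in \mathcal{G}(K, r_0\D)$ and invoking Lemma \ref{lemHarmonicsUnderControl}, one obtains
$$
S_s(\Gamma(\gamma))^2 \leq C_{K,s} \sum_{j,k\geq 0} (1+j)^{1-2s}(1+k)^{1-2s}\, r_0^{2\max(j,k)}.
$$
Since $r_0<1$, the geometric factor defeats the polynomial weight for every $s\in\R$, so the sum converges and $\norm{\Gamma(\gamma)}_{\mathcal{L}_{s,-s}(\D)}$ is bounded uniformly on $\mathcal{G}(K, r_0\D)$.

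To promote uniform boundedness to total boundedness I would truncate. Let $\Pi_N$ denote the orthogonal projection (acting on $H^s(\partial\D)$ and also by duality on $H^{-s}(\partial\D)$) onto the span of $\{f_{jp} : 0\leq j\leq N\}$. The discarded tail satisfies
$$
\norm{\Gamma(\gamma) - \Pi_N\Gamma(\gamma)\Pi_N}_{\mathcal{L}_{s,-s}(\D)}^2 \leq C_{K,s} \sum_{\max(j,k)>N} (1+j)^{1-2s}(1+k)^{1-2s}\, r_0^{2\max(j,k)},
$$
which tends to $0$ as $N\to\infty$ \emph{uniformly in $\gamma$}. On the other hand, each $\Pi_N \Gamma(\gamma) \Pi_N$ lives in the finite-dimensional space of matrices supported on $\{0\leq j,k\leq N\}$ with entries bounded by $C_{K,r_0,N}$ thanks again to Lemma \ref{lemHarmonicsUnderControl}, so this family of truncated operators is totally bounded by Heine--Borel. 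A standard $\varepsilon/2$ argument then produces a finite $\varepsilon$-net for $\Gamma(\mathcal{G}(K, r_0\D))$ inside $\mathcal{L}_{s,-s}(\D)$.

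I do not anticipate any serious obstacle: Lemma \ref{lemHarmonicsUnderControl} has already done the quantitative work, and what remains is a routine functional-analytic repackaging. The one mildly delicate point is the tracking of the normalising weight $(1+j)^{-s}$ when $s<0$, where the weight itself grows polynomially; this is harmless because the geometric factor $r_0^{\max(j,k)}$ arising from the conductivity being constant in the annulus $\D\setminus r_0\D$ dominates any polynomial blow-up.
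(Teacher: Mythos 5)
Your argument is correct, and it reaches the conclusion by a somewhat different technical route than the paper. Both proofs run on the same fuel, namely the geometric decay $|\langle\Gamma(\gamma)f_{jp},f_{kq}\rangle|\leq C_K\sqrt{jk}\,r_0^{\max\{j,k\}}$ of Lemma \ref{lemHarmonicsUnderControl} followed by truncation to finitely many harmonics; they differ in how matrix decay is converted into operator-norm information. The paper passes through the weighted sup-norm sequence space $X_s$ and quotes Mandache's continuous embedding $X_{-s}\subset\mathcal{L}_{s,-s}$ (stated there for $s\geq 0$), and then produces an explicit finite $\delta$-net by discretizing the finitely many surviving entries on a lattice $Y_\delta$. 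You instead dominate the $\mathcal{L}_{s,-s}$ norm by a weighted Hilbert--Schmidt sum over the essentially orthonormal basis $(1+j)^{-s}f_{jp}/\norm{f_{jp}}_{L^2(\partial\D)}$ of $H^s(\partial\D)$, which is self-contained (two applications of Cauchy--Schwarz) and immediately gives a tail bound uniform in $\gamma$; the finite-rank part is then handled by boundedness in a finite-dimensional space (Heine--Borel) rather than an explicit grid, and an $\varepsilon/2$ argument assembles the net. Your version avoids the external embedding lemma and works verbatim for every $s\in\R$, the polynomial growth of the weights for $s<0$ being absorbed by $r_0^{2\max\{j,k\}}\leq r_0^{j+k}$, at the price of a cruder Hilbert--Schmidt bound, which is harmless here precisely because the entries decay geometrically. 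The only point worth stating explicitly is the identification of the paper's quotient definition of $H^{1/2}(\partial\D)$ with the Fourier description on the circle (and the analogous Fourier definition of $H^s(\partial\D)$ for general $s$), which the paper itself records, so no genuine gap remains.
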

\begin{proof}
Let 
$$X_s:=\left\{ (a_{jpkq}) : \, \norm{(a_{jpkq}) }_{X_s} := \sup_{j,p,k,q} (1+\max\{j,k\})^{2s+2}|a_{jpkq}|<\infty\right\}.$$
The embedding $X_{-s} \subset \mathcal{L}_{s,-s}$ (for $s\geq 0$) is shown to be continuous in \cite{MandacheInstability}. Thus, it is enough to show $\Gamma(\mathcal{G}(K,r_0\D))$ is totally bounded in $X_{0}$. By the preceding lemma, the embedding  $\Gamma(\mathcal{G}(K,r_0\D))  \subset X_{0}$ is clear. It remains to see that it admits a finite $\delta$-cover for every $\delta>0$ in the $X_{0}$ norm. 

Let $0<\delta< e^{-1}$ and let $\ell_{\delta}$ be the smallest integer satisfying that for $\ell\geq \ell_{\delta}$ the estimate
\begin{equation}\label{eqEllDeltaS}
C_K (1+\ell)^{3} r_0^{\ell}\leq \delta
\end{equation}
holds. 
 Let $\delta ' =\delta (1+ \ell_{\delta})^{-2}$, let
$$ Y_{\delta} := \delta' \Z \cap [-C_{K,r_0},C_{K,r_0}],$$
where the constant is given by Lemma \ref{lemHarmonicsUnderControl}, and
$$Y := \left\{ (b_{jpkq}) : b_{jpkq} \in Y_{\delta} \mbox{ whenever } \max\{j,k\} \leq \ell_{\delta} \mbox{ and } b_{jpkq}=0 \mbox{ otherwise}\right\}.$$
Then $Y$ has finitely many elements and for $(a_{jpkq})\in \Gamma(\mathcal{G}(K,r_0\D))$ there exists $(b_{jpkq})\in Y$ with $\norm{(a_{jpkq}-b_{jpkq})}_{X_{0}}\leq \delta$. Indeed, if $\max\{j,k\}>\ell_{\delta}$, then fix $b_{jpkq}=0$ and by Lemma \ref{lemHarmonicsUnderControl} and \rf{eqEllDeltaS} we get that
$$(1+\max\{j,k\})^{2}|a_{jpkq}|\leq C_K (1+\max\{j,k\})^{3}  r_0^{\max\{j,k\}}\leq \delta.$$
 If $\max\{j,k\}\leq \ell_{\delta}$ instead, since $|a_{jpkq}| <C_{K,r_0}$  (see Lemma \ref{lemHarmonicsUnderControl} again) we can choose $b_{jpkq}\in Y_{\delta}$ with $|a_{jpkq}-b_{jpkq}|\leq \delta'$. Thus, we obtain
$$(1+\max\{j,k\})^{2}|a_{jpkq}-b_{jpkq}|\leq (1+\max\{j,k\})^{2}  \delta ' \leq \delta.$$
\end{proof}

\begin{proof}[Proof of Theorem \ref{theoSharp}]
Let $K\geq 1$, $1\leq s \leq \infty$ and $r_0<1$. Let $\mathcal{F}\subset \mathcal{G}(K, r_0\D)$ be an $L^s$-stable family of conductivities for $\D$. By Lemma \ref{lemGCompactInL} the image $\Lambda(\mathcal{F})$ is totally bounded in $\mathcal{L}_{1/2,-1/2}(\D)$. If the recovery map is uniformly continuous in $\Lambda(\mathcal{F})$, then $\mathcal{F}$ must be totally bounded as well in $L^s$. 
By the Kolmogorov-Riesz Theorem (see \cite[Theorem 5]{HancheOlsenHolden}, for instance), for $1\leq s<\infty$ there exists a modulus of continuity $\omega$ such that $\mathcal{F}\subset \mathcal{G}(K,\D, s, \omega)$. 

Given any $s < p <\infty$, $L^p$-stability of $\mathcal{F}$ follows after proper interpolation with $L^\infty$. Given $0< p < s$, H\"older's inequality serves to find $L^p$-stability as well.
\end{proof}

\begin{remark}\label{remNotCompactlySupported}
To end this section, let us remark that the condition $r_0<1$ is crucial. Otherwise total boundedness of $\Lambda(\mathcal{F})$ in the previous proof is not satisfied.
\end{remark}
\begin{proof}
Indeed, in \cite[Theorem 4.9]{FaracoKurylevRuiz}, one shows that the family of conductivities
$$\gamma_R(z):= 1+ 2\chi_{R\D \setminus R^2\D}(z)$$
satisfies that
$$\langle (\Lambda_R-\Lambda_0) e^{ij\theta},e^{ik\theta}\rangle = \delta^k_j |j|m_{|j|},$$
with 
$$m_x = 4\frac{R^{2x}-R^{4x}}{4-3R^{2x} + 2R^{4x}}.$$
Note that $m_x$, depends only on $R^x$ and, therefore, its maximum in $x\in(0,\infty)$ does not depend on $0<R<1$. Moreover, the function has only one maximum, with $m_x$ tending to 0 both for  $x\to 0$ and $x\to \infty$, being increasing before the maximum is obtain, decreasing after that and continuous in $(0,\infty)$. For $0<R_n<1$, choosing an appropriate $R_n< R_{n+1}^2<1$ close enough to one and using \rf{eqNablaHarmonic}, we can obtain that
$$\max_{j} \frac{ \left|\langle (\Lambda_{R_n}-\Lambda_{R_{n+1}}) e^{ij\theta},e^{ij\theta}\rangle \right|}{\norm{f_{jp}}_{H^{1/2}(\partial\D)}^2} \approx  \frac{j_0}{j_0} = 1.$$
By induction we have obtained a sequence of conductivities $\gamma_n:=\gamma_{R_n}$ such that $\norm{\Lambda_{\gamma_n}-\Lambda_{\gamma_m}} \approx C$ for every $m\neq n$ and $\norm{\gamma_n}_{L^\infty}=2$. In particular, 
\begin{itemize}
\item $\mathcal{F}:= \{\gamma_n\}_{n=1}^\infty$ is (tautologically) $\infty$-stable for $\D$.
\item $\Lambda(\mathcal{F})$ is not totally bounded in $\mathcal{L}_{1/2,-1/2}(\D)$.
\item $\mathcal{F}$ is not totally bounded in $L^\infty(\D)$.
\end{itemize}
\end{proof}

\section{Background for the inverse problem}\label{secBackground}
Next we recall the reader the basic background of the present work. First we will give a word on quasiconformal mappings, then we will recall some related previous results in the stability issue, later we provide a short argument to reduce Theorem \ref{theoMainTheorem} to the case $\Omega=\D$, and finally we will introduce the notation and some results on CGOS from \cite{AstalaPaivarinta} and \cite{BarceloFaracoRuiz}.

\subsection{Quasiconformality}\label{secQuasiregular}
\begin{definition}\label{defFourier}
We define the (non-unitary) Fourier transform of a function $f \in L^1$ as
$$\widehat{f}(\xi)=\int_\C e_{\bar{\xi}}(z) f(z)\, dm(z), $$
where $e_{\xi}(z):= e^{i(\xi z + \overline{\xi z})}= e^{2 i \bar{\xi} \cdot z}$. The Fourier transform extends to $L^2$ as an isometry, and it can be defined in tempered distributions via the Parseval identity
$$\int g \overline{\widehat{f}}=\int \widehat{g} \overline{f},$$
see \cite[Chapter 4.1]{AstalaIwaniecMartin}.
\end{definition}

\begin{definition}\label{defBeurling}
We define the Beurling transform of $f\in L^2$ as $\widehat {\Beurling f} (\xi) = \frac{\bar \xi}{\xi}\widehat f (\xi)$. It coincides with the principal value Calder\'on-Zygmund convolution operator
$$\Beurling f(z) = - \frac1\pi \lim_{\varepsilon \to 0} \int_{|w-z|>\varepsilon} \frac{f(w)}{(z-w)^2}\,dm(w), $$
and therefore it extends to $L^p$ for $1<p<\infty$ (see \cite[Corollary 4.1.1 and Theorem 4.5.3]{AstalaIwaniecMartin}). 

We define the Cauchy transform of $f\in \mathcal{S}$ as 
$$\Cauchy f(z) = \frac1\pi \int_{\C} \frac{f(w)}{(z-w)}\,dm(w). $$
For $p\geq 2$, with a slight modification of the kernel one can extend $\Cauchy$ to $L^p$ modulo constants, obtaining that for $1<p<2$, $\frac1{p^*}=\frac1p-\frac12$ and $2<q<\infty$, the Cauchy transform acts as a bounded operator 
\begin{equation}\label{eqCauchyBounded}
\Cauchy : L^p \to L^{p^*}, \mbox{\quad\quad\quad\quad} 
\Cauchy : L^2 \to BMO \mbox{\quad\quad and \quad\quad} 
\Cauchy : L^{q} \to \dot{C}^{1-\frac2{q}}.
\end{equation}
For $f\in L^p$ with $1<p<\infty$, the identities
\begin{equation*}
\partial \Cauchy f = \Beurling f \mbox{\quad\quad and \quad\quad} \bar \partial \Cauchy f =  f
\end{equation*}
hold. Regarding compactly supported functions, for $1<p\leq 2$ and $2< q<\infty$ we have that 
\begin{equation}\label{eqCauchyCompactlyBounded}
\Cauchy\circ \chi_B: \left\{f\in L^p(B): \int_B f \, dm = 0 \right\} \to W^{1,p}(\C), \mbox{\quad\quad and \quad\quad} 
\Cauchy\circ\chi_B: L^q(B) \to W^{1,q}(\C),
\end{equation}
with operator norm depending on $p$ and the radius of the ball (see \cite[Section 4.3.2]{AstalaIwaniecMartin}).
\end{definition}

 The following result is known as the Measurable Riemann Mapping Theorem.
\begin{theorem}[see {\cite[Theorem 5.3.2]{AstalaIwaniecMartin}}]\label{theoMeasurableRiemann}
Let $\mu\in L^\infty$ be compactly supported in $\overline{\D}$, with $\norm{\mu}_{L^\infty}\leq \kappa$. Then there exists a unique $W^{1,2}_{loc}$ solution $f$ to the Beltrami equation
\begin{equation}\label{eqBeltramiEquation}
\begin{cases}
	\bar\partial f (z) = \mu (z) \partial f (z) \mbox{\quad\quad a.e. } z\in\C, \\
	f(z) - z = \mathcal{O}_{z\to \infty}(1/z),
\end{cases}
\end{equation}
which is called the {\em principal solution} to \rf{eqBeltramiEquation} and, moreover, it is homeomorphic.

In addition, 
\begin{equation}\label{eqPrincipalFromCauchy}
f= z + \Cauchy (\bar\partial f),
\end{equation}
and its $\bar \partial$-derivative can be obtained by the  Neumann series
\begin{equation}\label{eqPrincipalNeumann}
\bar\partial f = (I-\mu\Beurling)^{-1} (\mu) = \sum_{n=0}^{\infty} (\mu\Beurling)^n \mu = \mu + \mu\Beurling\mu + \mu \Beurling\mu\Beurling\mu +\cdots ,
\end{equation}
which is convergent in $L^p$ as long as $\kappa\norm{\Beurling}_{p,p}<1$.
\end{theorem}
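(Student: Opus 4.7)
The plan is to reformulate \rf{eqBeltramiEquation} as a Fredholm-type equation for $\bar\partial f$ on $L^p$, solve it by Neumann series, and then separately establish injectivity via a Stoilow factorization argument. I start with the ansatz $f(z) = z + \Cauchy h(z)$ with $h$ compactly supported in $\overline{\D}$ belonging to some $L^p$ with $p>2$. Using $\bar\partial \Cauchy h = h$ and $\partial \Cauchy h = \Beurling h$ together with \rf{eqCauchyCompactlyBounded}, the equation $\bar\partial f = \mu \partial f$ becomes the operator identity
\begin{equation*}
(I - \mu \Beurling) h = \mu.
\end{equation*}
The asymptotic $f(z) - z = \mathcal{O}(1/z)$ at infinity then follows automatically from the Laurent expansion of $\Cauchy h$ outside the support of $h$, and \rf{eqPrincipalFromCauchy} is the ansatz itself.

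To invert $I - \mu\Beurling$ I need an exponent $p$ with $\kappa \|\Beurling\|_{L^p \to L^p} < 1$. Since $\Beurling$ is a Fourier multiplier of modulus one, $\|\Beurling\|_{L^2\to L^2} = 1$, and by Riesz--Thorin interpolation the map $p \mapsto \|\Beurling\|_{L^p\to L^p}$ is continuous at $p=2$; hence such a $p>2$ exists. The Neumann series \rf{eqPrincipalNeumann} converges in $L^p$, yielding $h = (I - \mu\Beurling)^{-1}\mu$ supported in $\overline{\D}$. Setting $f := z + \Cauchy h$ produces, via \rf{eqCauchyCompactlyBounded}, a $W^{1,p}_{loc} \subset W^{1,2}_{loc}$ solution of \rf{eqBeltramiEquation} with the required decay.

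For uniqueness, given two solutions $f_1, f_2$ their difference $g = f_1 - f_2$ is a $W^{1,2}_{loc}$ quasiregular map, holomorphic off $\overline{\D}$, with $g(z) = \mathcal{O}(1/z)$ at infinity. Hence $\bar\partial g \in L^2$ is compactly supported, and the decay forces the entire part of its Cauchy representation to vanish: $g = \Cauchy(\bar\partial g)$ and $\partial g = \Beurling(\bar\partial g)$. Substitution gives $(I - \mu\Beurling)(\bar\partial g) = 0$. The delicate point is upgrading $\bar\partial g$ from $L^2$ to the same $L^p$ where $I - \mu\Beurling$ is invertible; this is done via Astala's higher integrability theorem for quasiregular mappings, after which invertibility forces $\bar\partial g \equiv 0$, and the decaying entire function $g$ vanishes by Liouville.

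The hardest step is the homeomorphism claim. Here I would invoke Stoilow's factorization theorem: every non-constant $W^{1,2}_{loc}$ quasiregular mapping factors as $\psi \circ \phi$ with $\phi$ a quasiconformal homeomorphism solving the same Beltrami equation and $\psi$ holomorphic on the image. Normalizing $\phi$ to satisfy $\phi(z) - z = \mathcal{O}(1/z)$ at infinity (applying the existence part to the same $\mu$), and matching this with the asymptotics of $f$, forces $\psi$ to be an entire function with $\psi(w) - w = \mathcal{O}(1/w)$ at infinity, hence $\psi(w) = w$. Therefore $f = \phi$ is a homeomorphism. The main obstacle is that Stoilow's theorem itself — requiring openness, discreteness, and sense-preservation of quasiregular maps — is a substantial piece of theory which must be invoked cleanly without circular reasoning, and I would cite the corresponding chapter of Astala--Iwaniec--Martin for it.
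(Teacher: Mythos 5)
The paper itself gives no proof of this statement: it is quoted verbatim from \cite[Theorem 5.3.2]{AstalaIwaniecMartin}, so your attempt can only be compared with the standard literature argument. Your existence and uniqueness steps essentially reproduce that argument and are sound: the ansatz $f=z+\Cauchy h$ turns \rf{eqBeltramiEquation} into $(I-\mu\Beurling)h=\mu$, the Neumann series converges in $L^p$ once $\kappa\norm{\Beurling}_{p,p}<1$ (such $p>2$ exists by interpolation), each term is supported in $\overline{\D}$, and the decay and \rf{eqPrincipalFromCauchy} follow from the Cauchy transform of a compactly supported function. One remark: the ``delicate point'' you describe in the uniqueness step is not there. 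Since $\norm{\Beurling}_{2,2}=1$ and $\kappa<1$, the operator $I-\mu\Beurling$ is already invertible on $L^2$, which is exactly where $\bar\partial g$ lives, so no higher integrability is needed; invoking it is harmless but superfluous.

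The genuine gap is the homeomorphism claim. The form of Stoilow factorization you invoke --- every $W^{1,2}_{loc}$ quasiregular map equals $\psi\circ\phi$ with $\phi$ a quasiconformal \emph{homeomorphism} solving the same Beltrami equation and $\psi$ holomorphic --- is not an independent input: in \cite{AstalaIwaniecMartin} it is proved \emph{from} the Measurable Riemann Mapping Theorem, precisely because producing the homeomorphic solution $\phi$ is the statement you are trying to establish. The purely topological Stoilow theorem (continuous, open, discrete maps factor through a homeomorphism) would avoid quoting the MRMT, but then you must first prove openness and discreteness of the principal solution, and in the plane the standard route to those properties again passes through the factorization; so, as written, the argument is circular, and although you flag the risk you do not discharge it. A non-circular repair is the classical injectivity argument for principal solutions: for fixed $z_0$, the function $f-f(z_0)$ solves the same equation, and one shows $f(z)-f(z_0)=(z-z_0)e^{s(z)}$ with $s$ the Cauchy transform of a compactly supported $L^p$ function extracted from the equation, hence continuous and vanishing at infinity; this gives $f(z)\neq f(z_0)$ for $z\neq z_0$, and injectivity together with the normalization at infinity (properness plus a degree argument) yields that $f$ is a homeomorphism of $\C$. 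Alternatively, one can approximate $\mu$ by smooth coefficients, where the homeomorphism property is classical, and pass to the limit using uniform H\"older bounds and normal families.
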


Note that $\norm{\Beurling}_{2,2}=1<\frac1\kappa$, so there is an open  interval of exponents $p$ such that $\bar\partial f \in L^p$ which contains $2$. Thus, the Cauchy transform in \rf{eqPrincipalFromCauchy} can be computed by the expression in Definition \ref{defBeurling}. Although the range of convergence of the Neumann series is not clear yet, it is well known that 
\begin{equation*}
I-\mu\Beurling \mbox{\quad\quad is invertible for } p_\kappa '<p<p_\kappa
\end{equation*}
(see \cite{AstalaIwaniecSaksman}), where $p_\kappa = 1+\frac1\kappa = \frac{2K}{K-1} = p_K$ is called the critical exponent.

 Given $\kappa<1$, we will write  
\begin{equation}\label{eqKappaK}
K=\frac{1+\kappa}{1-\kappa}.
\end{equation}
In general, a $K$-quasiregular mapping is a function $f \in W^{1,2}_{loc}(\Omega)$  satisfying that $\bar\partial f = \mu \partial f $ with Beltrami coefficient essentially bounded by  $\norm{\mu}_{L^\infty}\leq \kappa$, where $K$ and $\kappa$ are related by \rf{eqKappaK}. We say that $f$ is $K$-quasiconformal if, in addition, it is a homeomorphism between domains. Quasiregular mappings have the following self-improvement property, in the form of a Caccioppoli inequality:
\begin{theorem}[see {\cite[Theorem 5.4.2]{AstalaIwaniecMartin}}]\label{theoSelfImprovement}
Let $\kappa<1$, let $q\in(p_\kappa', p_\kappa)$ and let $f\in W^{1,q}_{loc}(\Omega)$ for a planar domain $\Omega$ satisfy the distortion inequality
$$|\bar\partial f| \leq \kappa |\partial f|$$
almost everywhere. Then $f\in W^{1,p}_{loc}$ for every $p<p_\kappa$. In particular, $f$ is locally H\"older continuous, and for every $s \in (p_\kappa',p_\kappa)$, we have the Caccioppoli estimate
\begin{equation}\label{eqCaccioppoli}
\norm{\varphi Df}_{L^s}\leq C_{\kappa,s}\norm{f\nabla\varphi}_{L^s}
\end{equation}
whenever $\varphi$ is a  Lipschitz function compactly supported in $\Omega$.
\end{theorem}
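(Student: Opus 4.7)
The plan is to localize $f$ via a cutoff, convert the distortion inequality into a compactly supported $\R$-linear Beltrami equation, and exploit the invertibility of $I - \mu\Beurling$ on $L^s$ for $s \in (p_\kappa', p_\kappa)$ (cited from \cite{AstalaIwaniecSaksman} in the excerpt). Since $|\bar\partial f| \leq \kappa|\partial f|$, we may write $\bar\partial f = \mu \, \partial f$ for a measurable $\mu$ with $|\mu|\leq \kappa$ on $\{\partial f \neq 0\}$, extended by zero elsewhere.

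Fix a ball $B\subset\subset \Omega$ and a Lipschitz cutoff $\varphi$ compactly supported in $\Omega$ with $\varphi\equiv 1$ on $B$. Set $g=\varphi f$, which lies in $W^{1,q}$ with compact support. Applying the Leibniz rule and the Beltrami identity,
\begin{equation*}
\bar\partial g = \varphi\bar\partial f + f\bar\partial\varphi = \mu(\partial g - f\partial\varphi) + f\bar\partial\varphi.
\end{equation*}
For compactly supported $W^{1,q}$ functions (with $1<q<\infty$), one has $g=\Cauchy(\bar\partial g)$ and hence $\partial g = \Beurling\bar\partial g$. Rearranging therefore yields the perturbed Beltrami equation
\begin{equation*}
(I-\mu\Beurling)\bar\partial g = f(\bar\partial\varphi - \mu\partial\varphi).
\end{equation*}
For any $s\in(p_\kappa',p_\kappa)$, the operator $I-\mu\Beurling$ is invertible on $L^s$ with norm controlled by $\kappa$ and $s$, giving
\begin{equation*}
\norm{\bar\partial g}_{L^s} \leq C_{\kappa,s}\norm{f\,\nabla\varphi}_{L^s}
\end{equation*}
whenever the right-hand side is finite. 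Combined with $\partial g=\Beurling\bar\partial g$ and the Leibniz rule, this bounds $\norm{\varphi\partial f}_{L^s}$ by the same quantity, and $|\bar\partial f|\leq \kappa|\partial f|$ handles $\varphi\bar\partial f$, producing the Caccioppoli estimate \rf{eqCaccioppoli}.

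For higher integrability we bootstrap. Initially $f\in W^{1,q}_{loc}$ so $f\,\nabla\varphi\in L^q$ and the estimate with $s=q$ places $\varphi Df$ in $L^q$. Sobolev embedding then upgrades local integrability of $f$: to $L^{q^*}_{loc}$ with $q^*=\tfrac{2q}{2-q}$ when $q<2$, or to $L^\infty_{loc}$ when $q\geq 2$. Choosing a slightly smaller cutoff supported inside where the previous one was identically one, we may then run the estimate with the larger exponent $\min(q^*, p_\kappa-\varepsilon)$. Iterating finitely many times reaches every $s<p_\kappa$. Local Hölder continuity is then immediate from Morrey's embedding applied to $W^{1,p}_{loc}$ for any $p>2$, which lies in the achievable range since $p_\kappa>2$.

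The main obstacle is the exponent bookkeeping in the bootstrap: one must verify that the nested cutoffs can be chosen so the constants at successive iterations remain uniformly controlled on the target ball $B$, and that the chain of Sobolev upgrades $q\mapsto q^*$ actually reaches the critical exponent in finitely many steps for every $\kappa<1$. This is ultimately a routine exercise once one observes that the assumption $q\in(p_\kappa',p_\kappa)$ is precisely what places the very first iteration inside the invertibility range of $I-\mu\Beurling$; without it the inversion step is unavailable and the scheme collapses.
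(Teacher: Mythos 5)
The paper offers no proof of this statement: it is imported verbatim from the cited reference, and your argument is essentially a reconstruction of the standard proof there, with the Astala--Iwaniec--Saksman invertibility of $I-\mu\Beurling$ on $L^s$, $s\in(p_\kappa',p_\kappa)$, as the black box. The localization identity $(I-\mu\Beurling)\bar\partial(\varphi f)=f(\bar\partial\varphi-\mu\partial\varphi)$ is correct (for $q\geq 2$ one should justify $\partial g=\Beurling\bar\partial g$ by density or Fourier multipliers rather than via $g=\Cauchy(\bar\partial g)$, which needs the modified kernel, but this is cosmetic), the derivation of \rf{eqCaccioppoli} once $f\in W^{1,s}_{loc}$ is known is fine, and the Sobolev chain $q\mapsto q^{*}$ does exit $(1,2]$ in finitely many steps since the increments are bounded below.

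The genuine gap is hidden in the phrase ``giving $\norm{\bar\partial g}_{L^s}\leq C_{\kappa,s}\norm{f\nabla\varphi}_{L^s}$ whenever the right-hand side is finite''. At each bootstrap step you only know $\bar\partial g\in L^{q_n}$, while $h:=f(\bar\partial\varphi-\mu\partial\varphi)\in L^{q_{n+1}}$ with $q_{n+1}>q_n$. Invertibility of $I-\mu\Beurling$ on $L^{q_{n+1}}$ produces \emph{some} solution $v\in L^{q_{n+1}}$ of $(I-\mu\Beurling)v=h$, but to conclude $\bar\partial g\in L^{q_{n+1}}$ you must identify $v$ with $\bar\partial g$, i.e.\ you need injectivity of $I-\mu\Beurling$ on $L^{q_n}+L^{q_{n+1}}$, equivalently that the inverses on the various $L^p$ of the critical interval agree on intersections. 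This is not automatic: attempting to prove it directly (set $W=\Cauchy(\bar\partial g-v)$ and note $\bar\partial W=\mu\,\partial W$) throws you back onto a Liouville-type statement for weakly quasiregular maps, which is the theorem you are proving. The point is settled in the cited work because the inverse there is given by one explicit formula, through the principal solution of the Beltrami equation, valid simultaneously for every exponent in $(p_\kappa',p_\kappa)$, which yields the required consistency; you should quote that (or the corresponding uniqueness statement) to close the bootstrap. By contrast, the issues you single out as the main obstacle --- uniformity of constants over the nested cutoffs and the exponent bookkeeping --- are harmless: only finitely many steps are used, and the final estimate \rf{eqCaccioppoli}, with constant depending only on $\kappa$ and $s$, is re-derived in one stroke for an arbitrary compactly supported Lipschitz $\varphi$ once $f\in W^{1,s}_{loc}$ is in hand.
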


\subsection{Previous results on the inverse problem}\label{secInverse}
In Section \ref{secDirect} we have seen that the forward map $\Lambda:\gamma\mapsto\Lambda_\gamma$ is bounded and continuous. Uniqueness for the inverse problem was solved by Astala and P\"aiv\"arinta in \cite{AstalaPaivarinta}.

The problem we face is stability. Barcel\'o, Faraco and Ruiz in \cite{BarceloFaracoRuiz}, showed continuity of the inverse mapping when we assume uniform H\"older continuity on the conductivities and the boundary measurements are taken in Lipschitz domains.
\begin{theorem*}[Stability for continuous conductivities]
Let $\Omega$ be a Lipschitz domain and let $0<s<1$. Then the family $\mathcal{G}(K,\Omega, \infty, t^s) = \left\{ \gamma \in\mathcal{G}(K,\Omega) : \norm{\gamma}_{\dot C^s} \leq 1 \right\}$ is  $L^\infty$-stable for $\Omega$,
and the recovery map has modulus of continuity 
$$\eta(\rho):= \frac1{\left|\log \left(\min \left\{\frac12,\rho \right\}\right)\right|^{b_s}}$$
for $\rho$ small enough.
\end{theorem*}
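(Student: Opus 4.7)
The plan is to follow the Astala--P\"aiv\"arinta strategy for uniqueness but promote it to a quantitative stability estimate by exploiting the uniform H\"older control. First I would reduce the problem to $\Omega=\D$ with both conductivities coinciding with $1$ near the boundary (this reduction is the content of the forthcoming Section \ref{secReduction} and is harmless because $\gamma \in \dot C^s$ can be extended keeping the H\"older seminorm and multiplied by a smooth cutoff that equals $1$ on the support of the original conductivity). After this reduction one associates to each $\gamma_j$ a Beltrami coefficient $\mu_j=(1-\gamma_j)/(1+\gamma_j)$ and works with the $\R$-linear Beltrami equation in which Astala--P\"aiv\"arinta cast Calder\'on's problem.

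Next I would introduce the family of complex geometric optics solutions: for each $k\in\C$, consider the principal solutions $f_{\pm}(z,k)=e^{ikz}M_{\pm}(z,k)$, with $M_\pm(\cdot,k)\to 1$ at infinity, of the Beltrami equations $\bar\partial f = \pm \mu \overline{\partial f}$. The first key step is a \emph{decay estimate} of the form $\|M_\pm(\cdot,k)-1\|_{L^\infty}\lesssim |k|^{-\alpha_s}$, with $\alpha_s$ depending only on $s$ and $K$. In the H\"older case this can be obtained by splitting $\mu$ into a smooth part (to which one applies oscillatory integral / stationary phase bounds using the factor $e^{ikz}$) and a rough remainder of small $L^\infty$ norm controlled by $\omega(t)=t^s$; the smooth part gives polynomial decay in $|k|$, and optimising the splitting yields a decay rate determined by $s$. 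The second key step is the nonlinear Fourier / scattering transform: defining $\tau_\pm(k)$ from the asymptotics of $M_\pm$, one proves an identity of the type $\tau_1(k)-\tau_2(k) = \langle (\Lambda_{\gamma_1}-\Lambda_{\gamma_2})f_1(\cdot,k),\, g_2(\cdot,k)\rangle$ plus errors controlled by the CGOS themselves, so that $\|\tau_1-\tau_2\|_{L^\infty(|k|\le R)}\lesssim e^{CR}\,\|\Lambda_{\gamma_1}-\Lambda_{\gamma_2}\|$. Inverting the scattering transform by a fixed-point argument, and using the decay estimate to control frequencies $|k|\ge R$, I get an $L^\infty$ bound for $M_1-M_2$ of the form $C\bigl(e^{CR}\|\Lambda_1-\Lambda_2\| + R^{-\alpha_s}\bigr)$. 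Optimising in $R$ produces the logarithmic rate $|\log\|\Lambda_1-\Lambda_2\||^{-\alpha_s}$ for the $L^\infty$ norm of $M_1-M_2$.

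The last step transfers this stability back to the conductivities. Because $\gamma_j$ is recovered from the boundary values (or, equivalently, from the large-$z$ asymptotics) of the CGOS via the formula $\gamma = (\real M_+ / \real M_-)^2$ (or the analogous identity used in \cite{AstalaPaivarinta}), an $L^\infty$ estimate on $M_1-M_2$ together with a uniform lower bound on $\real M_\pm$ (which follows from the H\"older assumption) gives the same rate for $\gamma_1-\gamma_2$ in, a priori, some weak integral sense. To upgrade to $L^\infty$ one interpolates this weak bound against the uniform $C^s$ control, namely
\begin{equation*}
\|\gamma_1-\gamma_2\|_{L^\infty} \,\lesssim\, \|\gamma_1-\gamma_2\|_{L^q}^{\theta}\, \|\gamma_1-\gamma_2\|_{\dot C^s}^{1-\theta}
\end{equation*}
for an appropriate $q,\theta$, which preserves the logarithmic form of the modulus but degrades the exponent, yielding the final rate $|\log\rho|^{-b_s}$ with $b_s<\alpha_s$.

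The main obstacle is the decay estimate for $M_\pm-1$: controlling the principal solution of a Beltrami equation in terms of the modulus of continuity of $\mu$ and of a rapidly oscillating exponential requires careful interaction between the Beurling transform, the Neumann series in \eqref{eqPrincipalNeumann}, and the oscillation of $e^{ikz}$, and it is precisely the point at which the regularity assumption on $\gamma$ is consumed; every quantitative exponent in the final statement can be traced back to this step.
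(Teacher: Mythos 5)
Two remarks before the substance: the statement you were asked about is recalled in this paper from \cite{BarceloFaracoRuiz} (the paper itself does not reprove it; it generalizes the same scheme in Sections \ref{secDecay}--\ref{secFinal}), and your overall architecture — reduction to $\D$, Beltrami coefficients $\mu_j$, CGOS, decay in $k$ from the H\"older control, an Alessandrini-type identity giving $|\tau_1(k)-\tau_2(k)|\lesssim e^{C|k|}\rho$, then transfer back to $\gamma$ with an interpolation against $\dot C^s$ — is indeed the Barcel\'o--Faraco--Ruiz scheme. However, two of your key steps are not correct as stated. First, the decay estimate: one cannot prove $\norm{M_\pm(\cdot,k)-1}_{L^\infty}\lesssim |k|^{-\alpha_s}$. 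What decays is the \emph{phase correction}: writing $f_\mu=e^{ik\varphi_\mu}$ as in \rf{eqFMuRepresentation}, the quantitative statement is $\norm{\varphi_\mu(\cdot,k)-Id}_{L^\infty}\leq \upsilon(|k|^{-1})$ (Theorem \ref{theoDecayOfCGOS}, with $\upsilon(t)\approx t^{\alpha}$ in the H\"older case), so $\log M_\mu = ik(\varphi_\mu-z)$ is only $o(|k|)$ — sub-exponential growth — and $M_\mu-1$ itself need not become small in $L^\infty$ as $|k|\to\infty$. This matters because the rest of the argument has to be run at the level of the logarithms $\delta_j=ik\varphi_{\lambda\mu_j}$ rather than of $M_1-M_2$.

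The more serious gap is the sentence ``inverting the scattering transform by a fixed-point argument''. With only $C^s$ a priori regularity there is no decay of $\tau_\mu(k)$ in $k$ (one only has $\norm{\tau}_{L^\infty}\leq 1$, see \rf{eqTauBound}, and exponentially growing derivative bounds \rf{eqTauDerivative}), so the $\bar\partial_k$-equation \rf{eqTransportTau} cannot be inverted by a contraction or Neumann-series argument as in the smoother settings of Nachman or Liu; the exponential factor $e^{C|k|}$ in Theorem \ref{theoStabilityFromDtnToScattering} combined with the absence of decay in $k$ defeats the naive ``optimize over $|k|\leq R$ versus $|k|\geq R$'' strategy at this stage. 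The known proof (in \cite{BarceloFaracoRuiz}, reworked here as Theorem \ref{theoUInftyIsSmall} and Proposition \ref{propoGDoesNotVanishFarAway}) replaces this step by a quantitative topological argument: one studies $g(k)=\delta_1(z,k)-\delta_2(w,k)$, factors $g=e^{\varsigma}(F+S)$ with $F$ holomorphic in a disk of radius $R_{z,w}$ and $S$ small in $W^{1,\infty}$ (built from Cauchy transforms in the $k$ variable), and uses a degree/argument-principle count to show that $g$ can only vanish when $|z-w|$ is small; the $L^\infty$ closeness of the CGOS then follows from their H\"older continuity in $z$. Without this ingredient, or a genuine substitute for it, your chain from $\norm{\tau_1-\tau_2}$ to an $L^\infty$ bound on the CGOS difference does not close. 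Your final interpolation $\norm{\gamma_1-\gamma_2}_{L^\infty}\lesssim \norm{\gamma_1-\gamma_2}_{L^q}^{\theta}\norm{\gamma_1-\gamma_2}_{\dot C^s}^{1-\theta}$ is fine and is consistent with the logarithmic loss in the exponent $b_s$.
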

According to Mandache's result in \cite{MandacheInstability}, there is no hope to improve qualitatively the modulus of continuity obtained above.

Here, there is the extra assumption that the conductivities are (H\"older) continuous. Clop, Faraco and Ruiz addressed the question of stability for non-continuous conductivities with a priori Sobolev stability of fractional smoothness as close to zero as needed in \cite{ClopFaracoRuiz}. In that case, $L^\infty$ stability cannot be reached, but one gets $L^2$ stability nevertheless.
\begin{theorem*}[Stability for non-continuous conductivities]
Let $\Omega$ be a Lipschitz domain and let $0<s<1$. Then the family ${\mathcal{G}}(K,\Omega, 2, t^s) = \left\{ \gamma \in\mathcal{G}(K,\Omega) : \norm{\gamma}_{\dot B^s_{2,\infty}} \leq 1 \right\}$ is  $L^2$-stable for $\Omega$,
and the recovery map has modulus of continuity 
$$\eta(\rho):= \frac1{\left|\log \left(\min \left\{\frac12,\rho \right\}\right)\right|^{c \min\{s,1/2\}^2}}$$
for $\rho$ small enough.
\end{theorem*}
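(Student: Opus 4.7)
The plan is to follow the Astala--Päivärinta scheme in the form adapted to stability by Barceló--Faraco--Ruiz, but upgrading all the intermediate estimates from pointwise Hölder moduli to the integral $L^{2}$-modulus of continuity controlled by $\|\gamma\|_{\dot B^{s}_{2,\infty}}\le 1$. First, by a standard conformal reduction (the same one alluded to in Section~\ref{secReduction}) one may assume $\Omega=\mathbb D$, and replace each conductivity $\gamma_{j}$ by its associated Beltrami coefficient $\mu_{j}=-\bar\partial\gamma_{j}/(2\gamma_{j}+\cdots)$; this inherits the same integral $2$-modulus up to a constant because multiplication by a bounded function with positive lower bound preserves $\dot B^{s}_{2,\infty}$. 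The central object is then, for each spectral parameter $k\in\mathbb C$, the principal solution $f_{\mu_{j}}(\cdot,k)=e^{ikz}M_{j}(\cdot,k)$ of the Beltrami equation in the $z$-variable with the exponential asymptotics of Astala--Päivärinta; the uniqueness and CGOS technology described in Section~\ref{secCGOS} then reduces stability for $\gamma$ to quantitative control on $M_{1}-M_{2}$.

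Second, one has to prove that the CGOS decay in $|k|\to\infty$ is governed by $\omega_{2}(\mu_{j})(t)\le t^{s}$. Here I would exploit Theorem~\ref{theoCaccioppoliModulusModified}: the $p$-modulus of $\bar\partial f_{\mu_{j}}$ is bounded in terms of $\omega_{p}(\mu_{j})$ and the Hölder norm of $f_{\mu_{j}}$ coming from quasiregularity; propagating this bound through the Neumann series \eqref{eqPrincipalNeumann} and the Beurling transform shows that $M_{j}(\cdot,k)-1$ belongs, uniformly in $k$, to a Besov space with a known modulus, and a Riemann--Lebesgue type lemma in the oscillatory factor $e_{k}$ then turns this into explicit decay of $M_{j}(z,k)$ as $|k|\to\infty$ at rate controlled by $s$.

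Third, one connects $\Lambda_{\gamma_{1}}-\Lambda_{\gamma_{2}}$ to $M_{1}-M_{2}$ via the scattering transform $\tau_{\mu_{j}}(k)$. The DtN data determine $\tau_{\mu_{j}}$ on $\partial\mathbb D$, and a logarithmic stability estimate (of Alessandrini type, combined with the gauge/topological argument of Section~\ref{secGauge}) propagates $\|\Lambda_{1}-\Lambda_{2}\|=\rho$ into an estimate of the form $\|M_{1}(\cdot,k)-M_{2}(\cdot,k)\|_{L^{\infty}(K)}\lesssim e^{C|k|}\rho$ on compact sets; optimising in $|k|\sim |\log\rho|^{\alpha}$ by balancing this exponential blow-up against the CGOS decay rate from the previous step produces an $L^{\infty}$-in-$z$ estimate for $M_{1}-M_{2}$ with logarithmic rate. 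Backtracking from the CGOS to $\gamma_{j}$ via the identity relating $\gamma_{j}$ to the boundary value $\lim_{k\to 0}M_{j}$ (or the corresponding asymptotics) converts this into an $L^{\infty}$-type control of $\gamma_{1}-\gamma_{2}$ on frequencies up to some threshold.

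Finally, the $L^{2}$-stability is obtained by real interpolation between this frequency-truncated estimate and the a priori bound $\|\gamma_{1}-\gamma_{2}\|_{\dot B^{s}_{2,\infty}}\le 2$: splitting $\gamma_{1}-\gamma_{2}$ at a scale $t$ to be chosen optimally against $\rho$ yields $\|\gamma_{1}-\gamma_{2}\|_{L^{2}}\lesssim|\log(1/\rho)|^{-c\min\{s,1/2\}^{2}}$, which is the claimed $\eta(\rho)$. The main obstacle I expect is the decay step: with only an integral $L^{2}$-modulus at hand one cannot apply the classical Hölder/Riemann--Lebesgue argument of Barceló--Faraco--Ruiz directly, and one must carefully track how the modulus $\omega_{2}(\mu)(t)=O(t^{s})$ interacts with the Beurling transform and the quasiconformal change of variables while still producing a quantitative decay rate in $|k|$; the exponent $\min\{s,1/2\}^{2}$ in $\eta$ is precisely the cost of this interaction (the $s^{2}$ coming from the balance between exponential blow-up in $|k|$ and polynomial-in-$|k|^{-s}$ decay, and the cutoff at $1/2$ from the $L^{2}$ rather than $L^{\infty}$ regime).
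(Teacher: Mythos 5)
This theorem is quoted background: the paper does not prove it, it cites \cite{ClopFaracoRuiz}, and its own Theorem \ref{theoMainTheorem} is the generalization of that argument. Your outline follows the same Astala--P\"aiv\"arinta/BFR/CFR scheme (reduction to $\D$, decay of the CGOS governed by the modulus, DtN-to-scattering stability plus the topological argument, balance at $|k|\sim|\log\rho|^{\alpha}$, final interpolation), so the plan is the right one; but two of the concrete steps as you state them would fail.

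First, the Beltrami coefficient is not $-\bar\partial\gamma_j/(2\gamma_j+\cdots)$: that is the Brown--Uhlmann/Dirac-type potential, which requires a derivative of $\gamma_j$ and is simply not available for $\gamma\in\dot B^s_{2,\infty}$ with $s$ small. The whole point of the framework used here is the derivative-free coefficient $\mu_\gamma=\frac{1-\gamma}{1+\gamma}$, which inherits the integral $2$-modulus because $x\mapsto\frac{1-x}{1+x}$ is Lipschitz on $[K^{-1},K]$. Second, the passage from $L^\infty$-closeness of the CGOS back to the conductivities is not an ``identity relating $\gamma_j$ to $\lim_{k\to0}M_j$'': since $\mu_j$ sits in the $z$-derivatives of $f_{\mu_j}$, sup-norm closeness of $f_1$ and $f_2$ gives nothing about $\gamma_1-\gamma_2$ by itself. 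The actual argument (in \cite{ClopFaracoRuiz}, reproduced in Section \ref{secInterpolation} here) fixes $k=1$, uses the a.e.\ bound $|\mu_1-\mu_2|\leq\bigl(|\mu_1||\partial(f_2-f_1)|+|\bar\partial(f_2-f_1)|\bigr)/|\partial f_2|$, H\"older's inequality together with the crucial integrability $\norm{|\partial f_2|^{-1}}_{L^{s^*}}\leq C_{s,\kappa}$ for $s^*<\frac{2}{K-1}$ (\cite[Lemma 4.6]{ClopFaracoRuiz}), and only then a Fourier splitting of $\bar\partial(\varphi(f_1-f_2))$: low frequencies are bounded by $R\norm{f_1-f_2}_{L^\infty(2\D)}$ (this is where the scattering/topological step enters), high frequencies by the a priori modulus of $\bar\partial f_j$ (a Caccioppoli-type bound -- this final step, not the decay step, is where Theorem \ref{theoCaccioppoliModulusModified} is used), and $R$ is optimized against $\rho$. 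Without that derivative-level decomposition, your ``real interpolation with $\norm{\gamma_1-\gamma_2}_{\dot B^s_{2,\infty}}\leq 2$'' has no frequency-truncated estimate on $\gamma_1-\gamma_2$ to interpolate with. Note also that the decay step is organized through the Neumann series for the auxiliary linear equation \rf{eqBeltramiLinearEquation} and composition with the inverse quasiconformal map (Lemma \ref{lemCompositionModuli}); it is precisely this composition loss, balanced against the $e^{C|k|}$ growth, that produces the exponent $c\min\{s,1/2\}^2$ in $\eta$.
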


In \cite{FaracoRogers} the regularity conditions on $\Omega$ were severely reduced.

\subsection{Reduction to the unit disk}\label{secReduction}
In this section we explain how to reduce Theorem \ref{theoMainTheorem} to the case $\Omega=\D$. By rescaling, we can assume that $\overline{\Omega}\subset\D$. We want to check that the mapping 
$$
\begin{matrix}
\Lambda(\mathcal{G}(K, \Omega, p, \omega)) &  \to & \left(\mathcal{G}(K, \Omega, p, \omega), L^2\right)\\
\Lambda_\gamma	& \mapsto & \gamma 
\end{matrix}
$$
is uniformly continuous. By \cite[Theorem 3.6]{ClopFaracoRuiz}, every pair of conductivities $\gamma_1,\gamma_2\in\mathcal{G}(K, \Omega)$ satisfies the estimate 
$$\norm{ \Lambda_{\gamma_1}^{\partial\D} -\Lambda_{\gamma_2}^{\partial\D} }_{\mathcal{L}_{1/2,-1/2}(\D)}\leq C\norm{ \Lambda_{\gamma_1} -\Lambda_{\gamma_2} }_{\mathcal{L}_{1/2,-1/2}(\Omega)}.$$
In particular, the mapping
$$
\begin{matrix}
\Lambda(\mathcal{G}(K, \Omega, p, \omega)) & \to & \Lambda^{\partial\D}(\mathcal{G}(K, \Omega, p, \omega))\\
\Lambda_\gamma	& \mapsto & \Lambda_\gamma^{\partial\D}
\end{matrix}
$$
is Lipschitz continuous. Note that the reverse mapping is not continuous (see Remark \ref{remNotCompactlySupported}). Moreover, since $\Omega\subset \D$ it follows that $ \mathcal{G}(K,\Omega, p, \omega))\subset  \mathcal{G}(K,\D, p, \omega))$, so the inclusion mapping 
$$
\begin{matrix}
\Lambda^{\partial\D}(\mathcal{G}(K, \Omega, p, \omega)) & && \to && \Lambda^{\partial\D}(\mathcal{G}(K,\D, p, \omega))
\end{matrix}
$$
has norm 1. Thus, Theorem \ref{theoMainTheorem} is reduced to the following one.

\begin{theorem}\label{theoMainBisTheorem}
Let $K\geq 1$, let $2K< p <\infty$ and let $\omega$ be a modulus of continuity. Then $\mathcal{G}(K,\D, p, \omega)$ is an $L^2$-stable family of conductivities for  $\D$. 
\end{theorem}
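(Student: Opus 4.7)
The strategy advertised in the introduction is clear: attach to each $\gamma$ the Beltrami coefficient $\mu_\gamma=(1-\gamma)/(1+\gamma)$, build complex geometric optics solutions (CGOS) for the associated $\R$-linear Beltrami equation, quantify their decay as the spectral parameter $k$ grows in terms of $\omega$, pass this information to the scattering transform via the DtN map, and finally reconstruct $\gamma_1-\gamma_2$ in $L^2$. I would therefore begin by observing that $\mu_\gamma$ is supported in $\overline\D$, satisfies $\norm{\mu_\gamma}_{L^\infty}\le\kappa$ with $\kappa=(K-1)/(K+1)$, and (because $\gamma\ge 1/K$ is bounded below) inherits a uniform $p$-modulus of continuity controlled by $\omega$ via the Lipschitz dependence of $\mu$ on $\gamma$. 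Theorem \ref{theoMeasurableRiemann} then furnishes, for every $k\in\C$, a principal CGOS $f_\mu(\cdot,k)$ normalized by $e^{-ikz}f_\mu(z,k)=1+\mathcal{O}(1/z)$, which will be the basic object of the analysis.

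The first substantive step is a decay estimate: I need $\norm{M_\mu(\cdot,k)-1}_{L^q(\D)}\to 0$ at an explicit $\omega$-dependent rate as $|k|\to\infty$, where $M_\mu(z,k)=e^{-ikz}f_\mu(z,k)$. The plan, following Section \ref{secDecay}, is to treat first the linear problem with fixed right-hand side by iterating the Neumann series \rf{eqPrincipalNeumann} and exploiting the $L^p$-boundedness of $\Beurling$ (possible precisely because $2K<p$ puts us inside the interval where $I-\mu\Beurling$ is invertible). The key technical input is Theorem \ref{theoCaccioppoliModulusModified}, which controls $\modulus{p}{\bar\partial f}$ by $\modulus{p}{\mu}+\modulus{p}{\nu}$ up to a Hölder remainder arising from the natural regularity of quasiregular maps; this transfers the uniform $p$-modulus of $\mu$ to the derivative of $f_\mu$ and, after a Cauchy-transform step, to $M_\mu-1$ itself. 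Bootstrapping through the nonlinear dependence in $k$ yields the logarithmic rate $\omega(C_K|\log|k||^{-1/K})^b$ appearing in the theorem statement.

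Next, I would run the scattering argument of Barceló–Faraco–Ruiz and Astala–Päivärinta as revisited in Section \ref{secScatteringToUInfty}: from $f_\mu(\cdot,k)$ extract a scattering transform $\tau_\mu(k)$ that is determined, up to an explicit boundary pairing, by $\Lambda_\gamma$ on $\partial\D$; hence $\norm{\tau_{\mu_1}-\tau_{\mu_2}}$ is controlled linearly by $\rho:=\norm{\Lambda_{\gamma_1}-\Lambda_{\gamma_2}}$. Then the $\bar\partial_k$-equation satisfied by $M_\mu$ in the spectral variable, together with the topological gauge argument of Section \ref{secGauge}, converts this into an $L^\infty$ estimate
\[
\sup_{|k|\le R}\norm{M_{\mu_1}(\cdot,k)-M_{\mu_2}(\cdot,k)}_{L^\infty(\D)}\lesssim \eta_1(\rho,R),
\]
with $\eta_1$ only logarithmically deteriorating in $R$. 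Balancing this with the decay rate of the previous step for $|k|>R$ and choosing $R\sim|\log\rho|^{\alpha_K}$ recovers the first two terms in the modulus $\eta$ announced in Theorem \ref{theoMainTheorem}.

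The final, and hardest, step is the conductivity-level interpolation of Section \ref{secInterpolation}. Since $\gamma-1$ can be recovered from the large-$k$ asymptotics of $M_\mu$ via a Beurling/Cauchy identity, the CGOS estimate above produces an $L^2$ bound involving $\omega$ applied to the CGOS-stability modulus — explaining the outer $\omega(\cdot)$ in the statement. Unlike \cite{ClopFaracoRuiz}, the a priori class $B^\omega_p$ does not embed into any Sobolev space, so classical Sobolev interpolation is unavailable; the main obstacle is precisely to replace it by an interpolation that uses only the uniform $p$-modulus together with the $L^\infty$-ellipticity bound, while preserving the explicit logarithmic dependence. I would expect this to be the genuinely new ingredient, and the restriction $2K<p$ enters here both to guarantee invertibility of $I-\mu\Beurling$ on $L^p$ in the decay step and to ensure that the $p$-modulus interacts well with the Beurling transform and with quasiconformal change of variables (the content of Sections \ref{secInteractionCGOS} and \ref{secCaccioppoli}).
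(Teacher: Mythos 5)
Your outline reproduces the general architecture advertised in the introduction (CGOS, $\omega$-dependent decay, DtN-to-scattering stability, topological argument, final interpolation), but it stops short of a proof exactly where the proof of Theorem \ref{theoMainBisTheorem} actually lives: the closing ``interpolation'' is only described as an expected obstacle (``I would expect this to be the genuinely new ingredient''), not carried out. In the paper this step is concrete and elementary once the right objects are chosen: one fixes $k=1$ (no large-$k$ asymptotics of $M_\mu$ are used to recover $\gamma$), writes the pointwise identity
$|\mu_1-\mu_2|\le \bigl(|\mu_1||\partial(f_2-f_1)|+|\bar\partial(f_2-f_1)|\bigr)/|\partial f_2|$,
applies H\"older together with the quantitative integrability of $|\partial f_2|^{-1}$ in $L^{s^*}$ for $s^*<\frac{2}{K-1}$ (this is where the exponent restriction $\frac1s>\frac12+\frac{K-1}{2}$ and the $L^2$ claim after interpolation with $L^\infty$ come from), and then splits $\widehat{\bar\partial\big(\varphi(f_1-f_2)\big)}$ at a radius $R$ in the $z$-Fourier variable: the low frequencies are bounded by $R\,\iota_{\mathcal M}(\rho)$ using the $L^\infty$ stability of the CGOS at $k=1$ (Theorem \ref{theoUInftyIsSmall}), and the high frequencies by $\omega(R^{-1})+R^{\frac2p-1}$ via Lemma \ref{lemFourier} and Corollary \ref{coroModulusCGOSBounded}; optimizing $R=\iota_{\mathcal M}(\rho)^{-1/2}$ gives the modulus $\eta$. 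Your alternative closing device --- recovering $\gamma-1$ from large-$k$ asymptotics of $M_\mu$ and balancing a $k$-space split at $R\sim|\log\rho|^{\alpha_K}$ --- is not substantiated and is not how the stated bound arises; the balancing in $|\log\rho|$ happens inside the topological argument (Lemma \ref{lemExistsC}) at each fixed $k$, not by splitting the spectral parameter.

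A second, related misattribution: Theorem \ref{theoCaccioppoliModulusModified} is not the engine of the decay step. The decay $\norm{\varphi_\mu(\cdot,k)-Id}_{L^\infty}\le\upsilon(|k|^{-1})$ (note: an $L^\infty$ statement about the quasiconformal phase, not an $L^q(\D)$ bound on $M_\mu-1$) is obtained from the linear equation for $\psi_k=\varphi_\mu(\cdot,k)^{-1}$, whose Neumann series carries the oscillating factors $e_{-nk}$; one exploits this oscillation through the Fourier tail estimate of Lemma \ref{lemFourier}, the mollified Cauchy kernel, and the composition Lemma \ref{lemCompositionModuli} (this is where $2K<p$ is genuinely needed, to allow $q>2$ with $\frac1q>\frac Kp$). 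The Caccioppoli-type modulus inequality enters only in the final step, at $k=1$, to show that $\modulus{2}{(\bar\partial(\varphi f_j))}(R^{-1})\lesssim \omega(R^{-1})+R^{\frac2p-1}$ for the high-frequency part. As written, your proposal therefore has a genuine gap at the decisive final stage and misplaces the roles of the two main technical tools, so it does not yet constitute a proof of the theorem.
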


\subsection{Complex geometric optics solutions}\label{secCGOS}
Next we present the main ingredients for the proof of Theorem \ref{theoMainBisTheorem}. We begin by recalling some results. In order to retain information of the whole DtN map, Astala and P\"aiv\"arinta define a parametrized family of solutions of the conductivity equation. Before that, the authors introduce Beltrami equation techniques to reach a good control of these solutions, so  they introduce the following parameterized family of functions (see \cite[Section 4]{AstalaPaivarinta}).
\begin{definition}\label{defFMu}
Given a Beltrami coefficient $\mu \in L^\infty$ supported in $\overline{\D}$ with $\kappa:=\norm{\mu}_{L^\infty}<1$ and a real number $2< p < p_\kappa$, we have that for each $k\in\C$ there exists a unique $f_\mu(\cdot,k)\in W^{1,p}_{loc}$ such that
\begin{equation}\label{eqBeltrami}
\overline{\partial} f_\mu(\cdot,k)= \mu \, \overline{\partial f_\mu(\cdot,k)}
\end{equation}
satisfying the asymptotics
\begin{equation}\label{eqBeltramiAssimptotics}
	f_\mu (z,k)= e^{i k z}M_\mu(z,k), \mbox{\quad with $M_\mu(\cdot,k)-1 = \mathcal{O}_{z\to\infty}(\frac{1}{z})$.}
\end{equation}
 Of course $f_\mu$ does not depend on our choice of $p$ and, since $M_\mu(\cdot, k)$ is continuous by the Sobolev embedding Theorem, we have that $f(\cdot, 0)\equiv1$.
We call $f_\mu$ the {\em Complex Geometric Optics Solution (CGOS) to the Beltrami equation} \rf{eqBeltrami}.

We define the {\it scattering transform} of $\mu$ as
\begin{equation*}
\tau_\mu(k) := \frac{1}{2\pi}\int_\D \partial_z\left( \overline{M_\mu(z,k)}-\overline{M_{-\mu}(z,k)}\right)\, dm(z).
\end{equation*}
\end{definition}

The definition of the scattering transform changes from one paper to the other. The one presented here agrees with \cite{AstalaPaivarinta}, as a short argument using Green's theorem and the Residue Theorem shows. Collecting the results in that paper, we have the following:

\begin{theorem}\label{theoConsequencesFMu}
Given $\mu \in L^\infty$  supported in $\overline{\D}$ with $\kappa:=\norm{\mu}_{L^\infty}<1$ and $2<p < p_\kappa$, the solution $f_\mu$ admits a representation
\begin{equation}\label{eqFMuRepresentation}
f_\mu(z,k)=e^{ik\varphi_\mu(z,k)},
\end{equation}
where $\varphi_\mu(\cdot,k)$ is a $K$-quasiconformal principal mapping (for $K$ defined in \rf{eqKappaK}). In particular $\varphi_{\mu}(\cdot,k)$ has uniform decay:
\begin{equation}\label{eqUniformDecay}
|z||\varphi_\mu(z,k) - z|\leq C_{\kappa}.
\end{equation}
 Moreover, we have that the Jost function $M_\mu$ satisfies that
\begin{equation}\label{eqJostSobolevBounded}
\norm{M_\mu(\cdot,k)-1}_{W^{1,p}(\C)}\leq e^{C_{\kappa,p}(1+|k|)},
\end{equation}
with $\real\left(\frac{M_\mu}{M_{-\mu}}\right)>0$ and for every $z\in\C$ the map
\begin{equation}\label{eqCInfty}
k \mapsto M_\mu(z,k) \mbox{\quad\quad is }C^\infty.
\end{equation}
In addition, 
\begin{equation}\label{eqTauBound}
\norm{\tau}_{L^\infty}\leq 1.
\end{equation}
\end{theorem}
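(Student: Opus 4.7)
The plan is to extract each assertion from the $\R$-linear Beltrami machinery developed in \cite{AstalaPaivarinta}, which the excerpt already sets up via Theorems \ref{theoMeasurableRiemann} and \ref{theoSelfImprovement}.

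For the representation $f_\mu=e^{ik\varphi_\mu}$, I would substitute $f=e^{ik\varphi}$ into \rf{eqBeltrami} and divide through by $e^{ik\varphi}$. A direct computation (using that $\bar f/f=e^{-2i\real(k\varphi)}$ is unimodular) shows that $\varphi$ must satisfy the nonlinear $\R$-linear Beltrami equation
$$\bar\partial\varphi = -\tfrac{\bar k}{k}\,\mu(z)\,e^{-2i\real(k\varphi)}\,\overline{\partial\varphi},$$
whose coefficient has modulus $|\mu|\leq\kappa$ independently of $\varphi$. Thus the problem reduces to finding a principal $K$-quasiconformal solution of a nonlinear Beltrami equation with uniform distortion $\kappa$, whose existence and uniqueness follow from a fixed-point argument in $W^{1,p}$ using Theorem \ref{theoSelfImprovement} and the invertibility of $I-\nu\Beurling$ in $L^p$ for $\norm{\nu}_{L^\infty}\leq\kappa$. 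Then $e^{ik\varphi_\mu}$ automatically solves \rf{eqBeltrami} with the asymptotics \rf{eqBeltramiAssimptotics} because $\varphi_\mu(z,k)-z=\mathcal{O}(1/z)$, and uniqueness in Definition \ref{defFMu} identifies this construction with $f_\mu$.

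The uniform decay \rf{eqUniformDecay} is then a general principle: for any principal $K$-quasiconformal $\phi$ with Beltrami coefficient supported in $\overline{\D}$ and bounded by $\kappa$, the Cauchy representation $\phi(z)-z=\Cauchy(\bar\partial\phi)(z)$ from \rf{eqPrincipalFromCauchy}, combined with the uniform $L^p$-bound on $\bar\partial\phi$ from the Neumann series \rf{eqPrincipalNeumann}, yields $|z|\cdot|\phi(z)-z|\leq C_\kappa$ for $|z|\geq 2$ via the standard $1/z$ tail estimate for the Cauchy kernel against a compactly supported $L^p$ datum; the range $|z|\leq 2$ is absorbed by H\"older continuity from Theorem \ref{theoSelfImprovement}. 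The bound \rf{eqJostSobolevBounded} follows from $M_\mu=e^{ik(\varphi_\mu-z)}$: expanding the exponential term-by-term and using that $\varphi_\mu-z\in W^{1,p}(\C)$ with $\kappa$-controlled norm, the $n$-th contribution is at most $C_{\kappa,p}^n|k|^n/n!$, and summation produces the exponential bound $e^{C_{\kappa,p}(1+|k|)}$.

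The smoothness \rf{eqCInfty} in $k$ is obtained by formally differentiating the integral equation satisfied by $M_\mu(z,k)$: each $\partial_k^n M_\mu$ solves an inhomogeneous Beltrami equation of the same type whose solvability again rests on the invertibility of $I-\mu\Beurling$ in $L^p$, with inhomogeneity built from lower-order derivatives. For the positivity $\real(M_\mu/M_{-\mu})>0$, the key identity $M_\mu/M_{-\mu}=e^{ik(\varphi_\mu-\varphi_{-\mu})}$ reduces matters to the sharp bound $|\imag(k(\varphi_\mu-\varphi_{-\mu}))|<\pi/2$. I expect this to be the main obstacle: it is the subexponential-growth / topological phenomenon at the heart of uniqueness in \cite{AstalaPaivarinta}, and is proved via a Phragmen--Lindel\"of-type argument applied to $\log(f_\mu/f_{-\mu})$ exploiting the asymptotic control \rf{eqUniformDecay} of both principal mappings. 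Finally, $\norm{\tau}_{L^\infty}\leq 1$ follows by rewriting the defining integral as a $\partial\D$ contour integral via Green's theorem, so that $\tau_\mu(k)$ is identified with the ``$1/z$''-coefficient in the expansion near infinity of $(M_\mu-M_{-\mu})/(M_\mu+M_{-\mu})$, whose modulus is strictly below $1$ precisely by the positivity of $\real(M_\mu/M_{-\mu})$ just established.
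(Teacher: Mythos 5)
Your proposal is an attempt to reconstruct the proofs that the paper itself does not give: the paper's ``proof'' of Theorem \ref{theoConsequencesFMu} is a citation to \cite[Theorems 7.1, 4.2, 4.3 and Proposition 6.3]{AstalaPaivarinta} and \cite[Proposition 2.6]{BarceloFaracoRuiz}, with the $k$-smoothness taken from the discussion after \cite[Lemma 5.3]{AstalaPaivarinta}. Much of your reconstruction is sound: substituting $f=e^{ik\varphi}$ does reduce \rf{eqBeltrami} to the nonlinear equation $\bar\partial\varphi=-\frac{\bar k}{k}\,\mu\, e_{-k}(\varphi)\,\overline{\partial\varphi}$ (compare \rf{eqVarphi}); the decay \rf{eqUniformDecay} does follow from $\varphi-z=\Cauchy(\bar\partial\varphi)$ plus the uniform $L^p$ bound on $\bar\partial\varphi$; and the term-by-term expansion of $M_\mu-1=e^{ik(\varphi_\mu-z)}-1$ gives \rf{eqJostSobolevBounded}, since $\norm{\varphi_\mu-z}_{W^{1,p}(\C)}\leq C_{\kappa,p}$ uniformly in $k$.

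The genuine gap is in your step for $\real\left(\frac{M_\mu}{M_{-\mu}}\right)>0$, on which your argument for \rf{eqTauBound} also rests. First, the reduction is misstated: since $\frac{M_\mu}{M_{-\mu}}=e^{ik(\varphi_\mu-\varphi_{-\mu})}$, positivity of the real part is governed by $\real\left(k(\varphi_\mu-\varphi_{-\mu})\right)$ (the argument of the exponential), not by its imaginary part, and only modulo $2\pi$. More seriously, a Phragm\'en--Lindel\"of argument applied to $\log(f_\mu/f_{-\mu})$ has nothing to run on: the function lives on all of $\C$, there is no boundary on which its real part is controlled, and subexponential behaviour at infinity does not by itself confine $\real\left(k(\varphi_\mu-\varphi_{-\mu})\right)$ to $(-\pi/2,\pi/2)$. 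The actual argument in \cite{AstalaPaivarinta} is topological and uses the whole family of rotated coefficients: for each $\lambda\in\partial\D$ the combination $\frac{1+\lambda}{2}f_\mu+\frac{1-\lambda}{2}f_{-\mu}$ equals $f_{\lambda\mu}=e^{ik\varphi_{\lambda\mu}}$ (this is \rf{eqExpressionNewHope}, i.e.\ \cite[Lemma 8.2]{AstalaPaivarinta}), hence never vanishes; as $\frac{1-\lambda}{1+\lambda}$ sweeps the imaginary axis, the quotient $h=f_\mu/f_{-\mu}=M_\mu/M_{-\mu}$ omits $i\R$, and since $h\to 1$ at infinity and $\C\setminus i\R$ has two components, connectedness forces $\real h>0$ everywhere. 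Without this input your item on positivity is unproven, and your (otherwise correct) Schwarz-lemma identification of $\tau_\mu(k)$ with the coefficient at infinity of $\frac{M_\mu-M_{-\mu}}{M_\mu+M_{-\mu}}$ only yields $|\tau_\mu(k)|\leq 1$ once positivity is in place. A secondary caveat: existence and uniqueness of the principal solution of the nonlinear equation is not a plain contraction in $W^{1,p}$ (the coefficient depends on $\varphi$ itself); in \cite{AstalaPaivarinta} it is obtained by approximation and normal families, so ``a fixed-point argument'' needs more care, though this part is standard.
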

\begin{proof}
See  {\cite[Theorems 7.1, 4.2, 4.3, and Proposition 6.3]{AstalaPaivarinta} and \cite[Proposition 2.6]{BarceloFaracoRuiz}}. The differentiability properties on the $k$ variable are shown in  the discussion following \cite[Lemma 5.3]{AstalaPaivarinta}.
\end{proof}

In Definition \ref{defFMu} we could consider the equation $\overline{\partial}f=\nu\partial f + \mu \overline{\partial f}$ with $\norm{|\nu|+|\mu|}_{L^\infty}<1$, and we would get the same results. However, to deal with the isotropic conductivity equation, we only need  the case presented above, and we only work with real-valued Beltrami coefficients. This comes from a natural bijection between the conductivity equation \rf{eqConductivity} and the Beltrami equation \rf{eqBeltrami} which, in the case of isotropic conductivities, reads as
$$\mu_\gamma:=\frac{1-\gamma}{1+\gamma},$$
which leads to a real-valued Beltrami coefficient. Then, the identity \rf{eqRelationfu} below links the solutions of both equations together. The interested reader may find a more detailed explanation on \cite[Chapter 16]{AstalaIwaniecMartin}. If it is clear from the context we will omit the subindex in $\mu$. Analogously, given a Beltrami coefficient $\mu$, we call $\gamma_\mu:=\frac{1-\mu}{1+\mu}$. Note that $\gamma_{\mu_\gamma}=\gamma$.

\begin{definition}
Given a conductivity coefficient $\gamma$ supported in $\overline{\D}$, and $k\in\C$ there is a unique complex valued solution $u_\gamma(\cdot,k) : \C\to\C$ of 
\begin{equation}\label{eqConductivityAssimptotics}
\begin{cases}
	\nabla \cdot \left(\gamma \nabla u_\gamma(\cdot,k)\right) \equiv 0, \\
	u_\gamma (z,k)= e^{i k z}\left(1 + R_\gamma(z,k) \right), \mbox{\quad with $R(\cdot,k)\in W^{1,p}(\C)$}
\end{cases}
\end{equation}
 for $p\in \left(2,p_\kappa\right)$, which we call {\it Complex Geometric Optics Solution to the conductivity equation} \rf{eqConductivityAssimptotics}. \end{definition}

\begin{proposition}[{see \cite[from (1.14) to (1.17)]{AstalaPaivarinta}}]\label{propoCGOS}
The CGOS to \rf{eqConductivityAssimptotics} is given by
\begin{equation}\label{eqRelationfu}
u_\gamma =\real(f_\mu)+i\,\imag(f_{-\mu})=\frac{1}{2} \left(f_\mu+f_{-\mu}+\overline{f_{\mu}}-\overline{f_{-\mu}}\,\right),
\end{equation}
where $\mu=\mu_\gamma$. In addition, $u_\gamma$ satisfies the equation
\begin{equation}\label{eqTransportTau}
\partial_{\overline{k}} u_\gamma(z,k)=-i \tau_\mu(k) \overline{u_\gamma}(z,k).
\end{equation}
\end{proposition}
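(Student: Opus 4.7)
The plan is to prove both identities by reducing them to results already contained in \cite{AstalaPaivarinta}, together with the uniqueness of the CGOS to the conductivity equation. I split the argument in two parts.

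For the first identity, I would exploit the Hodge-type duality between the $\R$-linear Beltrami equation with real-valued Beltrami coefficient and the divergence form conductivity equation. Writing $f = u + iv$ with $u, v$ real, a direct computation (taking real and imaginary parts of $\bar\partial f = \mu \,\overline{\partial f}$) shows that $\partial_y v = \gamma \partial_x u$ and $\partial_x v = -\gamma \partial_y u$, so that $u$ solves $\nabla\cdot(\gamma \nabla u)=0$ while $v$ solves $\nabla\cdot(\gamma^{-1}\nabla v)=0$. Since $\mu_{1/\gamma} = -\mu_\gamma$, this duality applied to $f_\mu$ and to $f_{-\mu}$ shows that both $\real(f_\mu)$ and $\imag(f_{-\mu})$ are real solutions of $\nabla\cdot(\gamma\nabla\cdot)=0$, so that the complex combination $u := \real(f_\mu) + i\,\imag(f_{-\mu})$ is also a solution.

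To identify $u$ with $u_\gamma$ via the uniqueness statement in the definition of the conductivity CGOS, I would then verify the correct asymptotics. Using $\real(f_\mu)=\tfrac12(f_\mu+\overline{f_\mu})$ and $i\,\imag(f_{-\mu})=\tfrac12(f_{-\mu}-\overline{f_{-\mu}})$ yields the second equality in \rf{eqRelationfu}. Writing $f_{\pm\mu}(z,k)=e^{ikz}M_{\pm\mu}(z,k)$ and setting $R(z,k):=e^{-ikz}u(z,k)-1$ gives
\begin{equation*}
R(z,k) = \tfrac12\bigl((M_\mu(z,k)-1) + (M_{-\mu}(z,k)-1)\bigr) + \tfrac12\, e^{-2i\,\real(kz)}\bigl(\overline{M_\mu(z,k)} - \overline{M_{-\mu}(z,k)}\bigr).
\end{equation*}
The first term lies in $W^{1,p}(\C)$ by \rf{eqJostSobolevBounded}; the second is the product of a bounded smooth multiplier (with bounded gradient) and the $W^{1,p}$ difference $\overline{M_\mu}-\overline{M_{-\mu}}$, which is again in $W^{1,p}(\C)$. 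Hence $R(\cdot,k)\in W^{1,p}(\C)$, so by uniqueness of the CGOS to the conductivity equation $u\equiv u_\gamma$.

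For the transport equation \rf{eqTransportTau}, the natural route is to combine the $\bar\partial_k$-equations for $M_\mu$ and $M_{-\mu}$ established by Astala--P\"aiv\"arinta in the discussion surrounding their (1.14)--(1.17), of the form $\partial_{\bar k} f_{\pm\mu}(z,k) = \mp i\,\tau_{\pm\mu}(k)\,\overline{f_{\mp\mu}(z,k)}$ (up to the sign conventions fixed by Definition \ref{defFMu}). Since $\tau_\mu$ is built symmetrically from $M_\mu$ and $M_{-\mu}$, one checks that $\tau_{-\mu} = \tau_\mu$ (again in the convention of \cite{AstalaPaivarinta}). Differentiating the combination $u_\gamma = \tfrac12(f_\mu + \overline{f_\mu} + f_{-\mu} - \overline{f_{-\mu}})$ in $\bar k$ and substituting these relations, the four terms reorganize (using $\overline{\partial_{\bar k} g} = \partial_k \bar g$) into exactly $-i\,\tau_\mu(k)$ times $\overline{u_\gamma}=\tfrac12(\overline{f_\mu}+f_\mu+\overline{f_{-\mu}}-f_{-\mu})$, yielding \rf{eqTransportTau}.

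The main obstacle is the derivation of the $\bar\partial_k$-equation for $f_{\pm\mu}$: this is the delicate content of \cite[\S\S 5--7]{AstalaPaivarinta}, relying on the pseudoanalytic-in-$k$ character of $M_\mu$ together with the identification of the scattering coefficient via the $\partial_z$-integral in Definition \ref{defFMu}. I would invoke this black-box result rather than reprove it, since our only new task is to arrange the algebra so that \rf{eqTransportTau} follows.
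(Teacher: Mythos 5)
The paper itself gives no proof of this proposition: it is taken verbatim from the block (1.14)--(1.17) of \cite{AstalaPaivarinta}, so any argument you supply is extra. Your first half is sound and is exactly the construction underlying that citation: with $\mu$ real the equation $\bar\partial f=\mu\overline{\partial f}$ for $f=u+iv$ gives $\partial_y v=\gamma\partial_x u$, $\partial_x v=-\gamma\partial_y u$, hence $\real (f_\mu)$ and (since $\mu_{1/\gamma}=-\mu_\gamma$) $\imag (f_{-\mu})$ solve $\nabla\cdot(\gamma\nabla\cdot)=0$, and your $W^{1,p}$ verification of the asymptotics of $R$ lets you conclude by the uniqueness built into the definition of $u_\gamma$. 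That part I accept.

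The derivation of \rf{eqTransportTau} has a genuine gap. Even granting the black-box you invoke, $\partial_{\bar k}f_{\pm\mu}=\mp i\,\tau_{\pm\mu}\,\overline{f_{\mp\mu}}$, the algebra does not close: $u_\gamma$ contains $\overline{f_\mu}$ and $\overline{f_{-\mu}}$, and $\partial_{\bar k}\overline{f_{\pm\mu}}=\overline{\partial_k f_{\pm\mu}}$ involves the \emph{holomorphic} $k$-derivatives, about which your hypotheses say nothing. The identity you appeal to, $\overline{\partial_{\bar k}g}=\partial_k\bar g$, produces $\partial_k\overline{f_{\pm\mu}}$, not the derivative you need, so the ``four terms'' cannot reorganize as claimed. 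Moreover, with the definition of $\tau_\mu$ in Definition \ref{defFMu} the scattering transform is antisymmetric, $\tau_{-\mu}=-\tau_\mu$, contradicting the symmetry you assert. Finally, the attribution is off: what Astala and P\"aiv\"arinta establish in the cited block is precisely the transport equation for the combinations $u_\gamma$ (and its $\gamma^{-1}$ counterpart), proved by a nontrivial argument in which $\partial_{\bar k}u_\gamma$ and $\overline{u_\gamma}$ are matched as solutions of the same real equation with the same asymptotics; there is no clean pseudoanalytic $k$-equation for $f_{\pm\mu}$ alone to quote, so your reduction is circular at best. The consistent version of your plan is either to cite the transport equation for $u_\gamma$ itself, as the paper does, or to reprove it along those Nachman/Brown--Uhlmann lines.
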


\begin{remark}\label{remDifferentiabilityOfFunctions}
First note that the differentiability properties in Theorem \ref{theoConsequencesFMu} extend to $f_\mu$ and $u_\gamma$, so the derivative in \rf{eqTransportTau} can be understood in the classical sense. 
By the Abstract Monodromy Theorem (see \cite[Theorem 15.1.3]{ConwayComplexII}, for instance) there is a determination of the logarithm of $M_\mu$ which coincides with $ik(\varphi_\mu-z)$. Thus, the differentiability property of $M_\mu$ with respect to the second variable extends to the product $k\varphi_\mu$ as well.

On the other hand, note that $u_\gamma$ is close to $e^{i zk}$ for $z$ big. Indeed, since 
$$R_\gamma(z,k)=\frac12 \left(M_\mu(z,k)+M_{-\mu}(z,k)+ e^{-i(kz+\bar{k}\bar{z})}\left(\overline{M_\mu(z,k)}-\overline{M_{-\mu}(z,k)}\,\right)\right) -1,$$ 
we get a uniform control independent of $\gamma$ of the Sobolev norm
$$\norm{R_\gamma(\cdot,k)}_{W^{1,p}(\C)}\leq C(1+|k|)\left(\norm{M_\mu(\cdot,k)-1}_{W^{1,p}(\C)}+\norm{M_{-\mu}(\cdot,k)-1}_{W^{1,p}(\C)}\right) \leq e^{C(1+|k|)}.$$
By the Sobolev embedding Theorem, $R_\gamma(z,k)\to 0$ as $z\to \infty$. This tells us that the complex geometric optics solution spins as $z$ approaches infinity in the direction of $\bar{k}$ and it blows up in the direction of $-i\bar k$, but when $k\to 0$ this is done slower and slower.
\end{remark}

\begin{theorem}[{see \cite[Theorem 3.12]{BarceloFaracoRuiz}}]\label{theoMMuDerivativeTauDerivative}
Let $\mu \in L^\infty$ be supported in $\overline{\D}$ with $\kappa:=\norm{\mu}_{L^\infty}<1$ and let $2<p<p_\kappa$. There exists a constant $C$ depending on $\kappa$ and $p$ such that
\begin{equation}\label{eqMMuDerivative}
\norm{\nabla_k M_\mu(\cdot,k)}_{W^{1,p}(\C)}\leq e^{C(1+|k|)},
\end{equation}
and
\begin{equation}\label{eqTauDerivative}
|\nabla_k \tau_\mu(k)|\leq e^{C(1+|k|)}.
\end{equation}
\end{theorem}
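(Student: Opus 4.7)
The plan is to differentiate the Beltrami-type equation satisfied by $M_\mu$ in the parameter $k$, observe that the derivatives $\partial_{k_j} M_\mu$ solve an equation of exactly the same form with a compactly supported source, and then invert using the same machinery that underlies (\ref{eqJostSobolevBounded}). The bound on $\nabla_k\tau_\mu$ will then follow by passing the derivative inside the defining integral and applying Stokes' theorem together with a Sobolev trace.

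Substituting $f_\mu(z,k)=e^{ikz}M_\mu(z,k)$ into (\ref{eqBeltrami}) and using $\bar\partial e^{ikz}=0$ gives
$$L_k M_\mu := \bar\partial M_\mu - \mu e_{-k}\overline{\partial M_\mu} + i\bar k\,\mu e_{-k}\,\overline{M_\mu} = 0, \qquad e_{-k}(z):=e^{-i(kz+\overline{kz})}.$$
Since $L_k 1 = i\bar k\mu e_{-k}$ is compactly supported in $\overline\D$, the Jost function satisfies the inhomogeneous equation $L_k(M_\mu-1) = -i\bar k\mu e_{-k}$; this is precisely the equation that is inverted in \cite{AstalaPaivarinta} to prove (\ref{eqJostSobolevBounded}). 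Their argument provides a right inverse $L_k^{-1}$, on the subspace of $W^{1,p}(\C)$ functions vanishing at infinity, with operator norm at most $e^{C(1+|k|)}$ when acting on sources supported in $\overline\D$.

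Applying the real derivative $\partial_{k_j}$ ($j=1,2$) to $L_k M_\mu = 0$ and using that $\partial_{k_j}$ commutes with $\partial_z,\partial_{\bar z}$ and complex conjugation, a short computation with $\partial_{k_1}e_{-k}=-i(z+\bar z)e_{-k}$, $\partial_{k_2}e_{-k}=i(z-\bar z)e_{-k}$, $\partial_{k_1}\bar k=1$ and $\partial_{k_2}\bar k=-i$, together with the substitution $\mu e_{-k}\overline{\partial M_\mu}=\bar\partial M_\mu+i\bar k\mu e_{-k}\overline{M_\mu}$ (which cancels the unpleasant terms), yields
$$L_k V_j = S_j, \qquad V_j:=\partial_{k_j}M_\mu,$$
where $S_j = \alpha_j(z)\,\bar\partial M_\mu + \beta_j(z)\,\overline{M_\mu}$ with $\alpha_j,\beta_j$ bounded and supported in $\overline\D$. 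From (\ref{eqJostSobolevBounded}) one obtains $\norm{S_j}_{L^p}\leq e^{C(1+|k|)}$, so applying $L_k^{-1}$ gives $\norm{V_j}_{W^{1,p}(\C)}\leq e^{C(1+|k|)}$, which is (\ref{eqMMuDerivative}).

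For (\ref{eqTauDerivative}), the smoothness (\ref{eqCInfty}) legitimises passing $\nabla_k$ inside the integral defining $\tau_\mu$; since $\partial_z\overline{M_{\pm\mu}} = \overline{\bar\partial M_{\pm\mu}}$ is supported in $\overline\D$ (because $\mu$ is), Stokes' theorem reduces $\nabla_k\tau_\mu(k)$ to a line integral on $\partial\D$ of $\nabla_k(\overline{M_\mu}-\overline{M_{-\mu}})$, and the Sobolev trace $W^{1,p}(\D)\hookrightarrow L^\infty(\partial\D)$ (available because $p>2$) combined with (\ref{eqMMuDerivative}) finishes the proof. The main obstacle is the inversion step: $L_k$ is not a standard Beltrami operator because of the compactly supported zeroth-order term $i\bar k\mu e_{-k}\,\overline{(\cdot)}$, whose norm grows linearly in $|k|$, and the twist $e_{-k}$ makes the inverse of the principal part unavoidably lose control exponentially in $|k|$; tracking both effects quantitatively is what produces the exponent $e^{C(1+|k|)}$ in the final bound.
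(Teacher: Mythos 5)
Your route is essentially the proof of the cited result: the paper itself gives no argument for Theorem \ref{theoMMuDerivativeTauDerivative} (it quotes \cite[Theorem 3.12]{BarceloFaracoRuiz}), and the proof there proceeds exactly as you propose — derive the equation $\bar\partial M_\mu-\mu e_{-k}\overline{\partial M_\mu}+i\bar k\mu e_{-k}\overline{M_\mu}=0$, differentiate it in $k$, view $V_j=\partial_{k_j}M_\mu$ as a solution of an inhomogeneous equation of the same type with compactly supported source controlled via \rf{eqJostSobolevBounded}, and conclude with the exponentially weighted a priori estimate; \rf{eqTauDerivative} then follows from \rf{eqMMuDerivative} by differentiating under the integral. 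Your algebra is correct: substituting the equation back in does cancel the $\bar k$-terms, leaving for instance $L_kV_1=-i(z+\bar z)\,\bar\partial M_\mu-i\mu e_{-k}\overline{M_\mu}$, so $\norm{S_j}_{L^p(\D)}\leq e^{C(1+|k|)}$. Two harmless slips: $\partial_{k_2}e_{-k}=(z-\bar z)e_{-k}$, not $i(z-\bar z)e_{-k}$, and $\alpha_j(z)$ is a linear function, not compactly supported — it is $\bar\partial M_\mu$ (supported in $\supp\mu$) that localizes that term.

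Two points need tightening. The ``right inverse $L_k^{-1}$ with norm $e^{C(1+|k|)}$'' should not be attributed loosely to \cite{AstalaPaivarinta}; the precise tool is Theorem \ref{theoArgumentPrinciple} of this paper (i.e.\ \cite[Theorem 2.1]{BarceloFaracoRuiz}): the differentiated equation gives $|\bar\partial V_j|\leq\chi_\D\left(\kappa|\partial V_j|+\kappa|k|\,|V_j|+|S_j|\right)$, and applying that theorem with $M=\kappa|k|\chi_\D$, $E=|S_j|$ yields $\norm{V_j}_{W^{1,p}(\C)}\leq e^{C(1+|k|)}\norm{S_j}_{L^p(\D)}$. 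But to apply it one must know a priori that $V_j\in W^{1,2}_{loc}$ in $z$, that $V_j(z)\to0$ as $z\to\infty$, and that $V_j\in L^q(\D)$; \rf{eqCInfty} only gives pointwise smoothness in $k$, so the clean way is to run the same computation on the difference quotients $h^{-1}\left(M_\mu(\cdot,k+h)-M_\mu(\cdot,k)\right)$, which do lie in $W^{1,p}(\C)$ by \rf{eqJostSobolevBounded}, get the bound uniformly in $h$, and pass to the limit. Also, your closing diagnosis misplaces the source of the exponential loss: the twist $e_{-k}$ is unimodular and costs nothing in operator norms; the factor $e^{C(1+|k|)}$ comes from the zeroth-order coefficient of size $|k|$, through the $e^{C(1+\norm{M}_{L^p(\D)})}$ dependence in Theorem \ref{theoArgumentPrinciple}. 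The $\tau_\mu$ step is fine — indeed, once \rf{eqMMuDerivative} controls $\nabla_z\nabla_k M_{\pm\mu}$ in $L^p(\C)$, even H\"older's inequality on $\D$ suffices, without invoking Stokes and a trace estimate.
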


\section{Decay of the complex geometric optics solution}\label{secDecay}
Next we focus on the sub-exponential behavior of the complex geometrics optics solution depending on the modulus of continuity of the solutions. To start, we note some properties of the modulus of continuity.

\subsection{Interaction of the modulus of continuity with operators.}\label{secInteractionCGOS}

\begin{lemma}\label{lemTranslationInvariant}
Let $T$ be a translation-preserving linear operator mapping $L^p$ to $L^q$ for certain $0<p,q\leq \infty$, with norm $\norm{T}_{p,q}$. The following holds:
$$\norm{Tf}_{B^\omega_q} \leq \norm{T}_{p,q} \norm{f}_{B^\omega_p}.$$
\end{lemma}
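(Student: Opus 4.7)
The proof should be essentially a one-line unfolding of the definitions, once we exploit translation invariance. The plan is to treat the two parts of the norm $\norm{\cdot}_{B^\omega_q} = \norm{\cdot}_{L^q} + \norm{\cdot}_{\dot B^\omega_q}$ separately, noting that the $L^q \leftarrow L^p$ bound on $T$ immediately handles the first summand, so all the work (such as it is) concerns the homogeneous seminorm.

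For the seminorm part, I would start from the hypothesis that $T$ commutes with translations, i.e.\ $T \tau_y = \tau_y T$ for every $y\in\C$. Linearity then gives the pointwise identity
\begin{equation*}
Tf - \tau_y (Tf) = T f - T(\tau_y f) = T(f-\tau_y f),
\end{equation*}
and the $L^p\to L^q$ boundedness of $T$ yields
\begin{equation*}
\norm{Tf - \tau_y (Tf)}_{L^q} \leq \norm{T}_{p,q}\, \norm{f-\tau_y f}_{L^p}.
\end{equation*}
Taking the supremum over $|y|\leq t$ on both sides gives the pointwise inequality of moduli $\modulus{q}{(Tf)}(t) \leq \norm{T}_{p,q}\, \modulus{p}{f}(t)$ for every $t>0$.

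Dividing through by $\omega(t)$ (which is strictly positive for $t>0$ since $\omega$ is increasing with $\omega(0^+)=0$, unless $\omega\equiv 0$ in which case the statement is trivial) and taking the supremum in $t$ yields $\norm{Tf}_{\dot B^\omega_q} \leq \norm{T}_{p,q}\, \norm{f}_{\dot B^\omega_p}$. Combined with $\norm{Tf}_{L^q} \leq \norm{T}_{p,q}\, \norm{f}_{L^p}$, this gives the claim. There is no real obstacle; the only subtlety worth flagging is that $\tau_y$ must act on $Tf$ as an element of $L^q$ on the left and on $f$ as an element of $L^p$ on the right, which is consistent since both translation operators are the standard shift $x\mapsto x-y$, and the identity $T\tau_y=\tau_y T$ is precisely the translation-preservation hypothesis.
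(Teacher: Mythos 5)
Your proof is correct and follows essentially the same argument as the paper: use $T\tau_y=\tau_y T$ and linearity to write $Tf-\tau_y(Tf)=T(f-\tau_y f)$, apply the $L^p\to L^q$ bound, and take suprema to get $\modulus{q}{(Tf)}(t)\leq \norm{T}_{p,q}\,\modulus{p}{f}(t)$. Your treatment of the inhomogeneous part and the division by $\omega(t)$ just spells out what the paper leaves implicit.
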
 
\begin{proof}
Being translation invariant can be written as $T(\tau_y f)=\tau_y (Tf)$. Thus, for every $t>0$
\begin{equation*}
\modulus{q}{Tf}(t) = \sup_{|y|\leq t} \norm{Tf-\tau_y(Tf)}_{L^q}= \sup_{|y|\leq t} \norm{ Tf- T(\tau_yf)}_{L^q}=\sup_{|y|\leq t} \norm{ T(f-\tau_yf)}_{L^q}.
\end{equation*}
It follows that
\begin{equation}\label{eqModulusBoundedTranslationInvariant}
\modulus{q}{Tf}(t) \leq \norm{T}_{p,q} \modulus{p}{f} (t).
\end{equation}
\end{proof}

Note that the Beurling transform is translation invariant. Indeed, it follows from the definition of the Fourier transform that given a function $f\in L^1_{loc}$ and complex numbers $\xi,k\in \C$, we have that 
\begin{equation}\label{eqFourierTranslation}
\widehat{e_{k} f}(\xi)= \tau_{-\bar{k}}\widehat{f}(\xi) \mbox{\quad \quad and \quad \quad} \widehat{\tau_k f}(\xi) = e_{\bar{k}} (\xi) \widehat{f}(\xi).
\end{equation}
The same happens with the Cauchy transform.

Next we  get a quantitative version of \cite[Theorem 4]{Pego}.
\begin{lemma}\label{lemFourier}
Given $f\in L^p$, $1\leq p\leq 2$ and $R>0$, then 
\begin{equation}\label{eqFourierKeepingModulus}
\norm{ \hat f \chi_{|\xi|> R} }_{L^{p'}} \leq C(p) \modulus{p}{f}\left(\frac1R\right) .
\end{equation}
\end{lemma}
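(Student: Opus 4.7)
The plan is to exploit the multiplicative behavior of the Fourier transform under translation. Applying identity \rf{eqFourierTranslation} from Definition \ref{defFourier}, one has $\widehat{f - \tau_y f}(\xi) = (1 - e^{2i\,\xi\cdot y})\,\hat f(\xi)$, where I write $\xi\cdot y$ for the real inner product (so that $e_{\bar\xi}(y) = e^{2i\,\xi\cdot y}$). The Hausdorff--Young inequality (which is Plancherel when $p = 2$) then yields, for every $y$ with $|y| \leq 1/R$,
\begin{equation*}
\norm{(1 - e^{2i\,\xi\cdot y})\hat f}_{L^{p'}(\C)} \leq C_p \norm{f - \tau_y f}_{L^p} \leq C_p \modulus{p}{f}(1/R).
\end{equation*}
The proof therefore reduces to producing $y$'s with $|y| \leq 1/R$ such that $|1 - e^{2i\,\xi\cdot y}| = 2|\sin(\xi\cdot y)|$ is bounded away from zero uniformly on the tail $\{|\xi| > R\}$.

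For $1 < p \leq 2$ I would average over the circle $y_\theta = \frac{e^{i\theta}}{2R}$, $\theta \in [0, 2\pi)$. On the region $|\xi| > R$, writing $A := |\xi|/(2R) > 1/2$ and performing the change of variable $\phi = \theta - \arg\xi$, one gets
\begin{equation*}
\frac{1}{2\pi}\int_0^{2\pi} |1 - e^{2i\,\xi\cdot y_\theta}|^{p'}\,d\theta = \frac{2^{p'}}{2\pi}\int_0^{2\pi} |\sin(A\cos\phi)|^{p'}\,d\phi =: F_{p'}(A).
\end{equation*}
The heart of the proof---and what I expect to be the main obstacle---is the uniform lower bound $\inf_{A \geq 1/2} F_{p'}(A) =: c_{p'} > 0$. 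Continuity of $F_{p'}$ on $(0,\infty)$ is immediate by dominated convergence, and $F_{p'}(A) > 0$ for every $A > 0$ because $\phi \mapsto \sin(A\cos\phi)$ vanishes only on a finite set. As $A \to \infty$, equidistribution of $A\cos\phi$ modulo $2\pi$ (combined with integrability of the weight $1/\sqrt{1-u^2}$ after the substitution $u = \cos\phi$) forces $F_{p'}(A)$ to converge to a positive limit proportional to $\int_0^\pi |\sin v|^{p'}\,dv$, so the infimum on $[1/2,\infty)$ is attained at a positive value. Averaging the Hausdorff--Young bound in $\theta$ and invoking this lower bound then gives
\begin{equation*}
c_{p'} \int_{|\xi| > R} |\hat f(\xi)|^{p'}\,dm(\xi) \leq C_p^{p'}\,\modulus{p}{f}(1/R)^{p'},
\end{equation*}
which is the desired inequality.

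For $p = 1$, $p' = \infty$, the averaging becomes a pointwise choice: for each $\xi$ with $|\xi| > R$ I would take $y_\xi := \xi/(2|\xi|^2)$, so that $|y_\xi| = 1/(2|\xi|) < 1/R$ and $\xi \cdot y_\xi = 1/2$, hence $|1 - e^{2i\,\xi\cdot y_\xi}| = 2\sin(1/2)$. Since $\hat f$ is continuous by Riemann--Lebesgue and Hausdorff--Young here reads $\norm{\hat g}_{L^\infty} \leq \norm{g}_{L^1}$, evaluating the identity at $\xi$ yields $|\hat f(\xi)| \leq \modulus{1}{f}(1/R)/(2\sin(1/2))$; taking the supremum over $|\xi| > R$ completes the proof.
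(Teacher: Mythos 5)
Your proof is correct, and its skeleton is the same as the paper's: combine the translation identity \rf{eqFourierTranslation} with the Hausdorff--Young inequality, and reduce the estimate to a lower bound, uniform over $\{|\xi|>R\}$, for an average of $|e_{\bar\xi}(y)-1|$ taken over translations $|y|\lesssim 1/R$. The genuine difference lies in how that lower bound is produced and exploited. The paper averages $|e_{\bar\xi}(y)-1|^{p'}$ over the full disk $\{|y|\leq 1/R\}$ and proves the analogue of your $c_{p'}$, namely \rf{eqSomethingFixed}, by an explicit measure count of the angles $\theta$ for which $2r\bar\xi\cdot e^{i\theta}$ stays away from $2\pi\Z$; this gives a universal constant independent of the exponent (Jensen then covers all $q$ at once), and the $y$-average is subsequently dominated by a supremum before Hausdorff--Young is applied. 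You instead average over the circle $|y|=1/(2R)$, keep the average inside via Tonelli, and obtain the uniform positivity of $F_{p'}(A)$ on $[1/2,\infty)$ by a soft argument: continuity, strict positivity for each fixed $A$, and the limit as $A\to\infty$ coming from the standard periodic averaging (Riemann--Lebesgue) lemma applied against the integrable weight $(1-u^2)^{-1/2}$. That step is correct but non-quantitative, and it makes your constant depend on $p'$ through $c_{p'}$ --- which the statement permits, since only $C(p)$ is claimed --- whereas the paper's lower bound carries an explicit, exponent-independent constant. Your separate treatment of $p=1$ via the pointwise choice $y_\xi=\xi/(2|\xi|^2)$ is a clean replacement for the paper's ``mutatis mutandis''.
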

\begin{proof}
The first step is showing that for $1\leq q \leq \infty$ and $|\xi|> R$, 
\begin{equation}\label{eqSomethingFixed}
	\left(\fint_{|y|\leq \frac{1}{R}} \left| e_{\bar{\xi}}(y)  -1\right|^{q} \, dm(y) \right)^{\frac{1}{q}} \approx 1,
\end{equation}
with constants not depending on $p$. Note that $\norm{ e_{\bar{\xi}} -1}_{L^{\infty}} < 4$, so we need to prove that  
\begin{equation*}
	\fint_{|y|\leq \frac{1}{R}} \left| e_{\bar{\xi}}(y)  -1\right| \, dm(y)  \gtrsim 1,
\end{equation*}
and \rf{eqSomethingFixed} will follow by Jensen's inequality.

Indeed, for any given $\xi \in \C$, changing variables we obtain that
\begin{align*}
\fint_{|y|\leq \frac{1}{R}} \left| e_{\bar{\xi}}(y) -1\right| \, dm(y) 
	& \gtrsim  \fint_{\frac{1}{2R} \leq |y|\leq \frac{1}{R}} \left| e_{\bar{\xi}}(y)  -1\right| \, dm(y) 
		\approx  R^2 \int_{\frac{1}{2R}}^{\frac{1}{R}}\int_0^{2\pi} \left| e_{\bar{\xi}}(re^{i\theta}) -1\right| \, d\theta r dr	\\
	& \gtrsim \inf_{r\in \left(\frac{1}{2R},\frac{1}{R}\right)} \left|\left\{ \theta\in(0,{2\pi}) :  \left| e^{2 i (r \bar{\xi} \cdot e^{i\theta})} -1\right|>0.4 \right\}\right|,
\end{align*}
where $\bar{\xi} \cdot e^{i\theta}$ stands for the usual scalar product of two vectors. Thus,
\begin{align*}
\fint_{|y|\leq \frac{1}{R}} \left| e_{\bar{\xi}}(y)  -1\right| \, dm(y) 
	& \gtrsim \inf_{r\in \left(\frac{1}{2R},\frac{1}{R}\right)} \left|\left\{ \theta\in(0,{2\pi}) :  \dist(2 r \bar{\xi} \cdot e^{i\theta}, 2\pi \Z)>0.5 \right\}\right|.
\end{align*}
Writing $\bar{\xi}=|\xi| e^{it}$, we have that
\begin{align*}
 \left|\left\{ \theta\in(0,{2\pi}) :  \dist(2r \bar{\xi} \cdot e^{i\theta}, 2\pi \Z)> 0.5 \right\}\right| 
 	& =  \left|\left\{ \theta\in(0,{2\pi}) :  \dist(|r \xi| \cos(\theta-t) , \pi \Z)> 0.25 \right\}\right| \\
 	& =  \left|[0,2\pi] \cap \cos^{-1}\left( \bigcup_{j} \left(\frac{\pi j +0.25}{|r \xi|},\frac{\pi j + \pi - 0.25}{|r \xi|}\right)\right)\right|.
\end{align*}
But the cosine is a contraction and, therefore, 
\begin{align*}
 \left|\left\{ \theta\in(0,{2\pi}) :  \dist(2 r \bar{\xi} \cdot e^{i\theta},  2\pi \Z)> 0.5 \right\}\right| 
 	& >   \sum_{j} \left| \left(  \frac{\pi j + 0.25}{|r \xi|},  \frac{\pi j + \pi - 0.25}{|r \xi|} \right) \cap [-1,1] \right| \\
 	& \geq  \frac2{|r \xi|} \sum_{j\geq 0 : \pi j + 0.5 \leq |r\xi|} \left| \left( \pi j + 0.25, \pi j + 0.5 \right) \cap [0,|r \xi|] \right| .
\end{align*}
Whenever $j$ is in the summation range, the measure $\left| \left( \pi j + 0.25, \pi j + 0.5 \right) \cap [0,|r \xi|] \right| =0.25$. For every $|\xi|>R$ and $r>\frac{1}{2R}$, we have that $|r \xi|> 0.5$ and, thus, the number of elements in the sum bounded above and below by constants depending linearly on $|r\xi|$. Namely,
\begin{align*}
 \left|\left\{ \theta\in(0,{2\pi}) :  \dist(2 r \bar{\xi} \cdot e^{i\theta},  2\pi \Z)> 0.5 \right\}\right| 
	&  > \frac{2 \cdot 0.25}{|r \xi|}  \#\{j\geq 0 : \pi j + 0.5 \leq |r\xi|\} 
	 	\gtrsim 1,
\end{align*}
establishing \rf{eqSomethingFixed}.

By \rf{eqSomethingFixed}, for $p>1$ we have that 
\begin{align*}
\norm{ \hat f \chi_{|\xi|> R} }_{L^{p'}} 
	& \approx \left( \int_{|\xi|> R} \fint_{|y|< \frac1R}\left| e_{\bar{\xi}}(y)  - 1 \right|^{p'} \, dm(y) |\hat f(\xi)|^{p'} \, dm(\xi)\right)^\frac{1}{p'} \\
	& \leq \left( \sup_{|y|< \frac1R} \int_{|\xi|> R} \left|( e_{\bar{y}}(\xi)  - 1 )\hat f(\xi)\right|^{p'} \, dm(\xi)\right)^\frac{1}{p'} .
\end{align*}
Using \rf{eqFourierTranslation} and increasing the domain of integration we get
\begin{align*}
\norm{ \hat f \chi_{|\xi|>R} }_{L^{p'}} 
	& \leq  \sup_{|y|< \frac1R} \norm{ \widehat{\tau_y f - f}}_{L^{p'}} .
\end{align*}
By the Hausdorff-Young inequality, 
\begin{align*}
\norm{ \hat f \chi_{|\xi|>R} }_{L^{p'}} 
	& \leq C_p \sup_{|y|< \frac1R} \norm{ \tau_y f - f }_{L^{p}} 
		= C_p\, \modulus{p}{f}\left(\frac1R\right),
\end{align*}
that is, \rf{eqFourierKeepingModulus}. The case $p=1$ can be shown mutatis mutandis.
\end{proof}

%
 
\subsection{Solution to the linear equation}\label{secLinear}
Following the sketch of \cite[Section 5.1]{ClopFaracoRuiz}, we derive properties of the unique quasiconformal solution $\psi_k :\C\to\C$  to the linear equation
\begin{equation}\label{eqBeltramiLinearEquation}
\begin{cases}
	\bar\partial \psi_k (z) = - \frac{\bar{k}}{k} e_{-k}(z) \mu (z) \partial \psi_k (z), \\
	\psi_k(z) - z = \mathcal{O}_{z\to \infty}(1/z),
\end{cases}
\end{equation}
where  $\mu\in L^\infty$ is supported in $\D$ and $\norm{\mu}_{L^\infty}\leq \kappa<1$. In particular, we want to derive information on the decay of the $L^\infty$ norm of $\psi_k - Id$ when $k$ tends to infinity from the modulus of continuity of $\mu$.

Let us adapt the notion of the families of conductivities in Definition \ref{defFamiliesOfModuli} to the Beltrami equation context.
\begin{definition}\label{defFamiliesOfModuliRevisited}
Let $\mu \in L^\infty$ real valued and supported in $\overline{\D}$ with $\norm{\mu}_{L^\infty}\leq \kappa <1$. Let $1< p <\infty$ and assume  that for a certain modulus of continuity  $\omega$ we have the pointwise bound $\modulus{p}{\mu}\leq \omega$. Then we say that $\mu\in\mathcal{M}(\kappa, p,\omega)$. 
\end{definition}

\begin{proposition}\label{propoUpsilon}
Let  $q>2$, let  $\kappa < 1$, let $\omega$ be a modulus of continuity and let $\mu\in\mathcal{M}(\kappa,q,\omega)$. For every $k\in \C$, let $\psi_k$ be the unique quasiconformal solution to \rf{eqBeltramiLinearEquation}. Then, there exits a modulus of continuity $\upsilon$ depending only on $\kappa$, $q$ and $\omega$ such that
$$\norm{\psi_k-Id}_{L^\infty}\leq \upsilon(|k|^{-1}).$$
\end{proposition}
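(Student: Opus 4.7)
The plan is to combine the Neumann series for $\bar\partial\psi_k$ with the Cauchy representation $\psi_k-\mathrm{Id}=\Cauchy(\bar\partial\psi_k)$, and to extract decay in $|k|$ from the rapid oscillation of $e_{-k}$ against the $q$-modulus of $\mu$. Setting $\tilde\mu_k:=-\frac{\bar k}{k}e_{-k}\mu$ (supported in $\overline{\D}$ with $|\tilde\mu_k|\leq\kappa$), Theorem \ref{theoMeasurableRiemann} and the Neumann identity \rf{eqPrincipalNeumann} give
\begin{equation*}
\bar\partial\psi_k=(I-\tilde\mu_k\Beurling)^{-1}\tilde\mu_k=\sum_{n=0}^\infty(\tilde\mu_k\Beurling)^n\tilde\mu_k,
\end{equation*}
with convergence in $L^r$ for every $r\in(p_\kappa',p_\kappa)$. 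Since $\bar\partial\psi_k$ is compactly supported in $\overline{\D}$ and $\|\bar\partial\psi_k\|_{L^r}\lesssim_{\kappa,r}1$, the compact-support bound \rf{eqCauchyCompactlyBounded} together with Morrey's embedding (valid for $r>2$) and the $O(1/|z|)$ decay of the Cauchy kernel at infinity yield the a priori estimate $\|\psi_k-\mathrm{Id}\|_{L^\infty(\C)}\lesssim_{\kappa,q}1$ uniformly in $k$. This alone proves the claim for $|k|$ bounded, so the real content is the decay as $|k|\to\infty$.

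I would first treat the leading Neumann term $\Cauchy(\tilde\mu_k)$. On the Fourier side \rf{eqFourierTranslation} gives $\widehat{e_{-k}\mu}(\xi)=\hat\mu(\xi-\bar k)$, and the Cauchy symbol is (up to a harmless constant) $1/\bar\xi$. Splitting the frequency space at the threshold $|\xi|\sim |k|/2$ separates two regimes: on the low-frequency part $|\xi|\leq |k|/2$ the shifted spectrum $\hat\mu(\xi-\bar k)$ lives at scales $|\eta|\geq |k|/2$, where Lemma \ref{lemFourier} controls its $L^{q'}$-norm by $\modulus{q}{\mu}(2/|k|)\leq \omega(2/|k|)$; on the high-frequency part $|\xi|\geq |k|/2$ the symbol $1/|\bar\xi|$ gains a factor $\lesssim 1/|k|$. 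Combining these via Hausdorff--Young (or Plancherel together with H\"older) produces an estimate of the form
\begin{equation*}
\|\Cauchy(\tilde\mu_k)\|_{L^\infty}\lesssim_{\kappa,q}\omega(2/|k|)^\alpha+|k|^{-\beta}
\end{equation*}
for some exponents $\alpha,\beta>0$ depending only on $\kappa$ and $q$.

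For the higher-order terms $\Cauchy((\tilde\mu_k\Beurling)^n\tilde\mu_k)$ I would iterate the same idea, exploiting two structural facts: (a) the Beurling transform is translation invariant, so by Lemma \ref{lemTranslationInvariant} it preserves the $q$-modulus up to its operator norm; and (b) each factor $\tilde\mu_k\Beurling$ contributes a gain of $\kappa\|\Beurling\|_{r,r}<1$ in $L^r$ for $r$ close to $p_\kappa$. Truncating the series at a level $n\sim\log(1/\omega(|k|^{-1}))$ and applying the same low/high frequency splitting term by term (the outermost $e_{-k}$ still produces the oscillatory gain) allows one to sum the geometric tail, yielding the desired modulus $\upsilon$ depending only on $\kappa$, $q$ and $\omega$.

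The main obstacle is the higher-order bookkeeping: a product of two $L^q$ functions with prescribed $q$-moduli does not automatically enjoy a clean $q$-modulus estimate, so at each iteration one has to trade integrability (via H\"older) for modulus regularity and reapply Lemma \ref{lemFourier}, making sure the geometric decay $(\kappa\|\Beurling\|_{r,r})^n$ beats any polynomial loss in the modulus constants. This balance dictates the correct truncation level and ultimately the shape of the modulus $\upsilon$.
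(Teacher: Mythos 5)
Your overall architecture (Neumann series for $\bar\partial\psi_k$, the representation $\psi_k-\mathrm{Id}=\Cauchy(\bar\partial\psi_k)$, a frequency splitting at $|\xi|\sim|k|/2$, Lemma \ref{lemFourier} to convert the shift of the spectrum by $\bar k$ into the factor $\omega(2/|k|)$, geometric decay of the tail and a final truncation/optimization) is the same as the paper's, and the ``bookkeeping'' you flag as the main obstacle is exactly what Lemma \ref{lemModulusIOfIterates} settles via the generalized Leibniz rule and translation invariance of $\Beurling$. The genuine gap is elsewhere: your treatment of the Cauchy transform on the Fourier side does not close. On the high-frequency region $|\xi|\geq|k|/2$ the bulk of the shifted spectrum $\hat\mu(\xi-\bar k)$ sits precisely there (centered at $\bar k$), so the only smallness is the symbol gain $1/|\xi|\lesssim 1/|k|$; but to conclude you must integrate $|\hat\mu(\xi-\bar k)|$ over a ball of radius $\sim|k|$, and Plancherel/Cauchy--Schwarz costs a factor $\sim|k|$ that exactly cancels the gain ($\hat\mu\in L^1$ is not available from a mere modulus-of-continuity bound). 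On the low-frequency region, $|\xi|^{-1}\notin L^2_{loc}$ in the plane, so H\"older forces an exponent $p<2$ and the factor $\norm{|\xi|^{-1}}_{L^p(\D_{|k|/2})}\sim|k|^{2/p-1}$ multiplying $\omega(2/|k|)$; for an arbitrary modulus $\omega$ (the whole point of the theorem) this power loss cannot be absorbed. Also, Lemma \ref{lemFourier} is stated only for exponents $1\leq p\leq 2$, so invoking it for the ``$L^{q'}$-norm'' with $q>2$ is not literally allowed (fixable by passing to $p\le 2$ on the compact support, but then you run into the pairing problem just described).

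The paper circumvents precisely this obstruction: since $g_k$, $h_k$ are supported in $\overline\D$ and one only needs $L^\infty(\D)$ bounds, the Cauchy transform is replaced by convolution with a compactly supported truncated kernel $\Phi$, whose Besov regularity (Proposition \ref{propoCompactKernelIsGood}) supplies the quantitative high-frequency decay $\norm{\widehat\Phi}_{L^s(\D_R^c)}\lesssim R^{-\varepsilon}$; and because $\Phi\notin L^2$ one cannot pair $\widehat\Phi$ with $\widehat{g_k}$ by a two-factor H\"older either, so an approximation of the identity $\Phi_\delta=\Phi*\varphi_\delta$ is inserted, producing a three-factor H\"older estimate in which no positive power of $|k|$ appears — only $\omega(2/|k|)$, $|k|^{-\varepsilon}$, the geometric tail $\kappa^{N/2}$, and $\delta$-losses that are finally removed by optimizing over $N$ and $\delta$ (this is also why the paper does not prescribe $N\sim\log(1/\omega(|k|^{-1}))$ in advance, but defines $\upsilon$ as an infimum). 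To repair your argument you would need either this truncation-plus-mollification device or some substitute giving summable decay of the kernel's Fourier transform at high frequencies without paying powers of $|k|$; the global symbol $1/\bar\xi$ alone cannot do it.
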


To show the proposition above we adapt the Neumann series expression \rf{eqPrincipalNeumann} to the parameterized Beltrami equation \rf{eqBeltramiLinearEquation}. Namely,
\begin{equation*}
\bar \partial \psi_k = \sum_{n=0}^\infty  \left(\frac{-\bar{k}}{k} e_{-k} \mu \Beurling\right)^n \left(\frac{-\bar{k}}{k}e_{-k}\mu\right)= \sum_{n=0}^\infty \left(\frac{-\bar{k}}{k}\right)^{n+1} \left(e_{-k} \mu \Beurling\right)^n \left(e_{-k}\mu\right),
\end{equation*}
where the series has, at least, $L^p$ convergence for $p$ in a certain open interval containing $2$.
We want to study the asymptotic behavior on $k$, and, thus, we rewrite this expression as
\begin{equation}\label{eqNeumannModified}
\bar \partial \psi_k = \sum_{n=0}^\infty G_{n,k}.
\end{equation}

To understand better the terms $G_{n,k}$, let
$$f_0(z) :=\mu(z)$$
and
\begin{equation}\label{eqFnDefinition}
f_n(z) := \mu(z) T_n (f_{n-1})(z) ,
\end{equation}
with $T_n$ defined as 
\begin{equation}\label{eqTnDefinition}
T_n(\varphi) = e_{nk} \Beurling (e_{-nk} \varphi).
\end{equation}

With a quick induction argument we show that
\begin{equation}\label{eqHypotesiInduction}
 f_{n}  = e_{(n+1)k}(e_{-k} \mu \Beurling)^{n} (e_{-k}\mu).
\end{equation}
Indeed, the case $n=0$ is trivial. Let $n>0$. We only need to show that if \rf{eqHypotesiInduction} holds for $n-1$, then it holds for $n$.
Thus, assuming that \rf{eqHypotesiInduction} holds for $f_{n-1}$, using \rf{eqFnDefinition} and \rf{eqTnDefinition}, we obtain
\begin{align*}
f_n
	 = \mu T_n (f_{n-1}) 
	 = \mu e_{nk} \Beurling (e_{-nk} e_{nk}(e_{-k} \mu \Beurling)^{n-1} (e_{-k}\mu)) 
	 = e_{(n+1)k} (e_{-k} \mu \Beurling)^{n} (e_{-k}\mu).
\end{align*}
Thus, identity \rf{eqHypotesiInduction} holds for every $n\geq 0$, establishing the following:
\begin{equation}\label{eqTermOfNeumann}
G_{n,k}(z) = \left(\frac{-\bar{k}}{k}\right)^{n+1} e_{-(n+1)k}(z) f_n(z).
\end{equation}

Next we list some properties of the operator $T_n$. By Definition \ref{defBeurling} and \rf{eqFourierTranslation}, we have that  
\begin{equation*}
\widehat{T_n \varphi}(\xi) =(e_{nk} \Beurling (e_{-nk} \varphi))\,\widehat{\,}\,(\xi) =(\Beurling (e_{-nk} \varphi))\,\widehat{\,}\,(\xi + n\bar{k}) =\frac{\bar{\xi}+nk}{\xi+n\bar{k}} \widehat {e_{-nk} \varphi} (\xi+ n\bar{k})=\frac{\bar{\xi}+nk}{\xi+n\bar{k}}  \widehat {\varphi} (\xi).
\end{equation*}
Moreover, $T_n$ is translation invariant, as the reader can check using either the same property of the Beurling transform or the Fourier multiplier definition just mentioned.
The boundedness of the Beurling transform in $L^p$ can be brought to $T_n$ as well, since
\begin{equation}\label{eqBoundTnLp}
\norm{T_n f}_{L^p}=\norm{\Beurling(e_{-nk} f)}_{L^p}\leq \norm{\Beurling}_{p,p}\norm{e_{-nk} f}_{L^p} = \norm{\Beurling}_{p,p}\norm{f}_{L^p}.
\end{equation}

\begin{lemma}\label{lemModulusIOfIterates}
Let $1<p<q<\infty$ and let $\frac1{r}+\frac1q=\frac1p$. Then
\begin{equation}\label{eqModulusFn}
\norm{f_n}_{\dot B^\omega_p}\leq \left(2 \pi\right)^\frac1r \kappa^n M(p,q)^n \norm{\mu}_{\dot B^\omega_q} ,
\end{equation}
where $M(p,q):=\norm{\Beurling}_{r,r} + \norm{\Beurling}_{p,p}$. 
\end{lemma}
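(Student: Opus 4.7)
The plan is to induct on $n$, exploiting the recursion $f_n=\mu\,T_n(f_{n-1})$ together with the translation invariance of $T_n$ established just above the lemma statement. For the base case $n=0$, since $\tau_y\mu-\mu$ is supported in $\overline\D\cup(y+\overline\D)$, a set of measure at most $2\pi$, H\"older's inequality with the exponent relation $\frac1p=\frac1q+\frac1r$ yields $\modulus{p}{\mu}(t)\leq(2\pi)^{1/r}\modulus{q}{\mu}(t)$, and dividing by $\omega(t)$ gives $\norm{f_0}_{\dot B^\omega_p}\leq (2\pi)^{1/r}\norm{\mu}_{\dot B^\omega_q}$, matching the claim.

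For the inductive step I would write the translation increment of the product $f_n=\mu\,T_n(f_{n-1})$ as
$$\tau_y f_n-f_n=\mu\bigl(\tau_y T_n f_{n-1}-T_n f_{n-1}\bigr)+(\tau_y\mu-\mu)\,\tau_y T_n f_{n-1}.$$
Taking $L^p$ norms and applying H\"older with $\frac1p=\frac1q+\frac1r$ on the second summand, together with $\norm{\mu}_{L^\infty}\leq\kappa$, gives
$$\norm{\tau_y f_n-f_n}_{L^p}\leq\kappa\,\norm{\tau_y T_n f_{n-1}-T_n f_{n-1}}_{L^p}+\norm{\tau_y\mu-\mu}_{L^q}\,\norm{T_n f_{n-1}}_{L^r}.$$
Translation invariance of $T_n$ combined with Lemma \ref{lemTranslationInvariant} and \rf{eqBoundTnLp} bounds $\modulus{p}{T_n f_{n-1}}(t)\leq\norm{\Beurling}_{p,p}\modulus{p}{f_{n-1}}(t)$ and $\norm{T_n f_{n-1}}_{L^r}\leq\norm{\Beurling}_{r,r}\norm{f_{n-1}}_{L^r}$, so taking $\sup_{|y|\leq t}$ one obtains
$$\modulus{p}{f_n}(t)\leq\kappa\,\norm{\Beurling}_{p,p}\,\modulus{p}{f_{n-1}}(t)+\norm{\Beurling}_{r,r}\,\modulus{q}{\mu}(t)\,\norm{f_{n-1}}_{L^r}.$$

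For the residual $L^r$ factor I would iterate \rf{eqBoundTnLp} crudely: since $\mu$ is supported in $\overline\D$ with $\norm{\mu}_{L^\infty}\leq\kappa$ one has $\norm{\mu}_{L^r}\leq\pi^{1/r}\kappa$, and the recursion $\norm{f_j}_{L^r}\leq\kappa\norm{\Beurling}_{r,r}\norm{f_{j-1}}_{L^r}$ propagates this to $\norm{f_{n-1}}_{L^r}\leq\pi^{1/r}\kappa^n\norm{\Beurling}_{r,r}^{n-1}$. Feeding in the inductive hypothesis $\modulus{p}{f_{n-1}}(t)\leq (2\pi)^{1/r}\kappa^{n-1}M(p,q)^{n-1}\norm{\mu}_{\dot B^\omega_q}\omega(t)$ and $\modulus{q}{\mu}(t)\leq\norm{\mu}_{\dot B^\omega_q}\omega(t)$, the displayed inequality collapses to
$$\modulus{p}{f_n}(t)\leq(2\pi)^{1/r}\kappa^{n}\bigl(\norm{\Beurling}_{p,p}M(p,q)^{n-1}+2^{-1/r}\norm{\Beurling}_{r,r}^{n}\bigr)\norm{\mu}_{\dot B^\omega_q}\omega(t).$$
Since $2^{-1/r}\norm{\Beurling}_{r,r}^{n}\leq\norm{\Beurling}_{r,r}M(p,q)^{n-1}$, the bracket is bounded by $M(p,q)^n=(\norm{\Beurling}_{p,p}+\norm{\Beurling}_{r,r})M(p,q)^{n-1}$; dividing by $\omega(t)$ and taking the supremum in $t>0$ closes the induction and gives \rf{eqModulusFn}.

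The step I expect to be the most delicate is choosing the right Leibniz-type splitting of $\tau_y(\mu g)-\mu g$: the whole argument depends on placing the translation increment of $\mu$ against a factor for which only an $L^r$ bound is needed (cheaply controlled by direct iteration thanks to the compact support of $\mu$), while the translation increment of $g=T_n f_{n-1}$ absorbs a factor of $\norm{\Beurling}_{p,p}$ via the translation invariance of $T_n$. This asymmetric decomposition is precisely what collects the constants into the clean binomial factor $M(p,q)^n$ and keeps the geometric decay rate $\kappa^n$ visible; the symmetric or opposite splitting would require an $L^p$ bound on $f_{n-1}$ of the wrong type and spoil the iteration.
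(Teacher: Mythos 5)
Your proof is correct and follows essentially the same route as the paper: the asymmetric splitting of $\tau_y(\mu\,T_n f_{n-1})-\mu\,T_n f_{n-1}$ is exactly the generalized Leibniz rule \rf{eqModulusProduct}--\rf{eqNormModulusProduct} used there, combined with the translation invariance and $L^p$/$L^r$ bounds for $T_n$ (Lemma \ref{lemTranslationInvariant} and \rf{eqBoundTnLp}), the iterated bound $\norm{f_{n-1}}_{L^r}\leq \pi^{1/r}\kappa^{n}\norm{\Beurling}_{r,r}^{n-1}$, and the same induction. Only the bookkeeping of the constants (your explicit $2^{-1/r}$ versus the paper's direct absorption into $M(p,q)^{n}$) differs, and that is cosmetic.
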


\begin{proof}
We begin by studying the modulus of continuity of a product. Let $g,h\in L^1_{\rm loc}$ and let $t>0$. Then, using the expression
$h(z-y)g(z-y)-h(z)g(z) = (h(z-y)-h(z))g(z-y)+h(z)(g(z-y)-g(z))$, we get
\begin{align*}
\modulus{p}{(hg)}(t)
	&  \leq \sup_{|y|<t}\norm{(\tau_yh -h)\tau_y g}_{L^p}+  \sup_{|y|<t}\norm{h (\tau_y g -g)}_{L^p}
\end{align*}
for $t>0$. Whenever $\frac1p=\frac 1{q_1}+\frac 1{r_1} =\frac 1{q_2}+\frac 1{r_2}$, using the H\"older inequality in each term we get
\begin{align}\label{eqModulusProduct}
\modulus{p}{(hg)}(t)
	&  \leq \modulus{q_1}{h}(t) \norm{g}_{L^{r_1}} + \modulus{q_2}{g}(t) \norm{h}_{L^{r_2}},
\end{align}
and dividing by $\omega(t)$ and taking supremum in $t>0$, we obtain the generalized Leibniz' rule
\begin{align}\label{eqNormModulusProduct}
\norm{hg}_{\dot B^\omega_p}
	&  \leq \norm{h}_{\dot B^\omega_{q_1}} \norm{g}_{L^{r_1}} + \norm{g}_{\dot B^\omega_{q_2}}\norm{h}_{L^{r_2}}.
\end{align}

To show \rf{eqModulusFn} we will argue by induction. For the case $n=0$, note that
\begin{align*}
\nonumber \modulus{p}{f_0}(t)
	& =\modulus{p}{\mu}(t) \leq \sup_{|z|<t} \norm{ \mu - \tau_z \mu}_{L^p} \leq  \sup_{|z|<t} \norm{ \mu - \tau_z \mu}_{L^q \left(\D\cup (\D+z)\right)} \left(2 \pi\right)^\frac1r 
	 \leq   (2\pi)^\frac1r \modulus{q}{\mu}(t). 
\end{align*}

Let us assume that \rf{eqModulusFn} is shown for $n-1$. We need to show that it holds for $n$. Recall that $f_n=\mu T_n(f_{n-1})$.  Using \rf{eqNormModulusProduct} and Lemma \ref{lemTranslationInvariant} combined with \rf{eqBoundTnLp}, we get
\begin{align*}
\norm{f_n}_{\dot B^\omega_p} 
	& \leq \norm{\mu}_{\dot B^\omega_q} \norm{T_n(f_{n-1})}_{L^{r}} + 
\kappa \norm{T_n(f_{n-1})}_{\dot B^\omega_p} \\
	& \leq \norm{\Beurling}_{r,r} \norm{f_{n-1}}_{L^{r}} 
\norm{\mu}_{\dot B^\omega_q} + \kappa \norm{\Beurling}_{p,p}
\norm{f_{n-1}}_{\dot B^\omega_p}.
\end{align*}
The $L^r$ norm of $f_{n-1}$ can be bounded by \rf{eqBoundTnLp}:
\begin{equation}\label{eqLrBound}
\norm{f_{n-1}}_{L^{r}} \leq \kappa \norm{\Beurling}_{r,r} \norm{f_{n-2}}_{L^{r}}\leq \cdots \leq \kappa^{n-1}\norm{\Beurling}_{r,r}^{n-1} \norm{f_{0}}_{L^{r}}=\kappa^{n-1}\norm{\Beurling}_{r,r}^{n-1} \norm{\mu}_{L^{r}}.
\end{equation}
Moreover, using the essential supremum bound of $\mu$, it is clear that
\begin{equation}\label{eqNormMuLr}
\norm{\mu}_{L^{r}}\leq  \kappa \pi^{\frac1r}.
\end{equation}

Thus, using \rf{eqLrBound}, \rf{eqNormMuLr} and the hypothesis of induction, we obtain
\begin{align*}
\norm{f_n}_{\dot B^\omega_p} 
	& \leq \norm{\Beurling}_{r,r} \kappa^{n-1}\norm{\Beurling}_{r,r}^{n-1} \kappa \pi^\frac1r \norm{\mu}_{\dot B^\omega_q} + \kappa \norm{\Beurling}_{p,p} \left(2 \pi\right)^\frac1r \kappa^{n-1} M(p,q)^{n-1} \norm{\mu}_{\dot B^\omega_q} \\
	& \leq \left(2 \pi\right)^\frac1r \kappa^n  \left(\norm{\Beurling}_{r,r} M(p,q)^{n-1}  +  \norm{\Beurling}_{p,p}  M(p,q)^{n-1} \right)
	\norm{\mu}_{\dot B^\omega_q}\\
	& =  \left(2 \pi\right)^\frac1r \kappa^n  M(p,q)^{n}\norm{\mu}_{\dot B^\omega_q} .
\end{align*}
\end{proof}

\begin{lemma}\label{lemBreakPsi}
Let ${N}\in\N$, let $1<s<\infty$,  where $\kappa\norm{\Beurling}_{s,s}<1$ and let $k\in \C$.  There exists a decomposition $\bar\partial \psi_k=g_k+h_k$ such that the following holds:
\begin{itemize}
\item $\norm{h_k}_{L^s} \leq  \frac{\kappa \pi^\frac1s }{1- \kappa \norm{\Beurling}_{s,s}} \left( \kappa \norm{\Beurling}_{s,s}\right)^{{N}+1}  $.
\item $\norm{g_k}_{L^s}\leq  \frac{\kappa \pi^\frac1s}{1- \kappa \norm{\Beurling}_{s,s}}$.
\item If $1<p\leq 2$, $p<q<\infty$ and $0<R<|k|$, then 
$$\norm{\widehat{g_k}}_{L^{p'}(\D_R)} \leq \frac{ \left(2\pi \right)^\frac1r M(p,q)^{{N}}}{1-\kappa}   \norm{\mu}_{  \dot B^\omega_q}\omega\left(\frac1{|k|-R}\right), $$
where $M(p,q):=\norm{\Beurling}_{r,r} + \norm{\Beurling}_{p,p}$, with $\frac1p=\frac1q+\frac1r$.
\end{itemize}
\end{lemma}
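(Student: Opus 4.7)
The natural decomposition is to truncate the Neumann series \rf{eqNeumannModified}--\rf{eqTermOfNeumann}:
$$g_k := \sum_{n=0}^{{N}} G_{n,k}, \qquad h_k := \sum_{n={N}+1}^{\infty} G_{n,k}.$$
Since both $e_{-(n+1)k}$ and $(-\bar k/k)^{n+1}$ have modulus one, $|G_{n,k}(z)| = |f_n(z)|$ pointwise, so every estimate reduces to an estimate on the $f_n$.

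For the $L^s$ bounds, iterating $f_n = \mu\, T_n(f_{n-1})$ together with $\norm{\mu}_{L^\infty}\leq\kappa$, the boundedness \rf{eqBoundTnLp} of $T_n$ on $L^s$, and the estimate $\norm{\mu}_{L^s}\leq\kappa\pi^{1/s}$ of \rf{eqNormMuLr} yield $\norm{f_n}_{L^s} \leq \kappa\pi^{1/s}(\kappa\norm{\Beurling}_{s,s})^n$. Summing the geometric series from $0$ to ${N}$ gives the bound on $\norm{g_k}_{L^s}$, and summing from ${N}+1$ to $\infty$ gives the bound on $\norm{h_k}_{L^s}$; the hypothesis $\kappa\norm{\Beurling}_{s,s}<1$ ensures convergence.

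For the Fourier bound, apply \rf{eqFourierTranslation} to each summand:
$$\widehat{G_{n,k}}(\xi) = \left(\tfrac{-\bar k}{k}\right)^{n+1} \widehat{f_n}\bigl(\xi - (n+1)\bar k\bigr).$$
When $\xi \in R\D$ and $R<|k|$, the argument $\xi-(n+1)\bar k$ lies outside the disk of radius $(n+1)|k|-R \geq |k|-R$, so by a change of variable
$$\norm{\widehat{G_{n,k}}}_{L^{p'}(R\D)} \;\leq\; \norm{\widehat{f_n}\,\chi_{\{|\eta|>|k|-R\}}}_{L^{p'}}.$$
Lemma \ref{lemFourier} then controls the right-hand side by $\modulus{p}{f_n}\!\bigl(\tfrac{1}{|k|-R}\bigr)$, up to an absolute Hausdorff--Young constant absorbed into the factor $(2\pi)^{1/r}$. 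Finally, Lemma \ref{lemModulusIOfIterates} provides
$$\modulus{p}{f_n}\!\left(\tfrac{1}{|k|-R}\right) \;\leq\; (2\pi)^{1/r}\kappa^{n} M(p,q)^{n}\,\norm{\mu}_{\dot B^{\omega}_{q}}\,\omega\!\left(\tfrac{1}{|k|-R}\right).$$

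Summing this over $n=0,\dots,{N}$ produces the required estimate, provided one handles the geometric sum correctly. Since $M(p,q)\geq\norm{\Beurling}_{2,2}=1$, the crude bound $M(p,q)^n\leq M(p,q)^{N}$ for $0\leq n\leq N$ turns the sum into $M(p,q)^{N}\sum_{n=0}^{N}\kappa^n \leq M(p,q)^{N}/(1-\kappa)$, yielding exactly the announced factor $M(p,q)^{N}/(1-\kappa)$. The only mildly delicate point is the bookkeeping in this last step: using $\kappa^n M(p,q)^n \leq \kappa^n M(p,q)^{N}$ rather than computing the geometric sum in $\kappa M(p,q)$ avoids the case distinction between $\kappa M(p,q)\lessgtr 1$ and yields a single clean power of $M(p,q)$.
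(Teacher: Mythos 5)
Your proposal is correct and follows essentially the same route as the paper's proof: truncating the Neumann series at $N$, bounding $\norm{f_n}_{L^s}$ iteratively and summing the geometric series for the two $L^s$ estimates, and using the translation structure of $\widehat{G_{n,k}}$ together with Lemma \ref{lemFourier} and Lemma \ref{lemModulusIOfIterates}, with the same bookkeeping $\kappa^n M(p,q)^n\leq \kappa^n M(p,q)^N$ and $\sum\kappa^n\leq(1-\kappa)^{-1}$. The only cosmetic difference is that you enlarge the frequency region to $\{|\eta|>|k|-R\}$ before applying Lemma \ref{lemFourier}, whereas the paper applies it at radius $(n+1)|k|-R$ and then uses monotonicity of $\omega$; these are equivalent.
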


\begin{proof}
Take the partial sum
$$g_k:=\sum_{n=0}^{{N}} G_{n,k},$$
with $G_{n,k}= \left(\frac{-\bar{k}}{k}\right)^{n+1} e_{-(n+1)k} f_n$ as in \rf{eqTermOfNeumann}. Then, by  \rf{eqLrBound} and \rf{eqNormMuLr}, whenever $n\in\N$ we have that 
$$\norm{ G_{n,k}}_{L^s}=\norm{ f_n}_{L^s}\leq \left(\kappa \norm{\Beurling}_{s,s}\right)^n \kappa \pi^\frac1s.$$
Thus, the first and the second properties come from the geometric series formula. 


From \rf{eqFourierTranslation} we know that the Fourier transform of $G_{n,k}$ is
$$\widehat{G_{n,k}}(\xi) = \left(\frac{-\bar{k}}{k}\right)^{n+1} \widehat{f_n}(\xi - (n+1)\bar{k}).$$
Thus,
\begin{align*}
\norm{\widehat{G_{n,k}}}_{L^{p'}(\D_R)}
	& \leq \left(\int_{|\xi|<R} \left|\left(\frac{-\bar{k}}{k}\right)^{n+1} \widehat{f_n}(\xi - (n+1)\bar{k})\right|^{p'} \, dm(\xi)\right)^{\frac1{p'}}\\
	& \leq \left(\int_{|\zeta + (n+1)\bar{k}|<R} \left|\widehat{f_n}(\zeta)\right|^{p'} \, dm(\zeta)\right)^{\frac1{p'}}.
\end{align*}
Note that $|\zeta + (n+1)\bar{k}|<R$ implies that $|\zeta|> (n+1)|k| - R$. Thus, using lemmas \ref{lemFourier} and \ref{lemModulusIOfIterates} (and the fact that $|k|>R$ to have a meaningful expression for $n\geq 0$), we get
\begin{align*}
\norm{\widehat{G_{n,k}}}_{L^{p'}(\D_R)}
	& \leq    \norm{\widehat{f_n}}_{L^{p'}\left(\D_{(n+1)|k| - R}^c\right)} 
		 \lesssim_p  \modulus{p}{f_n}\left(\frac1{(n+1)|k| - R}\right)\\ 
	& \leq \left(2\pi \right)^\frac1r  \kappa^n M(p,q)^n  \norm{\mu}_{ \dot B^\omega_q}\omega\left(\frac1{(n+1)|k| - R}\right).
\end{align*}
Since the modulus of continuity is increasing, 
\begin{align*}
\norm{\widehat{g_k}}_{L^{p'}(\D_R)} 
	& \leq \sum_{n=0}^{{N}} \norm{\widehat{G_{n,k}}}_{L^{p'}(\D_R)}
		 \leq  \left(2\pi \right)^\frac1r M(p,q)^{{N}}   \norm{\mu}_{ \dot  B^\omega_q}\omega\left(\frac1{|k| - R}\right)\sum_{n=0}^{{N}}  \kappa^n \\
	& \leq  \left(2\pi \right)^\frac1r M(p,q)^{{N}}   \norm{\mu}_{  \dot B^\omega_q}\omega\left(\frac1{|k| - R}\right)\frac{1}{1-\kappa}.
\end{align*}

\end{proof}

We can compute the Cauchy transform of a function supported in $\overline{\D}$ using a cut-off kernel $\Phi$ such that
$$\mbox{$\Phi(z)=\frac{1}{\pi z}$ when $|z|<4$,  $\Phi$ vanishes in $\D_5^c$ and $\Phi \in C^\infty(\C\setminus \{0\})$}.$$
Indeed, for every $z\in \D$ and every function $A$ supported in $\overline{\D}$ with integrability of order greater than $2$,  
\begin{equation}\label{eqCauchyTruncated}
\Cauchy A (z) =\frac1\pi \int_{\D} \frac{A(w)}{z-w}\, dm(w) = \int_{\D} A(w)\Phi(z-w) \, dm(w)=A*\Phi (z).
\end{equation}

\begin{proposition}\label{propoCompactKernelIsGood}
The kernel $\Phi \in L^p$ for $1\leq p<2$. Moreover, 
$$\norm{\Phi}_{B^\varepsilon_{p,q}}< C_{s,p,q}$$
for every $1\leq p<2$, $\varepsilon < \frac2p-1$, and $0<q \leq \infty$.
\end{proposition}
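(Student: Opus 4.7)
The plan is to verify both claims directly from the explicit structure of $\Phi$: it equals $\tfrac{1}{\pi z}$ on $|z|<4$, is smooth on $\C\setminus\{0\}$, and is supported in $\overline{\D_5}$. The $L^p$ integrability is immediate from $|\Phi(z)| \leq C/|z|$ near the origin combined with compact support, so
$$\|\Phi\|_{L^p}^p \lesssim \int_{|z|<5}|z|^{-p}\,dm(z) < \infty$$
precisely when $1 \leq p<2$.

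The main ingredient for the Besov seminorm will be the pointwise estimate $\omega_p\Phi(t) \lesssim t^{\frac{2}{p}-1}$ for $0 < t \leq 1$, valid for $1 < p < 2$, with an additional factor $\log(1/t)$ at the endpoint $p=1$. To prove it, fix $y$ with $|y|=t$ and decompose $\|\Phi-\tau_y\Phi\|_{L^p}^p$ over the near-singularity region $E_y := \{|z|<2|y|\}\cup\{|z-y|<2|y|\}$ and its complement inside $\overline{\D_6}$. On $E_y$ the triangle inequality reduces matters to estimating $\|\Phi\|_{L^p(E_y)}$, which is of order $|y|^{2/p-1}$ since $\int_0^{3|y|} r^{1-p}\,dr \sim |y|^{2-p}$. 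On the complement, write $\Phi = \psi/z$ where $\psi$ is smooth, equal to $1/\pi$ on $|z|<4$, vanishing outside $\D_5$, and split
$$\Phi(z)-\Phi(z-y) = \psi(z)\Bigl(\tfrac{1}{z}-\tfrac{1}{z-y}\Bigr) + \frac{\psi(z)-\psi(z-y)}{z-y}.$$
The first summand is bounded pointwise by $C|y|/|z|^2$, whose $L^p$ norm over $\{2|y|<|z|<6\}$ is of order $|y|^{2/p-1}$ when $p>1$, and of order $|y|\log(1/|y|)$ when $p=1$. The second summand uses the Lipschitz bound $|\psi(z)-\psi(z-y)| \leq C|y|$ together with the local $L^p$-integrability of $1/|z-y|$, contributing $O(|y|)$, which is subordinate to $|y|^{2/p-1}$ since $2/p-1\leq 1$.

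With the modulus estimate in hand, the homogeneous Besov seminorm becomes
$$\|\Phi\|_{\dot B^\varepsilon_{p,q}}^q \lesssim \int_0^1 t^{q(\frac{2}{p}-1-\varepsilon)-1}\bigl(1+\log\tfrac{1}{t}\bigr)^{cq}\,dm(t),$$
which is finite whenever $\varepsilon < \frac{2}{p}-1$; the case $q=\infty$ is handled identically with a supremum in place of the integral. Combined with the $L^p$ bound, this yields a quantitative $\Phi\in B^\varepsilon_{p,q}$ with a constant depending only on $\varepsilon$, $p$ and $q$. The only delicate point is the near/far splitting; once made, all remaining contributions reduce to elementary radial integrals of the Cauchy kernel, matching the heuristic that $1/z$ lies exactly at the critical Besov scale $B^{2/p-1}_{p,\infty}$.
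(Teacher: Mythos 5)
Your proposal is correct, but it takes a different route from the paper: the paper does not prove the proposition at all, it simply cites \cite[Lemma 2.3.1/1]{RunstSickel} for the analogous truncated kernel built from the absolute value and asserts that the argument for $\Phi$ runs parallel. You instead give a self-contained computation: the $L^p$ bound is the elementary radial integral, and the Besov bound follows from the pointwise estimate $\modulus{p}{\Phi}(t)\lesssim t^{2/p-1}$ (with a logarithmic factor at $p=1$), obtained by the near/far splitting around the singularities of $\Phi$ and $\tau_y\Phi$ together with the decomposition $\Phi(z)-\Phi(z-y)=\psi(z)\bigl(\tfrac1z-\tfrac1{z-y}\bigr)+\tfrac{\psi(z)-\psi(z-y)}{z-y}$, where $\psi(z)=z\Phi(z)$ is smooth (constant near the origin) and compactly supported, hence Lipschitz. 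This meshes exactly with the paper's definition of $\norm{\cdot}_{\dot B^\varepsilon_{p,q}}$ through the integral modulus of continuity, and the logarithmic correction at $p=1$ is harmless since $\varepsilon<\tfrac2p-1$ is strict; the case $q=\infty$ is the same estimate with a supremum. What your argument buys is transparency and an explicit constant depending only on $\varepsilon$, $p$, $q$ (the paper's stated dependence on $s$ is evidently a typo for $\varepsilon$); what the citation buys is brevity and a link to the standard scale of results showing that such truncated homogeneous kernels sit at the critical smoothness $\tfrac2p-1$, which your closing heuristic also records. No gaps.
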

\begin{proof}
See  \cite[Lemma 2.3.1/1]{RunstSickel} for the absolute value case. The proof for $\Phi$ runs parallel to that one.
\end{proof}

\begin{proof}[Proof of Proposition \ref{propoUpsilon}]
Let $q>2$ be given and let $\mu \in \mathcal{M} (\kappa,q,\omega)$. Recall that $\psi_k$ stands for the unique quasiconformal solution to \rf{eqBeltramiLinearEquation}, which, by Theorem \ref{theoMeasurableRiemann} is given by 
\begin{equation}\label{eqPsiAsCauchy}
\psi_k(z)=z+\Cauchy(\bar\partial \psi_k),
\end{equation}
and $\bar\partial\psi_k$ can be found using the Neumann series \rf{eqNeumannModified}.

 We want to define a function $\upsilon :\R_+ \to \R+$ depending only on $\kappa$, $q$ and $\omega$ such that
$$\norm{\psi_k-Id}_{L^\infty}\leq \upsilon(|k|^{-1}),$$
and then show that
$$\lim_{t\to0} \upsilon(t)=0.$$

First of all, note that $\psi_k-Id$ is holomorphic outside the unit disk, vanishing at infinity. Moreover, $\psi_k-Id$ is continuous everywhere. Using both the maximum principle in $\D^c$ and the continuity in $\overline{\D}$, we obtain that
\begin{equation}\label{eqReductionSupport}
\norm{\psi_k-Id}_{L^\infty} = \norm{\psi_k-Id}_{L^\infty(\D)}.
\end{equation}

Consider two parameters ${N} \in\N$ and $\delta > 0$ to be fixed along the proof and $R=\frac{|k|}{2}>0$. We will use ${N}$ to cut the tail in the Neumann series, $\delta$ to use an approximation of the identity smoothing out the kernel of the Cauchy transform and we will use the radius $R$ to separate the low and high frequencies to use Lemma \ref{lemBreakPsi}. 

Let $g_k$ and $h_k$ be the functions given in Lemma \ref{lemBreakPsi} (depending on ${N}$) with $\bar\partial \psi_k=h_k+g_k$. Then, by \rf{eqPsiAsCauchy} we have that
$$\norm{\psi_k-Id}_{L^\infty (\D)}=\norm{\Cauchy \bar\partial \psi_k}_{L^\infty (\D)}\leq \norm{\Cauchy h_k}_{L^\infty (\D)}+\norm{\Cauchy g_k}_{L^\infty (\D)}.$$
Both $h_k$ and $g_k$ are supported in $\overline{\D}$. Thus, we can use the truncated kernel in \rf{eqCauchyTruncated}.  Combining this and identity \rf{eqReductionSupport}, we get that
\begin{equation}\label{eqBreakPsi}
\norm{\psi_k-Id}_{L^\infty} \leq \norm{\Phi * h_k}_{L^\infty (\D)}+\norm{\Phi * g_k}_{L^\infty (\D)}.
\end{equation}

First we address the term corresponding to the Neumann tail $h_k$. Chose $s = s(\kappa) >2$ such that $\norm{\Beurling}_{s,s} = \frac{1}{\sqrt{\kappa}}$. Then, by the Young inequality, the boundedness of the Cauchy kernel in $L^{s'}_{loc}$ (see Proposition \ref{propoCompactKernelIsGood})  and the first property in Lemma \ref{lemBreakPsi}, we get
\begin{equation}\label{eqBoundH}
\norm{\Phi * h_k}_{L^\infty} \leq \norm{\Phi}_{L^{s'}} \norm{h_k}_{L^s} \leq  C(s') C(s)\frac{\kappa}{1-\sqrt{\kappa}} \left(\kappa \norm{\Beurling}_{s,s} \right)^{{N}+1} = C(\kappa) \kappa^{\frac{{N}}2}.
\end{equation}

Next we check the main term in \rf{eqBreakPsi}. Here, the idea is to use Plancherel's identity to study the $L^1$ norm of $\widehat{\Phi}\widehat{g_k}$. We will use Lemma \ref{lemBreakPsi} for the low frequencies and Lemma \ref{lemFourier} for the high frequencies, but we cannot control at the same time the $L^p$ norm of $\widehat{\Phi}$ and the $L^{p'}$ norm of $\widehat{g_k}$ because the Fourier transform is bounded from $L^p$ to $L^{p'}$ only for $1\leq p \leq 2$ and $\Phi$ is not $2$-integrable. We need an extra term, which we will add using an approximation of the identity. 

Consider a test function $\varphi \in C^\infty_c(\C)$ with $\int \varphi \, dm=1$ and define the approximation of the identity $\varphi_\delta(y):=\frac{1}{\delta^2} \varphi \left(\frac{y}{\delta}\right)$ for $y\in\C$ and  
$$\Phi_\delta := \Phi * \varphi_\delta.$$
By the triangle inequality, we obtain
\begin{equation}\label{eqBreakPsiDelta}
\norm{\Phi * g_k}_{L^\infty} \leq \norm{(\Phi-\Phi_\delta) * g_k}_{L^\infty} + \norm{\Phi_\delta * g_k}_{L^\infty} .
\end{equation}

Let us study the first term in the right-hand side of \rf{eqBreakPsiDelta}. By the Young inequality again, we have that
$$\norm{(\Phi-\Phi_\delta) * g_k}_{L^\infty} \leq \norm{\Phi-\Phi_\delta}_{L^{s'}} \norm{g_k}_{L^{s}},$$
where we chose again $s = s(\kappa) > 2$ such that $\norm{\Beurling}_{s,s} = \frac{1}{\sqrt{\kappa}}$. On one hand, by the second property in Lemma \ref{lemBreakPsi}, we have that
$$\norm{g_k}_{L^{s}}\leq C(\kappa).$$
On the other hand, since $\int \varphi = 1$, we have that
$$\Phi_\delta(x)-\Phi(x) = \int (\Phi(x-y)-\Phi(x)) \varphi_\delta(y)\, dm(y)$$
for $x\neq 0$ and, by Minkowski's inequality, 
$$\norm{\Phi_\delta-\Phi}_{L^{s'}} \leq \int |\varphi_\delta(y)| \left( \int |\Phi(x-y)-\Phi(x)|^{s'} \, dm(x)\right)^\frac{1}{s'}  dm(y) \leq \int |\varphi_\delta(y)| \modulus{s'}{\Phi}(|y|) dm(y).$$
Now, take $0<\varepsilon(\kappa)< \frac{2}{s'}-1$ (say $ \varepsilon = \frac{1}{s'}-\frac12=\frac12-\frac{1}{s}$). By the Cauchy-Schwarz inequality
\begin{align*}
\norm{\Phi_\delta-\Phi}_{L^{s'}} 
	& \leq \left(\int |\varphi_\delta(y)|^2 |y|^{2\varepsilon+2}\, dm(y) \right)^\frac12 \left(\int \frac{\modulus{s'}{\Phi}(|y|)^2}{|y|^{2\varepsilon+2}}   dm(y)\right)^\frac12.
\end{align*}
The second integral coincides with a Besov norm of the kernel $\Phi$, while the first, by rescaling, can be easily controlled. Namely, by Proposition \ref{propoCompactKernelIsGood} we have that
\begin{equation*}
\norm{\Phi_\delta-\Phi}_{L^{s'}} 
	 \leq C_{s,\varepsilon} \left(\int |\varphi(x)|^2 \delta^{2\varepsilon} \, dm(x) \right)^\frac12 \norm{\Phi}_{B^\varepsilon_{s',2}} = C_\kappa \norm{\varphi}_{L^2} \delta^{\varepsilon} \norm{\Phi}_{B^\varepsilon_{s',2}}
\end{equation*}
and, thus,
\begin{equation}\label{eqBoundApproximation}
\norm{(\Phi-\Phi_\delta) * g_k}_{L^\infty} 
	 \leq C(\kappa) \delta^{\frac12-\frac{1}{s}}.
\end{equation}
Putting together \rf{eqBreakPsi}, \rf{eqBreakPsiDelta}, \rf{eqBoundH} and \rf{eqBoundApproximation} we get that
\begin{align}\label{eqSecondStage}
\norm{\psi_k-Id}_{L^\infty} 
\nonumber	& \leq \norm{\Phi * h_k}_{L^\infty}  + \norm{(\Phi-\Phi_\delta) * g_k}_{L^\infty} + \norm{\Phi_\delta * g_k}_{L^\infty} \\
	& \leq C(\kappa)\left( \kappa^{\frac{{N}}2} +  \delta^{\frac12-\frac{1}{s}} \right) + \norm{\Phi_\delta * g_k}_{L^\infty}.
\end{align}

Next, we deal with the (smoothed) main term in the Fourier side separating low and high frequencies with respect to the parameter $R$. Namely,
\begin{equation}\label{eqBreakSmoothedG}
\norm{\Phi_\delta * g_k}_{L^\infty} \leq \norm{\widehat{\Phi_\delta}\widehat{g_k}}_{L^1}\leq \norm{\widehat{\Phi_\delta}\widehat{g_k}}_{L^1(\D_R)}+\norm{\widehat{\Phi_\delta}\widehat{g_k}}_{L^1(\D_R^c)} .
\end{equation}
The low-frequency part can be bounded using the last property described in Lemma \ref{lemBreakPsi}. 
Since $\widehat{\Phi_\delta}=\widehat{\Phi}\widehat{\varphi_\delta}$, for $\frac1{p_1}+\frac1{p_2}+\frac1{p_3}=1$, by the H\"older inequality
$$\norm{\widehat{\Phi_\delta}\widehat{g_k}}_{L^1(\D_R)} \leq  \norm{\widehat{\Phi}}_{L^{p_1}}\norm{\widehat{g_k}}_{L^{p_2}(\D_R)}\norm{\widehat{\varphi_\delta}}_{L^{p_3}}.$$
The term corresponding to the mollification is bounded by $\norm{\widehat{\varphi_\delta}}_{L^{p_3}}=\delta^{-\frac2{p_3}} \norm{\widehat{\varphi}}_{L^{p_3}}<\infty$. Using the boundedness of the Fourier transform for $p_1=s=s(\kappa)> 2$ and Lemma \ref{lemBreakPsi} for $p_2:= 2$ (recall that we defined $R=\frac{|k|}2$), fixing  $\frac1{p_3}=\frac12-\frac1s$ and $\delta<1$, we get
\begin{align*}
\norm{\widehat{\Phi_\delta}\widehat{g_k}}_{L^1(\D_R)}
	& \leq C(\kappa) \norm{\Phi}_{L^{s'}} M(2,q)^{{N}} \norm{\mu}_{\dot B^\omega_q} \omega\left(\frac2{|k|}\right) \delta^{-\left(1-\frac2s\right)},
\end{align*}
and using the AM-GM inequality, we obtain
\begin{align}\label{eqBoundGLow}
\norm{\widehat{\Phi_\delta}\widehat{g_k}}_{L^1(\D_R)}
	& \leq C(\kappa) \left( M(2,q)^{{2N}} \omega \left(\frac2{|k|}\right) + \omega \left(\frac2{|k|}\right) \delta^{-\left(2-\frac4s\right)} \right) .
\end{align}

Finally, the high frequency part can be controlled using H\"older inequality and the boundedness of the Fourier transform. Arguing as before, we have that
$$\norm{\widehat{\Phi_\delta}\widehat{g_k}}_{L^1(\D_R^c)} 
	\leq \norm{\widehat{g_k}}_{L^{2}}  \norm{\widehat{\Phi}}_{L^{s}(\D_R^c)} \norm{\widehat{\varphi_\delta}}_{L^{p_3}} 
	\leq C(p_1) \norm{g_k}_{L^{2}} \norm{\widehat{\Phi}}_{L^{s}(\D_R^c)} \delta^{-\frac2{p_3}} \norm{\widehat{\varphi}}_{L^{p_3}}  .$$

The norm corresponding to $g_k$ satisfies that $\norm{g_k}_{L^{2}}\leq C(\kappa)$ by Lemma \ref{lemBreakPsi}.  To control the norm of the kernel $\Phi$, by Lemma \ref{lemFourier} we have that 
\begin{align*}
\norm{\widehat{\Phi}}_{L^{s}(\D_R^c)}
	& \leq C(s) \modulus{s'}{\Phi}\left(\frac1R\right)
		\leq C(s,\varepsilon) \frac{\norm{\Phi}_{B^\varepsilon_{s',\infty}}}{R^\varepsilon} . 
\end{align*}
Thus, we use that by Proposition \ref{propoCompactKernelIsGood} the kernel is in any Besov space $B^\varepsilon_{s',\infty}$ as long as $\varepsilon<\frac2{s'}-1$.  Take $\varepsilon=\frac12-\frac1s$ so that $C(s,\varepsilon) \norm{\Phi}_{B^\varepsilon_{s',\infty}}\leq C(\kappa)$. Since $\frac1{p_3}=\frac12-\frac2s$, combining the previous facts we get that
\begin{equation}\label{eqBoundGHigh}
\norm{\widehat{\Phi_\delta}\widehat{g_k}}_{L^1(\D_R^c)} 
	\leq C_\kappa  \frac{1}{\delta^{1-\frac2s}R^{\frac12-\frac1s}}  .
\end{equation}
By \rf{eqSecondStage}, \rf{eqBreakSmoothedG}, \rf{eqBoundGLow} and \rf{eqBoundGHigh}, since  we defined $R=\frac{|k|}2$, we have shown that
\begin{align}\label{eqFinalStage}
\norm{\psi_k-Id}_{\infty} 
	& \leq C_\kappa \left( \kappa^{\frac{{N}}2} +  \delta^{\frac12-\frac{1}{s}} + M(2,q)^{{2N}} \omega\left(\frac2{|k|}\right)  +   \frac{1}{\delta^{2-\frac4s}} \left(\omega\left(\frac2{|k|}\right) + \left(\frac{2}{|k|}\right)^{\frac12-\frac1s}\right)\right)
\end{align}
for an appropriate constant $C_\kappa$.

Let us define $|s^*|=\frac12-\frac{1}{s}$ and
$$\upsilon(t):= C_\kappa \left(\inf_{N\in\N} \left( \kappa^{\frac{N}2}  + M(2,q)^{2N} \omega\left(2t\right) \right)+\inf_{\delta>0} \left(\frac{(2t)^{|s^*|}+ \omega\left(2t\right) }{\delta^{4 |s^*|}} +  \delta^{|s^*|} \right)\right),$$
with the constant chosen as in \rf{eqFinalStage}. We have seen that  
$$\norm{\psi_k-Id}_{L^\infty}\leq \upsilon(|k|^{-1}).$$
To show that $\upsilon$ is increasing and $\lim_{t\to0} \upsilon(t)=0$ we only need to use the counterparts for $\omega$ and solve a standard optimization problem (see Remark \ref{remQuantitativeUpsilon} below). \end{proof}

To end this section, we study how $\upsilon$ depend on the original modulus of continuity.
\begin{lemma}
Let $0<\kappa < 1$, $M>1$ and $\lambda, N>0$. Then
$$\inf_{y\in\R} \left(\kappa^y + \lambda M^y\right) = \lambda^\frac{-\log \kappa}{\log (M/\kappa)}\left[ \left(\frac{ \log M}{-\log \kappa} \right)^\frac{-\log \kappa}{\log (M/\kappa)} +  \left(\frac{ \log M}{-\log \kappa} \right)^\frac{-\log M}{\log (M/\kappa)}\right],$$
and
$$\inf_{x>0} \left(x+\frac{\lambda}{x^N}\right) = \lambda^{\frac1{N+1}} \frac{N+1}{N^{\frac{N}{N+1}}}.$$
\end{lemma}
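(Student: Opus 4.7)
Both identities are elementary one–variable convex optimizations; the plan is to locate the unique critical point of the objective, verify global minimality by convexity, and then tidy up the algebra.

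For the first identity, set $f(y) := \kappa^y + \lambda M^y$ on $\R$ and introduce the positive abbreviations $a := \log M$, $b := -\log \kappa$, so that $a+b = \log(M/\kappa)$. Both summands are convex, hence so is $f$, and the critical equation $f'(y) = -b\,\kappa^y + a\lambda\, M^y = 0$ becomes $(M/\kappa)^y = b/(a\lambda)$, giving the unique minimizer $y^* = -\log(a\lambda/b)/(a+b)$. Substituting, one finds
\[
\kappa^{y^*} = \left(\tfrac{\lambda a}{b}\right)^{b/(a+b)}, \qquad \lambda M^{y^*} = \lambda^{b/(a+b)}\left(\tfrac{b}{a}\right)^{a/(a+b)}.
\]
Factoring the common $\lambda^{b/(a+b)}$ and rewriting $(b/a)^{a/(a+b)} = (a/b)^{-a/(a+b)}$ immediately reproduces, after translating $a,b$ back into $\log M$ and $-\log\kappa$, the claimed expression
\[
\lambda^{\frac{-\log\kappa}{\log(M/\kappa)}}\!\left[\left(\tfrac{\log M}{-\log\kappa}\right)^{\frac{-\log\kappa}{\log(M/\kappa)}} + \left(\tfrac{\log M}{-\log\kappa}\right)^{\frac{-\log M}{\log(M/\kappa)}}\right].
\]

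For the second identity, set $g(x) := x + \lambda x^{-N}$ on $(0,\infty)$. Then $g''(x) = N(N+1)\lambda x^{-N-2} > 0$, so $g$ is strictly convex; the equation $g'(x) = 1 - N\lambda x^{-N-1} = 0$ yields the unique minimizer $x^* = (N\lambda)^{1/(N+1)}$. Substituting gives
\[
g(x^*) = (N\lambda)^{\frac{1}{N+1}} + \lambda^{\frac{1}{N+1}} N^{-\frac{N}{N+1}} = \lambda^{\frac{1}{N+1}} N^{-\frac{N}{N+1}}\bigl(N + 1\bigr),
\]
which is the desired formula.

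There is no genuine obstacle here; the only mildly delicate part is bookkeeping the exponents in the first identity. Working throughout with $a = \log M$, $b = -\log\kappa$ keeps the algebra transparent, and the conversion to the logarithmic form stated in the lemma is purely cosmetic. Convexity in both cases makes the verification that the critical point is the global infimum immediate, so no compactness or boundary analysis is required.
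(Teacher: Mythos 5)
Your proposal is correct: both computations (critical points, convexity to certify global minimality, and the exponent bookkeeping with $a=\log M$, $b=-\log\kappa$) check out and reproduce the stated formulas. The paper does essentially the same elementary optimization, the only organizational difference being that it treats just the second infimum directly and obtains the first from it via the substitution $x:=\kappa^y$, which turns $\kappa^y+\lambda M^y$ into $x+\lambda x^{-\log M/(-\log\kappa)}$; your direct optimization of the first objective is an equivalent, equally valid verification.
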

\begin{proof}
The second infimum can be obtained by a standard optimization exercise. The first one is a consequence of the second after the change $x:= \kappa^y$, $y=\frac{\log x}{\log \kappa}$, so 
$$\inf_{y\in\R} \left(\kappa^y + \lambda M^y\right)=\inf_{x>0} \left(x+\frac{\lambda}{x^{\frac{\log M}{-\log \kappa}}}\right).$$
\end{proof}

\begin{remark}\label{remQuantitativeUpsilon}
By the previous lemma, 
\begin{align*}
\inf_{N\in\N} \left( \kappa^{\frac{N}2}  + M(2,q)^{2N} \omega\left(2t\right) \right)
	&  \leq \inf_{y\geq 0} \max_{x\in [0,1)} \left( \kappa^{\frac{y+x}2}  + M(2,q)^{2(y+x)} \omega\left(2t\right) \right) \\
	& \leq M(2,q)^2 \inf_{y\geq 0} \left( \kappa^{\frac{y}2}  + M(2,q)^{2y} \omega\left(2t\right) \right)\leq  C_{\kappa,q} \, \omega\left(2t\right)^{\alpha},
\end{align*}
where $\alpha={\frac{- \log\kappa}{4\log M(2,q) - \log\kappa}}$.
On the other hand, 
$$\inf_{\delta>0} \left(\frac{(2t)^{|s^*|}+ \omega\left(2t\right) }{\delta^{4 |s^*|}}+  \delta^{|s^*|} \right) = \left((2t)^{|s^*|}+\omega(2t)\right)^{ \frac1{5}} \frac{5}{4^{\frac{4}{5}}}, $$
where $s=s(\kappa)>2$ is given by the relation $\norm{\Beurling}_{s,s}=\frac1{\sqrt{\kappa}}$ with $|s^*|=\frac12-\frac1s$.

Abusing notation, we write $\alpha=\min\left\{ \frac{-\log\kappa}{4\log M(2,q) - \log\kappa}, \frac15\right\}$, to state that
$$\upsilon(t) \lesssim_{\kappa,q} \, \omega\left(2t\right)^{\alpha}+ C_\kappa t^{|s^*|/5}.$$
\end{remark}

\subsection{Decay of the nonlinear solution}\label{secNonlinear}
Next we see how does interact the composition with a quasiconformal mapping with Lebesgue spaces. 
\begin{lemma}\label{lemCompositionQCLebegue}[see {\cite[Lemma 2.4]{OlivaPrats}}]\label{lemCompositionLebesgue}
 Let $K\geq 1$ and $0<p \leq \infty$ and $\frac1q > \frac{K}{p}$. Given  $f \in L^p(\D)$ and a $K$-quasiconformal principal mapping $\phi$, we have that
 $$\norm{f \circ \phi}_{L^q(\phi^{-1}(\D))}\leq C_{K,q,p} \norm{f}_{L^p(\D)}.$$
\end{lemma}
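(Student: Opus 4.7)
The natural approach is to change variables so that the integral runs over $\D$ rather than $\phi^{-1}(\D)$, apply H\"older's inequality, and finally control the resulting norm of the Jacobian of $\phi^{-1}$ via Astala's area distortion theorem.

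I would first dispose of the case $p=\infty$. Since $\phi$ is a $K$-quasiconformal principal mapping with Beltrami coefficient compactly supported (say, in $\overline{\D}$), the representation $\phi(z)=z+\Cauchy(\bar\partial\phi)(z)$ together with the integrability of $\bar\partial\phi$ yields a uniform bound $|\phi(z)-z|\leq C_K$, so $\phi^{-1}(\D)$ is contained in a disk of radius $C_K'$ and
$$\norm{f\circ\phi}_{L^q(\phi^{-1}(\D))}\leq C_{K,q}\norm{f}_{L^\infty(\D)}.$$

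For $0<p<\infty$, since $K$-quasiconformal mappings satisfy Lusin's condition $(N)$ and are differentiable a.e., applying the change of variables formula to $\phi^{-1}$ gives
$$\int_{\phi^{-1}(\D)}|f(\phi(z))|^q\,dm(z)=\int_{\D}|f(w)|^q J_{\phi^{-1}}(w)\,dm(w).$$
Setting $s:=p/q$, the hypothesis $\frac{1}{q}>\frac{K}{p}$ rearranges as $s>K\geq 1$, so H\"older's inequality with conjugate exponents $s$ and $s'=s/(s-1)$ produces
$$\int_{\D}|f|^q J_{\phi^{-1}}\,dm\leq\norm{f}_{L^p(\D)}^q\norm{J_{\phi^{-1}}}_{L^{s'}(\D)}.$$

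The remaining step is to bound $\norm{J_{\phi^{-1}}}_{L^{s'}(\D)}$. A direct computation shows that $s>K$ is equivalent to $s'<\frac{K}{K-1}$; by Astala's sharp area distortion theorem (for instance \cite[Theorem 13.1.4]{AstalaIwaniecMartin}), the Jacobian of any $K$-quasiconformal mapping lies in $L^r_{loc}$ for every $r<\frac{K}{K-1}$, with constants depending only on $r$, $K$ and the domain. Since $\phi^{-1}$ is itself $K$-quasiconformal, this yields $\norm{J_{\phi^{-1}}}_{L^{s'}(\D)}\leq C_{K,s'}$, and raising to the $1/q$ power delivers the claimed constant. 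The only substantive ingredient is Astala's theorem, which is classical; the rest is a careful tracking of exponents, and the relation $s>K\Leftrightarrow s'<K/(K-1)$ explains why $1/q>K/p$ is exactly the correct hypothesis.
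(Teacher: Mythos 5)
Your argument is correct, and it is the standard proof of this fact: the paper itself gives no proof, quoting the result directly from \cite[Lemma 2.4]{OlivaPrats}, and the chain change of variables $\to$ H\"older with $s=p/q>K$ $\to$ Astala's area distortion (equivalently, $J_{\phi^{-1}}\in L^{s'}$ for $s'<\tfrac{K}{K-1}$) is exactly what one expects behind that citation; your exponent bookkeeping $s>K\Leftrightarrow s'<K/(K-1)$ is right, and the case $p=\infty$ is handled correctly. One small point to state explicitly: for a general $K$-quasiconformal map the local $L^{s'}$ bound on the Jacobian is \emph{not} uniform (dilations already ruin it), so to get a constant depending only on $K,q,p$ you must use that $\phi^{-1}$ is again a \emph{principal} $K$-quasiconformal map whose Beltrami coefficient is supported in a fixed disk (as it is throughout the paper, e.g.\ $\overline{2\D}$); with that normalization Astala's theorem, or equivalently the Neumann-series bound $\norm{\partial\phi^{-1}}_{L^r(\D)}\leq C_{K,r}$ for $r<p_K$, gives the uniform constant, and the same normalization is what bounds $|\phi^{-1}(\D)|$ in your $p=\infty$ case.
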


\begin{lemma}\label{lemCompositionModuli}
Let $\phi$ be a $K$-quasiconformal principal mapping and let $\mu\in L^\infty$ be supported in $\overline{2\D}$. Consider $0<p \leq \infty$ and $\frac1q > \frac{K}{p}$. For $t$ small enough (depending on $K$), there exist constants $C_{K,q,p}$ and $C_K$ such that
$$\modulus{q}{(\mu\circ \phi)}(t) \leq C_{K,q,p} \, \modulus{p}{\mu}(C_K t^\frac1K).$$
\end{lemma}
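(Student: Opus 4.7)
The strategy combines three ingredients: the classical Hölder regularity of principal $K$-quasiconformal mappings (Mori's theorem), the $L^p$-to-$L^q$ composition estimate of Lemma \ref{lemCompositionLebesgue}, and an averaging device to remove the base-point dependence of the relevant shift in the $\phi$-image.

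Since $\phi$ is principal and conformal outside a compact set, Mori's theorem yields $|\phi(x)-\phi(x-y)|\leq C_K|y|^{1/K}$, uniformly on any fixed disk containing $\phi^{-1}(\overline{2\D})$, with $C_K$ depending only on $K$. Fix $|y|\leq t$ and set $s:=C_K t^{1/K}$, so that $\phi(x-y)=\phi(x)-\xi(x)$ with $|\xi(x)|\leq s$. To bypass the $x$-dependence of $\xi$, I would introduce the averaged oscillation
$$G_r(z):=\fint_{|\eta|\leq r}|\mu(z)-\mu(z-\eta)|\,dm(\eta).$$
Inserting $G_r$ via a triangle inequality and then comparing the ball $B(-\xi,r)\subset B(0,2r)$ yields the pointwise bound
$$|\mu(z)-\mu(z-\xi)|\leq G_r(z)+4\,G_{2r}(z-\xi)\quad\text{whenever }|\xi|\leq r.$$

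Applying this at $z=\phi(x)$ with $\xi=\xi(x)$ and $r=s$, raising to the $q$-th power and integrating against $dm(x)$, the translation invariance of Lebesgue measure under $x\mapsto x+y$ turns the second contribution into $\|G_{2s}\circ\phi\|_{L^q}^q$, producing
$$\|\mu\circ\phi-\tau_y(\mu\circ\phi)\|_{L^q}\lesssim_{q}\|G_s\circ\phi\|_{L^q}+\|G_{2s}\circ\phi\|_{L^q}.$$
For $t$ small, the functions $G_r$ are supported in a fixed disk (namely the support of $\mu$ dilated by $r$), so the hypothesis $1/q>K/p$ allows Lemma \ref{lemCompositionLebesgue} to be invoked; its proof extends to any fixed disk through the higher integrability of $J_{\phi^{-1}}$, yielding $\|G_r\circ\phi\|_{L^q}\leq C_{K,q,p}\|G_r\|_{L^p}$. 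Minkowski's integral inequality closes the gap,
$$\|G_r\|_{L^p}\leq\fint_{|\eta|\leq r}\|\mu-\tau_{-\eta}\mu\|_{L^p}\,dm(\eta)\leq\modulus{p}{\mu}(r),$$
and plugging in $r=s=C_K t^{1/K}$ together with the monotonicity of $\modulus{p}{\mu}$ (which absorbs the factor $2$ into $C_K$) completes the estimate.

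I expect the principal obstacle to be the base-point dependence of the shift $\xi(x)$, which prevents a direct reduction to translation estimates on $\mu$; the averaging device dissolves it at the cost of a multiplicative constant, after which the argument reduces to invoking tools already at hand. A minor technicality is verifying that Lemma \ref{lemCompositionLebesgue} extends to any fixed disk, but this is routine since the essential content of that lemma is a single Hölder inequality involving $J_{\phi^{-1}}$, whose integrability bound scales trivially under dilations of the target disk.
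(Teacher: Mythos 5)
Your argument is correct, but it takes a genuinely different route from the paper's. The paper's proof pivots on Claim \ref{claimComparableModuli}, the two-sided equivalence between $\modulus{q}{g}(t)$ and the averaged quantity $\bigl(\int_\C\fint_{B(x,t)}|g(x)-g(y)|^q\,dm(y)\,dm(x)\bigr)^{1/q}$, proved by a symmetrization argument; with that in hand it compares $\mu\circ\phi$ with the averages $\fint_{B(\phi(x),C_Kt^{1/K})}\mu\,dm$, uses the $\frac1K$-H\"older continuity of $\phi$ and Lemma \ref{lemCompositionLebesgue} applied to $z\mapsto\bigl(\fint_{B(z,2C_Kt^{1/K})}|\mu(z)-\mu(w)|^q\,dm(w)\bigr)^{1/q}$, and converts back to $\modulus{p}{\mu}$ via Jensen. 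You bypass the claim entirely: fixing the shift $y$, your pointwise bound $|\mu(z)-\mu(z-\xi)|\le G_r(z)+4G_{2r}(z-\xi)$ for $|\xi|\le r$, combined with translation invariance of Lebesgue measure, removes the base-point dependence of $\xi(x)=\phi(x)-\phi(x-y)$, after which the same two external inputs do the work — H\"older continuity of the principal map and Lemma \ref{lemCompositionLebesgue} extended to a fixed larger disk, which is indeed routine and is in effect what the paper itself does when it integrates $g^p$ over $2\D$ — with Minkowski's integral inequality playing, on the $\mu$ side, the role the claim plays in the paper. Both proofs lose exactly the same things (integrability $p\to q$ through the composition lemma, smoothness $t\mapsto t^{1/K}$ through H\"older), so neither is sharper; yours is somewhat more elementary in that it dispenses with the measure-theoretic claim, while the paper's claim is a self-contained averaged characterization of the integral modulus that could be reused elsewhere. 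One small caveat: the statement allows $0<p\le\infty$, and Minkowski's integral inequality for $\norm{G_r}_{L^p}$ requires $p\ge1$ (similarly, for $q<1$ the triangle inequality in $L^q$ only holds with the usual quasi-norm constant, harmless since it is absorbed in $C_{K,q,p}$); for $p<1$ one should replace $G_r$ by the $p$-th power average $\bigl(\fint_{|\eta|\le r}|\mu(\cdot)-\mu(\cdot-\eta)|^p\,dm(\eta)\bigr)^{1/p}$ and use $p$-subadditivity together with Tonelli, after which your argument goes through unchanged — and in the paper's applications $p>2K\ge2$, so this regime never actually arises.
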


\begin{proof}
First we will show the following claim:
\begin{claim}\label{claimComparableModuli}
For every $g\in L^1_{loc}(\C)$, we have that
$$\modulus{p}{g}(t) \approx_{p} \left( \int_{\C} \fint_{B(x,t)} |g(x)-g(y)|^p\, dm(y)\, dm(x) \right)^\frac1p.$$
\end{claim}
\begin{figure}
 \centering
\begin{subfigure}{0.5 \textwidth}
 \centering{\includegraphics[width=\textwidth]{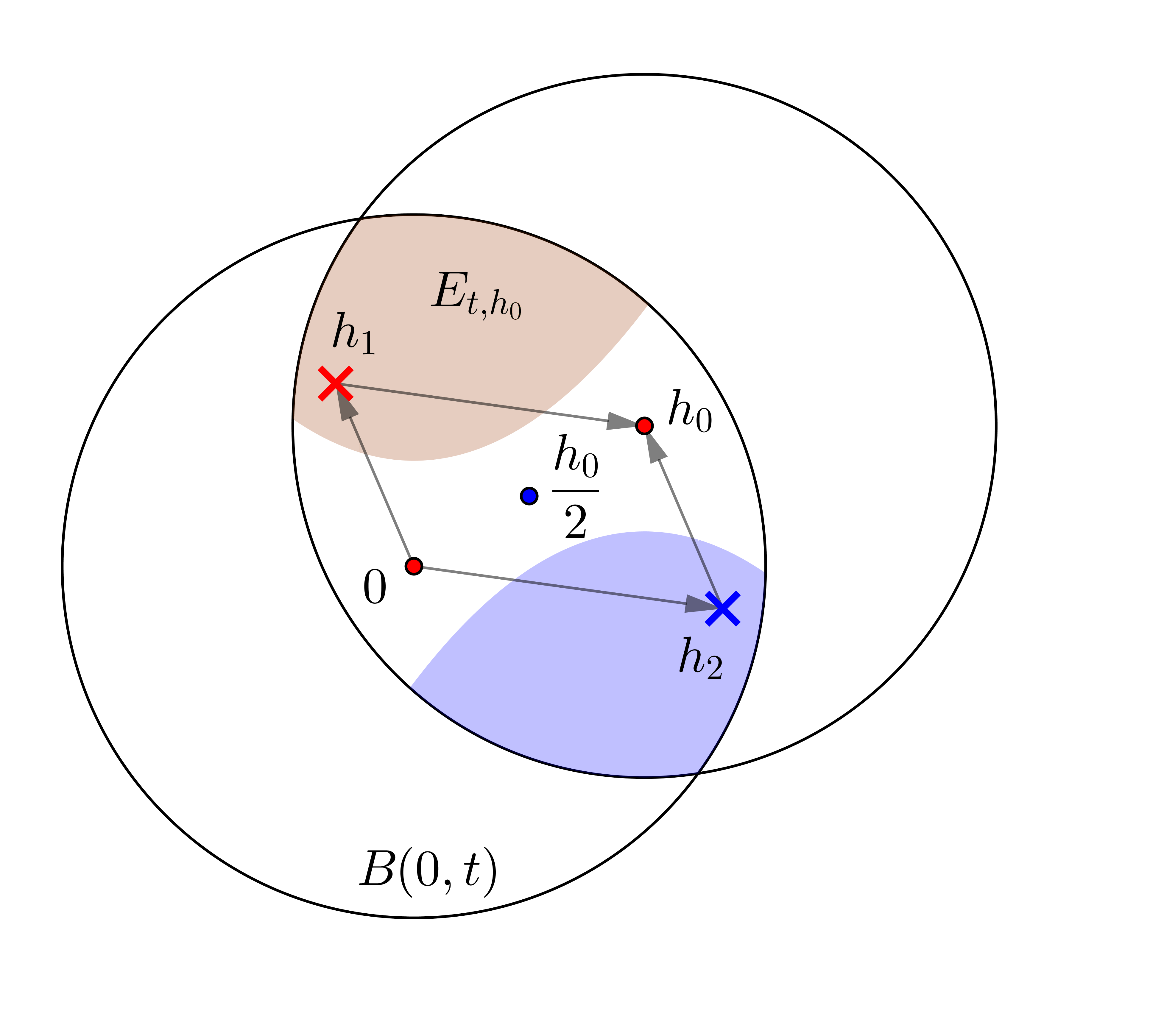}}
\end{subfigure}
\caption{The symmetrized region is included in the complement by the triangle inequality, see the proof of Claim \ref{claimComparableModuli}.}\label{figDuesZones}
\end{figure}

Indeed, the function 
$$\modulus{p}{g}(h):= \left( \int |g(x)-g(x+h)|^p\right)^\frac1p \mbox{\quad\quad for }h\in \C,$$
is measurable by Tonelli's theorem. Given $|h_0|<t$, the set $E_{t,h}:=\{|h|<t, |h-h_0|<t : \modulus{p}{g}(h)< \frac12 \modulus{p}{g}(h_0) \}$ is measurable, and its measure is bounded by $|\{|h|<t, |h-h_0|<t\}|/2$: using the triangle inequality $\modulus{p}{g}(h_1+h_2)\leq\modulus{p}{g}(h_1)+\modulus{p}{g}(h_2)$ we obtain that the set $\{|h|<t, |h-h_0|<t\} \setminus E_{t,h}$ contains the symmetrization of $E_{t,h}$ with respect to $h_0/2$ (see Figure \ref{figDuesZones}). Note that the argument is valid even if  $\modulus{p}{g}(t)=\infty$. Thus, there is a measurable region $E_0\subset B(0,t)$ with Lebesgue measure $|E_0| \geq \frac{\pi }6t^2$ with  $\modulus{p}{g}(h)\geq \frac12 \modulus{p}{g}(h_0)$ for $h\in E_0$. Thus, 
$$\modulus{p}{g}(t) 
	= \sup_{|h_0|\leq t} \modulus{p}{g}(h_0) \leq 2 \left(  \fint_{E_0} \modulus{p}{g}(h)^p  dm(h) \right)^\frac1p
	 \lesssim_{p} \left(  \fint_{B(0,t)} \modulus{p}{g}(h)^p  dm(h) \right)^\frac1p.$$
The converse inequality is trivial. Claim \ref{claimComparableModuli} follows now by Tonelli's theorem. 

Now,  using Claim \ref{claimComparableModuli}, and adding and subtracting $\fint_{B(\phi(x),Ct^{1/K})} \mu \, dm $ with $C$ to be fixed, we obtain
\begin{align*}
\modulus{q}{(\mu\circ \phi)}(t) 
	& \approx \left( \int_{\C} \fint_{B(x,t)} |\mu\circ \phi(x)-\mu\circ \phi(y)|^q\, dm(y)\, dm(x) \right)^\frac1q\\
	& \lesssim \left( \int_{\C} \left|\mu\circ\phi (x)- \fint_{B(\phi(x),Ct^{1/K})} \mu \, dm \right|^q\, dm(x) \right)^\frac1q \\
	& \quad + \left( \int_{\C} \fint_{B(x,t)} \left|\mu\circ\phi (y)- \fint_{B(\phi(x),Ct^{1/K})} \mu \, dm \right|^q\, dm(y)\, dm(x) \right)^\frac1q .
\end{align*}
Since $\phi$ maps the unit disk to $2\D$ (being a principal mapping, this follows from Koebe's theorem) and it is $\frac1K$-H\"older continuous, we obtain that for $x,y\in 2\D$ with $|x-y|<t$, $|\phi(x)-\phi(y)|\leq C_K |x-y|^\frac1K < C_K t ^\frac1K$, that is, $\phi(x)\in B(\phi(y), C_Kt^{\frac1K})$. 
Therefore, for $t<1$, if we choose above $C=C_K$, taking absolute values inside the integral and using Jensen's inequality we get that
\begin{align*}
\modulus{q}{(\mu\circ \phi)}(t) 
	& \lesssim \left( \int_{\C}  \fint_{B(\phi(x),Ct^{1/K})} \left|\mu\circ\phi (x)  - \mu(w)  \right|^q \, dm(w)\, dm(x) \right)^\frac1q \\
	& \quad + \left( \int_{\C}  \fint_{B(\phi(y),2Ct^{1/K})} \left|\mu\circ\phi (y) - \mu (w)\right|^q \, dm(w) \, dm(y) \right)^\frac1q .
\end{align*}
Applying Lemma \ref{lemCompositionLebesgue} to the function $g: z\mapsto \left(\fint_{B(z,2Ct^{1/K})} \left|\mu(z)  - \mu(w)  \right|^q \, dm(w) \right)^\frac1q$, we obtain
$$ \left( \int_{\D} g\circ \phi (x)^q\, dm(x) \right)^\frac1q\leq  C_{K,q,p} \left( \int_{2\D} g(z)^p \, dm(z) \right)^\frac1p. $$
Since $q<p$, applying Jensen's inequality to $g$ and Claim \ref{claimComparableModuli} the lemma follows.
\end{proof}

\begin{theorem}\label{theoDecayOfCGOS}
Let $\kappa < 1$,  let $2K<p<\infty$, where $K$ is defined by \rf{eqKappaK} and let $\omega$ be a modulus of continuity. There exits a modulus of continuity $\upsilon$ depending only on $\kappa$, $p$ and $\omega$ such that for every $\mu \in \mathcal{M}(\kappa,p,\omega)$, we have that
$$\norm{\varphi_\mu (\cdot,k)-Id}_{L^\infty}\leq \upsilon(|k|^{-1}),$$
where $\varphi_\mu(\cdot,k)$ stands for the quasiconformal mapping defined in Theorem \ref{theoConsequencesFMu}. 
\end{theorem}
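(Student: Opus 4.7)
The plan is to derive a Beltrami equation satisfied by $\varphi_\mu(\cdot,k)$ itself and reduce it to the linear equation handled by Proposition \ref{propoUpsilon}, via a composition with a suitable $K$-quasiconformal mapping.

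Starting from the representation $f_\mu = e^{ik\varphi_\mu}$ of Theorem \ref{theoConsequencesFMu}, I would compute $\partial f_\mu = ikf_\mu\,\partial\varphi_\mu$ and $\bar\partial f_\mu = ikf_\mu\,\bar\partial\varphi_\mu$ and substitute into $\bar\partial f_\mu = \mu\,\overline{\partial f_\mu}$. Since $\bar f_\mu = e^{-i\bar k \bar\varphi_\mu}$ gives the useful identity $\bar f_\mu/f_\mu = e_{-k}(\varphi_\mu)$, this yields the $\R$-linear Beltrami equation
\begin{equation*}
\bar\partial \varphi_\mu = -\frac{\bar k}{k}\,\mu(z)\,e_{-k}(\varphi_\mu(z))\,\overline{\partial \varphi_\mu}.
\end{equation*}
Its Beltrami coefficient has modulus equal to $|\mu|\leq\kappa$, so $\varphi_\mu$ is a $K$-quasiconformal principal mapping, which reproduces \rf{eqFMuRepresentation}. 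Comparing to \rf{eqBeltramiLinearEquation}, the nonlinearity lies entirely in the appearance of $\varphi_\mu$ inside $e_{-k}$.

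The next step would be a Stoilow-type factorization of $\varphi_\mu = \Psi \circ h$, where $h$ is a $K$-quasiconformal principal mapping close to the identity and $\Psi$ solves a linear equation of the form \rf{eqBeltramiLinearEquation} with Beltrami coefficient $-\frac{\bar k}{k}\,(\mu\circ h^{-1})\,e_{-k}$. Proposition \ref{propoUpsilon} applied to $\Psi$ would then yield $\norm{\Psi - Id}_{L^\infty}\leq \upsilon_0(|k|^{-1})$ provided $\mu\circ h^{-1}$ lies in $\mathcal{M}(\kappa,q,\tilde\omega)$ for some $q>2$ with a controlled $\tilde\omega$. This is exactly where the hypothesis $p>2K$ intervenes: Lemma \ref{lemCompositionModuli} transports the $p$-modulus $\omega$ of $\mu$ through composition with the $K$-quasiconformal $h^{-1}$ to a $q$-modulus controlled by $C_{K,q,p}\,\omega(C_K t^{1/K})$ whenever $q<p/K$. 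Since $p>2K$ forces $p/K>2$, one can pick such a $q>2$ and feed it into Proposition \ref{propoUpsilon}. Combining the decay of $\Psi$ with a direct bound on $h-Id$ (obtained via Koebe's theorem and the uniform decay \rf{eqUniformDecay}) gives the desired modulus $\upsilon$ with the structure predicted by Remark \ref{remQuantitativeUpsilon}, essentially $\upsilon(t)\lesssim_{\kappa,p}\omega(C_K t^{1/K})^\alpha + C_K t^\beta$.

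The principal obstacle is the self-referential character of the equation for $\varphi_\mu$: the Beltrami coefficient $\mu\, e_{-k}(\varphi_\mu)$ depends on $\varphi_\mu$ itself, so the factorization must be realized via a fixed-point iteration. One has to verify that the $L^\infty$-decay obtained at each iterate is preserved under the composition (which requires Lemma \ref{lemCompositionLebesgue} to keep the $L^p$ and Sobolev norms bounded) and that the resulting estimates are uniform in $k$. A secondary delicate point is the mismatch between the $\R$-linear equation for $\varphi_\mu$ and the $\C$-linear equation \rf{eqBeltramiLinearEquation} used in Proposition \ref{propoUpsilon}; fortunately the Neumann-series argument of Lemma \ref{lemBreakPsi} depends only on the size $\kappa$ and the $p$-modulus of the coefficient, not on its specific linear structure, so the same scheme adapts after minor notational adjustments.
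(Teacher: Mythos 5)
Your first step (deriving $\bar\partial\varphi_\mu = -\frac{\bar k}{k}\,\mu\,e_{-k}(\varphi_\mu)\,\overline{\partial\varphi_\mu}$) and your use of Lemma \ref{lemCompositionModuli} with the hypothesis $p>2K$ to reach an exponent $q>2$ are the right ingredients, but the mechanism you propose to remove the nonlinearity does not work as stated. If $\varphi_\mu=\Psi\circ h$ with $\Psi$ solving the linear equation with coefficient $-\frac{\bar k}{k}(\mu\circ h^{-1})e_{-k}$, then in the equation satisfied by $\Psi\circ h$ the exponential is evaluated at $h(z)$, whereas the equation for $\varphi_\mu$ needs it evaluated at $\varphi_\mu(z)=\Psi(h(z))$; so the factorization does not reproduce \rf{eqVarphi}, and you never specify which $h$ would make it do so. The fixed-point iteration you invoke to repair this is not actually set up, and the final combination step is quantitatively wrong in any case: \rf{eqUniformDecay} together with Koebe only gives $|h(z)-z|\leq C_\kappa$ for $|z|\sim 1$, a bound independent of $k$ rather than decaying in $|k|$, so ``decay of $\Psi$ plus a direct bound on $h-Id$'' would leave an $O(1)$ error instead of the claimed $\upsilon(|k|^{-1})$.

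The paper's resolution is simpler and needs no iteration: pass to the inverse map $\psi_k:=\varphi_\mu(\cdot,k)^{-1}$, which already exists by Theorem \ref{theoConsequencesFMu}. By the Wirtinger inversion formulas, the $\R$-linear equation \rf{eqVarphi} for $\varphi_\mu$ becomes the $\C$-linear equation $\bar\partial\psi_k=-\frac{\bar k}{k}\,(\mu\circ\psi_k)\,e_{-k}\,\partial\psi_k$, i.e.\ exactly \rf{eqBeltramiLinearEquation} with coefficient $\mu\circ\psi_k$ and with the exponential evaluated at the free variable. One then treats $\mu\circ\psi_k$ as a given coefficient: Lemma \ref{lemCompositionModuli} (which uses only that $\psi_k$ is a $K$-quasiconformal principal map, so the bound is uniform in $k$ and in $\mu\in\mathcal{M}(\kappa,p,\omega)$) places it in $\mathcal{M}(\kappa,q,\widetilde\omega)$ with $q>2$ and $\widetilde\omega(t)=C_{K,q,p}\,\omega(C_K t^{1/K})$; Proposition \ref{propoUpsilon} then gives $\norm{\psi_k-Id}_{L^\infty}\leq\upsilon(|k|^{-1})$, and the conclusion follows from $\norm{\psi_k-Id}_{L^\infty}=\norm{\psi_k\circ\varphi_\mu(\cdot,k)-\varphi_\mu(\cdot,k)}_{L^\infty}=\norm{Id-\varphi_\mu(\cdot,k)}_{L^\infty}$, since quasiconformal maps preserve null sets. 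This also disposes of the $\R$-linear versus $\C$-linear mismatch you flag at the end: no adaptation of the Neumann-series argument of Lemma \ref{lemBreakPsi} is required, because the inversion itself produces the $\C$-linear structure of \rf{eqBeltramiLinearEquation}.
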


\begin{proof}
Consider $\psi_k:= \varphi_\mu(\cdot, k)^{-1}$, that is, the quasiconformal map which is inverse to $\varphi_\mu(\cdot, k)$, which solves 
\begin{equation}\label{eqVarphi}
 \overline{\partial} (ik\varphi_\mu(\cdot,k))=  \mu \,e_{-k}(\varphi_\mu(\cdot,k))  \overline{\partial (ik\varphi_\mu(\cdot,k))}.
\end{equation}
 Then, using the inversion formulas \cite[(2.49) and (2.50)]{AstalaIwaniecMartin} for the Wirtinger derivatives, we get
$$ \overline{\partial} \psi_k(\cdot) = - \frac{ \bar{k}}{k}\mu\circ\psi_k(\cdot) \, e_{-k}(\cdot) \, \partial \psi_k(\cdot).$$
By Lemma \ref{lemCompositionModuli}, for every $q>0$ and $p$ in the range $0<\frac1p < \frac 1 {Kq}$, we have that 
$$\modulus{q}{(\mu\circ\psi_k)}(t)\leq C_{K,q,p} \, \modulus{p}{\mu}(C_K t^\frac1K)\leq C_{K,q,p} \, \omega(C_K t^\frac1K).$$
That is, $\mu\circ\psi_k\in \mathcal{M}(\kappa,q,\widetilde{\omega})$ for
\begin{equation}\label{eqModulusModified}
\widetilde{\omega}(t)=C_{K,q,p} \, \omega(C_K t^\frac1K).
\end{equation}

Choosing $q>2$, by Proposition \ref{propoUpsilon} we get that there exits a modulus of continuity $\upsilon$ depending only on $K$, $q$, $\omega$ and $C_{K,q,p}$ such that
$$\norm{\psi_k-Id}_{L^\infty}\leq \upsilon(|k|^{-1}).$$
But quasiconformal mappings preserve the essential supremum norm because they preserve null sets. Thus,
$$\norm{\psi_k-Id}_{L^\infty}=\norm{\psi_k\circ \varphi_\mu(\cdot,k) - \varphi_\mu(\cdot,k)}_{L^\infty}=\norm{Id-\varphi_\mu(\cdot,k)}_{L^\infty}.$$
\end{proof}

We end the section with some remarks on the last results above. Lemma \ref{lemCompositionModuli}  is non-sharp, at least for the Besov spaces. Indeed, $\mu\in \dot B^{s}_{p,\infty}$ if and only if $\modulus{p}{\mu}(t)\leq C t^s$ for $t<1$. Applying Lemma \ref{lemCompositionModuli} we obtain that $\modulus{q}{(\mu\circ \phi)}(t)\leq C C_K^s t^\frac sk$, that is, we get that $\mu\circ\phi \in \dot B^\frac sK_{q,\infty}$. Moreover, if we write $\frac1{p^*_s}=\frac1p-\frac s2$, since $B^s_{p,\infty}\subset L^r$ for $r<p^*_s$ (see \cite[Section 2.7]{TriebelTheory}, for instance), Lemma \ref{lemCompositionQCLebegue} implies that $\mu\circ\phi \in L^q$ for $\frac1q>\frac Kp- \frac{sK}2$, and, as a consequence, for $\frac1q>\frac Kp$ we get that $\mu\circ\phi \in B^\frac sK_{q,\infty}$. However, we already  know from \cite{OlivaPrats} that $\mu\circ\phi \in B^s_{q,\infty}$ for $\frac1q>\frac1p + \frac{K-1}2 \left(\frac2p-s\right)$, which implies our result by elementary embeddings (see \cite[Section 2.2]{TriebelTheory}).

The reason for this result to be so vague is that we do not use the relation between the jacobian determinant and the distortion at every point, but only the H\"older regularity in a quite naive way. The question here is whether it is possible to improve the result above or not to get, for instance, 
$$\modulus{p}{(\mu\circ \phi)}(t) \leq C_1 \, \modulus{p}{\mu}(C_2 t^\frac1K),$$
that is, preserving the integrability and losing only smoothness, or 
$$\modulus{q}{(\mu\circ \phi)}(t) \leq C_1 \, \modulus{p}{\mu}(C_2 t) \mbox{\quad\quad for every } \frac1q>\frac Kp,$$ 
which does not recover the Besov case but it would mean that there is no ``loss of smoothness''.

In any case, Lemma \ref{lemCompositionModuli} covers any modulus of continuity and, as a consequence, it can be applied to every $\mu \in L^\infty_c$, which is our purpose in the present paper.

As it has been noticed above, there are some particular moduli for which we know better estimates for $\widetilde{\omega}$ than \rf{eqModulusModified}. If we restrict to Beltrami coefficients in subcritical Besov spaces $B^s_{p,\infty}$, i.e., if $\omega (t)= C t^s$ with $0<s<1$ and $sp<2$, then by \cite[Theorem 2.25]{OlivaPrats} we can deduce that $\mu \circ \psi_k \in B^s_{q,\infty}$ as long as $1>\frac1q > \frac1p + (K-1)\left(\frac1p-\frac{s}{2}\right)=\frac Kp - (K-1)\frac{s}{2}$.
If, instead, the Besov space is supercritical, then we can take $1>\frac1q > \frac1p + \frac{(K-1)}{K}\left(\frac{s}{2}-\frac1p\right)$.  Therefore, in these cases we can write 
\begin{equation*}
\widetilde{\omega}(t)=C_{K,q,p} \,  t^s.
\end{equation*}
Note that the modulus $\widetilde{\omega}$ obtained here has a better decay at the origin, and the integrability parameter $q$ has been improved as well. 

To recover the previously known results (see \cite[Theorem 3.7]{BarceloFaracoRuiz} and \cite[Theorem 5.6]{ClopFaracoRuiz}), it suffices to switch the loss from the integrability to the smoothness. Namely, in the subcritical setting, where $sp<2$, by the embedding theorems we have $\mu \in B^{\frac sK}_{Kp,\infty}$ and
$$\modulus{p}{(\mu\circ\psi_k)}(t) \leq C_{K,q,p}  t^\frac sK.$$ 
In the supercritical case $sp>2$, we can find $0<\alpha_\kappa<1$ depending on $K$ and $p$ such that 
$$\modulus{p}{(\mu\circ\psi_k)}(t) \leq C_{K,q,p}  t^{\alpha_\kappa s}.$$ 
Moreover, it has the additional advantage that  $\mu\circ\psi_k \in C^{\beta_\kappa s}$ is granted by the Embedding theorems (see \cite[Section 2.7]{TriebelTheory}, for instance) for a certain $0<\beta_\kappa<1$ which depends on $K$ and $p$ as well, that is 
$$\modulus{\infty}{(\mu\circ\psi_k)}(t) \leq C_{K,q,p}  t^{\beta_\kappa s}.$$ 
As a consequence, if we restrict to $C^s$ H\"older spaces, with $0<s<1$, i.e., if $\omega (t)= C t^s$ and $p=\infty$, we can take $\frac1q >\frac{ (K-1) s}{2K}$ and $\widetilde{\omega}(t)=C_{K,q,p} \,  t^s$, which agrees with the fact that $\mu \circ \phi \in B^{s}_{q,\infty}\subset C^{\beta_\kappa s}$ (any $\beta_\kappa<\frac1K$ will do by the embedding theorems). Thus, we can take $q=\infty$ as well with a worse modulus of continuity $\widetilde{\omega}(t)=C_{K,q,p} \,  t^{\beta_\kappa s}$.

\subsection{Decay of the conductivity solution}\label{secDecayConductivity}

Equation \rf{eqRelationfu} can be used to deduce a useful relation between the complex geometric optics solutions to the conductivity equation $u_\gamma$ and the family of solutions to the Beltrami equations with rotated coefficients $f_{\lambda\mu}$.

\begin{lemma}\label{lemUgammaFLambdaMu}
 Given a Beltrami coefficient $\mu \in L^\infty$ supported in $\overline{\D}$ with $\norm{\mu}_{L^\infty}<1$. Then
 \begin{equation}\label{eqPhiXiFMu}
u_\gamma(z,k)=f_{\lambda_{\mu}(z,k) \mu}(z,k)=e^{ik\varphi_{\lambda_{\mu}(z,k) \mu}(z,k)},
\end{equation}
where $\lambda_{\mu}(z,k)\in\partial\D$ is defined as
\begin{equation}\label{eqDefinitionXi}
\lambda_{\mu}(z,k)=\begin{cases}
	\frac{\overline{f_{\mu}(z,k)-f_{-\mu}(z,k)}}{f_\mu(z,k)-f_{-\mu}(z,k)} & \mbox{if } f_\mu(z,k) \neq f_{-\mu}(z,k),\\
	1 & \mbox{otherwise.}
	\end{cases}
\end{equation}
\end{lemma}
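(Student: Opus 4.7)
The plan is to verify that $u_\gamma(\cdot,k)$ satisfies the two defining conditions of the CGOS $f_{\lambda_\mu\mu}(\cdot,k)$ --- the Beltrami equation $\bar\partial f = (\lambda_\mu\mu)\overline{\partial f}$ and the asymptotics $f(z,k) = e^{ikz}(1 + \mathcal{O}(1/z))$ --- and then to invoke the uniqueness in Theorem~\ref{theoMeasurableRiemann}. Since $|\lambda_\mu \mu| = |\mu| \leq \kappa < 1$, the (possibly complex valued) coefficient $\lambda_\mu\mu$ is an admissible Beltrami coefficient, so once the two conditions are checked the exponential representation $f_{\lambda_\mu\mu} = e^{ik\varphi_{\lambda_\mu\mu}}$ will follow automatically from Theorem~\ref{theoConsequencesFMu} applied to it.

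The asymptotic condition is immediate from \rf{eqRelationfu}: writing $f_{\pm\mu} = e^{ikz}M_{\pm\mu}$ one rewrites
\[
	u_\gamma(z,k) = e^{ikz}\,\tfrac{M_\mu + M_{-\mu}}{2} + e^{-i\bar k\bar z}\,\tfrac{\overline{M_\mu} - \overline{M_{-\mu}}}{2},
\]
and $M_{\pm\mu} - 1 = \mathcal{O}(1/z)$ together with $M_\mu - M_{-\mu} = \mathcal{O}(1/z)$ give $u_\gamma(z,k) = e^{ikz}(1 + \mathcal{O}(1/z))$ as $|z|\to\infty$. For the Beltrami equation, set $v = f_\mu + f_{-\mu}$ and $w = f_\mu - f_{-\mu}$; the equations $\bar\partial f_{\pm\mu} = (\pm\mu)\overline{\partial f_{\pm\mu}}$ combined with the reality of $\mu$ and the identity $\partial\bar g = \overline{\bar\partial g}$ yield the coupled system $\bar\partial v = \mu\,\overline{\partial w}$ and $\bar\partial w = \mu\,\overline{\partial v}$. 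Applying these together with \rf{eqRelationfu} one computes the clean formulas
\[
	\bar\partial u_\gamma = \tfrac{1+\mu}{2}\,\overline{\partial w}, \qquad \overline{\partial u_\gamma} = \tfrac{1+\mu}{2}\,\overline{\partial v}.
\]
On the open set $\{w\neq 0\}$ the Beltrami coefficient of $u_\gamma$ is thus $\overline{\partial w}/\overline{\partial v} = \mu\,\overline{\partial w}/\bar\partial w$ (using $\bar\partial w = \mu\,\overline{\partial v}$), so the claim is equivalent to the algebraic identity $w\,\overline{\partial w} = \bar w\,\bar\partial w$, which rewrites the Beltrami coefficient as $(\bar w/w)\mu = \lambda_\mu \mu$.

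The main obstacle will be establishing this last identity, which does not follow from the coupled $v$--$w$ system alone and needs the finer structure of $f_\mu, f_{-\mu}$. I would attack it via the non-vanishing representation $f_{\pm\mu} = e^{ik\varphi_{\pm\mu}}$ from Theorem~\ref{theoConsequencesFMu}: expanding $\partial w$ and $\bar\partial w$ in terms of $\varphi_{\pm\mu}$ and their derivatives, and then substituting the $\varphi$-equation \rf{eqVarphi} (which crucially carries the unimodular factor $\bar f_{\pm\mu}/f_{\pm\mu}$), should reduce both sides to an algebraic relation on $f_\mu, f_{-\mu}$ matching $\bar w/w$. The degenerate set $\{w = 0\} = \{f_\mu = f_{-\mu}\}$ is handled by the convention $\lambda_\mu \equiv 1$ together with continuity of both sides, after which Theorem~\ref{theoMeasurableRiemann} forces $u_\gamma = f_{\lambda_\mu\mu}$ and Theorem~\ref{theoConsequencesFMu} yields the exponential representation.
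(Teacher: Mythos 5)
Your plan cannot be completed, for two related reasons. First, it misreads the statement: \rf{eqPhiXiFMu} is a \emph{pointwise, diagonal} identity. For each fixed $(z,k)$ one freezes the constant $\lambda=\lambda_\mu(z,k)\in\partial\D$, takes the CGOS $f_{\lambda\mu}$ of the rotated (constant-rotation) coefficient $\lambda\mu$, and evaluates it at that same point; it is \emph{not} the claim that $z\mapsto u_\gamma(z,k)$ solves a Beltrami equation with the variable coefficient $\lambda_\mu(\cdot,k)\mu(\cdot)$, and it is the frozen-$\lambda$ reading that is used later (Proposition \ref{propoDecayDelta}, Lemma \ref{lemVarphiIsContinuous}), where estimates uniform in the constant $\lambda$ are transferred to $u_\gamma$. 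Second, and fatally, the ``algebraic identity'' $w\,\overline{\partial w}=\bar w\,\bar\partial w$ that you defer as the main obstacle is false, so no finer use of $f_{\pm\mu}=e^{ik\varphi_{\pm\mu}}$ can produce it. Taking absolute values it forces $|\partial w|=|\bar\partial w|$ wherever $w\neq0$, and since $\bar\partial w=\mu\,\overline{\partial v}$ this reads $|\partial w|=|\mu|\,|\partial v|$; off the support of $\mu$ (e.g.\ on $\C\setminus\overline\D$) it would force $\partial w\equiv0$, i.e.\ $f_\mu-f_{-\mu}$ locally constant there, which fails for nontrivial $\mu$ and $k\neq0$, and inside $\D$ there is no reason for it either. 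Concretely, outside $\overline\D$ your own formulas give $\bar\partial u_\gamma=\tfrac12\overline{\partial w}\not\equiv0$ while $\lambda_\mu\mu\,\overline{\partial u_\gamma}\equiv0$: the function $u_\gamma(\cdot,k)=\tfrac12(v+\bar w)$ is simply not quasiregular in $z$ (its real and imaginary parts solve the conductivity equations with $\gamma$ and $\gamma^{-1}$, respectively), so the uniqueness you want to invoke has nothing to apply to. (Incidentally, the uniqueness relevant to CGOS is the one in Definition \ref{defFMu}, not Theorem \ref{theoMeasurableRiemann}, which concerns the $\C$-linear principal solution.)

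The paper's argument avoids attributing any Beltrami equation to $u_\gamma$. For each \emph{constant} $\lambda\in\partial\D$ one checks directly that $\Phi_\lambda:=\tfrac{1+\lambda}{2}f_\mu+\tfrac{1-\lambda}{2}f_{-\mu}=\tfrac12(v+\lambda w)$ has the asymptotics \rf{eqBeltramiAssimptotics} and satisfies $\bar\partial\Phi_\lambda=\lambda\mu\,\overline{\partial\Phi_\lambda}$ (using $\lambda\bar\lambda=1$ and $\mu$ real), so uniqueness gives $\Phi_\lambda=f_{\lambda\mu}=e^{ik\varphi_{\lambda\mu}}$; then \rf{eqPhiXiFMu} is pure pointwise algebra: by \rf{eqRelationfu}, $u_\gamma(z,k)=\tfrac12(v+\bar w)(z,k)$, and with $\lambda=\lambda_\mu(z,k)=\bar w/w$ one has $\lambda w=\bar w$, hence $u_\gamma(z,k)=\Phi_{\lambda_\mu(z,k)}(z,k)=f_{\lambda_\mu(z,k)\mu}(z,k)$, the degenerate case $w(z,k)=0$ being immediate. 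Your computations of $\bar\partial u_\gamma$, $\overline{\partial u_\gamma}$ and of the asymptotics are correct, but the step identifying the ``Beltrami coefficient of $u_\gamma$'' with $\lambda_\mu\mu$ is where the argument breaks down, and it cannot be repaired within your framework.
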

\begin{proof}
First we recover an argument from \cite[Lemma 8.2]{AstalaPaivarinta}. Consider $\lambda \in \partial\D$ and let
$$\Phi_\lambda :=\left(\frac{1+\lambda}{2}f_\mu+\frac{1-\lambda}{2}f_{-\mu}\right)=\frac{f_\mu+f_{-\mu}}{2}+ \lambda \frac{f_\mu-f_{-\mu}}{2}. $$
By \rf{eqBeltramiAssimptotics} we have that
$$\Phi_\lambda=e^{ikz}\left(1+\mathcal{O}_{z\to\infty}\left(\frac{1}{z}\right)\right),$$
and, since $\lambda\bar{\lambda}=1$,  it is immediate to check that
$$\bar\partial \Phi_\lambda = \left(\frac{1+\lambda}{2}\bar\partial f_\mu+\frac{1-\lambda}{2} \bar\partial f_{-\mu}\right) = \left(\frac{\lambda(1+\bar{\lambda})}{2}\mu\overline{\partial f_\mu}- \frac{\lambda(-1+\bar{\lambda})}{2} \mu\overline{\partial f_{-\mu}}\right) = \lambda \mu  \overline{\partial\Phi_\lambda}.$$
By Definition \ref{defFMu} and \rf{eqFMuRepresentation} we have that 
$\Phi_\lambda=f_{\lambda \mu} = e^{ik\varphi_{\lambda\mu}}.$
By \rf{eqBeltramiAssimptotics} and Theorem \ref{theoConsequencesFMu} we have that $\real\left(\frac{f_\mu}{f_{-\mu}}\right)>0$ and, therefore, $f_\mu+f_{-\mu}\neq 0$. Using the definition of $\Phi_\lambda$ above, this equality reads as
\begin{align}\label{eqExpressionNewHope}
f_{\lambda \mu}=\frac{ \lambda \left((\bar{\lambda}+1)f_\mu+(\bar{\lambda}-1)f_{-\mu}\right)}{2} \frac{(f_\mu+f_{-\mu})}{(f_\mu+f_{-\mu})}=\left(\frac{f_\mu-f_{-\mu}}{f_\mu+f_{-\mu}}+\lambda^{-1} \right)\lambda \frac{f_\mu+f_{-\mu}}{2}.
\end{align}

Fix $z,k\in\C$. By \rf{eqRelationfu}, we have that
\begin{align*}
u_\gamma(z,k)
	& =\frac{1}{2} \left(f_\mu+f_{-\mu}+\overline{f_{\mu}}-\overline{f_{-\mu}}\,\right)(z,k).
\end{align*}
If $f_\mu(z,k)\neq f_{-\mu}(z,k)$, with some algebraic manipulation and using \rf{eqDefinitionXi} we get
\begin{align*}
u_\gamma(z,k)
	& =\frac{f_\mu+f_{-\mu}}{2}\left(1+\frac{\overline{f_{\mu}}-\overline{f_{-\mu}}}{f_\mu+f_{-\mu}}\right) (z,k)
		=\frac{f_\mu+f_{-\mu}}{2}\lambda_{\mu}\left((\lambda_{\mu})^{-1}+\frac{f_\mu-f_{-\mu}}{f_\mu+f_{-\mu}}\right)(z,k).
\end{align*}
Otherwise,  since we defined $\lambda(z,k)=1$ this identity is satisfied as well (in fact, we can fix any $\lambda\in\partial\D$ without restriction in this case).
Substituting the pointwise equality \rf{eqExpressionNewHope}, we get
\begin{equation*}
u_\gamma(z,k) =f_{\lambda_\mu(z,k) \mu}(z,k).
\end{equation*}
\end{proof}
 \begin{remark}
 Note that in the previous lemma we have seen two key facts. First, $f_\mu$ and $f_{-\mu}$ determine the solutions $f_{\lambda\mu}$ of any rotation of the Beltrami coefficient $\mu$ via \rf{eqExpressionNewHope}. Secondly, they also determine the possible values of $u_\gamma$, so any bound (either above or below), decay at infinity,... that we find for $f_{\lambda\mu}$ which is uniform on $\norm{\mu}$  or, even better, just in $\lambda$, is automatically applied to $u_\gamma$.
 \end{remark}

Let us introduce a new family of conductivities adapted to Definition \ref{defFamiliesOfModuliRevisited}.
\begin{definition}\label{defFamiliesOfModuliGamma}
Let $0<\kappa<1$, $1< p <\infty$ and let $\omega$ be a modulus of continuity. We define
$$\mathcal{G}(\kappa, p,\omega):=\{\gamma: \gamma=\gamma_\mu \mbox{ for a certain }\mu\in\mathcal{M}(\kappa, p,\omega) \}.$$
\end{definition}
Note that this definition does not coincide with  $\mathcal{G}(K, \D, p,\omega)$ from Definition \ref{defFamiliesOfModuli}. However, it is not difficult to find constants $C_j$ which depend only on $\kappa$ so that 
$$\mathcal{G}(K,\D,p,C_1\omega)\subset \mathcal{G}(\kappa, p,\omega)\subset  \mathcal{G}(K, \D,p, C_2\omega)$$
whenever \rf{eqKappaK} is satisfied.

To lift \rf{eqPhiXiFMu} to the logarithms of both CGOS, we need to check the continuity of $k\varphi_{\lambda\mu}$. We will use the following version of the Liouville theorem:
\begin{theorem}\label{theoArgumentPrinciple}
Let $\kappa<1$, let $2<p <p_\kappa$, let $M \in L^p(\D)$ and $E\in L^2(\D)$ be positive functions and let $F\in W^{1,2}_{loc}(\C)$ satisfy the differential inequality
\begin{equation*}
|\bar\partial F|\leq \chi_{\D} \left(\kappa |\partial F| + M|F| + E\right).
\end{equation*}
If
$$\lim_{z\to \infty}F(z)=0,$$
then for $2<q<\infty$ we have that
$$\norm{F}_{L^{q}(\C)} \leq e^{C\left(1+ \norm{M}_{L^p(\D)}\right)}\norm{E}_{L^2(\D)}.$$
If, moreover, $F\in L^q(\D)$ with $2<q<p_\kappa$ , then
\begin{equation*}
\norm{F}_{W^{1,q}(\C)} \leq e^{C\left(1+\norm{M}_{L^p(\D)}\right)}\norm{E}_{L^q(\D)}.
\end{equation*}
The constants depend only on $p$, $q$ and $\kappa$.
\end{theorem}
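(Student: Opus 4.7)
The plan is to mimic the classical Stoilow factorization (similarity principle) from the theory of generalized analytic functions, absorbing the zeroth-order coefficient $M$ into an exponential factor and thereby reducing the differential inequality to a pure $\R$-linear Beltrami equation with source, which can then be handled by the standard Cauchy/Beurling calculus.

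The first step converts the inequality into an equation. A measurable selection yields $\mu,\nu,\beta\in L^\infty(\C)$ supported in $\overline{\D}$, with $|\mu|\leq\kappa$, $|\nu|\leq M$, $|\beta|\leq E$, such that $\bar\partial F = \mu\partial F + \nu F + \beta$ almost everywhere. Since the right-hand side of the assumed inequality vanishes outside $\overline{\D}$, so does $\bar\partial F$, and the decay of $F$ at infinity forces $F=\Cauchy(\bar\partial F)$ and $\partial F=\Beurling(\bar\partial F)$. Next we solve $\bar\partial\omega=\mu\partial\omega+\nu$ with $\omega(\infty)=0$. Invertibility of $I-\mu\Beurling$ on $L^p$ (valid since $p<p_\kappa$, as recalled after Theorem \ref{theoMeasurableRiemann}) gives $\bar\partial\omega=(I-\mu\Beurling)^{-1}\nu\in L^p(\D)$ with norm $\leq C_{\kappa,p}\|M\|_{L^p}$. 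Setting $\omega:=\Cauchy(\bar\partial\omega)$, the estimate \rf{eqCauchyCompactlyBounded} places $\omega$ in $W^{1,p}(\C)\hookrightarrow C^{1-2/p}$, so in particular $\|\omega\|_{L^\infty}\leq C_{\kappa,p}\|M\|_{L^p}$. A direct computation then shows that $G:=e^{-\omega}F$ solves the pure Beltrami equation $\bar\partial G=\mu\partial G+e^{-\omega}\beta$, with $G(\infty)=0$ and $\bar\partial G$ supported in $\overline{\D}$, so that $\bar\partial G=(I-\mu\Beurling)^{-1}(e^{-\omega}\beta)$.

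For the first estimate, the $L^2$-invertibility of $I-\mu\Beurling$ (using $\|\Beurling\|_{L^2\to L^2}=1$ and $\kappa<1$) gives $\|\bar\partial G\|_{L^2}\leq(1-\kappa)^{-1}e^{\|\omega\|_\infty}\|E\|_{L^2}\leq e^{C(1+\|M\|_{L^p})}\|E\|_{L^2}$. Since $\bar\partial G$ is compactly supported in $\overline{\D}$, the Cauchy reconstruction $G=\Cauchy(\bar\partial G)$, the local $W^{1,2}$ regularity in \rf{eqCauchyCompactlyBounded}, and the $1/|z|$ decay at infinity of $\Cauchy f$ for compactly supported $f$ together yield $\|G\|_{L^q(\C)}\leq C_q\|\bar\partial G\|_{L^2(\D)}$ for every $q>2$. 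Multiplying back by $e^\omega$ produces the stated bound on $\|F\|_{L^q(\C)}$.

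For the second estimate, the same scheme applies but with the $L^q$-invertibility of $I-\mu\Beurling$, which holds for $q<p_\kappa$. This gives $\|\bar\partial G\|_{L^q}\leq e^{C(1+\|M\|_{L^p})}\|E\|_{L^q}$, whence $\partial G=\Beurling\bar\partial G\in L^q$ and $G=\Cauchy(\bar\partial G)\in L^q(\C)$ via \rf{eqCauchyCompactlyBounded}, placing $G$ in $W^{1,q}(\C)$ with the desired bound. The passage from $G$ to $F=e^\omega G$ via the product rule is the most delicate step: controlling the cross term $e^\omega(\nabla\omega)G$ in $L^q$ requires combining $\nabla\omega\in L^p(\D)$ (compactly supported) with $G\in L^\infty$, using H\"older's inequality on the bounded domain $\D$ together with the Morrey embedding $W^{1,q}\hookrightarrow C^{1-2/q}$ for $q>2$. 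The main technical obstacle is keeping this bookkeeping of exponents closed across the full range $2<p,q<p_\kappa$ so that the exponential growth remains confined to the single prefactor $e^{C(1+\|M\|_{L^p})}$.
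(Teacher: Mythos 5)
Your strategy is the same one the paper relies on (its proof is deferred to \cite[Theorem 2.1]{BarceloFaracoRuiz}): a measurable selection rewriting the inequality as $\bar\partial F=\mu\,\partial F+\nu F+\beta$ with $|\mu|\leq\kappa$, $|\nu|\leq M$, $|\beta|\leq E$ supported in $\overline\D$, the similarity factorization $F=e^{\omega}G$ with $\bar\partial\omega=(I-\mu\Beurling)^{-1}\nu$, $\omega=\Cauchy(\bar\partial\omega)$, and the Cauchy--Beurling calculus for the resulting Beltrami equation with source. The first estimate goes through as you describe: $\norm{\omega}_{L^\infty}\lesssim_{\kappa,p}\norm{M}_{L^p}$ by \rf{eqCauchyCompactlyBounded} and Morrey, $\norm{\bar\partial G}_{L^2}\leq(1-\kappa)^{-1}e^{\norm{\omega}_{L^\infty}}\norm{E}_{L^2}$ (after checking $\bar\partial G\in L^2$, which holds since $p>2$ and $F\in L^r_{loc}$ for all finite $r$, and $G=\Cauchy(\bar\partial G)$ by decay plus Weyl--Liouville), and the splitting of $\Cauchy(\bar\partial G)$ into a local $W^{1,2}$ piece and a $1/|z|$ tail gives $\norm{G}_{L^q(\C)}\lesssim_q\norm{\bar\partial G}_{L^2}$ for every $q>2$.

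The gap is in the return trip from $G$ to $F$ for the $W^{1,q}$ bound. First, $\nabla\omega$ is \emph{not} compactly supported ($\partial\omega=\Beurling\bar\partial\omega$ spreads over all of $\C$), so ``H\"older on the bounded domain $\D$'' does not apply to the gradient cross term as you invoke it; the correct route is to estimate only $\bar\partial F=e^{\omega}\left(\bar\partial\omega\, G+\bar\partial G\right)$, which is supported in $\overline\D$, and then recover $\norm{F}_{W^{1,q}(\C)}$ from $F=\Cauchy(\bar\partial F)$ and $\partial F=\Beurling(\bar\partial F)$ via \rf{eqCauchyCompactlyBounded}. Second, and more seriously, the cross term $\bar\partial\omega\,G$ lies only in $L^p$: $\bar\partial\omega=(I-\mu\Beurling)^{-1}\nu$ inherits exactly the integrability of $M$, and multiplying by $G\in L^\infty$ cannot raise it. H\"older on $\D$ moves exponents down, not up, so your bookkeeping closes only when $q\leq p$ (then $\norm{\bar\partial\omega\,G}_{L^q(\D)}\leq\norm{G}_{L^\infty}\norm{\bar\partial\omega}_{L^q(\D)}\lesssim_{\kappa,p,q}\norm{M}_{L^p}\,e^{C(1+\norm{M}_{L^p})}\norm{E}_{L^q}$, which is of the admissible form). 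For $q>p$ no bookkeeping can close it, because the second estimate is then false as stated: take $\kappa=0$, $M(z)=|z|^{-2/q}\chi_{\D}\in L^p\setminus L^q$, $\omega=\Cauchy(\chi_\D M)$ (continuous since $p>2$) and $F=\varphi e^{\omega}$ with $\varphi\in C^\infty_c(\D)$, $\varphi\equiv1$ on $\frac12\D$; then $|\bar\partial F|\leq\chi_\D\left(M|F|+E\right)$ with $E=|\bar\partial\varphi|\,e^{|\omega|}\in L^\infty(\D)$ and all hypotheses hold, yet $\bar\partial F=MF+e^{\omega}\bar\partial\varphi\notin L^q$ near the origin, so $F\notin W^{1,q}(\C)$. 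So you should state and prove the $W^{1,q}$ part for $q\leq p$ (which suffices for its only use in the paper, Lemma \ref{lemVarphiIsContinuous}, where $q=p$), rather than claim the full range $2<p,q<p_\kappa$ independently.
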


We skip the  proof because it  is exactly the same as in \cite[Theorem 2.1]{BarceloFaracoRuiz}.

\begin{lemma}\label{lemVarphiIsContinuous}
Let $k\in\C$ and let $\mu\in L^\infty$ be a compactly supported Beltrami coefficient. The function $z \mapsto \varphi_{\lambda_{\mu}(z,k) \mu}(z,k)$ as defined in Lemma \ref{lemUgammaFLambdaMu} is continuous, and  $k \mapsto k\varphi_{\lambda_{\mu}(z,k) \mu}(z,k)$ is differentiable.
\end{lemma}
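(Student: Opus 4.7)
The plan is to exploit the identification $u_\gamma(z,k)=e^{ik\varphi_{\lambda_{\mu}(z,k)\mu}(z,k)}$ from Lemma \ref{lemUgammaFLambdaMu}: since the exponential is never zero and $u_\gamma(\cdot,k)\in W^{1,p}_{\rm loc}\hookrightarrow C^0_{\rm loc}$ for some $p>2$ (by \rf{eqConductivityAssimptotics} and the Sobolev embedding), both statements reduce to regularity properties of a suitable logarithm of $u_\gamma$.

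For the continuity in $z$, I would argue exactly as in Remark \ref{remDifferentiabilityOfFunctions}. Since $\C$ is simply connected and $u_\gamma(\cdot,k)$ is continuous, nowhere zero, and satisfies $u_\gamma(z,k)=e^{ikz}(1+R_\gamma(z,k))$ with $R_\gamma(z,k)\to 0$ at infinity (from the Sobolev control of $R_\gamma$ recalled in Remark \ref{remDifferentiabilityOfFunctions}), the Abstract Monodromy Theorem yields a unique continuous determination $L(\cdot,k)$ of $\log u_\gamma(\cdot,k)$ with $L(z,k)-ikz\to 0$ at infinity. The function $z\mapsto ik\,\varphi_{\lambda_{\mu}(z,k)\mu}(z,k)$ also furnishes such a determination, since by \rf{eqUniformDecay} it has the correct asymptotics. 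Any two such determinations differ by a constant integer multiple of $2\pi i$, and the asymptotic normalization forces that constant to vanish. Hence $ik\,\varphi_{\lambda_{\mu}(z,k)\mu}(z,k)=L(z,k)$, which is continuous in $z$.

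For the differentiability of $k\mapsto k\,\varphi_{\lambda_{\mu}(z,k)\mu}(z,k)=-iL(z,k)$, I would observe that by \rf{eqRelationfu} and \rf{eqCInfty} the map $k\mapsto u_\gamma(z,k)$ is $C^\infty$, and since $u_\gamma$ is nowhere zero, locally in $k$ one can write $L(z,k)=L(z,k_0)+\log\bigl(u_\gamma(z,k)/u_\gamma(z,k_0)\bigr)$ using the principal branch of $\log$ on a neighborhood of $1$; the right-hand side is manifestly $C^\infty$ in $k$.

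The subtle point, which I see as the main obstacle, is coordinating the branches of $\log u_\gamma(\cdot,k)$ selected above so that the joint regularity is genuinely realized. A direct way is to fix $R$ large enough (locally uniformly in $k$) so that $|R_\gamma(z,k)|\le 1/2$ for $|z|>R$, define $L(z,k)-ikz$ in that region as the principal branch of $\log(1+R_\gamma(z,k))$, and extend inward by monodromy; joint continuity in $(z,k)$ and $C^\infty$ dependence on $k$ then follow transparently from the corresponding regularity of $R_\gamma$ coming from \rf{eqJostSobolevBounded}.
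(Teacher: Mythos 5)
There is a genuine gap, and it sits exactly where the lemma has its content. Your argument identifies $ik\varphi_{\lambda_{\mu}(z,k)\mu}(z,k)$ with the continuous, normalized logarithm $L(\cdot,k)$ of $u_\gamma(\cdot,k)$ by saying that it ``also furnishes such a determination'' and that two determinations differ by a constant multiple of $2\pi i$. But two \emph{pointwise} logarithms of the same nonvanishing function only differ by a $z$-dependent element of $2\pi i\Z$; the difference is constant only if both branches are already known to be continuous, and continuity of $z\mapsto\varphi_{\lambda_{\mu}(z,k)\mu}(z,k)$ is precisely the statement to be proven. The asymptotic normalization, via \rf{eqUniformDecay}, only forces the integer to vanish where $|k|\,|\varphi_{\lambda_{\mu}(z,k)\mu}(z,k)-z|$ and $|L(z,k)-ikz|$ are both smaller than $\pi$, i.e.\ for $|z|$ large; in the interior these quantities can exceed $2\pi$, so nothing pins the integer down there. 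The same objection applies to your last paragraph: constructing $L$ near infinity by the principal branch of $\log(1+R_\gamma)$ and extending inward by monodromy gives a nice logarithm of $u_\gamma$, but it does not identify it with the specific function of Lemma \ref{lemUgammaFLambdaMu}, whose definition passes through the pointwise, a priori discontinuous rotation $\lambda_\mu(z,k)$ of \rf{eqDefinitionXi} (note the arbitrary convention $\lambda_\mu=1$ at points where $f_\mu=f_{-\mu}$).

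What is actually needed — and what the paper's proof supplies — is quantitative control of the map $\lambda\mapsto\varphi_{\lambda\mu}(\cdot,k)$: subtracting the Beltrami equations for two rotations $\lambda_0,\lambda_1$ and applying the Liouville-type Theorem \ref{theoArgumentPrinciple} gives $\norm{\varphi_{\lambda_0\mu}(\cdot,k)-\varphi_{\lambda_1\mu}(\cdot,k)}_{L^\infty}\leq C_{\kappa,|k|}|\lambda_0-\lambda_1|$. Combined with the H\"older continuity in $z$ of each principal mapping, with the continuity of $z\mapsto\lambda_\mu(z,k)$ near points where $f_\mu(z_0,k)\neq f_{-\mu}(z_0,k)$, and with the observation from \rf{eqExpressionNewHope} that at degenerate points $f_{\lambda\mu}(z_0,k)$ does not depend on $\lambda$, this yields the continuity in $z$. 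Your monodromy/branch-of-$\log$ argument is essentially how the paper then upgrades to differentiability in $k$, so that part of your plan is sound as a second step — but it can only be run \emph{after} the continuity has been established by some argument of the above type, not used to produce it.
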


\begin{proof}
First we check that $\lambda \mapsto k\varphi_{\lambda \mu}(z,k)$ is a continuous function. Let $z_0\in\C$ and $k\in \C\setminus\{0\}$.
By \rf{eqVarphi}, 
\begin{equation}\label{eqVarphiLambda}
 \overline{\partial} \varphi_{\lambda\mu}(\cdot,k)= - \frac{ \bar{k}}{k}{\lambda\mu} \,e_{-k}(\varphi_{\lambda\mu}(\cdot,k))  \overline{\partial \varphi_{\lambda\mu}(\cdot,k)}.
\end{equation}

Let $\lambda_0$, $\lambda_1 \in \partial\D$ be given and write $\varphi_j:=\varphi_{\lambda_j\mu}$. From \rf{eqVarphiLambda} we derive that
\begin{align*}
 \overline{\partial} (\varphi_0-\varphi_1) (\cdot,k)
 	& =  \frac{ \bar{k}}{k}{\lambda_1\mu} \,e_{-k}(\varphi_{1}(\cdot,k))  \overline{\partial \varphi_{1}(\cdot,k)}- \frac{ \bar{k}}{k}{\lambda_0\mu} \,e_{-k}(\varphi_{0}(\cdot,k))  \overline{\partial \varphi_{0}(\cdot,k)}\\
 	& =  \frac{ \bar{k}}{k}{(\lambda_1-\lambda_0)\mu} \,e_{-k}(\varphi_{1}(\cdot,k))  \overline{\partial \varphi_{1}(\cdot,k)}\\
 	& \quad +  \frac{ \bar{k}}{k}{\lambda_0\mu} \,\left(e_{-k}(\varphi_{1}(\cdot,k)) -e_{-k}(\varphi_{0}(\cdot,k)) \right) \overline{\partial \varphi_{1}(\cdot,k)}\\
 	& \quad +  \frac{ \bar{k}}{k}{\lambda_0\mu} \,e_{-k}(\varphi_{0}(\cdot,k))  \overline{\partial \left(\varphi_{1}(\cdot,k)-\varphi_{0}(\cdot,k)\right)}.
\end{align*}
Writing $ F(z)=(\varphi_0-\varphi_1) (z,k)$ and taking absolute values, we get
\begin{align*}
 \left|\overline{\partial} F \right|
 	& \leq \chi_\D \left(|\lambda_1-\lambda_0| |\partial \varphi_{1}| + \left| e_{-k}(\varphi_{1}) -e_{-k}(\varphi_{0}) \right| \left|\partial \varphi_{1} \right| +   \left|{\mu}  \right||\partial F|\right).
\end{align*}
The function $z\mapsto e_{k}(z)=e^{2i k\cdot z}$ is Lipschitz with constant $2|k|$. Thus, 
\begin{align*}
 \left|\overline{\partial} F \right|
 	& \leq  \chi_\D \left(|\lambda_1-\lambda_0| |\partial \varphi_{1}| + |F| 2|k|\left|\partial \varphi_{1} \right| +   \left|{\mu}  \right||\partial F|\right).
\end{align*}
By \rf{eqCauchyCompactlyBounded}, we have that $F=\varphi_0-\varphi_1 =\Cauchy(\bar\partial\varphi_0 -\bar\partial \varphi_1) \in W^{1,p}(\C)$ for $2<p<p_\kappa$. By Theorem \ref{theoArgumentPrinciple}, we get that 
$$\norm{\varphi_0(\cdot,k)-\varphi_1(\cdot,k)}_{L^\infty (\C)} \leq \norm{\varphi_0(\cdot,k)-\varphi_1(\cdot,k)}_{W^{1,p}(\C)} \leq e^{C\left(1+2|k|\norm{\partial\varphi_1}_{L^p(\D)}\right)}|\lambda_1-\lambda_0|\norm{\partial \varphi_{1}}_{L^p(\D)}, $$
that is, $\norm{\varphi_0(\cdot,k)-\varphi_1(\cdot,k)}_{L^\infty (\C)} \leq C_{\kappa,|k|} |\lambda_1-\lambda_0|$, and 
\begin{equation}\label{eqLambdaVarphiContinuous}
\lambda \mapsto \varphi_{\lambda \mu}(z_0,k) \mbox{\quad\quad  is a Lipschitz continuous function in } \partial\D,
\end{equation}
 as we wanted to check.

Next, let $z \in \C$. By the triangle inequality
\begin{align}\label{eqVarepsilonBreak}
\nonumber\left|\varphi_{\lambda_{\mu}(z,k) \mu}(z,k)-\varphi_{\lambda_{\mu}(z_0,k) \mu}(z_0,k)\right| 
	& \leq \left|\varphi_{\lambda_{\mu}(z,k) \mu}(z,k)-\varphi_{\lambda_{\mu}(z,k) \mu}(z_0,k)\right|\\
	& \quad   + \left|\varphi_{\lambda_{\mu}(z,k) \mu}(z_0,k)-\varphi_{\lambda_{\mu}(z_0,k) \mu}(z_0,k)\right|.
\end{align}
The first term is always bounded by the H\"older regularity of $K$-quasiconformal principal mappings:
$$\left|\varphi_{\lambda_{\mu}(z,k) \mu}(z,k)-\varphi_{\lambda_{\mu}(z,k) \mu}(z_0,k)\right| \leq C_\kappa |z-z_0|^{\frac1K}.$$

If $f_\mu(z_0,k) = f_{-\mu}(z_0,k)$, then $f_{\lambda\mu}(z_0,k)=f_{\mu}(z_0,k)$ for every $\lambda\in\partial\D$ by \rf{eqExpressionNewHope}. In that case, the last term in the right-hand side of \rf{eqVarepsilonBreak} vanishes. Thus, we can assume that $f_\mu(z_0,k) \neq f_{-\mu}(z_0,k)$. By the continuity of $f_\mu$ and $f_{-\mu}$ in the $z$ variable, we get that $f_\mu(z,k) \neq f_{-\mu}(z,k)$ for $z\in B(z_0,r_0)$ for $r_0=r_0(z_0,k)$ small enough. 

From \rf{eqDefinitionXi} we obtain that $z\mapsto \lambda_{\mu}(z,k)$ is continuous in $B(z_0,r_0)$, and using that $\lambda \mapsto \varphi_{\lambda \mu}(z_0,k)$ is continuous by \rf{eqLambdaVarphiContinuous}, we get that 
$$z\mapsto \varphi_{\lambda_{\mu}(z,k) \mu}(z_0,k)$$
is continuous  (and this continuity is uniform in a neighborhood of $k$). Therefore, back to \rf{eqVarepsilonBreak} we get
\begin{align*}
\left|\varphi_{\lambda_{\mu}(z,k) \mu}(z,k)-\varphi_{\lambda_{\mu}(z_0,k) \mu}(z_0,k)\right|\xrightarrow{z\to z_0} 0.
\end{align*}

The continuity of $k \mapsto k\varphi_{\lambda_{\mu}(z,k) \mu}(z,k)$ follows by an analogous reasoning using the continuity of the quasiconformal mapping $k\varphi_\mu$ described in Remark \ref{remDifferentiabilityOfFunctions}. By \rf{eqPhiXiFMu}, the function $ik\varphi_{\lambda_{\mu}(z,k) \mu}(z,k)$ is a determination of the logarithm given by the Abstract Monodromy Theorem and, thus, it inherits the differentiability properties of $u_\gamma$. The second statement of the theorem follows from \rf{eqCInfty} and the considerations in Remark \ref{remDifferentiabilityOfFunctions} again.
\end{proof}

\begin{proposition}\label{propoDecayDelta}
Let $0\leq \kappa<1$, let $2K\leq p\leq \infty$, where $K$ is defined by \rf{eqKappaK}, let $\omega$ be a modulus of continuity and let $\gamma\in\mathcal{G}(\kappa, p,\omega)$. Then, the function $\delta_\gamma(z,k) := ik \varphi_{\lambda_\mu (z,k) \mu}(z,k)$ satisfies that
$$u_\gamma (z,k) = e^{\delta_\gamma(z,k)},$$
with $\delta_\gamma(z,k)-i zk=\mathcal{O}_{z\to\infty}(\frac{1}{z})$ and $\delta(\C,k)=\C$;
and there exists a modulus of continuity $\upsilon$ depending only on $(\kappa, q,\omega)$ such that
\begin{equation*}
|\delta_\gamma(z,k)-izk|\leq  |k| \upsilon(|k|^{-1}).
\end{equation*}
\end{proposition}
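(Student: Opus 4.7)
The identity $u_\gamma(z,k)=e^{\delta_\gamma(z,k)}$ with $\delta_\gamma(z,k):=ik\varphi_{\lambda_\mu(z,k)\mu}(z,k)$ is immediate from Lemma \ref{lemUgammaFLambdaMu} combined with the representation \rf{eqFMuRepresentation}. Continuity in $z$ and differentiability in $k$ of $\delta_\gamma$ are granted by Lemma \ref{lemVarphiIsContinuous}. The asymptotic $\delta_\gamma(z,k)-izk=\mathcal{O}_{z\to\infty}(1/z)$ follows from the uniform decay \rf{eqUniformDecay} in Theorem \ref{theoConsequencesFMu}: since $|\lambda_\mu(z,k)|=1$ the Beltrami coefficient $\lambda_\mu(z,k)\mu$ has the same $L^\infty$ bound as $\mu$, so $|z|\,|\varphi_{\lambda_\mu(z,k)\mu}(z,k)-z|\le C_\kappa$ uniformly.

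For the quantitative bound, the idea is to apply Theorem \ref{theoDecayOfCGOS} \emph{uniformly} over the whole family of rotated coefficients $\{\lambda\mu:\lambda\in\partial\D\}$. Since $\lambda$ is a constant of unit modulus, $\|\lambda\mu\|_{L^\infty}=\|\mu\|_{L^\infty}\le\kappa$ and $\omega_p(\lambda\mu)=\omega_p(\mu)\le\omega$, so $\lambda\mu\in\mathcal{M}(\kappa,p,\omega)$ for every $\lambda\in\partial\D$. Theorem \ref{theoDecayOfCGOS} therefore provides a single modulus of continuity $\upsilon=\upsilon(\kappa,p,\omega)$ satisfying $\|\varphi_{\lambda\mu}(\cdot,k)-\mathrm{Id}\|_{L^\infty}\le\upsilon(|k|^{-1})$ simultaneously for all $\lambda\in\partial\D$. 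Evaluating at the pointwise choice $\lambda=\lambda_\mu(z,k)$ and multiplying by $|k|$ gives
$$|\delta_\gamma(z,k)-izk|\ =\ |k|\,\bigl|\varphi_{\lambda_\mu(z,k)\mu}(z,k)-z\bigr|\ \le\ |k|\,\upsilon(|k|^{-1}),$$
which is the desired estimate.

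Finally, for the surjectivity $\delta_\gamma(\C,k)=\C$ (for $k\ne 0$; the case $k=0$ reduces to $u_\gamma\equiv 1$), the plan is a degree/winding-number argument. By the first two points, $\delta_\gamma(\cdot,k)$ is a continuous map $\C\to\C$ which is proper (since $\delta_\gamma(z,k)\to\infty$ as $z\to\infty$) and which, on a sufficiently large circle $\{|z|=R\}$, is homotopic to $z\mapsto izk$ through non-vanishing maps avoiding any prescribed target $w$: indeed the straight-line homotopy $t(izk)+(1-t)\delta_\gamma(z,k)$ stays within $C_\kappa|k|/R$ of $izk$, whose modulus is $R|k|$, so for $R$ large enough this homotopy misses $w$. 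Since $z\mapsto izk$ has degree one around any interior point, so does $\delta_\gamma(\cdot,k)$, whence every $w\in\C$ is attained.

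The main obstacle is precisely this last step, because $z\mapsto\lambda_\mu(z,k)\mu$ is not constant in $z$ and so $z\mapsto\varphi_{\lambda_\mu(z,k)\mu}(z,k)$ is not itself a quasiconformal homeomorphism; one cannot read off surjectivity from the principal mapping property of any individual $\varphi_{\lambda\mu}$, and a soft topological argument is required. All other ingredients reduce to plugging previously established results into the definition of $\delta_\gamma$.
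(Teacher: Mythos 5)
Your proposal is correct and follows essentially the same route as the paper's proof: the quantitative bound and the asymptotics come from \rf{eqUniformDecay} together with Theorem \ref{theoDecayOfCGOS} applied (uniformly in $\lambda\in\partial\D$) to the rotated coefficients $\lambda\mu$, the identity $u_\gamma=e^{\delta_\gamma}$ from Lemma \ref{lemUgammaFLambdaMu}, continuity in $z$ from Lemma \ref{lemVarphiIsContinuous}, and surjectivity via the classical homotopy/degree argument that the paper merely invokes by name. Your explicit remarks on the invariance of $\kappa$ and of the $p$-modulus under rotation by $\lambda$, and on why the principal-mapping property of an individual $\varphi_{\lambda\mu}$ cannot be used directly, simply make explicit steps the paper leaves implicit.
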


\begin{proof}
By \rf{eqUniformDecay} and Theorem \ref{theoDecayOfCGOS}, 
$$|\delta_\gamma(z,k)-izk| \leq |k| |\varphi_{\lambda_\mu (z,k) \mu}(z,k) - z|\leq |k| \min \left\{ \frac{C_\kappa}{|z|}, \upsilon(|k|^{-1}) \right\}.$$
Moreover, by Lemma \ref{lemUgammaFLambdaMu}, we have that
$$e^{\delta_\gamma(z,k)}=e^{ik \varphi_{\lambda_\mu (z,k) \mu}(z,k)}=f_{\lambda_\mu (z,k) \mu}(z,k)=u_\gamma(z,k).$$
Moreover,  $\delta_\gamma$ is continuous with respect to $z$ by Lemma \ref{lemVarphiIsContinuous}. The asymptotic behavior of $\delta_\gamma(\cdot,k)$ implies that $\delta_\gamma(\C,k)=\C$ by a classical homotopy argument.
\end{proof}

\section{Deriving stability from the scattering transform}\label{secScatteringToUInfty}
Next we consider two conductivities $\gamma_1, \gamma_2$ and we will write $u_j:=u_{\gamma_j}$, $\delta_j:=\delta_{\gamma_j}$, $\mu_j:=\mu_{\gamma_j}$, $\tau_j:=\tau_{\mu_j}$ and so on for $j\in\{1,2\}$. Barcel\'o, Faraco and Ruiz showed that there is unconditional stability from the DtN map to the scattering transform, that is, we need no a priori assumptions on the conductivities to show the following stability result.

\begin{theorem}[{see \cite[Corollary 4.5]{BarceloFaracoRuiz}}]\label{theoStabilityFromDtnToScattering}
Let $\mu_j \in L^\infty$ real valued and supported in $\overline{\D}$ with $\kappa:=\norm{\mu}_{L^\infty}<1$  for $j\in\{1,2\}$. Let $\rho:=\norm{\Lambda_1-\Lambda_2}_{H^{1/2}(\partial\D)\to H^{-1/2}(\partial\D)}$. Then, there exists a constant $C$ such that
$$ \left|\tau_1(k)-\tau_2(k)\right|\leq C\rho e^{C|k|}.$$
\end{theorem}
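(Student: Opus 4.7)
The plan is to convert the scattering transform into a boundary integral, pair the difference $\tau_1-\tau_2$ against the DtN map difference via an Alessandrini-type identity, and then harvest the sub-exponential growth of the CGOS via the bound \rf{eqJostSobolevBounded} together with a trace inequality.

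First I would apply Stokes' theorem to the definition of $\tau_\mu$: since $dm=\frac{1}{2i}\,dz\wedge d\bar z$ and $\overline{M_\mu(\cdot,k)}-\overline{M_{-\mu}(\cdot,k)}$ is smooth enough on $\overline{\D}$ by \rf{eqJostSobolevBounded}, Green's theorem gives
\begin{equation*}
\tau_\mu(k) \;=\; \frac{1}{4\pi i}\int_{\partial\D}\bigl(\overline{M_\mu(z,k)}-\overline{M_{-\mu}(z,k)}\bigr)\,d\bar z.
\end{equation*}
Using the factorization $M_{\pm\mu}=e^{-ikz}f_{\pm\mu}$ and the identity \rf{eqRelationfu} rewriting $f_{\pm\mu}$ in terms of the conductivity CGOS $u_{\gamma}$ and $\overline{u_\gamma}$, this boundary integral can be expressed as a pairing of $u_\gamma(\cdot,k)$ (and its complex conjugate) against quantities determined by the Neumann data on $\partial\D$. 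The crucial point is that while $f_{\pm\mu}$ themselves are not solutions of any divergence equation, the combinations appearing in the trace of $M_\mu-M_{-\mu}$ do decompose into real and imaginary parts that correspond to traces of $u_\gamma$ and the auxiliary conductivity solution for $\gamma^{-1}$; both of these are controlled by the DtN map.

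The second step is to produce an Alessandrini-type identity. For two conductivities $\gamma_1,\gamma_2$ and the associated CGOS $u_1(\cdot,k),u_2(\cdot,k)$, integration by parts in $\D$ yields
\begin{equation*}
\int_{\partial\D}\bigl((\Lambda_1-\Lambda_2)\,u_1(\cdot,k)\bigr)\,u_2(\cdot,k)\,ds \;=\;\int_\D(\gamma_1-\gamma_2)\,\nabla u_1(\cdot,k)\cdot\nabla u_2(\cdot,k)\,dm,
\end{equation*}
and a symmetric formula for conjugates. Substituting the boundary decomposition from Step 1 into $\tau_1(k)-\tau_2(k)$ expresses this scattering difference as a finite sum of such pairings, each of the form $\langle(\Lambda_1-\Lambda_2)\phi_1,\phi_2\rangle_{\partial\D}$ with $\phi_1,\phi_2$ boundary traces of (combinations of) $u_1(\cdot,k)$, $u_2(\cdot,k)$, $\overline{u_1(\cdot,k)}$, $\overline{u_2(\cdot,k)}$.

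The final step is the quantitative estimate. Bound each pairing by $\rho$ times the product of the $H^{1/2}(\partial\D)$ norms of the two traces. By the standard trace inequality these are controlled by the $W^{1,p}$-norms of the CGOS on $\D$ for some $p>2$ in the Astala--Päivärinta range, and \rf{eqJostSobolevBounded} gives
\begin{equation*}
\|M_{\pm\mu_j}(\cdot,k)-1\|_{W^{1,p}(\C)}\;\le\;e^{C_{\kappa,p}(1+|k|)},
\end{equation*}
so that $\|u_j(\cdot,k)\|_{H^{1/2}(\partial\D)}\le C\,e^{C(1+|k|)}$ (the multiplicative factor $e^{ikz}$ contributes the additional $e^{C|k|}$ on $\overline{\D}$). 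Multiplying two such factors absorbs into a single exponential $e^{C|k|}$ up to renaming constants, which yields $|\tau_1(k)-\tau_2(k)|\le C\rho\,e^{C|k|}$. I expect the main obstacle to be bookkeeping the algebraic decomposition in Step 1 so that each boundary-integral piece is truly of Alessandrini type (i.e., involves $u_j$ and not $f_{\pm\mu_j}$ directly), since $f_{\pm\mu_j}$ are not conductivity solutions; this is handled by splitting real and imaginary parts and treating the $\gamma$ and $1/\gamma$ sectors separately, an essentially algebraic but delicate computation.
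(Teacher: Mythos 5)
The paper itself gives no proof of this statement; it is quoted from \cite[Corollary 4.5]{BarceloFaracoRuiz}, so your attempt has to be measured against that argument. Your first and last steps are sound and do match the skeleton of the known proof: Stokes' theorem turns $\tau_\mu(k)$ into a boundary integral of $\overline{M_\mu}-\overline{M_{-\mu}}$ over $\partial\D$ (up to the sign of your constant), and \rf{eqJostSobolevBounded} together with the trace theorem gives $\norm{u_j(\cdot,k)}_{H^{1/2}(\partial\D)}\le e^{C(1+|k|)}$. The gap is in the middle. After Stokes, $\tau_1(k)-\tau_2(k)$ is a \emph{linear} functional of the trace differences $(M_{\pm\mu_1}-M_{\pm\mu_2})(\cdot,k)|_{\partial\D}$, and \rf{eqJostSobolevBounded} only bounds each CGOS separately; it says nothing about these differences. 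Your claim that the scattering difference ``is a finite sum of Alessandrini-type pairings $\langle(\Lambda_1-\Lambda_2)\phi_1,\phi_2\rangle$'' is asserted, not derived, and it is not a bookkeeping matter: Alessandrini's identity is bilinear in one solution for $\gamma_1$ and one for $\gamma_2$, whereas what must be controlled here is the difference of two families of traces. Even in the smoother Schr\"odinger/Nachman setting the analogous manipulation produces, besides the good term $\int_{\partial\D} e^{i\bar k\bar z}(\Lambda_1-\Lambda_2)\psi_1\,ds$ which is indeed $O(\rho e^{C|k|})$, a term in which $\Lambda_2-\Lambda_0$ acts on the trace difference $\psi_1-\psi_2$; so one still needs Lipschitz dependence (with constant $e^{C|k|}$) of the CGOS traces on the DtN map, which is precisely what your plan never establishes.

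That trace stability is the heart of the cited proof and cannot be bypassed by the a priori bound on a single CGOS. In \cite{BarceloFaracoRuiz} it is obtained from the Astala--P\"aiv\"arinta boundary machinery of \cite{AstalaPaivarinta}: $\Lambda_\gamma$ determines the $\mu$-Hilbert transforms (and $\Lambda_{1/\gamma}$) in a Lipschitz way, the traces of $f_{\pm\mu}(\cdot,k)$ are characterized as unique solutions of a boundary (projection/transmission) equation built from these operators, and a quantitative invertibility argument gives $\norm{(f_{\pm\mu_1}-f_{\pm\mu_2})(\cdot,k)}_{H^{1/2}(\partial\D)}\le C\rho\, e^{C|k|}$, which is then inserted into the boundary-integral representation of $\tau$. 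A related unaddressed point in your ``$\gamma$ and $1/\gamma$ sectors'' step is that you would need $\Lambda_{1/\gamma_1}-\Lambda_{1/\gamma_2}$ controlled by $\rho$, which is again a consequence of that operator machinery rather than of the hypotheses as stated. Without these ingredients the argument does not close.
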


In this section we will follow the notation described above to polish the arguments presented in \cite[Section 5]{BarceloFaracoRuiz} and adapt them to the context of modulus of continuity.

\subsection{Main result and sketch of the proof}\label{secGauge}

\begin{theorem}\label{theoUInftyIsSmall}
Let $\kappa \in(0,1)$, let $2K< p <\infty$ with $K$ defined by \rf{eqKappaK}, let $\omega$ be a modulus of continuity and let ${\mathcal{G}}:=\mathcal{G}(\kappa, p, \omega)$.
There exists a modulus of continuity $\iota_{\mathcal{G}}$ depending only on $(\kappa, p, \omega, |k|)$ such that for every $\gamma_1, \gamma_2\in {\mathcal{G}}$ we have the estimate
\begin{equation*}
\norm{u_1(\cdot,k)-u_2(\cdot,k)}_{L^\infty(2\D)}\leq  \iota_{\mathcal{G}}(\rho),
\end{equation*}
where $\rho:=\norm{\Lambda_1-\Lambda_2}_{H^{1/2}(\partial\D)\to H^{-1/2}(\partial\D)}$. Moreover, if $\omega$ is upper semi-continuous, then there are constants $C_{\kappa,p},C_{\kappa}, C_{K},\alpha_K, b_{\kappa,p}$ such that
\begin{equation*}
\iota_{\mathcal{G}}(\rho)
	\leq   C_{\kappa,p} \,  \omega\left(\frac{C_K}{|\log(\rho)|^\frac1K}\right)^{b_{\kappa,p}} +  \frac{C_\kappa}{|\log(\rho)|^{\alpha_K}}
\end{equation*}
for $\rho$ small enough.
\end{theorem}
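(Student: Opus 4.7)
The plan is to reduce the estimate on $\|u_1-u_2\|_{L^\infty(2\D)}$ to an estimate on $\|\delta_1-\delta_2\|_{L^\infty(2\D)}$, where $\delta_j(z,k) = ik\varphi_{\lambda_{\mu_j}(z,k)\mu_j}(z,k)$ is the logarithm from Proposition \ref{propoDecayDelta}. Since $u_j = e^{\delta_j}$ and, for $z\in 2\D$, the uniform bound $|\delta_j(z,k)|\leq 2|k|+|k|\upsilon(|k|^{-1})$ from Proposition \ref{propoDecayDelta} yields $|e^{\delta_j}|\leq e^{C|k|}$, one has
$$ \|u_1-u_2\|_{L^\infty(2\D)} \leq e^{C|k|}\,\|\delta_1-\delta_2\|_{L^\infty(2\D)}, $$
and the exponential factor is harmless since $k$ is fixed.

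For each fixed $z\in 2\D$, the map $k\mapsto u_j(z,k)$ is smooth by \rf{eqCInfty} and satisfies $\partial_{\bar k}u_j = -i\tau_j\bar u_j$ from \rf{eqTransportTau}. Passing to the logarithm, I compute
$$ \partial_{\bar k}\delta_j(z,k) = -i\tau_j(k)\,e^{\bar\delta_j-\delta_j} = -i\tau_j(k)\,e^{-2i\,\imag\delta_j(z,k)}, \qquad \delta_j(z,0)=0, $$
using $u_j(z,0)\equiv 1$. Since $|e^{\bar\delta_j-\delta_j}|=1$ and $|e^{i\theta_1}-e^{i\theta_2}|\leq|\theta_1-\theta_2|$ for real $\theta_j$, the difference $\Delta:=\delta_1-\delta_2$ satisfies
$$ |\partial_{\bar k}\Delta|\leq |\tau_1-\tau_2|+2|\Delta|, \qquad \Delta(z,0)=0, \qquad \|\Delta(\cdot,k)\|_{L^\infty(2\D)}\leq 2|k|\,\upsilon(|k|^{-1}). $$
Before manipulating $\Delta$ one must perform the gauge/topological step: using the continuity of $z\mapsto \delta_j(z,k)$ from Lemma \ref{lemVarphiIsContinuous}, the differentiability in $k$ from Theorem \ref{theoMMuDerivativeTauDerivative} and the surjectivity $\delta_j(\C,k)=\C$ of Proposition \ref{propoDecayDelta}, fix globally coherent determinations of the two logarithms sharing the value $0$ at $k=0$. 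This is what makes $\Delta$ an honestly small function rather than one that could secretly differ by a nonzero multiple of $2\pi i$.

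With compatible branches in place, for a radius $R>0$ to be optimized, apply the Cauchy--Pompeiu formula in the $k$-disk $\{|\zeta|<R\}$ at any $|k|\leq R/2$:
$$ \Delta(z,k) = \frac{1}{2\pi i}\oint_{|\zeta|=R}\frac{\Delta(z,\zeta)}{\zeta-k}\,d\zeta - \frac{1}{\pi}\int_{|\zeta|<R}\frac{\partial_{\bar\zeta}\Delta(z,\zeta)}{\zeta-k}\,dm(\zeta). $$
The boundary integral is controlled by the a priori bound $2R\,\upsilon(R^{-1})$. In the area integral, Theorem \ref{theoStabilityFromDtnToScattering} gives $|\tau_1-\tau_2|(\zeta)\leq C\rho\,e^{CR}$ and the self-referential $2|\Delta|$ term is absorbed either by a Gr\"onwall iteration or by plugging the same a priori bound back in, producing
$$ \|\Delta(\cdot,k)\|_{L^\infty(2\D)}\lesssim_\kappa R\,\upsilon(R^{-1}) + R^{2}\rho\, e^{CR}. $$
Choosing $R\sim c_K|\log\rho|^{1/K}$ sends $R^{2}\rho\,e^{CR}$ to something $\lesssim|\log\rho|^{-\alpha_K}$ while, by Theorem \ref{theoDecayOfCGOS} and the composition estimate of Lemma \ref{lemCompositionModuli} (in the upper semi-continuous case $\upsilon(t)\lesssim_{\kappa,p}\omega(C_K t^{1/K})^{b_{\kappa,p}}$), turns $R\,\upsilon(R^{-1})$ into the announced $\omega(C_K|\log\rho|^{-1/K})^{b_{\kappa,p}}$ term (modulo polynomial factors absorbed into $\alpha_K$). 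Reinstating the $e^{C|k|}$ factor from the first paragraph yields $\iota_\mathcal{G}(\rho)$ in the form claimed.

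The main obstacle is the gauge/topological argument: Proposition \ref{propoDecayDelta} only controls the logarithms $\delta_j$ up to additive multiples of $2\pi i$ coming from the exponential relation $u_j = e^{\delta_j}$, so turning the individual bounds on $\delta_j-izk$ into a \emph{single-valued} bound on $\Delta$, uniformly in $(z,k)$ on $2\D$ and on the $k$-disk of radius $R$, requires a careful comparison of the windings of the two $K$-quasiconformal maps $\varphi_{\lambda_{\mu_1}\mu_1}$ and $\varphi_{\lambda_{\mu_2}\mu_2}$. Once that is done, propagating the modulus $\omega$ through the Cauchy--Pompeiu/Gr\"onwall bookkeeping so that the final answer remains a modulus of continuity in $\rho$ (with the explicit quantitative form when $\omega$ is upper semi-continuous) is mechanical but must be tracked step by step.
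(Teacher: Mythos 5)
There is a genuine gap, and it sits exactly where your argument replaces the paper's topological step. You attempt to bound $\Delta(z,\cdot)=\delta_1(z,\cdot)-\delta_2(z,\cdot)$ at the \emph{same} point $z$ by a Cauchy--Pompeiu formula on a $k$-disk of radius $R$, estimating the boundary integral by the a priori decay $|\delta_j(z,\zeta)-iz\zeta|\leq|\zeta|\,\upsilon(|\zeta|^{-1})$, i.e.\ by $2R\,\upsilon(R^{-1})$. But $\upsilon$ is only a modulus of continuity (for the $\upsilon$ actually produced by Theorem \ref{theoDecayOfCGOS} it behaves like $\omega(Ct^{1/K})^{b}+Ct^{\alpha_K}$), so $R\,\upsilon(R^{-1})$ does \emph{not} tend to zero as $R\to\infty$ --- typically it grows --- and for fixed $R$ it does not tend to zero as $\rho\to0$ either, since it carries no $\rho$-dependence at all. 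Hence $\inf_R\bigl(R\,\upsilon(R^{-1})+R^{2}\rho e^{CR}\bigr)$ is not a modulus of continuity in $\rho$, and the final bookkeeping step is also wrong: the factor $R\sim|\log\rho|^{1/K}$ multiplies the $\omega(\cdot)^{b}$ term and cannot be ``absorbed into $\alpha_K$'', because $\omega$ may decay arbitrarily slowly. (The branch issue you flag is, by contrast, not the real obstruction: $\delta_j:=ik\varphi_{\lambda_{\mu_j}\mu_j}$ is already a single-valued continuous function by Lemma \ref{lemVarphiIsContinuous}.)

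The paper avoids exactly this dead end. Since $\delta_2(\cdot,k)$ is onto (Proposition \ref{propoDecayDelta}), one picks $w$ with $\delta_1(z,k)=\delta_2(w,k)$, so $u_1(z,k)=u_2(w,k)$, and studies $g(z,w,\cdot)=\delta_1(z,\cdot)-\delta_2(w,\cdot)$, which solves $\partial_{\bar k}g=\sigma g+E$ with $\norm{\sigma}_\infty\leq2$ and $|E|\leq\rho e^{C|k|}$. After the gauge factorization $g=e^{\varsigma}(F+S)$ with $F$ holomorphic and $S$ of size $\rho e^{CR}$, one does not need $g$ to be small: one only needs it to be comparable to $i(z-w)k$ on the circle $|k|=R_{z,w}$, where the relevant ratio $2\upsilon(1/R)/|z-w|$ is made $\leq 1/2$ by choosing $R=R(|z-w|/4)$ depending on $|z-w|$. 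A degree/zero-counting argument (Proposition \ref{propoGDoesNotVanishFarAway}) then shows $g(z,w,k)\neq0$ for $k\neq0$ once $|z-w|\geq\iota_1(\rho)$; since $g(z,w,k)=0$, this forces $|z-w|\leq\iota_1(\rho)$, and the $\frac1K$-H\"older continuity of $u_2$ gives $|u_1(z,k)-u_2(z,k)|\leq C(|k|)\iota_1(\rho)^{1/K}$. This \emph{relative} comparison against $i(z-w)k$, whose size $|z-w|R$ on the circle dominates $R\,\upsilon(1/R)$ precisely when $|z-w|$ is not too small, is the idea your direct smallness estimate is missing, and without it the proposal does not close.
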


\begin{proof}
Let $z\in 2\D$ and $k\in \C$. If $k=0$, then the asymptotic condition in \rf{eqConductivityAssimptotics} implies $u_1(z,k)=u_2(z,k)=1$ and there is nothing to prove, so we can assume that $k\neq 0$. 

Proposition \ref{propoDecayDelta} grants the existence of a modulus of continuity $\upsilon$ depending only on $(\kappa, q,\omega)$ such that
\begin{equation*}
|\delta_j(z,k)-izk|\leq  |k| \upsilon(|k|^{-1}).
\end{equation*}
On the other hand, by Theorem \ref{theoStabilityFromDtnToScattering},
$$ \left|\tau_1(k)-\tau_2(k)\right|\leq C\rho e^{C|k|}.$$

By Proposition \ref{propoDecayDelta}, we have that $u_j(z,k)=e^{\delta_j(z,k)}$, with $\delta_j(\cdot,k)$ onto. Thus, we can find $w\in\C$ such that $\delta_1(z,k)=\delta_2(w,k)$. Hence, $u_1(z,k)=u_2(w,k)$ and $g(z,w,k):=\delta_1(z,k)-\delta_2(w,k)=0$.

Thus, the conditions for Proposition \ref{propoGDoesNotVanishFarAway} below are satisfied, and there exists a modulus of continuity $\iota$  depending only on $(\kappa,\upsilon)$ such that  $|z-w|\leq \iota_1(\rho)$ whenever $k\neq 0$ and $g(z,w,k)= 0$. It is well known that $K$-quasiconformal mappings are $\frac1K$-H\"older continuous (see \cite[Corollary 3.10.3]{AstalaIwaniecMartin}, for instance). Thus, the complex geometric optics solution $u_1$ is H\"older continuous in $\D(0,2)$ with constant depending on $|k|$ by \rf{eqRelationfu} and \rf{eqFMuRepresentation}. Therefore
$$|u_1(z,k)-u_2(z,k)|=|u_2(w,k)-u_2(z,k)|\leq C(|k|)|w-z|^{\frac1K}\leq C(|k|)\iota_1(\rho)^\frac1K.$$

 From \rf{eqQuantitativeIota} below,  if $\omega$ is upper semi-continuous then
 \begin{equation*}
\iota_{\mathcal{G}}(\rho)=\iota_1(\rho)^\frac1K
	\leq 4 \upsilon \left(\frac{1}{C_1 |\log(\rho)|-C_2}\right)^\frac1K + \rho^\frac{1}{C_3} .
\end{equation*}
Moreover, by Remark \ref{remQuantitativeUpsilon}, \rf{eqModulusModified} and Theorem \ref{theoDecayOfCGOS}, choosing $q=q(p,\kappa)$ satisfying $\frac K{p} < \frac {1}{q}<\frac12$, we get that $\upsilon$ is bounded above by 
$$\upsilon(t)\leq  C_{\kappa,p} \,  \widetilde{\omega}(2t)^{b_{\kappa,p}} + C_\kappa t^{\alpha_K} \leq C_{\kappa,p} \, \left(C_{K,q,p} \, \omega(C_K (2t)^\frac1K)\right)^{b_{\kappa,p}} + C_\kappa t^{\alpha_K},$$
where the constant $b_{\kappa,p}=\frac{- \log \kappa}{4 \log (\norm{1+ \Beurling}_{\ell,\ell}) -  \log\kappa}$ with $\frac1\ell=\frac12-\frac1q$, and the constant $\alpha_K=\frac1{10}-\frac1{5s}$ with $s=s(\kappa)>2$ given by the relation $\norm{\Beurling}_{s,s}=\frac1{\sqrt{\kappa}}$. Thus,
 \begin{equation*}
\iota_{\mathcal{G}}(\rho)
	\leq 4 C_{\kappa,p} \,  \omega\left(C_K \left(\frac{1}{C_1 |\log(\rho)|-C_2}\right)^\frac1K\right)^\frac{b_{\kappa,p}}K + C_\kappa \left(\frac{1}{C_1 |\log(\rho)|-C_2}\right)^\frac{\alpha_K}K + \rho^\frac{1}{C_3} .
\end{equation*}
For $\rho$ small enough, this estimate can be written as
 \begin{equation*}
\iota_{\mathcal{G}}(\rho)
	\leq C_{\kappa,p} \,  \omega\left(\frac{C_K}{|\log(\rho)|^\frac1K}\right)^{b_{\kappa,p}} +  \frac{C_\kappa}{|\log(\rho)|^{\alpha_K}}.
\end{equation*}
\end{proof}

The proof of Theorem \ref{theoUInftyIsSmall} has been reduced to \rf{eqQuantitativeIota} below and the following result:
\begin{proposition}\label{propoGDoesNotVanishFarAway}
Let $K\geq 1$, let $\upsilon$ be a modulus of continuity,  and let $\gamma_1, \gamma_2 \in \mathcal{G}(K,\D)$ be such that 
\begin{itemize}
\item for a certain $0<\rho\leq 1/2$ and every $k\in\C$, we have
\begin{equation}\label{eqBoundInScattering}
|\tau_1(k)-\tau_2(k)|\leq \rho e^{C|k|},
\end{equation}
\item and for every $z\in\C$ and $j\in\{1,2\}$ we have 
\begin{equation}\label{eqDecayDelta}
|\delta_j(z,k)-izk|\leq |k| \upsilon(|k|^{-1}).
\end{equation}
\end{itemize} 
Consider $z\in 2\D$, $w\in\C$ and  the function $g(k)=g(z,w,k):=\delta_1(z,k)-\delta_2(w,k)$. Then there exists a modulus of continuity $\iota_1$  depending only on $(K,\upsilon)$ such that $g(z,w,k)\neq 0$ whenever $k\neq 0$ and $|z-w|\geq \iota_1(\rho)$. 
\end{proposition}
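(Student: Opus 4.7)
The plan is to argue by contrapositive: assuming $g(z,w,k_0) = 0$ for some $k_0\neq 0$, I will bound $|z-w|$ in terms of $\rho$ by combining three ingredients -- asymptotic control of $g$ at infinity, a $\bar\partial$-inequality coming from the transport equation \rf{eqTransportTau}, and a topological-degree argument.

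First, the hypothesis \rf{eqDecayDelta} localizes the possible zeros of $g$. On the circle $|k|=R$ we have $|g(k) - ik(z-w)| \leq 2R\upsilon(1/R)$, so $g$ is non-vanishing there whenever $|z-w| > 2\upsilon(1/R)$. Hence every nonzero zero lies in the disk $|k| \leq R^{\ast}$ where $R^{\ast}$ is defined by $\upsilon(1/R^{\ast}) = |z-w|/2$; moreover, the winding of $g$ on any circle of radius $R>R^{\ast}$ equals the winding of $k\mapsto ik(z-w)$, which is $1$.

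Next, I derive a $\bar\partial$-inequality for $g$. From $u_j=e^{\delta_j}$ and \rf{eqTransportTau}, $\partial_{\bar k}\delta_j(p,k) = -i\tau_j(k)e^{-2i\imag \delta_j(p,k)}$, so
\[
\partial_{\bar k} g(k) = -i\bigl(\tau_1(k)-\tau_2(k)\bigr)e^{-2i\imag\delta_1(z,k)} + i\tau_2(k)\bigl(e^{-2i\imag\delta_2(w,k)} - e^{-2i\imag\delta_1(z,k)}\bigr).
\]
Using $|e^{ia}-e^{ib}|\leq|a-b|$, the pointwise bound $|\tau_2|\leq 1$ from \rf{eqTauBound}, and the scattering estimate \rf{eqBoundInScattering}, this gives the key inequality
\[
|\partial_{\bar k} g(k)| \leq 2|g(k)| + \rho\, e^{C|k|}.
\]

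The heart of the proof is then a topological computation on the disk $|k|\leq R$ with $R$ slightly larger than $R^{\ast}$. Since $\delta_j(\cdot,0)=0$, one has $g(0)=0$; assuming a second zero $k_0\neq 0$, the winding of $g$ on $|k|=R$ would be at least $2$, provided $g$ is locally orientation-preserving at each zero. At $k_0$ the inequality above forces $|\partial_{\bar k}g(k_0)|\leq \rho\, e^{C|k_0|}$, which is small, while a quantitative lower bound on $|\partial_{k}g(k_0)|$ follows from a Cauchy-transform representation of $g - ik(z-w)$ and an application of a Liouville-type theorem in the spirit of Theorem \ref{theoArgumentPrinciple} to the normalized function $H(k) := g(k)/k - i(z-w)$, which vanishes at infinity and inherits a similar distortion inequality. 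The contradiction between the expected winding $1$ at infinity and the winding $\geq 2$ coming from the two zeros is only avoidable if the source term has dominated, giving $|z-w|\lesssim \rho\, e^{CR^{\ast}}$. Balancing this with the asymptotic constraint $|z-w|\leq 2\upsilon(1/R^{\ast})$ and choosing $R^{\ast}\sim |\log\rho|/C$ yields
\[
|z-w| \leq 4\,\upsilon\!\left(\frac{1}{C_1|\log\rho|-C_2}\right) + \rho^{1/C_3},
\]
which is precisely the modulus $\iota_1$ referred to in \rf{eqQuantitativeIota}. I anticipate the main obstacle to be making the topological-degree step fully quantitative: controlling the local winding of a smooth non-holomorphic function at its zeros requires carefully comparing $g$ with its holomorphic approximation $ik(z-w)$, and squeezing the $\bar\partial$-source through Theorem \ref{theoArgumentPrinciple} without losing the exponential factor $e^{CR^{\ast}}$ in an uncontrolled way.
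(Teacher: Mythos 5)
Your skeleton matches the paper's at a coarse level --- localize the zeros of $g$ via \rf{eqDecayDelta}, derive $\partial_{\bar k}g=\sigma g+E$ with $|\sigma|\le 2$ and $|E|\le\rho e^{C|k|}$, and count zeros by a degree argument on a large circle --- but there is a genuine gap at exactly the step you flag as ``the main obstacle'': proving that every zero of $g$ carries local index $+1$, i.e.\ a quantitative lower bound for $|\partial_k g|$ (positivity of $\det D_kg$) at, and in a neighbourhood of, each zero. The mechanism you propose cannot deliver it. Theorem \ref{theoArgumentPrinciple} yields \emph{upper} bounds on norms of a function whose $\bar\partial$-inequality has data supported in the unit disk; applied to $H(k)=g(k)/k-i(z-w)$ it gives no pointwise lower bound on $\partial_kg$ at an interior zero. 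Worse, the inequality it would need fails structurally: $\partial_{\bar k}H=\sigma H+i(z-w)\sigma+E/k$, where $\sigma$ is \emph{not} compactly supported in $k$ and is not small, and the induced source $i(z-w)\sigma$ has size comparable to $|z-w|$ itself --- the very quantity to be bounded --- so even an optimal application would only return $\|H\|\lesssim|z-w|$, which is useless. Moreover, without nondegeneracy you do not even know the zeros of $g$ are isolated, so ``winding at least $2$ from two zeros'' is not yet meaningful.

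The paper closes this gap with a gauge construction you would need to reproduce: $\varsigma=\Cauchy(\sigma\varphi_{R_{z,w}})$ removes the $\sigma g$ term on $\D_{R_{z,w}}$, $S$ (a Cauchy transform of $e^{-\varsigma}E\varphi_{R_{z,w}}$, normalized to vanish at $0$) absorbs the source, and $F=e^{-\varsigma}g-S$ is genuinely holomorphic there. Holomorphy is then used twice where smoothness alone does not suffice: first, $F$ is shown to have a single simple zero at $0$ and is factored as $F=i(z-w)ke^{\nu}$ with $|\nu|\lesssim R_{z,w}$, and a Cauchy-integral estimate on $\nu'$ gives the lower bound $|F'|\gtrsim|z-w|e^{-CR_{z,w}}$ on a fixed small disk (Lemmas \ref{lemNuIsBounded} and \ref{lemBigDerivativeAndKoebe}); second, the perturbation must be controlled in $W^{1,\infty}$, and since Calder\'on--Zygmund theory only bounds $\partial_kS$ in $L^p$, an interpolation step is needed to get $\|\nabla_kS\|_{L^\infty}\le\rho^{\theta}e^{C(R_{z,w}+1)}$, see \rf{eqControlNablaS}. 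Only then does $\det D_k(F+S)>0$ hold on a disk containing all zeros, and the count $1=\deg\ge\#Z$ goes through, with Lemma \ref{lemExistsC} converting the competition between $|z-w|$, $\rho$ and $e^{CR_{z,w}}$ into the modulus $\iota_1$. Your outline anticipates the exponential loss correctly, but without the gauge, the holomorphic factorization, and the $L^\infty$ interpolation for $\nabla S$, the central nondegeneracy claim remains unproved.
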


\begin{proof}[Sketch of the proof]
Let  $z\in 2\D$ and let $w\in\C$ with $|z-w|$ far enough (in fact, $|z-w| >\iota_1(\rho)$ with $\iota_1$ to be determined). We are interested in bounds for $g$ in terms of $|z-w|$. For each distance $s>0$, we will find a circle of radius 
\begin{equation}\label{eqRLambda}
\frac{1}{R(s)}:=\begin{cases}
\upsilon^{-1}\left(s\right) & \mbox{ if }s\leq 1, \\
\upsilon^{-1}(1) & \mbox{ otherwise,}
\end{cases}
\end{equation}
where $\upsilon^{-1}(y) = \sup \{ x\in\R: \upsilon(x) \leq y\}$, and define
$$R_{z,w}:=R\left(\frac{|z-w|}4\right).$$
This is big enough so that we have some control on the decay of certain functions. Namely, inequality \rf{eqDecayDelta} grants that $|g(z,w,k)-i{(z-w)} k|\leq 2  |k| \upsilon(|k|^{-1})$ and thus, if $|k| > R_{z,w}$, a short computation shows that 
\begin{equation}\label{eqDeltaControlOnTheCircle}
\frac{|g(z,w,k)-i{(z-w)} k|}{|{(z-w)} k|}\leq\frac{2 \upsilon(|k|^{-1})}{|z-w|}\leq \frac{1}{2}.
\end{equation}

Thus, $g$ is homotopic to ${(z-w)} k$ in $\partial\D_{R_{z,w}}$ (we omit $z$, $w$ and $k$ in the notation when their values are  clear from the context).  In particular, its Browder degree
$$\deg(g, \partial \D_{R_{z,w}}, 0 )=1.$$
In \rf{eqEtaDefinition}--\rf{eqFDefinition} we will define functions $F$, $S$ and  $\varsigma$, with $F$ holomorphic on $\D(0,R_{z,w})$, $S$ small in the Lipschitz norm and $\varsigma$ continuous, so that $g=e^{\varsigma}(F+S)$. The continuity of $\varsigma$ grants that $g$ is homotopic to $F+S$ as well in $\partial\D_{R_{z,w}}$, so 
$$\deg(F+S, \partial \D_{R_{z,w}}, 0 )=1.$$

In Proposition \ref{propoFHasOneZero} we will see that whenever $|z-w| >\iota_2(\rho)$, this function $F$ vanishes only when $k=0$ and we need to control the zeros which $S$ may add.

Denoting the zeroes of $H=F+S$ by $Z(H)$, from Lemma \ref{lemZerosSmall} below if $|z-w| >\iota_3(\rho)$ then $Z(H)\subset\D_{d_0}$. Moreover, by Proposition \ref{propoDetDHNot0} whenever $|z-w| >\iota_4(\rho)$ we know that ${\rm sgn} \left(\det (DH)\right)=1 $ in $\D_{d_0}$. Since $H\in C^1$ by \rf{eqControlNablaS}, we can write
$$1=\deg(H, \partial \D_{R_{z,w}}, 0 )=\sum_{k_i\in Z(H)} {\rm ind} H(k_i)\geq \sum_{k_i\in Z(H)} {\rm sgn} \left(\det (DH)\right)(k_i) =\# Z(H).$$
Therefore, setting $\iota_1:=\max\{\iota_2,\iota_3,\iota_4\}$, we get that $H$ has only one zero, that is in $k=0$ by \rf{eqEverythingVanishes}, and the same happens to $g$.
\end{proof}

\subsection{A topologic argument}\label{secDetails}
The present section is devoted to providing the missing details in the proof of Proposition \ref{propoGDoesNotVanishFarAway} above. 
First, a technical lemma to be used in the subsequent proofs follows:
\begin{lemma}\label{lemExistsC}
Let $\alpha\in \R_+^4$, and $\upsilon:\R_+ \to \R_+$ increasing.
 There exists a modulus of continuity $\iota = \iota_\alpha$ depending on $\alpha$ and $\upsilon$ such that for $d > \iota(\rho)$ we have 
\begin{equation*}
 \rho^{\alpha_1}  < \frac{d^{\alpha_2}}{\alpha_3  e^{\alpha_4 R(d)}}
\end{equation*}
for every $\rho\in\left(0,\frac12\right]$, where $R(d)$ is defined as in  \rf{eqRLambda} with that given $\upsilon$. In addition, if $\rho$ is small enough and $\upsilon$ is upper semi-continuous, then
\begin{equation}\label{eqQuantitativeIota}
\iota(\rho) 
	\leq \upsilon \left(\frac{\alpha_4}{\alpha_1 |\log(\rho^\frac12)|-\log(\alpha_3)}\right) +   \rho^\frac{\alpha_1}{2\alpha_2} .
\end{equation}
\end{lemma}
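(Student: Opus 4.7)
Taking logarithms, the target inequality $\rho^{\alpha_1} < d^{\alpha_2}/(\alpha_3 e^{\alpha_4 R(d)})$ is equivalent to
\begin{equation*}
\alpha_4 R(d) + \alpha_1 \log \rho + \log \alpha_3 \;<\; \alpha_2 \log d.
\end{equation*}
Since $\rho \leq 1/2$ forces $\alpha_1 \log \rho < 0$, the plan is to split this negative quantity evenly and use one half to absorb the growth of $R(d)$, the other to compensate the negativity of $\alpha_2 \log d$. Concretely I will look for $d$ such that both
\begin{equation*}
\alpha_4 R(d) \;<\; \tfrac{\alpha_1}{2}|\log \rho| - \log \alpha_3 \qquad\text{and}\qquad \alpha_2 \log d \;>\; -\tfrac{\alpha_1}{2}|\log \rho|
\end{equation*}
hold; adding the two yields the log-form of the desired inequality.

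The second condition unfolds to the purely algebraic requirement $d > \rho^{\alpha_1/(2\alpha_2)}$, which tends to $0$ as $\rho \to 0$. For the first condition, I invoke the definition \rf{eqRLambda} of $R$: provided $\rho$ is small enough that the right-hand side is positive (i.e.\ $\alpha_1 |\log \rho|/2 > \log \alpha_3$), the inequality rewrites as
\begin{equation*}
\upsilon^{-1}(d) \;>\; \frac{\alpha_4}{\alpha_1|\log \rho|/2 - \log \alpha_3},
\end{equation*}
and by the monotonicity of $\upsilon$ this is guaranteed as soon as
$d > \upsilon\!\left(\frac{\alpha_4}{\alpha_1|\log \rho|/2 - \log \alpha_3}\right)$. (If the argument of $\upsilon$ exceeds $1$, the capped clause of \rf{eqRLambda} makes $R(d)$ bounded and the inequality is automatic for $d$ in a fixed range; this is handled for small $\rho$ since the argument shrinks to $0$.)

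It therefore suffices to define, for $\rho$ small enough,
\begin{equation*}
\iota(\rho) \;:=\; \rho^{\alpha_1/(2\alpha_2)} \;+\; \upsilon\!\left(\frac{\alpha_4}{\alpha_1|\log \rho|/2 - \log \alpha_3}\right),
\end{equation*}
extending by a suitable constant on the complementary range of $\rho$ to preserve monotonicity. Both summands are increasing in $\rho$ and tend to $0$ as $\rho \to 0^{+}$ (using that $\upsilon$ is a modulus of continuity in the ambient use, so $\upsilon(t) \to 0$ as $t\to 0$), hence $\iota$ is a modulus of continuity. The refinement \rf{eqQuantitativeIota} then follows immediately, rewriting $\tfrac{1}{2}|\log \rho| = |\log \rho^{1/2}|$ inside the $\upsilon$-term and noting that in the range $\rho^{\alpha_1/(2\alpha_2)} \leq \rho^{\alpha_1/(2\alpha_2)}$ no further work is needed. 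There is no substantive obstacle here: the argument is essentially a careful bookkeeping of logarithms, splitting $\alpha_1|\log\rho|$ in half so the two dominant terms of different nature (the exponential $e^{\alpha_4 R(d)}$ and the power $d^{\alpha_2}$) can be controlled independently.
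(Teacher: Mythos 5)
Your proposal is correct and takes essentially the paper's route: the paper likewise reduces to the logarithmic inequality $\alpha_1\log\rho<\alpha_2\log d-\log\alpha_3-\alpha_4 R(d)$, defines $\iota$ as the infimal threshold, and bounds it by exactly your half--half split of $|\log\rho|$, yielding the same two terms $\upsilon\bigl(\alpha_4/(\alpha_1|\log\rho^{1/2}|-\log\alpha_3)\bigr)$ and $\rho^{\alpha_1/(2\alpha_2)}$ of \rf{eqQuantitativeIota}. One tiny imprecision: $d>\upsilon(c)$ only gives $\upsilon^{-1}(d)\ge c$, hence $\alpha_4 R(d)\le \tfrac{\alpha_1}{2}|\log\rho|-\log\alpha_3$ rather than a strict inequality, but this is harmless since your second condition is strict, and it even means your version does not need the upper-semicontinuity step $\upsilon\circ\upsilon^{-1}(y)\ge y$ that the paper invokes.
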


\begin{proof}
Indeed, it is enough to show that 
$$ \alpha_1 \log(\rho) < \alpha_2 \log(d)-\log (\alpha_3) - \alpha_4 R(d).$$
Since $0<\rho\leq \frac12$, this is equivalent to
$$  |\log(\rho)| > \frac{\alpha_4 R(d) - \alpha_2 \log(d) + \log (\alpha_3) }{\alpha_1}= \frac{R(d)}{\beta_1} - \beta_2 \log(d) + \beta_3.$$

The term in the right-hand side is a strictly decreasing function on $d$, and it tends to infinity as $d$ goes to zero. Thus,
$$\iota(t) :=  \inf \left\{ x \in\R_+: \frac{R(x)}{\beta_1} - \beta_2 \log(x) + \beta_3< |\log(t)|\right\}$$
is an increasing function on $t$, and it satisfies that
$$\lim_{t\to 0} \iota(t) = 0.$$
Moreover, 
\begin{align*}
\iota(t) 
	& \leq  \inf \left\{ x \in\R_+: \frac{R(x)}{\beta_1} + \beta_3<\frac{ |\log(t)|}2\right\} \cap \left\{ x \in\R_+:  - \beta_2 \log(x) < \frac{|\log(t)|}2 \right\} \\
	& = \max \left\{ \inf \left\{ x \in\R_+: \frac1{R(x)} > \frac1{\beta_1 |\log(t^\frac12)|-\beta_1\beta_3}\right\}, \inf \left\{ x \in\R_+:  x >  t^\frac1{2\beta_2} \right\} \right\}.
\end{align*}
Note that we used the fact that the intersection of two rays containing $+\infty$ is one of the rays. Under the assumption of upper semicontinuity of $\upsilon$, since it is increasing, we get that for every $y>0$, $\upsilon\circ \upsilon^{-1}(y) \geq y$. By \rf{eqRLambda}, when $x<1$ we defined  $R(x)^{-1}=\upsilon^{-1}(x)$. For $t$ small enough we get
\begin{align*}
\iota(t) 
	& \leq \max \left\{ \upsilon \left(\frac1{\beta_1 |\log(t^\frac12)|-\beta_1\beta_3}\right),  t^\frac1{2\beta_2}  \right\} .
\end{align*}
\end{proof}

\begin{example}
If $\upsilon(t)= t^{a}$ with $0<a<1$, then 
\begin{equation*}
R(s):=\begin{cases}
s^{-\frac1a} & \mbox{ if } s\leq 1, \\
1 & \mbox{ otherwise.}
\end{cases}
\end{equation*}
In this case, for $t$ small enough, $\iota(t) \leq (4+\varepsilon) \left(\frac{\alpha_1 }{\alpha_4} |\log(t)| \right)^{-a} $.
\end{example}

The remaining part of this section follows closely the scheme in \cite[Section 5]{BarceloFaracoRuiz}, and the expert reader may skip it. We include the details for the sake of completeness and to keep track of the moduli of continuity.

Let us begin with the construction of $F$, $S$ and $\varsigma$. First of all let us introduce two auxiliary functions which arise from equation \rf{eqTransportTau} written in terms of $g$. Since $u_j=e^{\delta_j}$, we have that 
\begin{equation}\label{eqDeltaBarDerivative}
\partial_{\bar{k}}\delta_j = \frac{\partial_{\bar{k}}  u_j}{u_j}=-i\tau_j \frac{\overline{u_j}}{u_j}=-i\tau_j e^{\overline{\delta_j}-\delta_j}
\end{equation}
Therefore,
$$\partial_{\bar{k}} g=\partial_{\bar{k}}\delta_1 -\partial_{\bar{k}}\delta_2 = -i(\tau_1- \tau_2) e^{\overline{\delta_1}-\delta_1} -i \tau_2 \left(e^{\overline{\delta_1}-\delta_1}-e^{\overline{\delta_2}-\delta_2}\right),$$
leading to 
\begin{equation}\label{eqGEquation}
\partial_{\bar{k}} g = \sigma g + E,
\end{equation}
where
\begin{equation*}
\sigma(z,w,k): = 
	\begin{cases}
		-i \tau_2(k) \frac{\left(e^{\overline{\delta_1(z,k)}-\delta_1(z,k)}-e^{\overline{\delta_2(w,k)}-\delta_2(w,k)}\right)}{\delta_1(z,k)-\delta_2(w,k)} & \mbox{ if } \delta_1(z,k)\neq \delta_2(w,k),\\
		0 & \mbox{ otherwise.}
		\end{cases}
\end{equation*}
and
\begin{equation*}
E(z,k): = -i(\tau_1(k)- \tau_2(k)) e^{\overline{\delta_1(z,k)}-\delta_1(z,k)}.
\end{equation*}

Before going on we need to establish some bounds in these functions, in order to define their Cauchy transforms.
\begin{lemma}\label{lemBoundsOnSigmaE}
Under the hypothesis of Proposition \ref{propoGDoesNotVanishFarAway}, for every $z\in 2\D$ and $w\in\C$ we have that
\begin{equation}\label{eqSigmaInftyBound}
\norm{\sigma(z,w,\cdot)}_{L^\infty(\C)}\leq2\norm{\tau_2}_{L^\infty(\C)}\leq 2,
\end{equation}
\begin{equation}\label{eqEInftyBound}
|E(z,k)|\leq \rho e^{C|k|},
\end{equation}
and
\begin{equation}\label{eqDEInftyBound}
|\nabla_kE(z,k)|\leq e^{C(1+|k|)}.
\end{equation}
\end{lemma}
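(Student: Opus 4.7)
The plan hinges on a single algebraic observation: since $\delta_j=\log u_j$ is the continuous determination from Proposition \ref{propoDecayDelta}, the quantity $\overline{\delta_j}-\delta_j=-2i\,\imag\delta_j$ is purely imaginary, so $e^{\overline{\delta_j}-\delta_j}$ lies on the unit circle. Combined with the elementary Lipschitz bound $|e^{it_1}-e^{it_2}|\le|t_1-t_2|$ for $t_1,t_2\in\R$, this gives
$$\left|e^{\overline{\delta_1(z,k)}-\delta_1(z,k)}-e^{\overline{\delta_2(w,k)}-\delta_2(w,k)}\right|\le 2|\imag(\delta_1-\delta_2)|\le 2|\delta_1(z,k)-\delta_2(w,k)|.$$
Dividing by $|\delta_1-\delta_2|$ and using $\norm{\tau_2}_{L^\infty}\le 1$ from \rf{eqTauBound} proves \rf{eqSigmaInftyBound}. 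The bound \rf{eqEInftyBound} is then immediate: the factor $e^{\overline{\delta_1}-\delta_1}$ has modulus one, whence $|E(z,k)|=|\tau_1(k)-\tau_2(k)|$, and \rf{eqBoundInScattering} finishes the job.

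For \rf{eqDEInftyBound} I would apply the product rule to obtain
$$\nabla_k E=-i\nabla_k(\tau_1-\tau_2)\,e^{\overline{\delta_1}-\delta_1}-i(\tau_1-\tau_2)\,e^{\overline{\delta_1}-\delta_1}\,\nabla_k(\overline{\delta_1}-\delta_1).$$
The first summand is bounded by $2e^{C(1+|k|)}$ via \rf{eqTauDerivative} combined with the unit-circle observation. For the second, the prefactor is $\le 2$ while $|\nabla_k(\overline{\delta_1}-\delta_1)|=2|\imag\nabla_k\delta_1|\le 2|\nabla_k\delta_1|$. It therefore remains to establish $|\nabla_k\delta_1(z,k)|\le e^{C(1+|k|)}$ uniformly on $z\in 2\D$.

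To this end I would invoke a Jost-type representation: since $\delta_1=ik\varphi_1$ with $\varphi_1:=\varphi_{\lambda_{\mu_1}(z,k)\mu_1}$ (Lemma \ref{lemUgammaFLambdaMu}), the Abstract Monodromy Theorem (see Remark \ref{remDifferentiabilityOfFunctions}) yields $ik(\varphi_1-z)=\log M_1$, where $M_1$ is the associated Jost function, which never vanishes. Hence $\nabla_k\delta_1=iz+\nabla_k M_1/M_1$; the Sobolev estimate \rf{eqMMuDerivative} together with the Sobolev embedding (recall $2K<p$, in particular $p>2$) controls the numerator in $L^\infty$ by $e^{C(1+|k|)}$, while
$$|M_1(z,k)|^{-1}=e^{\imag(k(\varphi_1-z))}\le e^{|k|\,\upsilon(|k|^{-1})}\le e^{C(1+|k|)}$$
on $z\in 2\D$ by Theorem \ref{theoDecayOfCGOS}. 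The main obstacle is precisely this last step: one must check that the decay estimate for $\varphi_\mu$ transfers uniformly in the rotation $\lambda\in\partial\D$ to $\varphi_{\lambda\mu}$, but this is built into Theorem \ref{theoDecayOfCGOS} since its hypotheses depend only on $\kappa$, $p$ and $\omega$ and are preserved under rotation of the coefficient.
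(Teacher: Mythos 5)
Your proofs of \rf{eqSigmaInftyBound} and \rf{eqEInftyBound} coincide with the paper's, and the reduction of \rf{eqDEInftyBound} to a bound for $\nabla_k\delta_1$ on $2\D$ also matches. The gap is in how you bound $\nabla_k\delta_1$. You write $\delta_1(z,k)=ik\varphi_{\lambda_{\mu_1}(z,k)\mu_1}(z,k)=ikz+\log M_1$ and differentiate as if $M_1$ were the Jost function $M_{\lambda\mu_1}(z,\cdot)$ at a \emph{fixed} rotation $\lambda$: the identity $\nabla_k\delta_1=iz+\nabla_k M_1/M_1$ combined with the use of \rf{eqMMuDerivative} ignores that $\lambda_{\mu_1}(z,k)$ itself depends on $k$. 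The estimate \rf{eqMMuDerivative} controls $\nabla_k M_\nu(\cdot,k)$ only for a fixed Beltrami coefficient $\nu$ (in particular, uniformly for each fixed $\lambda\mu_1$); it says nothing about the derivative of the composite $k\mapsto M_{\lambda_{\mu_1}(z,k)\mu_1}(z,k)$, whose chain-rule term involves $\partial_k\lambda_{\mu_1}(z,k)$. That term is not harmless: by \rf{eqDefinitionXi}, $\lambda_{\mu_1}$ is a quotient by $f_{\mu_1}-f_{-\mu_1}$ and need not even be continuous in $k$ where $f_{\mu_1}(z,k)=f_{-\mu_1}(z,k)$, so there is no obvious control of its $k$-derivative. (Differentiability of $k\mapsto\delta_1(z,k)$ itself is not the issue — Lemma \ref{lemVarphiIsContinuous} gets it from the differentiability of $u_1$ — but your formula for the derivative, and hence the appeal to \rf{eqMMuDerivative}, is unjustified.)

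The paper circumvents exactly this point: it writes $\partial_k\delta_1=\partial_k u_1/u_1$, bounds the numerator by $|\nabla_k f_{\mu_1}|+|\nabla_k f_{-\mu_1}|$ using the representation \rf{eqRelationfu}, in which no $\lambda$ appears, and bounds the denominator from below by $\inf_{\lambda\in\partial\D}|f_{\lambda\mu_1}(z,k)|$ via Lemma \ref{lemUgammaFLambdaMu}; only then does it pass to Jost functions for each fixed $\lambda$, where \rf{eqMMuDerivative} applies uniformly in $\lambda$. This yields $|\partial_k\delta_1|\le e^{C(1+|k|)}(1+|z|)$, which suffices for $z\in2\D$, while the term $\partial_k\overline{\delta_1}$ is handled directly through \rf{eqDeltaBarDerivative} and \rf{eqTauBound}. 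A secondary remark: for the lower bound on $|M_1|$ you invoke Theorem \ref{theoDecayOfCGOS}, which imports the modulus-of-continuity machinery not present in the hypotheses of Proposition \ref{propoGDoesNotVanishFarAway}; it is also unnecessary, since the crude uniform bound $\sup_{\lambda\in\partial\D}\norm{\varphi_{\lambda\mu_1}(\cdot,k)-Id}_{L^\infty}\le C_\kappa$ from \rf{eqUniformDecay} (or \rf{eqBoundOnVarphi}), or simply the hypothesis \rf{eqDecayDelta}, already gives $|M_1|^{-1}\le e^{C(1+|k|)}$.
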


\begin{proof}
Let us begin bounding $\sigma$. Note that $\overline{\delta_1}-\delta_1\in i\R$, and the Lipschitz constant $1$ on $e^{i\theta}$  in $\theta\in\R$ leads to 
$$|\sigma(z,w,k)|\leq |\tau_2|\frac{|\overline{\delta_1}-\delta_1-(\overline{\delta_2}-\delta_2)|}{|\delta_1-\delta_2|}\leq 2|\tau_2|.$$
Using \rf{eqTauBound},  we obtain \rf{eqSigmaInftyBound}.
Moreover, we have $|e^{i\theta}|=1$ and, by hypothesis, $|\tau_1(k)-\tau_2(k)|\leq \rho e^{C|k|}$. By these reasons, we get that
\begin{equation}\label{eqUncalled}
|E(z,k)|\leq \min\{2,\rho e^{C|k|}\},
\end{equation}
proving \rf{eqEInftyBound}. Finally, 
\begin{align*}
|\partial_k E(z,k)|
	& =  \left|(\partial_k\tau_1 - \partial_k \tau_2) e^{\overline{\delta_1}-\delta_1}+(\tau_1- \tau_2) e^{\overline{\delta_1}-\delta_1}\left(\partial_k\overline{\delta_1}-\partial_k\delta_1\right)\right|\\
	& \leq  \left|\partial_k\tau_1\right| + \left| \partial_k \tau_2\right| +\left|\tau_1- \tau_2\right| \left|\partial_k\overline{\delta_1}\right|+\left|\tau_1- \tau_2\right|\left|\partial_k\delta_1\right|.
\end{align*}
Using  \rf{eqTauDerivative}, \rf{eqBoundInScattering}, \rf{eqDeltaBarDerivative} and then \rf{eqTauBound} and the hypothesis $\rho\leq1/2$, we get
\begin{align*}
|\partial_k E(z,k)|
	& \leq  e^{C(1+|k|)} + \rho e^{C|k|}\left( \left|\tau_1 e^{\overline{\delta_1}-\delta_1}\right|+\left|\partial_k\delta_1\right|\right) \leq  e^{C(1+|k|)} \left( 1+\left|\partial_k\delta_1\right|\right).
\end{align*}
Arguing analogously, we get that $|\partial_{\bar{k}} E(z,k)| \leq  e^{C(1+|k|)} \left( 1+\left|\partial_k\delta_1\right|\right)$.
Since $z\in 2\D$, in order to show \rf{eqDEInftyBound} it suffices to show that 
\begin{equation}\label{eqObjectivePartialDelta}
|\partial_k\delta_1|\leq e^{C(1+|k|)}(1+|z|).
\end{equation}
To do so, by means of \rf{eqRelationfu} and Lemma \ref{lemUgammaFLambdaMu} we get
\begin{equation*}
\left|\partial_{{k}}\delta_1\right| = \frac{|\partial_{{k}}  u_1|}{|u_1|}\leq \frac{  |\nabla_k  f_{\mu}|+ |\nabla_k  f_{- \mu}|}{\inf_{\lambda\in\partial\D} |f_{\lambda \mu}|}.
\end{equation*}
We write this expression in terms of the Jost functions using \rf{eqBeltramiAssimptotics} and by \rf{eqFMuRepresentation} we get
\begin{align}\label{eqBoundDelta1}
\left|\partial_{{k}}\delta_1\right| 
\nonumber	& \lesssim \frac{\sup_{\lambda\in\partial\D} |e^{ikz}| |\nabla_k  M_{\lambda \mu}|+|ze^{ikz}| | M_{\lambda \mu}|}{\inf_{\lambda\in\partial\D} |e^{ikz}||M_{\lambda\mu}|} \leq \frac{\sup_{\lambda\in\partial\D} \norm{\nabla_k  M_{\lambda \mu}}_{L^\infty}+ |z|  |e^{ik(\varphi_{\lambda\mu}-z)}|}{\inf_{\lambda\in\partial\D} |e^{ik(\varphi_{\lambda\mu}-z)}|}\\
	& \leq   \left(\sup_{\lambda\in\partial\D} \norm{\nabla_k  M_{\lambda \mu}}_{L^\infty} +  |z|  \right) e^{2|k|\sup_{\lambda\in\partial\D} |\varphi_{\lambda \mu}(z,k)-z|} .
\end{align}
By the Sobolev Embedding Theorem and \rf{eqMMuDerivative} we get that
\begin{equation}\label{eqBoundOnSupM}
\sup_{\lambda\in\partial\D} \norm{\nabla_k  M_{\lambda \mu}(\cdot,k)}_{L^\infty}\leq e^{C(1+|k|)}.
\end{equation}
On the other hand,  the quasiconformal principal mapping $\varphi=\varphi_\mu(\cdot,k)$ with Beltrami coefficient $\nu=\frac{\bar{\partial}\varphi}{\overline{\partial\varphi}}$ such that $\varphi(z) - z= \Cauchy(\bar{\partial}\varphi)(z)$ satisfies that for $p>2$ and close enough to $2$,
\begin{equation}\label{eqBoundOnVarphi}
\norm{\varphi - Id}_{L^\infty(\C)}\lesssim \norm{\bar{\partial}\varphi}_{L^{p}(\C)}\leq \norm{(I-\nu\Beurling)^{-1}}_{(p,p)}\norm{\nu}_{L^p}\lesssim C_\kappa.
\end{equation}
Note that in the first step we used the Sobolev embedding Theorem again and \rf{eqCauchyCompactlyBounded}, and then the Neumann series \rf{eqPrincipalNeumann}. By \rf{eqBoundDelta1}, \rf{eqBoundOnSupM} and \rf{eqBoundOnVarphi}, we get \rf{eqObjectivePartialDelta}.
\end{proof}

Next, for each $R$ we consider a bump function $\varphi_{R}\in C^\infty_0(\C)$ such that $\chi_{\D(0,R)}\leq \varphi_R\leq\chi_{\D(0,2R)}$ and $\norm{\nabla\varphi_R}_{L^\infty}\lesssim \frac{1}{R}$. Now take
\begin{equation}\label{eqEtaDefinition}
\varsigma(z,w,k):=\Cauchy\left(\sigma(z,w,\cdot) \varphi_{R_{z,w}}(\cdot)\right)(k),
\end{equation}
which is well defined by \rf{eqSigmaInftyBound} and \rf{eqCauchyCompactlyBounded}, and it is locally H\"older continuous by \rf{eqCauchyBounded}. Thus, using \rf{eqEInftyBound} we can define
\begin{equation}\label{eqSDefinition}
S(z,w,k):=\Cauchy\left( e^{-\varsigma(z,w,\cdot)}E(z,\cdot) \varphi_{R_{z,w}}(\cdot)\right)(k)-\Cauchy\left(e^{-\varsigma(z,w,\cdot)}E(z,\cdot) \varphi_{R_{z,w}}(\cdot)\right)(0).
\end{equation}
The purpose of all this procedure is to get a holomorphic approximation of $g$ on $\D(0,R_{z,w})$: take
\begin{equation}\label{eqFDefinition}
F(z,w,k):=e^{-\varsigma(z,w,k)}g(z,w,k) - S(z,w,k).
\end{equation}
Using the fact that $\bar{\partial}\circ\Cauchy = Id$ on Sobolev functions and \rf{eqGEquation} we get
\begin{align*}
\partial_{\bar{k}}F
	& =e^{-\varsigma}\partial_{\bar{k}}g-  e^{-\varsigma}g\, \partial_{\bar{k}}\varsigma -\partial_{\bar{k}}S =e^{-\varsigma} \left(\sigma g +E  - \sigma \varphi_{R_{z,w}} g\right) -  e^{-\varsigma}E \varphi_{R_{z,w}} \\
	& = \left(1-\varphi_{R_{z,w}}\right) e^{-\varsigma} \left(\sigma g +E\right).
\end{align*}
By Weyl's Lemma, F is holomorphic on $\D(0,R_{z,w})$. Note that putting together Definition \ref{defFMu}, \rf{eqSDefinition} and \rf{eqFDefinition}, regardless of the values of $z,w$,  we have that
\begin{equation}\label{eqEverythingVanishes}
F(z,w,0)=S(z,w,0)=g(z,w,0)=0.
\end{equation}
Let us complete the bounds on $\varsigma$ and on $S$ and its derivatives.
\begin{lemma}
Under the hypothesis of Proposition \ref{propoGDoesNotVanishFarAway}, for every $z\in 2\D$ and $w\in\C$ we have that 
\begin{equation}\label{eqEtaLInftyBound}
\norm{\varsigma(z,w,\cdot)}_{L^\infty}\leq C R_{z,w}.
\end{equation}
Moreover,
\begin{equation}\label{eqSLInftyBound}
\norm{S(z,w,\cdot)}_{L^\infty} \leq \rho e^{CR_{z,w}} 
\end{equation}
and, for every $\theta < 1$ there exists $C$ depending on $\theta$ and $\upsilon^{-1}(1)$ such that 
\begin{equation}\label{eqSW1InftyBound}
\norm{\nabla_k S(z,w,\cdot)}_{L^\infty}\leq\rho^\theta e^{C(R_{z,w}+1)} .
\end{equation}
\end{lemma}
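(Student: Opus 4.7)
My plan is to extract all three bounds from two basic facts: a uniform Cauchy-kernel estimate for $L^\infty$ functions supported in a disc of radius $2R_{z,w}$, and H\"older preservation by the Beurling transform on compactly supported inputs.

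The first step is to record that for $f$ supported in $\D(0,M)$ with $\norm{f}_{L^\infty}\leq A$, one has $\norm{\Cauchy f}_{L^\infty(\C)}\lesssim AM$; this follows by splitting the integration region into a disc about $k$, where the logarithmic integrability of the Cauchy kernel yields a contribution of order $AM$, and its complement where $|k-w|\gtrsim\max(|k|,M)$ produces polynomial decay in $|k|$. Applied to $\sigma\,\varphi_{R_{z,w}}$, which is bounded by $2$ via \rf{eqSigmaInftyBound} and supported in $\D(0,2R_{z,w})$, this immediately gives \rf{eqEtaLInftyBound}. Combining \rf{eqEtaLInftyBound} with \rf{eqEInftyBound} next shows that $e^{-\varsigma(z,w,\cdot)}E(z,\cdot)\varphi_{R_{z,w}}$ is pointwise controlled by $\rho\,e^{CR_{z,w}}$ on its support, so a second application of the same Cauchy-kernel bound delivers $\norm{\Cauchy\bigl(e^{-\varsigma}E\varphi_{R_{z,w}}\bigr)}_{L^\infty}\lesssim R_{z,w}\,\rho\,e^{CR_{z,w}}$, with the polynomial factor absorbed into the exponential. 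Subtracting the value at $k=0$ prescribed in \rf{eqSDefinition} only doubles this, yielding \rf{eqSLInftyBound}.

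The genuine difficulty, and the step I expect to be the main obstacle, is \rf{eqSW1InftyBound}. The Wirtinger decomposition splits it in two. Since $\bar\partial_k\circ\Cauchy=\mathrm{Id}$ and the subtracted constant in \rf{eqSDefinition} is killed by $\bar\partial_k$, one has $\bar\partial_k S=e^{-\varsigma}E\varphi_{R_{z,w}}$ pointwise; interpolating \rf{eqUncalled} as $|E|\leq 2^{1-\theta}(\rho\,e^{C|k|})^\theta$ together with \rf{eqEtaLInftyBound} gives $|\bar\partial_k S|\leq\rho^\theta\,e^{C_\theta(1+R_{z,w})}$ at once. In the $\partial_k$ direction, however, $\partial_k S=\Beurling(e^{-\varsigma}E\varphi_{R_{z,w}})$, and the Beurling transform is not bounded on $L^\infty$, so one cannot proceed by pointwise estimates.

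To overcome this I plan to upgrade the $L^\infty$ information on $h:=e^{-\varsigma}E\varphi_{R_{z,w}}$ to H\"older regularity and exploit that $\Beurling$ does preserve $C^\beta$ on compactly supported inputs. Combining the pointwise bound $|E|\leq\rho\,e^{CR_{z,w}}$ with the Lipschitz estimate \rf{eqDEInftyBound} via the elementary interpolation $[f]_{C^\beta}\lesssim \norm{f}_{L^\infty}^{1-\beta}\norm{\nabla f}_{L^\infty}^{\beta}$ produces $[E\varphi_{R_{z,w}}]_{C^\beta}\leq \rho^{1-\beta}\,e^{C_\beta(1+R_{z,w})}$, while $\Cauchy:L^q\to\dot C^{1-2/q}$ from \rf{eqCauchyBounded} applied to $\sigma\varphi_{R_{z,w}}$ yields $[\varsigma]_{C^\beta}\leq e^{C_\beta R_{z,w}}$ for any $\beta<1$, and hence $[e^{-\varsigma}]_{C^\beta}\leq e^{C_\beta(1+R_{z,w})}$. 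The Leibniz rule for H\"older seminorms then gives $\norm{h}_{C^\beta}\leq\rho^{1-\beta}\,e^{C_\beta(1+R_{z,w})}$. Since $h$ is compactly supported in $\D(0,2R_{z,w})$, a classical Calder\'on--Zygmund argument (writing the principal value as $\int (h(w)-h(k))/(k-w)^2\,dm(w)$ to tame the singularity, and using compact support to handle the tail) shows that $\Beurling:C^\beta_c\to L^\infty$ is bounded with norm polynomial in $R_{z,w}$. Consequently $\norm{\partial_k S}_{L^\infty}\leq C_\beta\,\norm{h}_{C^\beta}\leq \rho^{1-\beta}\,e^{C(1+R_{z,w})}$, and setting $\theta:=1-\beta$ delivers \rf{eqSW1InftyBound} for every $\theta<1$.
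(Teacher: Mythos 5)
Your proposal is correct. For the first two bounds you argue exactly as the paper does: the crude kernel estimate $\norm{\Cauchy f}_{L^\infty}\lesssim \norm{f}_{L^\infty}\cdot(\text{radius of support})$ applied to $\sigma\varphi_{R_{z,w}}$ gives \rf{eqEtaLInftyBound}, and applied to $e^{-\varsigma}E\varphi_{R_{z,w}}$ (whose size is $\rho e^{CR_{z,w}}$ by \rf{eqEtaLInftyBound} and \rf{eqEInftyBound}) gives \rf{eqSLInftyBound} after subtracting the value at $0$.

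For \rf{eqSW1InftyBound} you take a genuinely different route from the paper, and it works. The paper keeps everything in the $L^p$ scale: it bounds $\norm{\partial_k S}_{L^p}=\norm{\Beurling(\partial_{\bar k}S)}_{L^p}\lesssim \rho e^{C(R_{z,w}+1)}$ and, separately, $\norm{\nabla_k(e^{-\varsigma}E\varphi_{R_{z,w}})}_{L^p}\lesssim e^{C(R_{z,w}+1)}$ (using \rf{eqEtaW1InftyBound} and \rf{eqDEInftyBound}), and then passes to $L^\infty$ by the fractional Sobolev embedding and interpolation $\norm{\nabla_kS}_{L^\infty}\lesssim\norm{\nabla_kS}_{W^{1-\theta,p}}\lesssim\norm{\nabla_kS}_{W^{1,p}}^{1-\theta}\norm{\nabla_kS}_{L^p}^{\theta}$ with $p>\frac{2}{1-\theta}$; the split of the smallness $\rho^\theta$ happens at the level of $S$ itself. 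You instead place the interpolation at the level of the datum $h=e^{-\varsigma}E\varphi_{R_{z,w}}$: the elementary inequality $[f]_{C^\beta}\lesssim\norm{f}_{L^\infty}^{1-\beta}\norm{\nabla f}_{L^\infty}^{\beta}$ together with \rf{eqEInftyBound}, \rf{eqDEInftyBound} and the H\"older bound on $\varsigma$ from $\Cauchy:L^q\to\dot C^{1-2/q}$ gives $\norm{h}_{C^\beta}\lesssim\rho^{1-\beta}e^{C_\beta(R_{z,w}+1)}$, and then you use the classical Calder\'on--Zygmund fact that $\Beurling$ maps compactly supported $C^\beta$ functions into $L^\infty$ (cancellation of $1/w^2$ on annuli handles the local part, the support of diameter $\approx R_{z,w}$ contributes only a $\log(1+R_{z,w})$ factor), while $\partial_{\bar k}S=h$ is estimated pointwise. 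Both routes consume the same ingredients ($|E|\le\rho e^{C|k|}$, $|\nabla_kE|\le e^{C(1+|k|)}$, bounds on $\varsigma$); yours avoids fractional Sobolev spaces and the abstract interpolation theorems at the price of proving the pointwise CZ estimate on H\"older data, and it even yields a constant depending only on $\theta$ (the paper's $\upsilon^{-1}(1)$ dependence enters only through bounding $R_{z,w}^{2/p-1}$, which your argument does not need). Setting $\theta=1-\beta$ indeed gives \rf{eqSW1InftyBound} for every $\theta<1$.
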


\begin{proof}
The first equation follows easily from \rf{eqEtaDefinition}, and \rf{eqSigmaInftyBound}:
\begin{equation*}
\norm{\varsigma(z,w,\cdot)}_{L^\infty}\lesssim\int_{\D_{2R_{z,w}}}\frac{|\sigma(t)|}{|k-t|} \, dm(t) \lesssim R_{z,w}.
\end{equation*}
Using that $\partial  = \Beurling\circ \bar{\partial}$, the boundedness of the Beurling transform in any $L^p$ space for $1<p<\infty$ and $\bar{\partial}\Cauchy=Id$, together with \rf{eqSigmaInftyBound} again, we get
\begin{equation}\label{eqEtaW1InftyBound}
\norm{\partial_k \varsigma}_{L^p}=\norm{\Beurling(\partial_{\bar k} \varsigma)}_{L^p}\leq C_p \norm{\partial_{\bar k} \varsigma}_{L^p}=C_p \norm{\sigma \varphi_{R_{z,w}}}_{L^p}\leq C R_{z,w}^{\frac{2}{p}}
\end{equation}
for every such $p$.

Regarding $S$, by \rf{eqEtaLInftyBound} and \rf{eqEInftyBound} we obtain
\begin{align*}
|S(z,w,k)|
\nonumber	& =\left|\Cauchy(e^{-\varsigma} E\varphi_{R_{z,w}})(k)-\Cauchy(e^{-\varsigma} E\varphi_{R_{z,w}})(0)\right|\\
	& \leq \int_{2\D_{R_{z,w}}}e^{|\varsigma(z,w,t)|}|E(z,w,t)|\left(\frac{1}{|k-t|} + \frac{1}{|t|}\right)\, dm(t)\\
	& \leq C e^{CR_{z,w}} \rho e^{CR_{z,w}}\int_{2\D_{R_{z,w}}}\left(\frac{1}{|k-t|} + \frac{1}{|t|}\right)\, dm(t) \leq  C \rho e^{CR_{z,w}}R_{z,w} ,
\end{align*}
which implies \rf{eqSLInftyBound}.

On the other hand, by the properties of the Cauchy transform $\partial_{\bar{k}} S=e^{-\varsigma} E\varphi_{R_{z,w}}$. Using \rf{eqEtaLInftyBound} and \rf{eqEInftyBound} we get that
$$\norm{\partial_{\bar{k}}S}_{L^\infty}=\norm{e^{-\varsigma} E\varphi_{R_{z,w}}}_{L^\infty}\leq  \rho e^{CR_{z,w}},$$
and for every $1<p<\infty$,  arguing as before
$$\norm{\partial_k S}_{L^p}=\norm{\Beurling(\partial_{\bar k} S)}_{L^p}\leq C_p \norm{\partial_{\bar k} S}_{L^p}\leq C_p R_{z,w}^{\frac{2}{p}} \norm{\partial_{\bar k} S}_{L^\infty}\leq \rho e^{C(R_{z,w}+1)}.$$
However, we need an $L^\infty$ bound, which we will obtain by interpolation. Combining \rf{eqEtaLInftyBound} and \rf{eqEtaW1InftyBound} with \rf{eqEInftyBound} and \rf{eqDEInftyBound}, we get
\begin{align*}
\norm{\nabla_k(e^{-\varsigma} E\varphi_{R_{z,w}})}_{L^p}
	& \leq e^{\norm{\varsigma}_{\infty}}\left(\norm{E}_{L^\infty(\D_{2R})}\norm{\nabla\varphi_{R_{z,w}}}_{L^p}+\norm{E}_{L^\infty(\D_{2R})}\norm{\nabla_k\varsigma}_{L^p}+\norm{\nabla_k E}_{L^p(\D_{2R})}\right)\\
	& \leq  e^{CR_{z,w}} \left( \rho e^{CR_{z,w}}  R_{z,w}^{\frac{2}{p}-1}+\rho e^{CR_{z,w}}C R_{z,w}^{\frac{2}{p}}+e^{C(R_{z,w}+1)}R_{z,w}^{\frac{2}{p}}\right)\\
	& \leq   R(1)^{\frac2p-1}e^{C(R_{z,w}+1)},
\end{align*}
where $\D_{2R}:=\D_{2R_{z,w}}$. Note that we have used implicitly the hypothesis $z\in 2\D$ to control $\norm{\nabla_k E}_{L^p(\D_{2R})}$ and the fact that $\rho\leq 1/2$ to keep the constants in the exponent. To end, let $\theta<1$ and choose $\frac{2}{1-\theta}<p<\infty$. Then by the embedding properties of fractional Sobolev spaces and interpolation between these spaces (see \cite[Theorems 2.4.7, 2.5.6, 2.5.7 and 2.7.1]{TriebelTheory}, for instance) we have that 
\begin{align}\label{eqControlNablaS}
\norm{\nabla_k S}_{L^\infty}
\nonumber	& \leq C\norm{\nabla_k S}_{W^{1-\theta,p}}
			\leq C_{\theta,p} \norm{\nabla_k S}_{W^{1,p}}^{1-\theta}\norm{\nabla_k S}_{L^p}^\theta \\
	& \leq C e^{C(R_{z,w}+1)(1-\theta)} \rho^{\theta} e^{C(R_{z,w}+1)\theta}.
 \end{align}
\end{proof}

Next we see that $F$ has only one zero if $|z-w|$ is big enough with respect to $\rho$ (i.e., when the perturbation $S$ is small with respect to $F$).
\begin{proposition}\label{propoFHasOneZero}
Under the hypothesis of Proposition \ref{propoGDoesNotVanishFarAway},  there exists a modulus of continuity $\iota_2$ such that for every $z\in 2\D$ and $w\in\C$ with $|z-w| > \iota_2(\rho)$, then 
$$F(z,w,k)=0 \iff k=0$$
and the zero is simple.
\end{proposition}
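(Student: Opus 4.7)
The plan is to compute the Browder degree of $F(z,w,\cdot)$ along $\partial\D_{R_{z,w}}$ by chaining three homotopies and then invoking the argument principle. First I would recycle the computation already carried out in the sketch of Proposition~\ref{propoGDoesNotVanishFarAway}: by \rf{eqDeltaControlOnTheCircle} the linear homotopy $(1-t)g+t\,i(z-w)k$ never vanishes on $\partial\D_{R_{z,w}}$, which yields both $\deg(g(z,w,\cdot),\partial\D_{R_{z,w}},0)=1$ and the quantitative lower bound $|g(z,w,k)|\geq \tfrac12|z-w|R_{z,w}$ on that circle. Next, using the identity $g=e^{\varsigma}(F+S)$ from \rf{eqFDefinition}, the continuous homotopy $t\mapsto e^{t\varsigma}(F+S)$ joins $F+S$ at $t=0$ to $g$ at $t=1$ without vanishing, because by \rf{eqEtaLInftyBound} the factor $e^{t\varsigma}$ is everywhere nonzero. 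Hence, as soon as $F+S$ is known to be nonzero on $\partial\D_{R_{z,w}}$, one obtains $\deg(F+S,\partial\D_{R_{z,w}},0)=1$.

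The key quantitative step is to absorb $S$. Combining the lower bound on $|g|$ with \rf{eqEtaLInftyBound} and \rf{eqSLInftyBound} gives, on $\partial\D_{R_{z,w}}$,
\begin{equation*}
|F| \ \geq\ |e^{-\varsigma}g|-|S|\ \geq\ \frac{|z-w|R_{z,w}}{2\,e^{CR_{z,w}}}-\rho\,e^{CR_{z,w}}.
\end{equation*}
I would then apply Lemma~\ref{lemExistsC} with $d=|z-w|$ and a suitable tuple $\alpha$ (absorbing the linear factor $R_{z,w}$ into the exponential after a harmless increase of the constant) to produce a modulus of continuity $\iota_2$, depending only on $K$ and $\upsilon$, such that $|z-w|>\iota_2(\rho)$ forces $2\rho\,e^{CR_{z,w}}<\tfrac12|z-w|R_{z,w}\,e^{-CR_{z,w}}$. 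This gives $|F|>2|S|$ on $\partial\D_{R_{z,w}}$, so both $F+S$ and the affine homotopy $s\mapsto F+sS$ are non-vanishing there, and therefore
\begin{equation*}
\deg(F,\partial\D_{R_{z,w}},0)\ =\ \deg(F+S,\partial\D_{R_{z,w}},0)\ =\ 1.
\end{equation*}

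To finish, I would use that $F(z,w,\cdot)$ is holomorphic on $\D(0,R_{z,w})$ by construction, so every zero counts positively and with integer multiplicity. The argument principle then forces the sum of multiplicities of the zeros of $F$ inside $\D(0,R_{z,w})$ to equal the degree, namely $1$. Since $F(z,w,0)=0$ by \rf{eqEverythingVanishes}, the unique zero is located at $k=0$ and is simple, which is exactly the statement. The hard part will be keeping track of the exponential appearance of $R_{z,w}$ in the bounds for $\varsigma$ and $S$: it is precisely this phenomenon that forces the appeal to Lemma~\ref{lemExistsC} and that explains why only a very weak (essentially logarithmic) modulus $\iota_2$ can be achieved.
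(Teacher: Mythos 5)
Your argument is correct in substance, but it is a genuinely different route from the paper's. The paper does not pass through the degree of $g$ at all in this proof: it establishes the pointwise estimate $\real\bigl(F/(i(z-w)ke^{-\varsigma})\bigr)\geq\frac14$ for \emph{all} $|k|\geq R_{z,w}$ (inequality \rf{eqFAndEtaStayCloseInTheCircleBelow}), using that $S(0)=0$ and the $W^{1,\infty}$ bound \rf{eqSW1InftyBound} to control $S(k)/k$ (hence the $\rho^{1/2}$ entering Lemma \ref{lemExistsC} there), and then argues on the holomorphic function $k\mapsto F(k)/k$: on $\partial\D_{R_{z,w}}$ it is homotopic to $i(z-w)e^{-\varsigma}$ and hence to the constant $i(z-w)$, so it has no zeros, which gives both uniqueness and simplicity at once. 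Your route instead chains $\deg(g)=\deg(F+S)=\deg(F)=1$ on $\partial\D_{R_{z,w}}$, using only the sup-norm bounds \rf{eqEtaLInftyBound} and \rf{eqSLInftyBound} together with the lower bound $|g|\geq\frac12|z-w||k|$ coming from \rf{eqDeltaControlOnTheCircle}, and then counts zeros of the holomorphic $F$ by the argument principle; the absorption of the factor $R_{z,w}$ and the application of Lemma \ref{lemExistsC} (with $d\sim|z-w|$, using $R_{z,w}\geq 1/\upsilon^{-1}(1)$) work as you say, and the count ``total multiplicity $=1$ with $F(0)=0$'' does deliver simplicity. What your version buys is a slightly cheaper input (no need of \rf{eqSW1InftyBound}, so $\rho$ instead of $\rho^{1/2}$); what the paper's version buys is the quantitative two-sided control \rf{eqFAndEtaStayCloseInTheCircleBelow}--\rf{eqFAndEtaStayCloseInTheCircleAbove} on the circle, which is reused immediately afterwards in Lemma \ref{lemNuIsBounded}, so in the paper's architecture this proposition's proof is doing double duty.

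One point you should patch: the proposition (and the paper's proof) asserts $F(z,w,k)\neq 0$ for \emph{every} $k\neq0$, not only for $k\in\D_{R_{z,w}}$, and your argument as written only accounts for zeros inside the disk. This is harmless but must be said: since \rf{eqDeltaControlOnTheCircle} holds for all $|k|\geq R_{z,w}$, the same chain $|F|\geq e^{-\norm{\varsigma}_{L^\infty}}|g|-|S|\geq \frac{|z-w||k|}{2e^{CR_{z,w}}}-\rho e^{CR_{z,w}}>0$ is valid on the whole exterior region under the same smallness condition from Lemma \ref{lemExistsC}, so $F$ cannot vanish there either. With that one line added, your proof is complete.
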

\begin{proof}
By \rf{eqEverythingVanishes} the ``if'' implication is trivial. Since $F$ is holomorphic in $\D_{R_{z,w}}$, the function $k\mapsto \frac{F(k)}{k}$ is holomorphic in $\D_{R_{z,w}}$ as well.

Let us assume that $|z-w|\geq \iota_2(\rho)$ with $\iota_2$ to be fixed and $|k|\geq R_{z,w}$. We will check that
\begin{equation}\label{eqFAndEtaStayCloseInTheCircleBelow}
\real\left(\frac{F(z,w,k)}{i{(z-w)} k e^{-\varsigma}}\right)\geq\frac{1}{4} .
\end{equation}
Indeed, using \rf{eqFDefinition} we can write
$$\frac{F(k)}{i{(z-w)} k e^{-\varsigma}}=\frac{[i{(z-w)} k + (g-i{(z-w)} k) ]e^{-\varsigma} -S}{i{(z-w)} k e^{-\varsigma}}=1+\frac{ g(k)-i{(z-w)} k}{i{(z-w)} k}-\frac{S(k)}{i{(z-w)} k e^{-\varsigma}}.$$
Therefore, by \rf{eqDeltaControlOnTheCircle}, \rf{eqEverythingVanishes} and \rf{eqSLInftyBound}, we have that
$$\real\left(\frac{F(k)}{i{(z-w)} k e^{-\varsigma}}\right)\geq 1-\left|\frac{ g-i{(z-w)} k}{i{(z-w)} k}\right|-\left|\frac{S(k)-S(0)}{k }\frac{1}{{(z-w)} e^{-\varsigma}}\right| \geq \frac{1}{2}-\frac{\norm{\nabla_k S}_{L^\infty}}{|z-w| e^{-\norm{\varsigma}_{L^\infty}}}.$$
Thus, to show \rf{eqFAndEtaStayCloseInTheCircleBelow} it suffices to see that 
\begin{equation}\label{eqNablaSImprovingStayClose}
\norm{\nabla_k S}_{L^\infty}\leq\frac{ |z-w|}{4 e^{\norm{\varsigma}_{L^\infty}}}
\end{equation}
 for a convenient $\iota_2$.

By \rf{eqSW1InftyBound} we have that $\norm{\nabla_k S}_{L^\infty}\leq \rho^\frac12 e^{C(R_{z,w}+1)}$. Therefore, for \rf{eqNablaSImprovingStayClose} to hold we only need that
\begin{equation*}
 \rho^\frac12  \leq \frac{|z-w|}{\widetilde{C} e^{(C+\norm{\varsigma}_{L^\infty}) R_{z,w}}}
\end{equation*}
for a convenient $\iota_2=\iota_{(\frac12,1,\widetilde{C},C+\norm{\varsigma}_{L^\infty})}$,  and $|z-w|\geq \iota_2(\rho)$, and this is a consequence of Lemma \ref{lemExistsC}. The proof of \rf{eqFAndEtaStayCloseInTheCircleBelow} is complete.

Inequality \rf{eqFAndEtaStayCloseInTheCircleBelow} implies that, in particular, for $|k|\geq R_{z,w}$, the function $F(k)$ does not vanish. To end, we need to see that when $0<|k|<R_{z,w}$ still $F(k)\neq 0$. Now, for $t\in(0,1)$, if
$$t \frac{F(k)}{k} + (1-t) i{(z-w)} e^{-\varsigma}=0,$$
then $\frac{F(k)}{i{(z-w)} k e^{-\varsigma}}=-\frac{1-t}{t}$, which is impossible when  $|k|\geq R_{z,w}$ by \rf{eqFAndEtaStayCloseInTheCircleBelow}.  Thus, $\frac{F(k)}{k}$ is holomorphic and homotopic to $i{(z-w)} e^{-\varsigma}$ in $\partial\D_{R_{z,w}}$, and the latter is homotopic to the constant function $k\mapsto i {(z-w)}$. Thus, the three of them have the same number of zeros, that is, none of them has zeroes. 
\end{proof}

\begin{remark}
The same reasoning used above to show \rf{eqFAndEtaStayCloseInTheCircleBelow}  can be used to see that for $|k|\geq R_{z,w}$ with $|z-w|\geq  \iota_2(\rho)$, the estimate
\begin{equation}\label{eqFAndEtaStayCloseInTheCircleAbove}
\left|\frac{F(z,w,k)}{i{(z-w)} k e^{-\varsigma}}\right|\leq \frac{7}{4} 
\end{equation}
holds.
\end{remark}

Next we want to see that $F$ behaves as ${(z-w)} k$ near the origin.

\begin{lemma}\label{lemNuIsBounded}
Under the hypothesis of Proposition \ref{propoGDoesNotVanishFarAway}, there exists a constant $M_0$ such that for every $z\in 2\D$ and $w\in\C$ with $|z-w|\geq  \iota_2(\rho) $,  there exists a holomorphic function $\nu$ on $\D_{R_{z,w}}$ with 
\begin{equation}\label{eqFAsAnExponential}
F(k)=i{(z-w)} k e^{\nu(k)} \mbox{\quad\quad for every }k\in\D_{R_{z,w}},
\end{equation}
 and $|\nu(k)|\leq M_0 (R_{z,w}+1)$.
\end{lemma}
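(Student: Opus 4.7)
The plan is to construct $\nu$ as a holomorphic branch of $\log\bigl(F(k)/[i(z-w)k]\bigr)$ and to fix its indeterminate additive $2\pi i\Z$-constant using a reference point on the boundary rather than at the origin. By Proposition \ref{propoFHasOneZero}, $F$ has a simple zero at $k=0$ and no other zeros on $\D_{R_{z,w}}$, so $G(k):=F(k)/[i(z-w)k]$ extends to a holomorphic, nowhere-vanishing function on $\D_{R_{z,w}}$; it is also continuous on $\overline{\D_{R_{z,w}}}$ and nonvanishing there thanks to \rf{eqFAndEtaStayCloseInTheCircleBelow}. Since $\D_{R_{z,w}}$ is simply connected, $G$ admits a holomorphic logarithm, unique up to an additive element of $2\pi i\Z$.

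On $\partial\D_{R_{z,w}}$, combining \rf{eqFAndEtaStayCloseInTheCircleBelow} and \rf{eqFAndEtaStayCloseInTheCircleAbove} shows that $H(k):=F(k)/[i(z-w)k\,e^{-\varsigma(k)}]=e^{\varsigma(k)}G(k)$ lies in the truncated right half-plane $\{w\in\C:\real w\geq 1/4,\ |w|\leq 7/4\}$, so the principal logarithm $\log H$ is well-defined and continuous on $\partial\D_{R_{z,w}}$ with $|\arg H(k)|\leq \arccos(1/7)$. Hence $\nu_1(k):=-\varsigma(k)+\log H(k)$ is a continuous logarithm of $G$ on the boundary satisfying
\[ |\imag \nu_1(k)|\leq \norm{\varsigma(z,w,\cdot)}_{L^\infty}+\arccos(1/7). \]
Fix $k_*\in\partial\D_{R_{z,w}}$ and choose the global branch of $\nu$ so that $\nu(k_*)=\nu_1(k_*)$. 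Since $\nu-\nu_1$ is continuous on the connected circle $\partial\D_{R_{z,w}}$ and takes values in $2\pi i\Z$, it vanishes identically there; thus $\nu\equiv\nu_1$ on $\partial\D_{R_{z,w}}$.

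With this choice, $\imag\nu$ is harmonic on $\D_{R_{z,w}}$, continuous up to the boundary, and bounded there by $\norm{\varsigma}_{L^\infty}+\arccos(1/7)$; the maximum/minimum principle propagates the same bound to $\overline{\D_{R_{z,w}}}$, and \rf{eqEtaLInftyBound} converts it into $|\imag \nu(k)|\leq C R_{z,w}+C$. For the real part, applying the maximum modulus principle to $G$ and to $1/G$ extends the boundary bounds $\tfrac14 e^{-\norm{\varsigma}_{L^\infty}}\leq |G(k)|\leq \tfrac74 e^{\norm{\varsigma}_{L^\infty}}$ to $\overline{\D_{R_{z,w}}}$, yielding $|\real \nu(k)|=\bigl|\log|G(k)|\bigr|\leq C R_{z,w}+\log 4$. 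Combining these bounds gives $|\nu(k)|\leq M_0(R_{z,w}+1)$ for a universal constant $M_0$. The delicate step is the branch selection: pinning $\nu$ at a boundary point ensures $\nu$ matches the continuous logarithm $\nu_1$ on the whole circle, which is what makes the boundary bound on $|\imag\nu|$ genuinely available to the maximum principle. Pinning $\nu$ at the origin instead would force one to use a tool like Borel--Carath\'eodory, which only delivers control on a strictly smaller disk and not on all of $\D_{R_{z,w}}$.
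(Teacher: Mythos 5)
Your argument is correct and is essentially the paper's proof: you write $F(k)=i(z-w)k\,e^{\nu(k)}$ using Proposition \ref{propoFHasOneZero}, bound $\nu$ on $\partial\D_{R_{z,w}}$ via \rf{eqFAndEtaStayCloseInTheCircleBelow}, \rf{eqFAndEtaStayCloseInTheCircleAbove} and \rf{eqEtaLInftyBound}, and propagate the bound inward by the maximum principle. The only difference is cosmetic: you make the branch selection (pinning $\nu$ at a boundary point and splitting into real and imaginary parts) explicit, where the paper simply chooses the determination sending $\nu+\varsigma$ into a fixed rectangle on the boundary circle.
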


\begin{figure}[ht]
\center
\begin{tikzpicture}[line cap=round,line join=round,>=triangle 45,x=1.6cm,y=1.6cm]
\draw[->,color=black] (-2.387559499715807,0.) -- (2.590889088635819,0.);
\foreach \x in {-2.,-1.5,-1.,-0.5,0.5,1.,1.5,2.,2.5}
\draw[shift={(\x,0)},color=black] (0pt,2pt) -- (0pt,-2pt) node[below] {\footnotesize $\x$};
\draw[->,color=black] (0.,-2.0050839418500694) -- (0.,2.020592589075628);
\foreach \y in {-2.,-1.5,-1.,-0.5,0.5,1.,1.5,2.}
\draw[shift={(0,\y)},color=black] (2pt,0pt) -- (-2pt,0pt) node[left] {\footnotesize $\y$};
\draw[color=black] (0pt,-10pt) node[right] {\footnotesize $0$};
\clip(-2.387559499715807,-2.0050839418500694) rectangle (2.590889088635819,2.020592589075628);
\draw [shift={(0.,0.)},fill=black,fill opacity=0.25]  plot[domain=-1.4274487578895307:1.4274487578895312,variable=\t]({1.*1.75*cos(\t r)+0.*1.75*sin(\t r)},{0.*1.75*cos(\t r)+1.*1.75*sin(\t r)});
\draw (0.25,1.7320508075688772)-- (0.25,-1.7320508075688774);
\end{tikzpicture}
\caption{Region containing the image of $\partial \D_{R_{z,w}}$ under $e^{\nu(k)+\varsigma(k)}$. }\label{figRecorregut}
\end{figure}
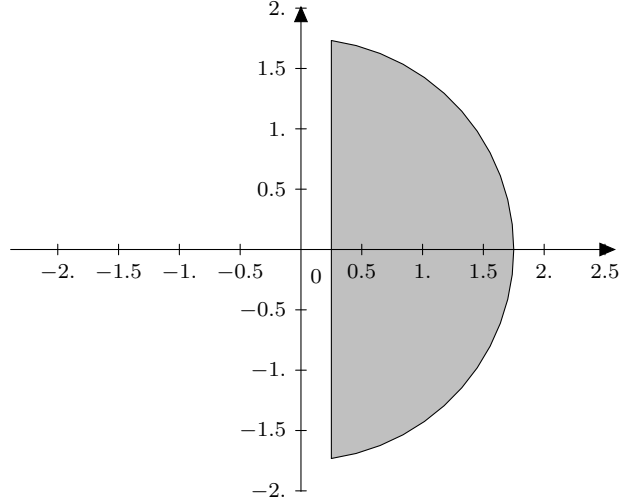

\begin{proof}
By Proposition \ref{propoFHasOneZero}, the function $k\mapsto \frac{F(k)}{i{(z-w)} k}$ is analytic with no zeroes on $\D_{R_{z,w}}$. Thus, $\frac{F(k)}{i{(z-w)} k}=e^{\nu(k)}$ with $\nu$ holomorphic on the considered domain. 

Let $|k|=R_{z,w}$. By \rf{eqFAndEtaStayCloseInTheCircleAbove}, we have that
$$ \left|e^{\nu(k)+\varsigma(k)}\right|
	=  \left|\frac{F(k)}{i{(z-w)} k e^{-\varsigma}}\right|
	\leq \frac{7}{4}.$$
Moreover, by \rf{eqFAndEtaStayCloseInTheCircleBelow} we have that 
$$\real \left(e^{\nu(k)+\varsigma(k)}\right)
	=\real \left( \frac{F(k)}{i{(z-w)} k e^{-\varsigma}}\right)
	\geq \frac{1}{4}$$
(see Figure \ref{figRecorregut}).	
	
Combining both estimates and using the principal branch of the logarithm, we can choose a determination of $\nu$ so that $\nu+\varsigma$ maps $\partial\D_{R_{z,w}}$ to the rectangle
$$\left\{\xi \in\C: -\log(4)\leq \real(\xi)\leq \log\left(\frac{7}{4}\right) \mbox{ and } |\imag(\xi)|<\frac{\pi}{4}\right\}.$$
Thus,  we have that $|\nu(k)|\leq 4 + \norm{\varsigma}_{L^\infty}\leq M_0 (R_{z,w}+1)$ in the circumference $|k|=R_{z,w}$ by \rf{eqEtaLInftyBound}. By the maximum principle   this result extends to the whole disk.
\end{proof}

\begin{lemma}\label{lemBigDerivativeAndKoebe}
Under the hypothesis of Proposition \ref{propoGDoesNotVanishFarAway}, there exists $d_0<\frac12$ such that for every $z\in 2\D$ and $w\in\C$ with $|z-w|\geq  \iota_2(\rho)$ as in the previous lemma the following statements hold true:
\begin{enumerate}[i)]
\item For every $\delta>0$, we have $F^{-1}(\D(0,\delta)) \subset \D\left(0,\frac{\delta e^{M_0( R_{z,w}+1)}}{|z-w|}\right)$,
\item $\inf_{|k|<d_0} |F'(k)|>\frac{1}{2}|z-w|e^{-M_0 (R_{z,w}+1)}$ and
\item $\sup_{|k|\leq R_{z,w}} |F'(k)|\leq |z-w| e^{C (R_{z,w}+1)}$.
\end{enumerate}
\end{lemma}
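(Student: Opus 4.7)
The plan is to read all three estimates directly off the multiplicative representation $F(k) = i(z-w)\,k\,e^{\nu(k)}$ supplied by Lemma \ref{lemNuIsBounded}, which holds on $\D_{R_{z,w}}$ together with the uniform bound $|\nu(k)| \leq M_0(R_{z,w}+1)$. Everything else is routine Cauchy estimates applied to the holomorphic function $\nu$.

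For part (i), I take absolute values in the representation. Since $\real\nu(k) \geq -|\nu(k)| \geq -M_0(R_{z,w}+1)$, we get
$$|F(k)| \;=\; |z-w|\,|k|\,e^{\real\nu(k)} \;\geq\; |z-w|\,|k|\,e^{-M_0(R_{z,w}+1)}.$$
So $|F(k)|<\delta$ forces $|k| < \delta\,e^{M_0(R_{z,w}+1)}/|z-w|$, which is precisely the inclusion.

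For part (ii), differentiating the representation yields $F'(k)=i(z-w)\,e^{\nu(k)}\bigl(1+k\nu'(k)\bigr)$. The Cauchy estimate for the holomorphic function $\nu$ on $\D_{R_{z,w}}$ gives $|\nu'(k)|\leq M_0(R_{z,w}+1)/(R_{z,w}-|k|)$. Since $R_{z,w}=R(|z-w|/4)\geq R_\infty:=1/\upsilon^{-1}(1)>0$ uniformly in $z,w$, I can choose a constant
$$d_0 \;:=\; \min\!\left\{\frac{R_\infty}{4M_0(R_\infty+1)},\;\tfrac12\right\},$$
depending only on $\upsilon$ and $M_0$ (hence on $\kappa,p,\omega$). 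A short computation shows that for $|k|<d_0$ one has $|k\nu'(k)|\leq 1/2$, so $|1+k\nu'(k)|\geq 1/2$, and combined with $|e^{\nu(k)}|\geq e^{-M_0(R_{z,w}+1)}$ this produces the required lower bound $|F'(k)| > \tfrac12 |z-w|\,e^{-M_0(R_{z,w}+1)}$.

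For part (iii), the same differentiated formula plus a Cauchy estimate for $\nu'$ on a slightly shrunk disk (at worst $|k|\leq R_{z,w}/2$, where $|\nu'(k)|\leq 2M_0(R_{z,w}+1)/R_{z,w}$) yields
$$|F'(k)| \leq |z-w|\,e^{M_0(R_{z,w}+1)}\bigl(1 + M_0(R_{z,w}+1)\bigr) \leq |z-w|\,e^{C(R_{z,w}+1)}$$
for a suitable $C$; alternatively one can bound $|F|\leq |z-w|R_{z,w}e^{M_0(R_{z,w}+1)}$ on $\D_{R_{z,w}}$ and apply Cauchy's integral formula to $F'$ on the shrunk disk, absorbing the $R_{z,w}$ into the exponential. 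The only genuine subtlety is ensuring that the constant $d_0$ in (ii) is uniform over all admissible $z,w$; this is exactly what the lower bound $R_{z,w}\geq R_\infty$ provides, and is the one place where the hypothesis $|z-w|\geq \iota_2(\rho)$ (propagated through the choice of $R_{z,w}$) enters in an essential way.
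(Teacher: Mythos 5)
Your argument is correct and follows essentially the same route as the paper's proof: part (i) by taking moduli in the representation $F(k)=i(z-w)k\,e^{\nu(k)}$ from Lemma \ref{lemNuIsBounded}, parts (ii)--(iii) by differentiating that representation and controlling $\nu'$ through a Cauchy estimate, with your choice $d_0=\frac{1}{4M_0(1+\upsilon^{-1}(1))}$ obtained, exactly as in the paper, from the uniform lower bound $R_{z,w}\geq 1/\upsilon^{-1}(1)$. The only caveat, which your proof shares with the paper's own (there $\nu'$ is only estimated for $|k|\leq d_1<\frac12$), is that your bound in (iii) is justified only on a compactly contained subdisk ($|k|\leq R_{z,w}/2$) rather than on the full stated range $|k|\leq R_{z,w}$; this is harmless because Proposition \ref{propoDetDHNot0} invokes (iii) only for $k\in\D(0,d_0)$.
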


\begin{proof}
The first statement follows from the definitions. Indeed, let $k$ be such that $|F(k)|<\delta$. Then, by \rf{eqFAsAnExponential} we have that  $\left|{(z-w)} k e^{\nu(k)}\right| <\delta$ and $|k|<\frac{\delta e^{M_0(R_{z,w}+1)}}{|z-w|}$.

The second and the third can be shown by means of the Cauchy integral formula. Namely, for $|k|\leq d_1<\min\left\{ \frac{1}{2}, \frac{R_{z,w}}{2}\right\}$,  we have that
$$|\nu'(k)|\leq \left|\frac{1}{2\pi}\int_{\partial\D_{R_{z,w}/4}}\frac{\nu(k)}{(\xi - k)^2}\, d\xi \right|\leq \frac{M_0 (R_{z,w}+1)}{2\pi (R_{z,w}/4)^2}2\pi R_{z,w}/4=4M_0\left(1+\frac1{R_{z,w}}\right).$$

Since $F'(k)={(z-w)} e^{\nu(k)} \left(1  +  k \nu'(k)\right)$, we have $|F'(k)|=|z-w| \left|e^{\nu(k)}\right| \left|1  +  k \nu'(k)\right|$. By Lemma \ref{lemNuIsBounded}, the two claims follow choosing  $d_0< \min \left\{ \frac{1}{8M_0 (1+\upsilon^{-1}(1))}, d_1\right\}$.
\end{proof} 

The idea to conclude comes from the fact that ${(z-w)} k$ cannot be intersected twice by $S$ which is small in $W^{1,\infty}$. By Lemma \ref{lemBigDerivativeAndKoebe} one expects the same for $F$. In the following lemma we deal with the distribution of the zeroes of $g(z,w,\cdot)$ when $z$ and $w$ are far from each other. Consider the set of zeroes
$$Z(g):=\{k:g(z,w,k)=0\}.$$

\begin{lemma}\label{lemZerosSmall}
Let us assume the hypothesis of Proposition \ref{propoGDoesNotVanishFarAway}.  There exists a modulus of continuity $\iota_3$ such that if $|z-w| > \iota_3(\rho)$, then $Z(g)\subset \D(0,d_0)$.
\end{lemma}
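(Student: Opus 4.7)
The plan is to exploit the decomposition $g=e^{\varsigma}(F+S)$ from \rf{eqFDefinition}, combined with the lower bound for $|F|$ furnished by Lemma \ref{lemBigDerivativeAndKoebe}$(i)$ and the smallness of $S$ coming from \rf{eqSLInftyBound}. Since $e^{\varsigma}$ is nowhere vanishing, $Z(g)=Z(F+S)$, so it suffices to show that $F+S$ cannot vanish outside $\D(0,d_0)$ once $|z-w|>\iota_3(\rho)$ for a convenient $\iota_3$. Throughout the argument I would impose $|z-w|\geq \iota_2(\rho)$, so that the conclusions of Proposition \ref{propoFHasOneZero} and Lemma \ref{lemBigDerivativeAndKoebe} are available.

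First, I would dispose of the exterior region $|k|\geq R_{z,w}$: the homotopy estimate \rf{eqDeltaControlOnTheCircle} directly yields $|g(z,w,k)|\geq \tfrac12|(z-w)k|$, which is strictly positive for $k\neq 0$. Since $g(z,w,0)=\delta_1(z,0)-\delta_2(w,0)=0$ and $0\in\D(0,d_0)$, any hypothetical zero outside $\D(0,d_0)$ must sit in the annulus $d_0\leq |k|\leq R_{z,w}$.

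Next I would treat this annulus, where $F$ is holomorphic on $\D_{R_{z,w}}$. Taking $\delta:=d_0|z-w|e^{-M_0(R_{z,w}+1)}$ in Lemma \ref{lemBigDerivativeAndKoebe}$(i)$ shows (by contrapositive) that
$$|F(z,w,k)|\geq d_0|z-w|e^{-M_0(R_{z,w}+1)} \qquad\text{for }d_0\leq |k|\leq R_{z,w}.$$
Combining with \rf{eqSLInftyBound}, which gives $\|S(z,w,\cdot)\|_{L^\infty}\leq \rho e^{CR_{z,w}}$, the triangle inequality ensures $|F+S|>0$ on the annulus whenever
$$\rho e^{CR_{z,w}} < d_0|z-w|e^{-M_0(R_{z,w}+1)},$$
equivalently $\rho<(d_0/e^{M_0})|z-w|e^{-(C+M_0)R_{z,w}}$. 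Lemma \ref{lemExistsC} applied with $(\alpha_1,\alpha_2,\alpha_3,\alpha_4)=(1,1,e^{M_0}/d_0,C+M_0)$ (after absorbing the harmless factor $4$ coming from $R_{z,w}=R(|z-w|/4)$) supplies a modulus of continuity $\iota_3'$ such that this inequality is valid whenever $|z-w|>\iota_3'(\rho)$. Setting $\iota_3:=\max\{\iota_2,\iota_3'\}$ completes the argument.

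The main obstacle is the exponential competition between the lower bound $e^{-M_0 R_{z,w}}$ for $|F|$ and the upper bound $\rho e^{CR_{z,w}}$ for $S$: both degenerate precisely in the regime where $|z-w|$ is small and $R_{z,w}$ is large, which is exactly when locating the zeroes is most delicate. The content of Lemma \ref{lemExistsC} is that one can still choose $|z-w|$ large enough in terms of $\rho$ to beat both blowups simultaneously, and this is what makes the choice of $\iota_3$ possible.
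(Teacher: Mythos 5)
Your argument is correct and takes essentially the same route as the paper's proof: both rely on the decomposition $g=e^{\varsigma}(F+S)$, the smallness bound \rf{eqSLInftyBound} for $S$, statement $(i)$ of Lemma \ref{lemBigDerivativeAndKoebe}, and Lemma \ref{lemExistsC} with the same choice of parameters, the only difference being that you phrase it contrapositively (a lower bound for $|F|$ on $d_0\leq|k|\leq R_{z,w}$) while the paper confines the zeros of $F+S$ directly. Your explicit exclusion of the region $|k|\geq R_{z,w}$ via \rf{eqDeltaControlOnTheCircle}, and your remark on the factor $4$ in $R_{z,w}=R(|z-w|/4)$, are small precisions the paper leaves implicit and are harmless.
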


\begin{proof}
Let $|z-w|> \iota_2(\rho)$. From \rf{eqFDefinition} we have that $g=e^{\varsigma}(F+S)$. Thus, if $k\in Z(g)$ then $F(k)=-S(k)$ and we can apply \rf{eqSLInftyBound} to get $|F(k)|\leq \rho e^{CR_{z,w}}$. That is, $k\in F^{-1}\left(\D\left(0,\rho e^{CR_{z,w}}\right)\right)$ and, by the first statement in Lemma \ref{lemBigDerivativeAndKoebe}, we have $|k|\leq \rho \frac{e^{(C+M_0)(R_{z,w}+1)}}{|z-w|}$. 

We only need to see that there exists a convenient $\iota_3$, so that $\rho \leq \frac{d_0 \, |z-w|}{e^{(C+M_0)(R_{z,w}+1)}}$ when $|z-w|> \iota_3(\rho)$, which can be done by  Lemma \ref{lemExistsC}.
\end{proof}

The last ingredient in the proof of Proposition  \ref{propoGDoesNotVanishFarAway} above is to compute the Jacobian determinant of 
$$H:=e^{-\varsigma}g = F+S.$$
\begin{proposition}\label{propoDetDHNot0}
Let us assume the hypothesis of Proposition \ref{propoGDoesNotVanishFarAway}. There exists a modulus of continuity $\iota_4$ such that  if $|z-w|>\iota_4(\rho)$ then $\det(D_k H)(k)> 0$ for $k\in\D(0,d_0)$.
\end{proposition}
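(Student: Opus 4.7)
The plan is to translate the condition $\det(D_k H) > 0$ into an explicit inequality between $|F'(k)|$ and $\|\nabla_k S\|_{L^\infty}$, using the holomorphy of $F$, and then invoke Lemma \ref{lemExistsC} to find the required modulus of continuity $\iota_4$.

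First I would pass to Wirtinger derivatives. For any $C^1$ map $h:\C\to\C$, the Jacobian determinant factors as $\det(D_k h) = |\partial_k h|^2 - |\partial_{\bar k} h|^2$. Since $F$ is holomorphic on $\D_{R_{z,w}}$ by the discussion following \rf{eqFDefinition}, one has $\partial_{\bar k} F \equiv 0$ and $\partial_k F = F'$ there, so
\begin{equation*}
\det(D_k H)(k) = |F'(k) + \partial_k S(k)|^2 - |\partial_{\bar k} S(k)|^2.
\end{equation*}
By the reverse triangle inequality, a sufficient condition for this to be strictly positive is $|F'(k)| > |\partial_k S(k)| + |\partial_{\bar k} S(k)|$, which is certainly implied by $|F'(k)| > 2\norm{\nabla_k S(z,w,\cdot)}_{L^\infty}$.

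Next I would insert the bounds already available. Provided $|z-w|>\iota_2(\rho)$, the second statement of Lemma \ref{lemBigDerivativeAndKoebe} gives the lower bound $|F'(k)| > \tfrac12 |z-w| e^{-M_0(R_{z,w}+1)}$ uniformly in $k\in\D(0,d_0)$, while \rf{eqSW1InftyBound} with $\theta=\tfrac12$ (say) provides $\norm{\nabla_k S}_{L^\infty} \leq \rho^{1/2} e^{C(R_{z,w}+1)}$. Therefore the condition $|F'(k)| > 2\norm{\nabla_k S}_{L^\infty}$ is implied by
\begin{equation*}
\rho^{1/2} < \frac{|z-w|}{4\, e^{(C+M_0)(R_{z,w}+1)}}.
\end{equation*}

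The main obstacle is purely bookkeeping: I need to produce a modulus of continuity $\iota_4$ so that the displayed inequality holds whenever $|z-w|>\iota_4(\rho)$. This is exactly the shape of the conclusion of Lemma \ref{lemExistsC} applied with the parameter vector $\alpha=(\tfrac12,\,1,\,4e^{C+M_0},\,C+M_0)$ and the same modulus $\upsilon$ defining $R(\cdot)$. Setting $\iota_4 := \max\{\iota_2,\iota_\alpha\}$, where $\iota_\alpha$ is the modulus produced by that lemma, yields $\det(D_k H)(k) > 0$ throughout $\D(0,d_0)$ for every $z\in 2\D$ and $w\in\C$ with $|z-w|>\iota_4(\rho)$, as desired.
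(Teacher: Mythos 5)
Your argument is correct and follows essentially the same route as the paper: pass to Wirtinger derivatives, use the holomorphy of $F$ on $\D_{R_{z,w}}$, play the lower bound on $|F'|$ from Lemma \ref{lemBigDerivativeAndKoebe} against the smallness of $\nabla_k S$ from \rf{eqSW1InftyBound}, and conclude with Lemma \ref{lemExistsC} (taking $\iota_4\geq\iota_2$ so that Lemma \ref{lemBigDerivativeAndKoebe} applies). The only difference is cosmetic: by using the reverse triangle inequality to reduce matters to $|F'|>2\norm{\nabla_k S}_{L^\infty}$ you avoid the paper's expansion of $|F'+\partial_k S|^2$ and hence the upper bound on $|F'|$ (third statement of Lemma \ref{lemBigDerivativeAndKoebe}), which is a slight streamlining of the same argument.
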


\begin{proof}
Let $|z-w|>\iota_3(\rho)$. Then, since $F$ is holomorphic in $\D_{d_0}$, by the   cosine formula we get
\begin{align*}
\det (D_k H)
	& =|\partial_k H|^2-|\partial_{\bar{k}}H|^2
		= |\partial_{{k}}F + \partial_{{k}}S|^2-|\partial_{\bar{k}}S|^2\\
	& \geq |F'|^2 + |\partial_{{k}}S|^2-2 |F'||\partial_{{k}}S|-|\partial_{\bar{k}}S|^2
	\geq |F'|^2 - 2 |F'||\nabla S|-|\nabla S|^2 .
\end{align*}
By \rf{eqSW1InftyBound} and the second and the third statements of Lemma \ref{lemBigDerivativeAndKoebe} we have that
\begin{align*}
\det (D_k H)
	& \geq  \left(\frac{1}{2}|z-w|e^{-M_0 (R_{z,w}+1)}\right)^2 - \rho^\theta e^{C(R_{z,w}+1)} \left(2  |z-w| e^{C (R_{z,w}+1)}+\rho^\theta e^{C(R_{z,w}+1)} \right),
\end{align*}
so the condition 
\begin{align*}
\rho^\theta 
	& < \frac{|z-w|^2 }{8 e^{(2C+M_0)(R_{z,w}+1)}  } \min\left\{\frac{1 }{2|z-w|},\frac{1}{\rho^\theta }\right\}
		\leq\frac{|z-w|^2 }{4 e^{(2C+M_0) (R_{z,w}+1)} \left(2 |z-w| +\rho^\theta \right)}\\
	&	= \frac{\left(\frac{1}{2}|z-w|e^{-M_0 (R_{z,w}+1)}\right)^2 }{e^{C(R_{z,w}+1)} \left(2 |z-w| e^{C (R_{z,w}+1)} + \rho^\theta e^{C(R_{z,w}+1)} \right)}
\end{align*}
implies that $\det (D_k H)>0$, and, therefore, the proposition follows. Again, this is a consequence of Lemma \ref{lemExistsC}.
\end{proof}

\section{Final approach}\label{secFinal}
To end we need to combine the  a priori uniform elliptic estimates on the conductivities with the control obtained in Theorem \ref{theoUInftyIsSmall} to obtain estimates in the distance between conductivities. Since interpolation is not possible in our setting, we will perform a subtle argument combining the division in high and low frequencies of the derivatives of the CGOS in the Fourier side with Lemma \ref{lemFourier}. Thus, we need some control on the integral moduli of the CGOS in terms of the moduli of the conductivities.

\subsection{Caccioppoli inequalities}\label{secCaccioppoli}
In Lemma \ref{lemModulusIOfIterates} we have studied the modulus of continuity of a Neumann series. However, the Beltrami equation together with \rf{eqModulusProduct} gives us bounds for the modulus of continuity of the Complex Geometric Optics Solution derivatives, as we will see below, by means of a Caccioppoli inequality.

\begin{theorem}\label{theoCaccioppoliModulus}
Let $\mu,\nu \in L^\infty$ be compactly supported with $\norm{|\mu|+|\nu|}_{L^\infty} \leq \kappa< 1$. Let $f$ be a quasiregular solution to 
$$	\bar\partial f = \mu \,  \partial f + \nu \,  \overline{\partial f }. $$
Let $1<p<p_\kappa$ satisfy that $\kappa \norm{\Beurling}_{L^p\to L^p}<1$, let $r\in [p, p_\kappa)$ and let $q$ defined by $\frac1p=\frac1q+\frac1r$. Then, for every real-valued, compactly supported Lipschitz function $\varphi$, we have that
\begin{equation}\label{eqOptimalModulus}
\modulus{p}{(\varphi \bar\partial f)}(t)
	 \leq  C_{\kappa,r,p} \norm{f \nabla \varphi}_{L^r}\left(  \modulus{q}{\mu}(t)+  \modulus{q}{\nu}(t)\right)  +C_{\kappa,p} \, \modulus{p}{(f \nabla\varphi)}(t).
\end{equation}
\end{theorem}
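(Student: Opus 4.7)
The plan is to imitate the classical proof of the Caccioppoli inequality for quasiregular mappings, but to track the modulus of continuity by exploiting the translation invariance of the Beurling transform.

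First I would set $F:=\varphi f$ and $G:=\bar\partial F$. Since $\varphi$ is Lipschitz and compactly supported and $f\in W^{1,r}_{\rm loc}$ for $r<p_\kappa$ by Theorem \ref{theoSelfImprovement}, $F$ is compactly supported and belongs to $W^{1,p}$, so the representation $F=\Cauchy G$ is valid and $\partial F=\Beurling G$. Using Leibniz and the Beltrami equation for $f$, I would derive
\begin{equation*}
G=\varphi(\mu\partial f+\nu\overline{\partial f})+f\bar\partial\varphi=\mu(\Beurling G-f\partial\varphi)+\nu\overline{(\Beurling G-f\partial\varphi)}+f\bar\partial\varphi,
\end{equation*}
which, after introducing the $\R$-linear operator $Th:=\mu\Beurling h+\nu\overline{\Beurling h}$ and $h:=f\bar\partial\varphi-\mu f\partial\varphi-\nu\overline{f\partial\varphi}$, becomes the integral equation $(I-T)G=h$. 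Since $\norm{Th}_{L^p}\le(|\mu|+|\nu|)\norm{\Beurling h}_{L^p}\le\kappa\norm{\Beurling}_{L^p\to L^p}\norm{h}_{L^p}$ and this constant is strictly less than one by hypothesis, $(I-T)^{-1}$ is bounded on $L^p$ with constant $C_{\kappa,p}$.

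The key step is to compute how $I-T$ interacts with translations. Because $\Beurling$ is translation invariant, a direct check gives
\begin{equation*}
(I-T)(\tau_y G-G)=(\tau_y h-h)+(\tau_y\mu-\mu)\Beurling(\tau_y G)+(\tau_y\nu-\nu)\overline{\Beurling(\tau_y G)}.
\end{equation*}
Applying $\norm{(I-T)^{-1}}_{L^p\to L^p}\le C_{\kappa,p}$, then H\"older's inequality with $\tfrac1p=\tfrac1q+\tfrac1r$ and the $L^r$-boundedness of $\Beurling$, I get
\begin{equation*}
\norm{\tau_y G-G}_{L^p}\le C_{\kappa,p}\bigl(\norm{\tau_y h-h}_{L^p}+\norm{\Beurling}_{r,r}(\modulus{q}{\mu}(t)+\modulus{q}{\nu}(t))\norm{G}_{L^r}\bigr)
\end{equation*}
for $|y|\le t$. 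The standard Caccioppoli bound \eqref{eqCaccioppoli}, together with $G=\varphi\bar\partial f+f\bar\partial\varphi$, gives $\norm{G}_{L^r}\le C_{\kappa,r}\norm{f\nabla\varphi}_{L^r}$, since $r\in[p,p_\kappa)$.

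It remains to control $\modulus{p}{h}$. Writing $h=f\bar\partial\varphi-\mu f\partial\varphi-\nu\overline{f\partial\varphi}$ and applying the Leibniz rule for integral moduli of continuity \eqref{eqModulusProduct} with the same splitting $\tfrac1p=\tfrac1q+\tfrac1r$ yields
\begin{equation*}
\modulus{p}{(\mu f\partial\varphi)}(t)\le\modulus{q}{\mu}(t)\norm{f\partial\varphi}_{L^r}+\kappa\,\modulus{p}{(f\partial\varphi)}(t),
\end{equation*}
and similarly for the $\nu$-term; the first summand of $h$ contributes only $\modulus{p}{(f\bar\partial\varphi)}(t)\le\modulus{p}{(f\nabla\varphi)}(t)$. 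Taking the supremum over $|y|\le t$, using triangle inequality $\modulus{p}{(\varphi\bar\partial f)}(t)\le\modulus{p}{G}(t)+\modulus{p}{(f\bar\partial\varphi)}(t)$, and collecting terms gives the asserted inequality.

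The main technical obstacle is the non-$\C$-linearity of $T$: the conjugation in the $\nu$-term prevents a direct appeal to the invertibility of $I-\mu\Beurling$ and forces us to work with the real-linear operator and to handle $\overline{\Beurling(\tau_y G)}$ separately throughout, but since complex conjugation preserves all $L^s$-norms and integral moduli of continuity, the same estimates go through. The other delicate point is making sure the integrability exponents line up so that $G\in L^r$, which is where the restriction $r<p_\kappa$ plays its role via Theorem \ref{theoSelfImprovement}.
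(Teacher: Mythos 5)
Your proposal is correct and follows essentially the same route as the paper: the decomposition $F=\varphi f$, the identity $\partial F=\Beurling(\bar\partial F)$ for the compactly supported Sobolev function $F$, the H\"older splitting $\frac1p=\frac1q+\frac1r$ together with translation invariance of $\Beurling$, and the Caccioppoli bound for $\norm{\bar\partial F}_{L^r}$ are exactly the ingredients of the paper's proof. The only (harmless) repackaging is that you invert the $\R$-linear operator $I-T$ on the translation difference $\tau_y G-G$, whereas the paper takes integral moduli of the equation directly and absorbs the term $\kappa\norm{\Beurling}_{L^p\to L^p}\,\modulus{p}{(\bar\partial F)}(t)$ into the left-hand side (which is why it must first note that this modulus is finite); your version sidesteps that finiteness check and treats the $\nu\neq0$ case explicitly, which the paper leaves to the reader.
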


\begin{proof}
We will show the case $\nu=0$, leaving the general case to the reader. 

Let $F:=\varphi f$. We have $\bar\partial F =  \varphi \bar\partial f + f \bar\partial \varphi = \varphi \mu \partial f + f \bar\partial \varphi  = \mu \partial F + f (\bar\partial \varphi - \mu \partial \varphi)$.  Since $F\in W^{1,p}(\C)$, it follows that $\Beurling (\bar\partial F) = \partial F$. Thus,
$$\bar\partial F  = \mu \Beurling (\bar\partial F) + f \bar\partial \varphi -  \mu f \partial \varphi.$$
Taking modulus of continuity for $t>0$, we get that
$$\modulus{p}{(\bar\partial F)}(t) \leq \modulus{p}{(\mu \Beurling(\bar\partial F))}(t) + \modulus{p}{(f \bar\partial \varphi)}(t)+\modulus{p}{(\mu f  \partial \varphi)}(t),$$
and, using \rf{eqModulusProduct}, we get
\begin{align*}
\modulus{p}{(\bar\partial F)}(t)
	& \leq  \modulus{q}{\mu}(t) \norm{ \Beurling(\bar\partial F)}_{L^{r}} + \modulus{p}{ \Beurling(\bar\partial F)}(t) \norm{\mu}_{L^\infty} + \modulus{p}{(f \bar\partial \varphi)}(t)\\
	& \quad + \norm{ \mu}_{L^\infty} \modulus{p}{(\partial \varphi f)}(t)+\norm{\partial \varphi f}_{L^r} \modulus{q}{\mu}(t).
\end{align*}

By \rf{eqModulusBoundedTranslationInvariant} we have that $\modulus{p}{\Beurling(\bar\partial F)}(t)\leq \norm{\Beurling}_{L^p\to L^p}\, \modulus{p}{(\bar\partial F)}(t)$
\begin{align*}
\modulus{p}{(\bar\partial F)}(t)
	& \leq  \modulus{q}{\mu}(t)  \norm{ \Beurling}_{L^{r}\to L^r}\norm{\bar\partial F}_{L^{r}} +\kappa\, \norm{\Beurling}_{L^p\to L^p}\, \modulus{p}{(\bar\partial F)}(t)  + \modulus{p}{(f \bar\partial \varphi)}(t)\\
	& \quad + \kappa\, \modulus{p}{(\partial \varphi f)}(t)+\norm{\partial \varphi f}_{L^r} \modulus{q}{\mu}(t).
\end{align*}

Note that $\bar \partial F$ is compactly supported and $p$-integrable (see Theorem \ref{theoSelfImprovement}). Thus, $\modulus{p}{(\bar\partial F)}(t)$ is finite and we can infer that
\begin{align*}
\modulus{p}{(\bar\partial F)}(t)
	& \leq  \frac{\modulus{q}{\mu}(t) \left(\norm{ \Beurling}_{L^{r}\to L^r}\norm{\bar\partial F}_{L^{r}} + \norm{f \nabla \varphi}_{L^r}  \right)+ (1+\kappa) \modulus{p}{(f \nabla\varphi)}(t)}{1-  \kappa \norm{\Beurling}_{L^p\to L^p} }.
\end{align*}
By \rf{eqCaccioppoli} we have that $\norm{\bar\partial F}_{L^r}\leq C_{\kappa,r} \norm{f \nabla \varphi}_{L^r}$. Thus 
\begin{align*}
\modulus{p}{(\bar\partial F)}(t)
	& \leq  C_{\kappa,r,p} \norm{f \nabla \varphi}_{L^r}  \modulus{q}{\mu}(t)  +C_{\kappa,p} \, \modulus{p}{(f \nabla\varphi)}(t).
\end{align*}
Using that $\bar\partial F= \varphi \bar\partial f + f \bar\partial \varphi$, we get \rf{eqOptimalModulus}.
\end{proof}

\begin{corollary}\label{coroModulusCGOSBounded}
Let $\mu \in L^\infty$ be compactly supported in $\overline{\D}$ with $\norm{\mu}_{L^\infty} \leq \kappa <1$, let $2 \leq p <\infty$  with $\kappa \norm{\Beurling}_{L^p\to L^p}<1$, let $r\in [p, p_\kappa)$ and $q$ defined by $\frac1p=\frac1q+\frac1r$, and let $f_\mu$ be the complex geometric optics solution from Definition \ref{defFMu}.  Then
\begin{equation*}
\modulus{p}{(\bar\partial f_\mu )}(t)
	 \leq   e^{C_{\kappa,r,p} (1+|k|)} \left(  \modulus{q}{\mu}(t) + |t|^{1-\frac2p}\right).
\end{equation*}
\end{corollary}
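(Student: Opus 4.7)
The plan is to apply Theorem \ref{theoCaccioppoliModulus} (with $\nu=0$) to $f = f_\mu$, choosing a cutoff $\varphi \in C^\infty_c(\C)$ with $\varphi \equiv 1$ on $\overline{\D}$ and $\supp\varphi \subset 2\overline{\D}$. Since $\mu$ is supported in $\overline{\D}$, the product $\varphi\,\bar\partial f_\mu = \mu\,\partial f_\mu = \bar\partial f_\mu$ almost everywhere, so the left-hand side of the Caccioppoli inequality reads as $\modulus{p}{(\bar\partial f_\mu)}(t)$. The problem is thereby reduced to verifying the two bounds $\norm{f_\mu \nabla\varphi}_{L^r} \lesssim e^{C(1+|k|)}$ and $\modulus{p}{(f_\mu \nabla\varphi)}(t) \lesssim e^{C(1+|k|)}\,|t|^{1-2/p}$, with constants depending only on $\kappa$, $r$ and $p$.

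Both bounds will come from the Jost representation $f_\mu(z,k) = e^{ikz} M_\mu(z,k)$ combined with the global Sobolev estimate $\norm{M_\mu - 1}_{W^{1,p'}(\C)} \leq e^{C_{\kappa,p'}(1+|k|)}$ supplied by \rf{eqJostSobolevBounded}. Picking $p'$ slightly larger than $\max(2,p)$ (permissible because $p' < p_\kappa$) and invoking the embedding $W^{1,p'}(\C) \hookrightarrow L^\infty(\C)$ yields $\norm{M_\mu}_{L^\infty(\C)} \leq e^{C(1+|k|)}$. Together with the pointwise bound $|e^{ikz}| \leq e^{C|k|}$ on $\supp\varphi \subset 2\overline{\D}$ and a direct product-rule computation of $\partial f_\mu$ and $\bar\partial f_\mu$, this gives $\norm{f_\mu}_{W^{1,p}(2\D)} + \norm{f_\mu}_{L^\infty(2\D)} \leq e^{C(1+|k|)}$. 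Hence $\norm{f_\mu \nabla\varphi}_{L^r} \lesssim e^{C(1+|k|)}$, and $f_\mu \nabla\varphi$ sits in $W^{1,p}(\C)$ compactly supported with the same exponential norm.

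Finally, the modulus $\modulus{p}{(f_\mu \nabla\varphi)}(t)$ will be dominated via two elementary estimates for compactly supported $g \in W^{1,p}(\C)$: the Sobolev--Poincar\'e translation inequality $\modulus{p}{g}(t) \leq t\,\norm{\nabla g}_{L^p}$, obtained from $g(x)-g(x-y) = \int_0^1 \nabla g(x-sy)\cdot y\,ds$ and Minkowski's inequality, and the trivial bound $\modulus{p}{g}(t) \leq 2\,\norm{g}_{L^p}$. Together they give $\modulus{p}{(f_\mu \nabla\varphi)}(t) \leq e^{C(1+|k|)}\min(t,1)$, and since $p \geq 2$ one checks $\min(t,1) \leq |t|^{1-2/p}$ for every $t>0$ (for $t \leq 1$ because $t^{-2/p} \geq 1$, and for $t \geq 1$ because the exponent $1-2/p$ is non-negative). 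Inserting the two bounds into the Caccioppoli inequality completes the proof. I do not anticipate a serious obstacle; the only mild subtlety is the endpoint $p = 2$, where $W^{1,2}(\C)$ fails to embed into $L^\infty$, but this is circumvented by working with a slightly larger exponent $p' > 2$ in \rf{eqJostSobolevBounded}.
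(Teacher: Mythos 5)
Your proposal is correct and is essentially the paper's own argument: cut off with a smooth $\varphi$, use that $\bar\partial f_\mu=\varphi\,\bar\partial f_\mu$ because $\bar\partial f_\mu$ is supported in $\overline\D$, apply Theorem \ref{theoCaccioppoliModulus}, and bound both resulting terms by $\norm{f_\mu}_{W^{1,p}(2\D)}\leq e^{C_{\kappa,p}(1+|k|)}$ coming from the Jost representation and \rf{eqJostSobolevBounded}; your only deviation — harmless, and in fact cleaner at the endpoint $p=2$ — is that you estimate $\modulus{p}{(f_\mu\nabla\varphi)}(t)$ by $\min\bigl(t\,\norm{\nabla(f_\mu\nabla\varphi)}_{L^p},\,2\norm{f_\mu\nabla\varphi}_{L^p}\bigr)\lesssim |t|^{1-\frac2p}\norm{f_\mu\nabla\varphi}_{W^{1,p}}$ rather than through the Morrey embedding $W^{1,p}(2\D)\subset C^{1-\frac2p}$ used in \rf{eqControlOffdiagonalCaccioppoli}. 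One label to fix: since the CGOS satisfies $\bar\partial f_\mu=\mu\,\overline{\partial f_\mu}$, you apply Theorem \ref{theoCaccioppoliModulus} with its $\mu$ equal to $0$ and its $\nu$ equal to our $\mu$ (not ``$\nu=0$''), which changes nothing in the conclusion because the right-hand side is symmetric in $\modulus{q}{\mu}+\modulus{q}{\nu}$.
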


\begin{proof}
Take $\chi_\D\leq \varphi\leq \chi_{2\D}$ with $|\nabla \varphi | \lesssim 1$. Since $\bar\partial f_\mu = \varphi \bar\partial f_\mu$, Theorem \ref{theoCaccioppoliModulus} leads to
\begin{align*}
 \modulus{p}{(\bar\partial f_\mu)}(t)
	& \leq C_{\kappa,r,p} \norm{f_\mu \nabla \varphi}_{L^r}  \modulus{q}{\mu}(t)  +C_{\kappa,p} \, \modulus{p}{(f_\mu \nabla\varphi)}(t).
\end{align*}
But for $|h|\leq t$, using the Sobolev embedding we have that
\begin{align}\label{eqControlOffdiagonalCaccioppoli}
\modulus{p}{(f_\mu \nabla\varphi)}(h) 
	& = \left(\int_{2\D} |f_\mu(x) \nabla\varphi(x) - f_\mu(x+h) \nabla\varphi(x+h)|^p\, dx\right)^\frac1p \\
\nonumber	& \leq \norm{f_\mu \nabla\varphi}_{C^{1-\frac2p}(2\D)} \left(\int_{2\D} |h|^{p-2} \, dx\right)^\frac1p 
		\leq C_p \norm{f_\mu \nabla\varphi}_{W^{1,p}(2\D)}|h|^{1-\frac2p}  \left(2\pi \right)^\frac2p\\
\nonumber	& \leq C_p \norm{f_\mu}_{W^{1,p}(2\D)} |t|^{1-\frac2p} .
\end{align}
On the other hand, by the Sobolev embedding theorem for subcritical indices, $\norm{f_\mu}_{L^r(2\D)}\leq C \norm{f_\mu}_{W^{1,p}(2\D)}$. Thus,
\begin{align*}
\modulus{p}{(\bar\partial f_\mu )}(t)
	& \leq  C_{\kappa,r,p}  \norm{f_\mu}_{W^{1,p}(2\D)}  \left(  \modulus{q}{\mu}(t) + |t|^{1-\frac2p}\right).
\end{align*}

But \rf{eqBeltramiAssimptotics} implies 
$$\norm{f_\mu}_{W^{1,p}(2\D)}  \leq C_p\norm{e^{ik\cdot}}_{W^{1,p}(2\D)} \norm{M_\mu}_{W^{1,p}(2\D)}, $$
and \rf{eqJostSobolevBounded} yields
\begin{equation}\label{eqSobolevForCGOS}
\norm{f_\mu}_{W^{1,p}(2\D)}  \leq  e^{C_{\kappa,p} (|k|+1)}.
\end{equation}
\end{proof}
Arguing analogously one gets Theorem \ref{theoCaccioppoliModulusModified}.

\subsection{Interpolation}\label{secInterpolation}
\begin{proof}[Proof of Theorem \ref{theoMainBisTheorem}]
Let $\kappa \in(0,1)$, let $2K< p <\infty$ with $K$ defined by \rf{eqKappaK}, and let $\omega$ be a modulus of continuity. We will show that there exists a modulus of continuity $\eta=\eta_{\mathcal{M}}$  depending only on $(\kappa, p, \omega)$  so that for every pair $\mu_1, \mu_2\in {\mathcal{M}}:=\mathcal{M}(\kappa, p, \omega)$, the estimate
\begin{equation}\label{eqSStability}
\norm{\mu_1-\mu_2}_{L^{s}(\D)} \leq C_{\kappa,s} \eta (\rho)
\end{equation}
holds for a fixed $\frac1s > \frac12 + \frac{K-1}2$, where $\rho:=\norm{\Lambda_1-\Lambda_2}_{H^{1/2}(\partial\D)\to H^{-1/2}(\partial\D)}$. By standard interpolation with $L^\infty$, we get $L^s$ stability for $0<s<\infty$, and Theorem \ref{theoMainBisTheorem} follows in particular.

To show \rf{eqSStability}, let $f_j=f_{\mu_j}(\cdot,1)$ (from now on we fix $k=1$). Note that $\bar\partial f_j=\mu_j \overline{\partial f_j}$. Thus, we have the almost everywhere identity
$$|\mu_1-\mu_2|=\frac{|\bar \partial f_1 \overline{\partial f_2}- \overline{\partial f_1 }\bar\partial f_2|}{|\partial f_1 \partial f_2|} = \frac{|\bar \partial f_1( \overline{\partial f_2}-  \overline{\partial f_1})+\overline{\partial f_1}( \bar \partial f_1 - \bar\partial f_2)|}{|\partial f_1 \partial f_2|} ,$$
so
$$|\mu_1-\mu_2| \leq \frac{|\mu_1||\partial ( f_2 -  f_1)| + |\bar\partial ( f_2 -  f_1)|}{|\partial f_2|}.$$
Let $s,s^*>0$ to be fixed, satisfying $\frac{1}{s^*}+\frac12=\frac1s$. Then we can apply H\"older's inequality to obtain
\begin{equation}\label{eqBreakMus}
\norm{\mu_1-\mu_2}_{L^{s}(\D)} \leq C \norm{|\bar\partial ( f_2 -  f_1)| +\kappa |\partial ( f_2 -  f_1)|}_{L^2(\D)}\norm{|\partial f_2|^{-1}}_{L^{s^*}(\D)}.
\end{equation}
By \cite[Lemma 4.6]{ClopFaracoRuiz}  the last (quasi)norm  is bounded by 
\begin{equation}\label{eqControlJacobianIntegrability}
\norm{|\partial f_2|^{-1}}_{L^{s^*}}\leq C_{s,\kappa}
\end{equation}
as long as $s^*< \frac{2}{K-1} $, that is, whenever $\frac1s > \frac12 + \frac{K-1}2$. 

We need to control the $L^2$-norm of the gradient of the difference in \rf{eqBreakMus}. To do so, we define a bump function $\chi_\D\leq \varphi \leq \chi_{2\D}$, and write $F=\varphi (f_1-f_2)$. Then, we can use the Plancherel's identity to state that
$$\norm{\partial ( f_2 -  f_1)}_{L^2(\D)} \leq \norm{\partial F}_{L^2(\C)} = \norm{\bar\partial F}_{L^2(\C)}= \norm{\widehat{ \bar\partial F }}_{L^2(\C)}.$$
and, by similar reasons, 
$$\norm{\bar\partial ( f_2 -  f_1)}_{L^2(\D)} \leq \norm{\widehat{ \bar\partial F }}_{L^2(\C)}.$$
Take $R$ to be fixed depending on $\rho$. Then
\begin{equation}\label{eqBreakDeltaFMinusF}
 \norm{|\bar\partial ( f_2 -  f_1)| +\kappa |\partial ( f_2 -  f_1)|}_{L^2(\D)} \leq 2\norm{\widehat{ \bar\partial F }}_{L^2(\D_R)}+2\norm{\widehat{ \bar\partial F }}_{L^2(\D_R^c)}.
\end{equation}

On one hand, for the low frequencies we use that $\left|\widehat{ \bar\partial F }(\xi)\right| \approx |\xi| \left|\widehat{F}(\xi)\right|$ and, thus,
$$\norm{\widehat{ \bar\partial F }}_{L^2(\D_R)}\lesssim R \norm{\widehat{ F }}_{L^2(\C)} = R \norm{ \varphi ( f_2 -  f_1)}_{L^2(\C)} \leq R \norm{f_2 -  f_1}_{L^2(2\D)} .$$
Since we have fixed $k=1$, by Theorem \ref{theoUInftyIsSmall},  there exists a modulus of continuity $\iota_{\mathcal{M}}$ depending only on $\kappa$, $p$ and $\omega$  so that
$$\norm{f_2 -  f_1}_{L^2(2\D)}\leq \norm{f_2 -  f_1}_{L^\infty(2\D)} (2\pi)^\frac12\leq   \iota_{\mathcal{M}}(\rho).$$
Thus, we get that
\begin{equation}\label{eqBoundLowFrequencies}
\norm{\widehat{ \bar\partial F }}_{L^2(\D_R)} \leq  R \, \iota_{\mathcal{M}}(\rho) .
\end{equation}

For the high frequencies we use Lemma \ref{lemFourier}, which implies that
\begin{equation*}
\norm{\widehat{ \bar\partial F }}_{L^2(\D_R^c)} \leq C\left( \modulus{2}{(\bar\partial(\varphi f_2 ))}\left(\frac1R\right)+ \modulus{2}{(\bar\partial(\varphi f_1))}\left(\frac1R\right)\right) .
\end{equation*}
Let $j\in \{1,2\}$.  By Corollary \ref{coroModulusCGOSBounded} and \rf{eqControlOffdiagonalCaccioppoli} we have that 
\begin{align*}
\modulus{2}(\bar\partial(\varphi f_j ))(R^{-1}) 
	& \leq \modulus{2}{(\varphi \bar\partial f_j )}(R^{-1}) +\modulus{2}{( f_j \bar\partial\varphi)}(R^{-1}) \\
	& \leq e^{2C_{\kappa,p}} \left(  \modulus{p}{\mu}(R^{-1}) + |R^{-1}|^{1-\frac2p}\right) + C_p \norm{f_j}_{W^{1,p}(2\D)} |R^{-1}|^{1-\frac2p} .
\end{align*}
By \rf{eqSobolevForCGOS},
\begin{equation}\label{eqBoundHighFrequencies}
\norm{\widehat{ \bar\partial F }}_{L^2(\D_R^c)}
	 \leq C_{\kappa,p} \left(  \omega(R^{-1}) + R^{\frac2p-1}\right).
\end{equation}

Combining \rf{eqBreakMus}, \rf{eqControlJacobianIntegrability}, \rf{eqBreakDeltaFMinusF}, \rf{eqBoundLowFrequencies} and \rf{eqBoundHighFrequencies}, we obtain
\begin{equation*}
\norm{\mu_1-\mu_2}_{L^{s}(\D)} \leq C_{s,\kappa} \inf_{R} \left( R\iota_{\mathcal{M}}(\rho) + C_{\kappa,p} \left( \omega(R^{-1})  + R^{\frac2p-1}\right)\right).
\end{equation*}
Defining
$$\eta(t):= \inf_{R} \left( C_{s,\kappa} R\iota_{\mathcal{M}}(t) + C_{\kappa,p,s} \left( \omega(R^{-1}) + R^{\frac2p-1}\right)\right),$$
we obtain the result. 
To see that $\eta $ tends to zero as $\rho\to 0$, it is enough to check that this happens with $R:=\iota_{\mathcal{M}}(t)^{-1/2}$. Note that using this value for $R$ in the last expression we get 
$$\eta (t) \leq  \left( C_{s,\kappa} \iota_{\mathcal{M}}(t)^\frac12 + C_{\kappa,p,s} \left( \omega(\iota_{\mathcal{M}}(t)^\frac12) + \iota_{\mathcal{M}}(t)^{\frac12-\frac1p}\right)\right).$$
The theorem follows combining that $\lim_{t\to 0}\omega (t)=0$ and $\lim_{t\to0}\iota_{\mathcal{M}}(t)=0$.

Whenever $\omega$ is upper semi-continuous, by Theorem \ref{theoUInftyIsSmall} we obtain the quantitative estimate in Theorem \ref{theoMainTheorem}.
\end{proof}

\bibliography{../../../bibtex/Llibres}
\end{document}